\documentclass[10pt,a4paper]{article}
\usepackage{amssymb,amsmath,amsopn,amsthm,graphicx,makeidx}
\usepackage[all,2cell]{xy} \UseAllTwocells

\begin{document}

\newtheorem{definition}{Definition}[section]
\newtheorem{definitions}[definition]{Definitions}
\newtheorem{lemma}[definition]{Lemma}
\newtheorem{prop}[definition]{Proposition}
\newtheorem{theorem}[definition]{Theorem}
\newtheorem{cor}[definition]{Corollary}
\newtheorem{cors}[definition]{Corollaries}
\theoremstyle{remark}
\newtheorem{remark}[definition]{Remark}
\theoremstyle{remark}
\newtheorem{remarks}[definition]{Remarks}
\theoremstyle{remark}
\newtheorem{notation}[definition]{Notation}
\theoremstyle{remark}
\newtheorem{example}[definition]{Example}
\theoremstyle{remark}
\newtheorem{examples}[definition]{Examples}
\theoremstyle{remark}
\newtheorem{dgram}[definition]{Diagram}
\theoremstyle{remark}
\newtheorem{fact}[definition]{Fact}
\theoremstyle{remark}
\newtheorem{illust}[definition]{Illustration}
\theoremstyle{remark}
\newtheorem{rmk}[definition]{Remark}
\theoremstyle{definition}
\newtheorem{question}[definition]{Question}
\theoremstyle{definition}
\newtheorem{conj}[definition]{Conjecture}

\newcommand{\stac}[2]{\genfrac{}{}{0pt}{}{#1}{#2}}
\newcommand{\stacc}[3]{\stac{\stac{\stac{}{#1}}{#2}}{\stac{}{#3}}}
\newcommand{\staccc}[4]{\stac{\stac{#1}{#2}}{\stac{#3}{#4}}}
\newcommand{\stacccc}[5]{\stac{\stacc{#1}{#2}{#3}}{\stac{#4}{#5}}}

\renewenvironment{proof}{\noindent {\bf{Proof.}}}{\hspace*{3mm}{$\Box$}{\vspace{9pt}}}

\title{Abelian categories and definable additive categories}
\author{Mike Prest,\\ Alan Turing Building
\\School of Mathematics\\University of Manchester\\
Manchester M13 9PL\\UK\\mprest@manchester.ac.uk}

\maketitle

\tableofcontents

\section{Introduction}\label{intro}

This paper is about three 2-categories which sit an an intersection of algebra, model theory and geometry (the last in the broad sense).

One of these categories, ${\mathbb A}{\mathbb B}{\mathbb E}{\mathbb X}$, has for its objects the skeletally small abelian categories and for its morphisms the exact functors; another, ${\mathbb D}{\mathbb E}{\mathbb F}$, is the category of definable additive categories and interpretation functors; the third is the category, ${\mathbb C}{\mathbb O}{\mathbb H}$, of locally coherent Grothendieck categories and coherent morphisms.  In each case the 2-arrows are just the natural transformations.  The (anti-)equivalences between these were described in \cite{PreRajShv}, which builds on \cite{PreDefAddCat} and \cite{KraFun}, and are recalled below (see also \cite{Mak} and \cite{Hu} for analogous results).  Here these categories and their connections are explored further.

I have tried to include enough explanation of background to make the paper accessible to a variety of readers; for more details one should consult the various references cited.  I will use \cite{PreNBK} as a convenient reference since it gathers together much of what I will need but \cite{HerzCat}, \cite{KraHab}, \cite{PreDefAddCat} also contain much of that.

Throughout this paper, categories are, by default, preadditive, functor means additive functor, $({\mathcal A},{\mathcal B})$ will denote the category of additive functors from the (usually skeletally small) preadditive category ${\mathcal A}$ to the (usually at least additive) category ${\mathcal B}$, ${\bf Ab}$ will denote the category of abelian groups, ${\rm Mod}\mbox{-}{\mathcal A}$ will be an alternative notation for $({\mathcal A}^{\rm op}, {\bf Ab})$ (where $^{\rm op}$ denotes the opposite of a category) - it is the category of right ${\mathcal A}$-modules - and ${\mathcal A}\mbox{-}{\rm Mod} =({\mathcal A}, {\bf Ab})$ will denote the category of left ${\mathcal A}$-modules.  The full subcategory of finitely presented modules is denoted by ${\rm mod}\mbox{-}{\mathcal A}$.  We write ${\mathbb P}{\mathbb R}{\mathbb E}{\mathbb A}{\mathbb D}{\mathbb D}$ for the 2-category of preadditive categories (additive functors and natural transformations).  We scarcely distinguish between a skeletally small category and a small version of it (i.e.~a category to which it is equivalent but which has just a set of objects).

\vspace{4pt}

Now we show how the three 2-categories are related, then give a quick summary of what is in each section.

\begin{theorem}\label{3cats} \cite[2.3 and comments following]{PreRajShv} There is a diagram of equivalences and anti-equivalences between $ {\mathbb A}{\mathbb B}{\mathbb E}{\mathbb X}$, $ {\mathbb C}{\mathbb O}{\mathbb H} $ and $ {\mathbb D}{\mathbb E}{\mathbb F} $ as follows.

\begin{center} $\xymatrix{{\mathbb A}{\mathbb B}{\mathbb E}{\mathbb X} \ar@{-}[rr]^{\simeq^{\rm op}} \ar@{-}[rd]_{\simeq^{\rm op}} & & {\mathbb D}{\mathbb E}{\mathbb F} \\ & {\mathbb C}{\mathbb O}{\mathbb H} \ar@{-}[ur]_{\simeq}}$
\end{center}

\noindent Explicitly:

\begin{center} $\xymatrix{{\mathcal A} ={\rm fun}({\mathcal D}) ={\mathcal G}^{\rm fp}  \ar@/^/[rr] \ar@/^/[dr] & & {\mathcal D} = {\rm Abs}({\mathcal G}) = {\rm Ex}({\mathcal A}, {\bf Ab})   \ar@/^/[ll] \ar@/^/[dl] \\ & {\mathcal G} = {\rm Flat}\mbox{-}{\mathcal A} = {\rm Fun}({\mathcal D}) \ar@/^/[ul] \ar@/^/[ur]}$
\end{center}

\end{theorem}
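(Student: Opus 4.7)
The plan is to exhibit the six 2-functors named in the diagram and verify that each pair forms a mutually inverse (anti-)equivalence. Since the statement compiles three classical dualities, the work is organizational; I treat the three sides of the triangle in turn, then address the 2-categorical bookkeeping.

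For the anti-equivalence ${\mathbb A}{\mathbb B}{\mathbb E}{\mathbb X} \simeq^{\rm op} {\mathbb C}{\mathbb O}{\mathbb H}$, send $\mathcal{G}$ to $\mathcal{G}^{\rm fp}$ (abelian because $\mathcal{G}$ is locally coherent); conversely send a skeletally small abelian $\mathcal{A}$ to ${\rm Flat}\mbox{-}\mathcal{A}$, which, since $\mathcal{A}$ is abelian, coincides with ${\rm Lex}(\mathcal{A}^{\rm op},{\bf Ab})$ and is a locally coherent Grothendieck category whose finitely presented objects are the representables. Yoneda then gives $\mathcal{A}\simeq({\rm Flat}\mbox{-}\mathcal{A})^{\rm fp}$, while the fact that every locally coherent Grothendieck category is the Ind-completion of its finitely presented part gives $\mathcal{G}\simeq{\rm Flat}\mbox{-}(\mathcal{G}^{\rm fp})$. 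On morphisms, coherent morphisms restrict to exact functors on finitely presented parts and, conversely, exact functors extend by left Kan extension.

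For ${\mathbb A}{\mathbb B}{\mathbb E}{\mathbb X} \simeq^{\rm op} {\mathbb D}{\mathbb E}{\mathbb F}$, send $\mathcal{A}$ to the category ${\rm Ex}(\mathcal{A},{\bf Ab})$ of exact additive functors, which is definable; conversely send a definable category $\mathcal{D}$ to ${\rm fun}(\mathcal{D})$, the category of coherent functors $\mathcal{D}\to{\bf Ab}$ (those commuting with products and direct limits), which is skeletally small abelian. Evaluation yields $\mathcal{A}\simeq{\rm fun}({\rm Ex}(\mathcal{A},{\bf Ab}))$, and $\mathcal{D}\simeq{\rm Ex}({\rm fun}(\mathcal{D}),{\bf Ab})$ expresses that an object of $\mathcal{D}$ is determined by its pp-type, i.e.\ by its image under all coherent functors. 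The third edge ${\mathbb C}{\mathbb O}{\mathbb H}\simeq{\mathbb D}{\mathbb E}{\mathbb F}$ follows by composition around the triangle, or directly via $\mathcal{G}\mapsto{\rm Abs}(\mathcal{G})$ (the absolutely pure objects of $\mathcal{G}$, which form a definable subcategory) and $\mathcal{D}\mapsto{\rm Fun}(\mathcal{D})={\rm Ind}({\rm fun}(\mathcal{D}))$.

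The main obstacle lies not in these object-level constructions, each of which is standard, but in checking that the three ambient notions of morphism — exact functors, coherent morphisms, and interpretation functors — correspond under the assignments and extend to 2-arrows. Interpretation functors are defined precisely so that $\mathcal{D}\to\mathcal{D}'$ corresponds to an exact ${\rm fun}(\mathcal{D}')\to{\rm fun}(\mathcal{D})$, which is where the \emph{op} enters, and coherent morphisms of locally coherent categories are similarly dual to exact functors on finitely presented parts. I would conclude by checking, corner by corner, that the three pairwise (anti-)equivalences compose around the triangle to a 2-natural isomorphism of the identity, invoking at each vertex the relevant universal property (Yoneda, Ind-completion, or the universal property of ${\rm fun}(\mathcal{D})$ among abelian codomains for $\mathcal{D}$).
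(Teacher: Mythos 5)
Your proposal is correct and follows essentially the same route as the paper, which itself records exactly these six assignments (${\mathcal A}\mapsto{\rm Ex}({\mathcal A},{\bf Ab})$, ${\mathcal D}\mapsto{\rm fun}({\mathcal D})$, ${\mathcal A}\mapsto{\rm Lex}({\mathcal A}^{\rm op},{\bf Ab})\simeq{\rm Flat}\mbox{-}{\mathcal A}$, ${\mathcal G}\mapsto{\mathcal G}^{\rm fp}$, ${\mathcal G}\mapsto{\rm Abs}({\mathcal G})$, ${\mathcal D}\mapsto{\rm Fun}({\mathcal D})$) and defers the verification to \cite{PreRajShv}, with the ${\mathbb A}{\mathbb B}{\mathbb E}{\mathbb X}$--${\mathbb C}{\mathbb O}{\mathbb H}$ side spelled out later in \ref{abtofromloccoh} and \ref{mors}. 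Be aware only that the step you describe as ``evaluation yields the two equivalences'' conceals the genuinely hard content (that every exact functor on ${\rm fun}({\mathcal D})$ is evaluation at some object of ${\mathcal D}$, i.e.\ \cite[12.10]{PreDefAddCat} resting on \cite{HerzCat} and \cite{KraFun}), which neither you nor this paper proves from scratch.
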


We will need the details of these (anti-)equivalences, so here they are.

\vspace{4pt}

\noindent From ${\mathbb A}{\mathbb B}{\mathbb E}{\mathbb X}$ to ${\mathbb D}{\mathbb E}{\mathbb F}$: to a skeletally small abelian category ${\mathcal A}$ we associate the definable category ${\rm Ex}({\mathcal A},{\bf Ab})$ - the full subcategory of ${\mathcal A}\mbox{-}{\rm Mod}$ on those functors which are exact; to an exact functor $F:{\mathcal A} \rightarrow {\mathcal B}$, we associate the functor $F^\ast:{\rm Ex}({\mathcal B},{\bf Ab}) \rightarrow {\rm Ex}({\mathcal A},{\bf Ab})$ which is just precomposition with $F$.

\vspace{4pt}

\noindent From ${\mathbb D}{\mathbb E}{\mathbb F}$ to ${\mathbb A}{\mathbb B}{\mathbb E}{\mathbb X}$: to a definable category ${\mathcal D}$ we associate the category, ${\rm fun}({\mathcal D}) =({\mathcal D}, {\bf Ab})^{\rightarrow \prod}$, of functors from ${\mathcal D}$ which commute with direct limits and direct products (we write ${\rm fun}\mbox{-}{\mathcal R}$ in the case that ${\mathcal D}={\rm Mod}\mbox{-}{\mathcal R}$); to an interpretation functor, that is, a functor $I:{\mathcal C} \rightarrow {\mathcal D}$ which commutes with direct products and direct limits, we associate the functor $I_0:{\rm fun}({\mathcal D}) \rightarrow {\rm fun}({\mathcal C})$ which is precomposition with $I$.

\vspace{4pt}

\noindent Between ${\mathbb A}{\mathbb B}{\mathbb E}{\mathbb X}$ and ${\mathbb C}{\mathbb O}{\mathbb H}$ (on objects): to a locally coherent Grothendieck category ${\mathcal G}$ we assign its full subcategory, ${\mathcal G}^{\rm fp}$, of finitely presented objects; in the other direction, to a skeletally small abelian category ${\mathcal A}$ we assign the category ${\rm Lex}({\mathcal A}^{\rm op},{\bf Ab})$ of left exact functors on ${\mathcal A}^{\rm op}$, thus right exact functors on ${\mathcal A}$, so this includes the representable functors $(-,A)$ for $A\in {\mathcal A}$.  This is a locally coherent Grothendieck category and the image of ${\mathcal A}$ under the just-mentioned Yoneda embedding $A\mapsto (-,A)$ is equivalent to the full subcategory of finitely presented objects (see \ref{abtofromloccoh}, also for the identifications ${\rm Lex}({\mathcal A}^{\rm op}, {\bf Ab}) \simeq {\rm Flat}\mbox{-}{\mathcal A} \simeq {\rm Ind}({\mathcal A})$).

\vspace{4pt}

\noindent Between ${\mathbb A}{\mathbb B}{\mathbb E}{\mathbb X}$ and ${\mathbb C}{\mathbb O}{\mathbb H}$ (on morphisms): from a morphism $f\in {\rm Ex}({\mathcal A}, {\mathcal B})$ we define the coherent morphism (see Section \ref{seccoh}) $(f^\ast,f_\ast):{\mathcal H}={\rm Ind}({\mathcal B}) \rightarrow {\rm Ind}({\mathcal A})={\mathcal G}$ which has $f_\ast:{\mathcal H}={\rm Lex}({\mathcal B}^{\rm op}, {\bf Ab}) \rightarrow {\rm Lex}({\mathcal A}^{\rm op}, {\bf Ab})={\mathcal G}$ just precomposition with $f^{\rm op}$ and has $f^\ast ={\rm Ind}(f)$.  In the other direction we take a coherent morphism $(f^\ast,f_\ast)$ to the restriction of the left adjoint, $f^\ast$, to the finitely presented objects of ${\mathcal G}$.

\vspace{4pt}  The as-yet-unexplained notation ${\rm Abs}({\mathcal G})$ refers to the full subcategory of {\bf absolutely pure} (or {\bf fp-injective}) objects of ${\mathcal G}$ - those objects $G$ such that ${\rm Ext}^1({\mathcal G}^{\rm fp},G)=0$.

\vspace{4pt}

The next result, which is not difficult to show (or see \cite{PreAxtFlat}), is one instance of this picture.  By ${\mathcal A}({\mathcal R})$ we denote the smallest abelian (not necessarily full) subcategory of ${\rm Mod}\mbox{-}{\mathcal R}$ which contains ${\rm mod}\mbox{-}{\mathcal R}$ (see \cite[\S 6]{PreRajShv}, also Section \ref{secabsch} below).  By $\langle {\mathcal X}\rangle$ we denote the smallest definable subcategory of ${\rm Mod}\mbox{-}{\mathcal R}$ containing ${\mathcal X}$.

\begin{prop}\label{cohexgendual} If $ {\mathcal R} $ is any skeletally small preadditive category then $${\rm Ex}({\mathcal A}({\mathcal R})^{\rm op},{\bf Ab})\simeq \langle  {\rm Abs}\mbox{-}{\mathcal R}\rangle  .$$ If $ {\mathcal R} $ is right coherent, so $ {\rm Abs}\mbox{-}{\mathcal R}$ is a definable subcategory of $ {\rm Mod}\mbox{-}{\mathcal R}$, then $${\rm Ex}(({\rm mod}\mbox{-}{\mathcal R})^{\rm op},{\bf Ab})\simeq {\rm Abs}\mbox{-}{\mathcal R}.$$
\end{prop}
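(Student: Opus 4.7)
The plan is to apply Theorem~\ref{3cats} to the skeletally small abelian category $\mathcal{A}(\mathcal{R})^{\rm op}$: it guarantees ${\rm Ex}(\mathcal{A}(\mathcal{R})^{\rm op},{\bf Ab})$ is a definable category, and the task is to identify it explicitly with $\langle{\rm Abs}\mbox{-}\mathcal{R}\rangle$. The natural comparison functor is
\[ \Psi\colon {\rm Mod}\mbox{-}\mathcal{R}\longrightarrow (\mathcal{A}(\mathcal{R})^{\rm op},{\bf Ab}), \qquad M\longmapsto {\rm Hom}_{\mathcal{R}}(-,M)|_{\mathcal{A}(\mathcal{R})}, \]
viewed covariantly on $\mathcal{A}(\mathcal{R})^{\rm op}$; it commutes with direct limits and products, so $\mathcal{E}:=\Psi^{-1}({\rm Ex}(\mathcal{A}(\mathcal{R})^{\rm op},{\bf Ab}))$, the subcategory of modules $M$ for which $\Psi(M)$ is exact, is a definable subcategory of ${\rm Mod}\mbox{-}\mathcal{R}$.

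I would next show ${\rm Abs}\mbox{-}\mathcal{R}\subseteq\mathcal{E}$, which is the key technical step. Since $\mathcal{A}(\mathcal{R})$ is an abelian subcategory of ${\rm Mod}\mbox{-}\mathcal{R}$, a short exact sequence in $\mathcal{A}(\mathcal{R})$ is in particular a short exact sequence in ${\rm Mod}\mbox{-}\mathcal{R}$, so $\Psi(M)$ being exact amounts to ${\rm Ext}^1_{\mathcal{R}}(C,M)=0$ for every $C\in\mathcal{A}(\mathcal{R})$ appearing as a cokernel in such a sequence. For $M$ absolutely pure this holds by definition when $C$ is finitely presented. I would extend to arbitrary $C\in\mathcal{A}(\mathcal{R})$ by induction on the stage at which $C$ enters the iterative construction of $\mathcal{A}(\mathcal{R})$: each new $C$ is a kernel, cokernel, image or coimage of a map between earlier objects, and the long exact sequence of ${\rm Ext}^\bullet_{\mathcal{R}}(-,M)$ together with dimension-shifting (tracking the higher ${\rm Ext}^i$ in parallel) delivers the required vanishing. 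This yields $\langle{\rm Abs}\mbox{-}\mathcal{R}\rangle\subseteq\mathcal{E}$.

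To upgrade this inclusion to the claimed equivalence I would construct a quasi-inverse $\Phi$ by restricting each exact $F\colon\mathcal{A}(\mathcal{R})^{\rm op}\to{\bf Ab}$ along the Yoneda embedding $\mathcal{R}^{\rm op}\hookrightarrow\mathcal{A}(\mathcal{R})^{\rm op}$. The isomorphism $\Phi\Psi\simeq{\rm id}$ is immediate, while $\Psi\Phi\simeq{\rm id}$ follows because two exact functors on $\mathcal{A}(\mathcal{R})^{\rm op}$ that agree on ${\rm mod}\mbox{-}\mathcal{R}$ must be naturally isomorphic, given how $\mathcal{A}(\mathcal{R})$ is generated from ${\rm mod}\mbox{-}\mathcal{R}$ by kernels and cokernels. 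The main obstacle I anticipate is the higher-Ext tracking in the inductive step, together with pinning down that the image of $\Phi$ lands in $\langle{\rm Abs}\mbox{-}\mathcal{R}\rangle$ rather than some larger definable subcategory; I would settle the latter by invoking Theorem~\ref{3cats} in reverse to match the associated abelian categories. The right-coherent case then drops out immediately: ${\rm mod}\mbox{-}\mathcal{R}$ is itself abelian, so $\mathcal{A}(\mathcal{R})={\rm mod}\mbox{-}\mathcal{R}$, and by hypothesis ${\rm Abs}\mbox{-}\mathcal{R}$ is already definable, so $\langle{\rm Abs}\mbox{-}\mathcal{R}\rangle={\rm Abs}\mbox{-}\mathcal{R}$ and the second equivalence follows from the first.
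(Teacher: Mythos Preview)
Your argument for the right-coherent case is sound and is indeed the standard one: when $\mathcal{A}(\mathcal{R})={\rm mod}\mbox{-}\mathcal{R}$ every object is finitely presented, so $\Psi(M)={\rm Hom}_{\mathcal R}(-,M)$ commutes with direct limits and products, and exactness of $\Psi(M)$ on $({\rm mod}\mbox{-}\mathcal R)^{\rm op}$ is precisely ${\rm Ext}^1({\rm mod}\mbox{-}\mathcal R,M)=0$, i.e.\ absolute purity.

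For general $\mathcal R$, however, the approach breaks down. The key difficulty is that when $\mathcal R$ is not right coherent the category $\mathcal{A}(\mathcal R)$ contains modules that are \emph{not} finitely presented (for instance the kernel of $R\to R/I$ for a non-finitely-generated ideal $I$ when $R=k[x_1,x_2,\dots]$). Consequently ${\rm Hom}_{\mathcal R}(C,-)$ need not commute with direct limits for such $C$, so your $\Psi$ is not an interpretation functor and there is no reason for $\mathcal E=\Psi^{-1}({\rm Ex}(\mathcal A(\mathcal R)^{\rm op},{\bf Ab}))$ to be a definable subcategory; indeed one can check directly that closure of $\mathcal E$ under direct limits fails. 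This also undermines your ${\rm Ext}$-induction: to push ${\rm Ext}^1(-,M)=0$ from ${\rm mod}\mbox{-}\mathcal R$ up through kernels you would need ${\rm Ext}^{\ge 2}({\rm mod}\mbox{-}\mathcal R,M)=0$, and absolute purity gives no such higher vanishing over a non-coherent ring. So neither the ``$\mathcal E$ is definable'' step nor the ``dimension-shifting'' step goes through, and in fact the equivalence in the general statement is \emph{not} realised by $M\mapsto{\rm Hom}_{\mathcal R}(-,M)\!\upharpoonright\!\mathcal A(\mathcal R)$.

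The paper itself gives no proof here; it treats the result as an instance of Theorem~\ref{3cats} together with the identification (from \cite{PreAxtFlat}, \cite[\S6]{PreRajShv}) of $\mathcal A(\mathcal R)$ as the Serre quotient $(\mathcal R\mbox{-}{\rm mod},{\bf Ab})^{\rm fp}/\{F:F(_{\mathcal R}\mathcal R)=0\}$, equivalently as ${\rm fun}(\langle\mathcal R\mbox{-}{\rm Flat}\rangle)$. One then reads off ${\rm Ex}(\mathcal A(\mathcal R)^{\rm op},{\bf Ab})$ as the elementary dual $\langle\mathcal R\mbox{-}{\rm Flat}\rangle^{\rm d}=\langle{\rm Abs}\mbox{-}\mathcal R\rangle$ via Proposition~\ref{dualindbij} and Theorem~\ref{defisexfun}. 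The point is that the correct functor attaches to $M$ the evaluation ${\rm ev}_M$ on the appropriate quotient of ${\rm fun}\mbox{-}\mathcal R$, not the restricted Hom-functor on the concrete realisation of $\mathcal A(\mathcal R)$ inside ${\rm Mod}\mbox{-}\mathcal R$; these agree on ${\rm mod}\mbox{-}\mathcal R$ but diverge on the extra objects that appear when $\mathcal R$ is not coherent.
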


Note the duality which applies to the whole picture described above. It is obvious for $ {\mathbb A}{\mathbb B}{\mathbb E}{\mathbb X}$, on which it is the 2-category equivalence which takes each abelian category to its opposite. It follows that there is a corresponding self-equivalence on each of the other two categories (which will be described in the relevant section).  In the context of the model theory of definable subcategories of module categories this duality was found first for pp formulas, and termed elementary duality (\cite{PreDual}) then extended to the category of pp-pairs and the Ziegler spectrum in \cite{HerzDual}.  In an algebraic form it is in \cite{AusDual} and \cite{GrJeDim}.

\vspace{4pt}

For instance, the dual to the result above is the following.

\begin{prop}\label{cohexgen} If $ {\mathcal R} $ is any skeletally small preadditive category then $${\rm Ex}({\mathcal A}({\mathcal R}),{\bf Ab})\simeq \langle {\mathcal R}\mbox{-}{\rm Flat}\rangle .$$ If $ {\mathcal R} $ is right coherent, so $  {\mathcal A}({\mathcal R})={\rm mod}\mbox{-}{\mathcal R} $ and $ {\mathcal R}\mbox{-}{\rm Flat} $ is a definable subcategory of $ {\mathcal R}\mbox{-}{\rm Mod}$, then $${\rm Ex}({\rm mod}\mbox{-}{\mathcal R},{\bf Ab})\simeq {\mathcal R}\mbox{-}{\rm Flat}.$$
\end{prop}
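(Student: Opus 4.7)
The plan is to derive this from Proposition \ref{cohexgendual} by invoking the self-duality on ${\mathbb A}{\mathbb B}{\mathbb E}{\mathbb X}$ sending $\mathcal{A}$ to $\mathcal{A}^{\rm op}$, and tracking how, via the anti-equivalence ${\mathbb A}{\mathbb B}{\mathbb E}{\mathbb X} \simeq^{\rm op} {\mathbb D}{\mathbb E}{\mathbb F}$ of Theorem \ref{3cats}, it corresponds to elementary duality on ${\mathbb D}{\mathbb E}{\mathbb F}$; this is precisely the duality alluded to in the paragraph preceding Proposition \ref{cohexgendual}.

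The key additional input is the classical fact, in the categorical form of \cite{HerzDual} (with algebraic forerunners in \cite{AusDual} and \cite{GrJeDim}), that the definable subcategories $\langle {\rm Abs}\mbox{-}\mathcal{R}\rangle \subseteq {\rm Mod}\mbox{-}\mathcal{R}$ and $\langle \mathcal{R}\mbox{-}{\rm Flat}\rangle \subseteq \mathcal{R}\mbox{-}{\rm Mod}$ are elementarily dual; equivalently, their associated abelian categories under Theorem \ref{3cats} are opposite to each other.

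Combining these, Proposition \ref{cohexgendual} identifies the abelian category attached to $\langle {\rm Abs}\mbox{-}\mathcal{R}\rangle$ as $\mathcal{A}(\mathcal{R})^{\rm op}$, so applying the duality the abelian category attached to $\langle \mathcal{R}\mbox{-}{\rm Flat}\rangle$ is $(\mathcal{A}(\mathcal{R})^{\rm op})^{\rm op} = \mathcal{A}(\mathcal{R})$; unwinding the anti-equivalence this reads ${\rm Ex}(\mathcal{A}(\mathcal{R}), {\bf Ab}) \simeq \langle \mathcal{R}\mbox{-}{\rm Flat}\rangle$. For the second equivalence, right coherence of $\mathcal{R}$ gives $\mathcal{A}(\mathcal{R}) = {\rm mod}\mbox{-}\mathcal{R}$ and, dually to the definability of ${\rm Abs}\mbox{-}\mathcal{R}$ in that setting, that $\mathcal{R}\mbox{-}{\rm Flat}$ is itself definable, so $\langle \mathcal{R}\mbox{-}{\rm Flat}\rangle = \mathcal{R}\mbox{-}{\rm Flat}$.

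The main obstacle is verifying that the ${\mathbb A}{\mathbb B}{\mathbb E}{\mathbb X}$ duality indeed matches the ${\rm Abs}/{\rm Flat}$ duality on ${\mathbb D}{\mathbb E}{\mathbb F}$: once one knows how each equivalence in Theorem \ref{3cats} behaves under taking opposites (equivalently, under pp-duality in the model-theoretic picture) this becomes bookkeeping, but it does genuinely draw on the non-trivial elementary duality of \cite{HerzDual}. A more direct alternative would be to realise ${\rm Ex}(\mathcal{A}(\mathcal{R}), {\bf Ab})$ concretely as a category of filtered colimits of representables sitting inside $(\mathcal{R}, {\bf Ab})$, but the duality route is cleaner and aligns with the paper's stated philosophy.
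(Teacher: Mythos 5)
Your proposal is correct and is essentially the paper's own argument: the paper offers no separate proof of this proposition, presenting it simply as ``the dual to the result above'', i.e.\ as obtained from Proposition \ref{cohexgendual} by the opposite-category involution on ${\mathbb A}{\mathbb B}{\mathbb E}{\mathbb X}$ transported to elementary duality on ${\mathbb D}{\mathbb E}{\mathbb F}$. Your identification of the one genuine input --- that $\langle {\rm Abs}\mbox{-}{\mathcal R}\rangle$ and $\langle {\mathcal R}\mbox{-}{\rm Flat}\rangle$ are elementary duals (cf.\ 3.4.24 of \cite{PreNBK}) --- is exactly what makes the bookkeeping go through.
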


In Section \ref{secabex} we identify the finitely presented objects of $ {\mathbb A}{\mathbb B}{\mathbb E}{\mathbb X}$ as the finite type localisations of free abelian categories of finitely presented rings and we show that every small abelian category is a direct limit - ``directed colimit" in the more category-theoretic terminology - of such categories.  Since $ {\mathbb A}{\mathbb B}{\mathbb E}{\mathbb X}$ also has directed colimits in a suitable 2-category sense we could therefore say that $ {\mathbb A}{\mathbb B}{\mathbb E}{\mathbb X}$ is finitely accessible (in some 2-category sense).  We show that $ {\mathbb A}{\mathbb B}{\mathbb E}{\mathbb X}$ has pullbacks and also characterise the monomorphisms (and say a little about the epimorphisms) of this category.

The main result of Section \ref{secdefcats} is that the structure of ${\mathbb D}{\mathbb E}{\mathbb F}$ - arrows and 2-arrows as well as the objects - is essentially determined by the full subcategories of pure-injective objects.  We also show that if ${\mathcal A}$ is skeletally small abelian then ${\rm Ex}({\mathcal A}, {\mathcal D})$ is definable for any  definable Grothendieck (so, \ref{defgroth}, locally finitely presented) category ${\mathcal D}$ (not just when ${\mathcal D}= {\bf Ab}$).

Section \ref{seccoh} is devoted to developing an additive version of things (coherent morphisms, classifying toposes, points) that are familiar in the context of toposes.  The parallel is well-known but we develop it further here as part of the larger additive picture.

\section{The category of small abelian categories and exact functors}\label{secabex}

The category $ {\mathbb A}{\mathbb B}{\mathbb E}{\mathbb X} $, of skeletally small abelian categories and exact functors, belongs to algebra but it has a model-theoretic meaning (its objects are the categories of pp-sorts and pp-definable functions for the corresponding definable categories, see \cite[Chpt.~22]{PreDefAddCat} or \cite[Part III]{PreNBK}, also \cite{PreAxtFlat}).  It can also be seen as generalising the category of rings through the free abelian category construction but also through a possibly more geometric construction (see Section \ref{secrngtoab}).

\vspace{4pt}

Recall that an {\bf exact} functor $  F:{\mathcal A}\rightarrow {\mathcal B} $ between abelian categories is a functor such that if $ 0\rightarrow A'\rightarrow A\rightarrow A''\rightarrow 0 $ is an exact sequence in $ {\mathcal A} $ then $ 0\rightarrow FA'\rightarrow FA\rightarrow FA''\rightarrow 0 $ is an exact sequence in $ {\mathcal B}$.  By the {\bf kernel} of such a functor we mean the full subcategory ${\rm ker}(F)$ on the objects $\{ A\in {\mathcal A}: FA=0\}$ which are sent to $0$ by $F$.  This is a {\bf Serre subcategory} of ${\mathcal A}$, meaning a full subcategory ${\mathcal S}$ of ${\mathcal A}$ which is closed under subobjects, quotient objects and extensions: otherwise said, if $ 0\rightarrow A'\rightarrow A\rightarrow A''\rightarrow 0 $ is exact then $A\in {\mathcal S}$ iff $A', A''\in {\mathcal S}$.  Conversely every Serre subcategory ${\mathcal S}$ is the kernel of an exact functor from ${\mathcal A}$.  A {\bf localisation} of ${\mathcal A}$ is an exact functor $F:{\mathcal A} \rightarrow {\mathcal B}$ which is such that the image of $F$ is full and includes an isomorphic copy of every object in ${\mathcal B}$; that is, up to equivalence ${\mathcal B}$ is the image of $F$.  We define the {\bf quotient category} of ${\mathcal A}$ at the Serre subcategory ${\mathcal S}$ to have the same objects as ${\mathcal A}$ but define the morphisms from $A$ to $B$ in ${\mathcal A}/{\mathcal S}$ to be equivalence classes of morphisms from subobjects $A'$ of $A$, with $A/A'\in {\mathcal S}$, to factor objects $B/B'$ of $B$, with $B'\in {\mathcal S}$, under a natural (eventual agreement) equivalence relation.  See, for instance, \cite[\S IX.1]{Ste}, for details but the idea is simply that one forces the objects in ${\mathcal S}$ to become zero.  If $F:{\mathcal A} \rightarrow {\mathcal B}$ is a localisation then ${\mathcal B}$ is equivalent to ${\mathcal A}/{\rm ker}(F)$ and $F$ has a right adjoint which is a full, though not in general exact, embedding $i$ of ${\mathcal B}$ in ${\mathcal A}$.  Thus the image of $i$ is a reflective subcategory of ${\mathcal A}$.  We write $\langle {\mathcal X} \rangle$ for the Serre subcategory generated by a collection ${\mathcal X}$ of objects of ${\mathcal A}$.  We also write ${\rm Ser}({\mathcal A})$ for the set of Serre subcategories of ${\mathcal A}$.

We will need the following theorem, the first paragraph of which is \cite[4.1]{FreydLJ} (for fuller references see \cite[\S10.2.7]{PreNBK} or \cite[\S4]{PreDefAddCat}).

\begin{theorem}\label{freeabthm} Let ${\mathcal R}$ be a skeletally small preadditive category.  Then there is an additive functor $ i $ from ${\mathcal R}$ to a skeletally small abelian category $ {\rm Ab}({\mathcal R}) $ such that if $ \alpha :{\mathcal R}\rightarrow {\mathcal B} $ is any additive functor to an abelian category ${\mathcal B}$ then there is a factorisation through $ i $ {\it via} a unique-to-natural-equivalence exact functor $ {\rm Ab}({\mathcal R})\rightarrow {\mathcal B}$.

$$\xymatrix{{\mathcal R} \ar[r]^i \ar[rd]_{\alpha} & {\rm Ab}({\mathcal R}) \ar[d]^{\alpha '} \\ & {\mathcal B}}$$

The category $ {\rm Ab}({\mathcal R}) $ may be identified with $ ({\mathcal R}\mbox{-}{\rm mod}, {\bf Ab})^{\rm fp} \simeq \big(({\rm mod}\mbox{-}{\mathcal R},{\bf Ab})^{\rm fp}\big)^{\rm op}$ and the embedding $ i $ takes an object $ P $ of $ {\mathcal R} $ to $((P,-),-)$ and has the then obvious action on morphisms.
\end{theorem}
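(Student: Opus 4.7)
The standard route is the two-stage free construction: first enlarge ${\mathcal R}$ to the category ${\mathcal R}\mbox{-}{\rm mod}$ of finitely presented left modules, which formally adjoins cokernels of morphisms between representables; then enlarge again to $({\mathcal R}\mbox{-}{\rm mod},{\bf Ab})^{\rm fp}$, which formally adjoins kernels.  I would \emph{define} ${\rm Ab}({\mathcal R}) := ({\mathcal R}\mbox{-}{\rm mod},{\bf Ab})^{\rm fp}$ with embedding $i(P) := ((P,-),-)$, the double Yoneda.  The equivalence with $(({\rm mod}\mbox{-}{\mathcal R},{\bf Ab})^{\rm fp})^{\rm op}$ is the Auslander--Gruson--Jensen type duality that swaps a presentation $(P_1,-)\to(P_0,-)\to M\to 0$ in ${\mathcal R}\mbox{-}{\rm mod}$ with $(-,P_0)\to(-,P_1)$ in ${\rm mod}\mbox{-}{\mathcal R}$ and transports this presentation-level swap to a genuine opposite of categories of coherent functors.

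\textbf{Abelianness of ${\rm Ab}({\mathcal R})$.} I would first check that ${\mathcal R}\mbox{-}{\rm mod}$ admits weak kernels: given $f:M\to N$ in ${\mathcal R}\mbox{-}{\rm mod}$, form its kernel in the ambient Grothendieck category ${\mathcal R}\mbox{-}{\rm Mod}$ and cover it by a finitely presented module, using that ${\mathcal R}\mbox{-}{\rm Mod}$ is locally finitely presented with the representables as fp generators.  Auslander's theorem on coherent functors then gives that $({\mathcal R}\mbox{-}{\rm mod},{\bf Ab})^{\rm fp}$ is abelian, with kernels and cokernels computed by manipulation of presentations; skeletal smallness follows from that of ${\mathcal R}\mbox{-}{\rm mod}$.

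\textbf{Universal property.}  Given additive $\alpha:{\mathcal R}\to{\mathcal B}$ with ${\mathcal B}$ abelian, I extend in two stages.  First, by left Kan extension along ${\mathcal R}\hookrightarrow {\mathcal R}\mbox{-}{\rm mod}$, define a right exact $\alpha_1:{\mathcal R}\mbox{-}{\rm mod}\to{\mathcal B}$ by sending $M$ with presentation $(P_1,-)\to(P_0,-)\to M\to 0$ to ${\rm coker}(\alpha(P_1)\to\alpha(P_0))$; well-definedness and functoriality follow from the projectivity of representables in ${\mathcal R}\mbox{-}{\rm mod}$ together with the Five Lemma.  Second, extend $\alpha_1$ to $\alpha':{\rm Ab}({\mathcal R})\to{\mathcal B}$ by the dual procedure applied to the coherent presentation of each object of ${\rm Ab}({\mathcal R})$: any such object admits, after refining the entries to projective resolutions, a double presentation by representables of ${\mathcal R}$, and $\alpha'$ sends it to the corresponding iterated (co)kernel of $\alpha$-values in ${\mathcal B}$.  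Uniqueness up to natural equivalence is then forced: $\alpha'$ is pinned down on representables by $\alpha$ itself, and on a general object by exactness applied to its presentation.

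\textbf{Main obstacle.} The delicate point is verifying that $\alpha'$ is \emph{fully exact}, not merely half-exact.  This requires showing that every short exact sequence in ${\rm Ab}({\mathcal R})$ can be realized by compatible diagrams of presentations and that the two-step extension carries such diagrams to short exact sequences in ${\mathcal B}$.  The argument is a diagram chase combining the projectivity of representables in ${\mathcal R}\mbox{-}{\rm mod}$ with the projectivity of the objects $((M,-),-)$ in ${\rm Ab}({\mathcal R})$, together with two iterated applications of the Five Lemma, one at each stage of the extension.  The explicit description of $i$ as the double Yoneda $P\mapsto((P,-),-)$ with the obvious action on morphisms then drops out of the construction, and skeletal smallness of ${\rm Ab}({\mathcal R})$ follows from that of ${\mathcal R}\mbox{-}{\rm mod}$.
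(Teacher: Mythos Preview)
The paper does not actually prove this theorem: it is stated with attribution to Freyd \cite[4.1]{FreydLJ} and pointers to fuller treatments in \cite[\S10.2.7]{PreNBK} and \cite[\S4]{PreDefAddCat}, so there is no in-paper proof to compare against. Your outline is the standard Freyd--Adelman construction and is correct in its essentials: build ${\rm Ab}({\mathcal R})$ as the category of coherent functors on ${\mathcal R}\mbox{-}{\rm mod}$, invoke Auslander's theorem (weak kernels in ${\mathcal R}\mbox{-}{\rm mod}$ imply $({\mathcal R}\mbox{-}{\rm mod},{\bf Ab})^{\rm fp}$ is abelian), and extend $\alpha$ in two stages using presentations.

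One point to tighten: your bookkeeping of variance and of which stage adjoins which half-exactness is slightly off. The Yoneda embedding $P\mapsto (P,-)$ from ${\mathcal R}$ to ${\mathcal R}\mbox{-}{\rm mod}$ is \emph{contravariant}, and the second Yoneda $M\mapsto (M,-)$ from ${\mathcal R}\mbox{-}{\rm mod}$ to $({\mathcal R}\mbox{-}{\rm mod},{\bf Ab})^{\rm fp}$ is again contravariant, so their composite $P\mapsto ((P,-),-)$ is covariant as claimed. But then a presentation $(P_1,-)\to (P_0,-)\to M\to 0$ in ${\mathcal R}\mbox{-}{\rm mod}$ arises from a morphism $P_0\to P_1$ in ${\mathcal R}$, and your formula ${\rm coker}(\alpha(P_1)\to\alpha(P_0))$ has the arrow reversed. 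The cleanest fix is to run the first stage through ${\rm mod}\mbox{-}{\mathcal R}$ via the \emph{covariant} Yoneda $P\mapsto (-,P)$ (freely adjoining cokernels), then dualise, then repeat; this is exactly how one sees the identification with $\big(({\rm mod}\mbox{-}{\mathcal R},{\bf Ab})^{\rm fp}\big)^{\rm op}$ directly. Once the variance is straightened out, your argument for exactness of $\alpha'$ via iterated projectivity of representables and the Five Lemma goes through.
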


\begin{cor}\label{freeabdual} If ${\mathcal R}$ is a skeletally small preadditive category then $ {\rm Ab}({\mathcal R}^{\rm op}) \simeq {\rm Ab}({\mathcal R})^{\rm op}$.
\end{cor}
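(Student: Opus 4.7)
The plan is to deduce the equivalence from the universal property in Theorem \ref{freeabthm} by showing that $\mathrm{Ab}(\mathcal{R})^{\mathrm{op}}$ is a solution to the universal problem that characterises $\mathrm{Ab}(\mathcal{R}^{\mathrm{op}})$; any two such solutions are equivalent in a canonical way.

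First I would record the basic fact that if $\mathcal{A}$ is abelian then so is $\mathcal{A}^{\mathrm{op}}$, and that a functor $F:\mathcal{A}\rightarrow \mathcal{B}$ between abelian categories is exact iff $F^{\mathrm{op}}:\mathcal{A}^{\mathrm{op}}\rightarrow \mathcal{B}^{\mathrm{op}}$ is exact (a short exact sequence in $\mathcal{A}^{\mathrm{op}}$ is precisely the reverse of one in $\mathcal{A}$, and exactness is preserved in both directions simultaneously). Combined with Theorem \ref{freeabthm}, the embedding $i:\mathcal{R}\rightarrow \mathrm{Ab}(\mathcal{R})$ yields an additive functor $i^{\mathrm{op}}:\mathcal{R}^{\mathrm{op}}\rightarrow \mathrm{Ab}(\mathcal{R})^{\mathrm{op}}$ into a skeletally small abelian category, which is the candidate for the universal arrow out of $\mathcal{R}^{\mathrm{op}}$.

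Next, given any additive functor $\beta :\mathcal{R}^{\mathrm{op}}\rightarrow \mathcal{B}$ with $\mathcal{B}$ abelian, I would view it as $\beta^{\mathrm{op}}:\mathcal{R}\rightarrow \mathcal{B}^{\mathrm{op}}$. Applying Theorem \ref{freeabthm} to $\mathcal{R}$ and the abelian category $\mathcal{B}^{\mathrm{op}}$ gives a unique-to-natural-equivalence exact functor $\gamma:\mathrm{Ab}(\mathcal{R})\rightarrow \mathcal{B}^{\mathrm{op}}$ with $\gamma \circ i$ naturally equivalent to $\beta^{\mathrm{op}}$. Dualising, $\gamma^{\mathrm{op}}:\mathrm{Ab}(\mathcal{R})^{\mathrm{op}}\rightarrow \mathcal{B}$ is exact and satisfies $\gamma^{\mathrm{op}}\circ i^{\mathrm{op}}\simeq \beta$, with uniqueness up to natural equivalence inherited from the uniqueness of $\gamma$. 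Hence $(\mathrm{Ab}(\mathcal{R})^{\mathrm{op}}, i^{\mathrm{op}})$ enjoys the universal property defining $\mathrm{Ab}(\mathcal{R}^{\mathrm{op}})$, and so $\mathrm{Ab}(\mathcal{R}^{\mathrm{op}})\simeq \mathrm{Ab}(\mathcal{R})^{\mathrm{op}}$.

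There isn't really a hard step here; the main thing to keep track of is the direction-reversal of all arrows and the interplay of exactness with taking opposites, neither of which causes trouble. As a sanity check one can also derive the equivalence directly from the explicit description in Theorem \ref{freeabthm}: $\mathrm{Ab}(\mathcal{R}^{\mathrm{op}})\simeq \big((\mathrm{mod}\mbox{-}\mathcal{R}^{\mathrm{op}},\mathbf{Ab})^{\mathrm{fp}}\big)^{\mathrm{op}}=\big((\mathcal{R}\mbox{-}\mathrm{mod},\mathbf{Ab})^{\mathrm{fp}}\big)^{\mathrm{op}}\simeq \mathrm{Ab}(\mathcal{R})^{\mathrm{op}}$, which agrees with the universal-property argument.
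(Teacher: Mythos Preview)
Your proof is correct and is precisely the argument the paper has in mind: the corollary is stated immediately after Theorem~\ref{freeabthm} with no proof given, so the intended derivation is exactly the universal-property (or explicit-description) argument you supply. Both of your routes are valid fillings-in of what the paper leaves implicit.
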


The category ${\rm Ab}({\mathcal R})$, more precisely the functor ${\mathcal R} \rightarrow {\rm Ab}({\mathcal R})$, given by \ref{freeabthm} is the {\bf free abelian category} on ${\mathcal R}$.  In the case that we start with a ring $R$, that is a preadditive category with one object $\ast_R$ which has endomorphism ring $R$, the image of that object in ${\rm Ab}(R)$ is the representable functor $((\ast_R,-),-)$ on the representable functor $(\ast_R,-) \in R\mbox{-}{\rm mod}$ (this latter being the projective left module $_RR$).

Note that, taking ${\mathcal B} ={\bf Ab}$ and allowing $\alpha$ to roam over all (covariant) functors, i.e.~left ${\mathcal R}$-modules, we obtain that $ {\mathcal R}\mbox{-}{\rm Mod}$ is equivalent to $ {\rm Ex}({\rm Ab}({\mathcal R}),{\bf Ab}) $.  Replacing ${\mathcal R}$ by ${\mathcal R}^{\rm op}$ to get the contravariant form, and noting \ref{freeabdual}, we have ${\rm Mod}\mbox{-}{\mathcal R} \simeq {\rm Ex}({\rm Ab}({\mathcal R})^{\rm op}, {\bf Ab})$.

If a ring $R$ is von Neumann regular then ${\rm Ab}(R)\simeq {\rm mod}\mbox{-}R$, indeed these are exactly the rings for which this is true (see, e.g., \cite[10.2.38]{PreNBK} (where the statement is missing an $^{\rm op}$)).

\subsection{Categorical properties of $ {\mathbb A}{\mathbb B}{\mathbb E}{\mathbb X}$}\label{seccatabex}

If $ {\mathcal A}, {\mathcal B}\in {\mathbb A}{\mathbb B}{\mathbb E}{\mathbb X}$ then $ {\rm Ex}({\mathcal A},{\mathcal B}) $ is a skeletally small category with objects the exact functors from $ {\mathcal A} $ to $ {\mathcal B} $ and with morphisms the natural transformations between these.  It is a preadditive category:  for $ F,G\in {\rm Ex}({\mathcal A},{\mathcal B}) $ the set $ (F,G)={\rm Nat}(F,G) $ of natural transformations from $ F $ to $ G $ is an abelian group with $ 0\in {\rm Nat}(F,G) $ being given by $ 0_A=0:FA\rightarrow GA $ for each object $ A\in {\mathcal A} $ and, if $ \tau ,\mu \in {\rm Nat}(F,G) $ then $ \tau -\mu  $ is defined at $ A\in {\mathcal A} $ by $ (\tau -\mu )_A=\tau _A-\mu _A:FA\rightarrow GA$. Indeed, $ {\rm Ex}({\mathcal A},{\mathcal B}) $ is additive: for a zero object, choose a zero object $ 0_{{\mathcal B}}\in {\mathcal B} $ and define the functor $ 0:{\mathcal A}\rightarrow {\mathcal B} $ by $ A\mapsto 0_{{\mathcal B}} $ for all $A\in {\mathcal A}$. And if $ F,G\in {\rm Ex}({\mathcal A},{\mathcal B}) $ then we may, since $ {\mathcal B} $ has finite direct sums, define $ F\oplus G $ by taking $ A\in {\mathcal A} $ to $ FA\oplus GA $ and $ f:A\rightarrow A' $ to $ (Ff,Gf):FA\oplus GA\rightarrow FA'\oplus GA'$.  Thus $ {\mathbb A}{\mathbb B}{\mathbb E}{\mathbb X}$ may be seen as a category enriched in additive categories.

\begin{example}\label{notab} $({\mathcal A},{\mathcal B})$ need not be abelian.

Let $ R $ be right coherent, so $ {\rm mod}\mbox{-}R\in {\mathbb A}{\mathbb B}{\mathbb E}{\mathbb X} $ and $ {\rm Ex}({\rm mod}\mbox{-}R,{\bf Ab})\simeq R\mbox{-}{\rm Flat}$, the category of flat left $R$-modules (see \ref{cohexgen}). In particular $ {\rm Ex}({\rm mod}\mbox{-}{\mathbb Z},{\bf Ab})\simeq {\mathbb Z}\mbox{-}{\rm Flat} $ ($\ast $) and, we claim, $ {\rm Ex}({\rm mod}\mbox{-}{\mathbb Z},{\rm mod}\mbox{-}{\mathbb Z})\simeq {\mathbb Z}\mbox{-}{\rm Flat}\cap {\mathbb Z}\mbox{-}{\rm mod}={\mathbb Z}\mbox{-}{\rm proj} $, the category of finitely generated projective ${\mathbb Z}$-modules, which is not abelian. To see the claim we just use that in the equivalence ($\ast $) a flat module $M$ on the right-hand side of the equivalence acts on ${\rm mod}\mbox{-}{\mathbb Z}$ as the (exact) functor $-\otimes M$; it is clear that if such a functor outputs only finitely presented values then $M$ is finitely presented, and conversely.
\end{example}

Bearing in mind that $ {\mathbb A}{\mathbb B}{\mathbb E}{\mathbb X} $ is a 2-category, so equality is generally replaced by natural equivalence, we will say that an exact functor $ F:{\mathcal A}\rightarrow {\mathcal B} $ in $ {\mathbb A}{\mathbb B}{\mathbb E}{\mathbb X} $ is a {\bf monomorphism} if for all $ G,H:{\mathcal A}'\rightarrow {\mathcal A} $ if $ \tau :FG\rightarrow FH $ is a natural isomorphism then there is a natural isomorphism $ \eta:G\rightarrow H $ such that $ \tau =F\eta$.

\begin{lemma}\label{morfromZ} For every $ {\mathcal A}\in {\mathbb A}{\mathbb B}{\mathbb E}{\mathbb X} $ and $ A\in {\mathcal A} $ there is a morphism $ {\rm Ab}({\mathbb Z})\rightarrow {\mathcal A} $ in $ {\mathbb A}{\mathbb B}{\mathbb E}{\mathbb X}$ such that the single object, $((\ast_{\mathbb Z},-),-)$, of the image of $ {\mathbb Z} $ in ${\rm Ab}({\mathbb Z})$ (\ref{freeabthm}) is taken to $ A $ (and hence the ring, $ {\mathbb Z} $, of endomorphisms of $ \ast_{\mathbb Z}  $ is taken to its natural image in $ {\rm End}(A)$).
\end{lemma}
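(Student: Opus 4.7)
The plan is to apply the universal property of the free abelian category (Theorem \ref{freeabthm}) to an appropriately chosen additive functor out of $\mathbb{Z}$ viewed as a one-object preadditive category.

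First I would fix the one-object preadditive category ${\mathcal R}=\mathbb{Z}$, with unique object $\ast_{\mathbb{Z}}$ and endomorphism ring ${\rm End}(\ast_{\mathbb{Z}})=\mathbb{Z}$. To build an additive functor $\alpha:\mathbb{Z}\rightarrow {\mathcal A}$ it suffices to specify where the single object goes and to give a ring homomorphism $\mathbb{Z}\rightarrow {\rm End}_{{\mathcal A}}(\alpha(\ast_{\mathbb{Z}}))$ that respects the abelian group structure on hom-sets. I would set $\alpha(\ast_{\mathbb{Z}})=A$; the map on endomorphisms is then forced to be the unique (initial) ring homomorphism $\mathbb{Z}\rightarrow {\rm End}_{{\mathcal A}}(A)$ sending $1\mapsto {\rm id}_A$, that is, $n\mapsto n\cdot {\rm id}_A$, where $n\cdot {\rm id}_A$ is defined using that $({\rm End}_{{\mathcal A}}(A),+)$ is an abelian group. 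Since ${\mathcal A}$ is preadditive this map is automatically a ring homomorphism and additive on hom-sets, so $\alpha$ is an additive functor.

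Next I would invoke Theorem \ref{freeabthm} with ${\mathcal R}=\mathbb{Z}$ and ${\mathcal B}={\mathcal A}$: since ${\mathcal A}$ is (skeletally small) abelian, the additive functor $\alpha:\mathbb{Z}\rightarrow {\mathcal A}$ factors as $\alpha=\alpha'\circ i$ for a unique-up-to-natural-equivalence exact functor $\alpha':{\rm Ab}(\mathbb{Z})\rightarrow {\mathcal A}$, where $i:\mathbb{Z}\rightarrow {\rm Ab}(\mathbb{Z})$ is the canonical embedding. By the explicit description in \ref{freeabthm}, $i(\ast_{\mathbb{Z}})=((\ast_{\mathbb{Z}},-),-)$, so $\alpha'((\ast_{\mathbb{Z}},-),-)=\alpha(\ast_{\mathbb{Z}})=A$. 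Moreover, the action of $\alpha'$ on the endomorphism ring of $((\ast_{\mathbb{Z}},-),-)$ agrees with the action of $\alpha$ on the endomorphism ring of $\ast_{\mathbb{Z}}$, which by construction is the canonical map $\mathbb{Z}\rightarrow {\rm End}_{{\mathcal A}}(A)$.

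There is really no obstacle: the lemma is essentially immediate once one has the free abelian category construction and the observation that every object in a preadditive category carries a canonical $\mathbb{Z}$-algebra structure on its endomorphism ring. The only thing worth noting is that the resulting morphism ${\rm Ab}(\mathbb{Z})\rightarrow {\mathcal A}$ in $\mathbb{ABEX}$ is unique only up to natural equivalence, but that is precisely the sense of equality appropriate to the $2$-category $\mathbb{ABEX}$.
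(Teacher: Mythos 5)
Your proof is correct and follows exactly the paper's argument: define the additive functor $\mathbb{Z}\rightarrow {\mathcal A}$ by sending $\ast_{\mathbb Z}$ to $A$ and $1\mapsto {\rm id}_A$ (forcing $n\mapsto n\cdot{\rm id}_A$), then apply the universal property of \ref{freeabthm} to obtain the exact functor ${\rm Ab}({\mathbb Z})\rightarrow{\mathcal A}$, unique up to natural equivalence. Your write-up simply spells out a little more explicitly the points the paper leaves implicit.
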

\begin{proof} Define the functor by taking $ \ast =\ast_{\mathbb Z} $ to $ A $ (and extending $1_{\mathbb Z} ={\rm id}_\ast \mapsto {\rm id}_A$ to a ring homomorphism) and then we have an extension to an exact functor (unique to natural equivalence) $ {\rm Ab}({\mathbb Z})\rightarrow {\mathcal A} $ by \ref{freeabthm}.
\end{proof}

\begin{lemma}\label{abexmono} If $ F:{\mathcal A}\rightarrow {\mathcal B} $ is a monomorphism in $ {\mathbb A}{\mathbb B}{\mathbb E}{\mathbb X}$ then $ F $ is full on isomorphisms in the strong sense that if $g:FA_1 \rightarrow FA_2$ is an isomorphism then there is an isomorphism $f:A_1\rightarrow A_2$ such that $Ff=g$.  This second condition on $F$ is equivalent to $F$ being faithful and every isomorphism $g:FA_1 \rightarrow FA_2$ being the image of some morphism $A_1\rightarrow A_2$.
\end{lemma}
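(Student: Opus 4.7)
The overall plan is to tackle the two claims in turn. For the implication that a monomorphism $F$ is strong-full-on-isomorphisms, given an iso $g:FA_1\to FA_2$, I will transport it into a natural isomorphism between exact functors out of ${\rm Ab}(\mathbb{Z})$ so that the monomorphism hypothesis can be applied. First, pick exact functors $G,H:{\rm Ab}(\mathbb{Z})\to\mathcal{A}$ with $G(\ast_{\mathbb{Z}})=A_1$ and $H(\ast_{\mathbb{Z}})=A_2$ via \ref{morfromZ}. To manufacture a natural iso $\tau:FG\to FH$ with $\tau_{\ast_{\mathbb{Z}}}=g$, I apply the universal property (\ref{freeabthm}) not to $\mathcal{B}$ but to the (abelian) arrow category $\mathcal{B}^{\rightarrow}$: the object $g\in\mathcal{B}^{\rightarrow}$ extends to an exact functor $\tilde g:{\rm Ab}(\mathbb{Z})\to\mathcal{B}^{\rightarrow}$, whose source and target projections to $\mathcal{B}$ are naturally equivalent to $FG$ and $FH$ by the uniqueness clause of \ref{freeabthm}. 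Thus $\tilde g$ is (essentially) a natural transformation $FG\to FH$ of the required form.

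This $\tau$ is actually a natural iso because the subclass of iso-arrows in $\mathcal{B}^{\rightarrow}$ is closed under kernels, cokernels, direct sums and extensions (the last via the five lemma), while every object of ${\rm Ab}(\mathbb{Z})$ is built from $\ast_{\mathbb{Z}}$ by these exact operations. The monomorphism hypothesis on $F$ then supplies a natural iso $\eta:G\to H$ with $F\eta=\tau$, and $f:=\eta_{\ast_{\mathbb{Z}}}:A_1\to A_2$ gives the required iso lifting $g$. For the equivalence with ``$F$ faithful and every iso $g$ equal to some $Ff$,'' the reverse direction is essentially formal: given $g=Ff$ and $g^{-1}=Fh$, faithfulness promotes the identities $F(hf)=F(1)$ and $F(fh)=F(1)$ to $hf=1$ and $fh=1$, so $f$ is an iso.

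For the forward direction the second clause (every iso being $Ff$ for some $f$) is trivially built into the strong-lifting condition, so only faithfulness requires work. The key is that strong iso-lifting is conservative on objects: if $FX=0$, then the zero morphism $0:FX\to F0$ is an iso between zero objects and therefore lifts to an iso $X\to 0$ in $\mathcal{A}$, forcing $X=0$. Faithfulness then follows from exactness of $F$: if $Ff=0$, the image satisfies $F({\rm Im}\, f)={\rm Im}(Ff)=0$, so ${\rm Im}\, f=0$ and $f=0$. The main obstacle is the production of $\tau$ in the first part: the arrow-category trick is what turns an abstract ``iso in $\mathcal{B}$'' into a natural isomorphism to which the monomorphism property actually applies, and once $\tau$ is in hand the remaining steps reduce to a short five-lemma check together with the object-reflection argument just given.
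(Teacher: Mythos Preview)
Your proof is correct and follows essentially the same route as the paper: probe $F$ with exact functors $G,H:{\rm Ab}(\mathbb Z)\to\mathcal A$ hitting $A_1,A_2$, promote $g$ to a natural isomorphism $\tau:FG\to FH$, and invoke the monomorphism hypothesis. The paper simply asserts the existence of $\tau$ with $\tau_\ast=g$ by citing Theorem~\ref{freeabthm}, implicitly using its 2-categorical content (that ${\rm Ex}({\rm Ab}(\mathbb Z),\mathcal B)\to\mathcal B$ is an equivalence of categories, not merely essentially surjective). Your arrow-category argument is a concrete way to extract that same 2-categorical statement from the 1-categorical universal property applied to $\mathcal B^{\to}$, together with the five-lemma check that the iso-locus is closed under exact operations; this is more detailed but not a genuinely different idea. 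For the second claim your argument and the paper's coincide almost verbatim: strong iso-lifting forces trivial object-kernel (lift $0:FX\to F0$), hence faithfulness by exactness, and conversely faithfulness plus exactness upgrades any preimage of an iso to an iso.
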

\begin{proof} Suppose that $ A_1,A_2\in {\mathcal A} $ and that $ g:FA_1\rightarrow FA_2 $ is an isomorphism in $ {\mathcal B}$.  Let $ G_i:{\rm Ab}({\mathbb Z})\rightarrow {\mathcal A} $ be as in the lemma above, taking $ \ast  =\ast_{\mathbb Z}$ to $ A_i$; then, by \ref{freeabthm}, we deduce that there is a natural isomorphism $ \tau :FG_1\rightarrow FG_2 $ essentially determined by $ \tau _\ast =g $ so, by assumption, there is a natural isomorphism $ \eta:G_1\rightarrow G_2$, in particular an isomorphism $ \eta_\ast :A_1\rightarrow A_2 $, such that $F\eta_\ast =g$, as required.

For the equivalent condition, recall that a functor $F$ is {\bf faithful} if whenever $f,f':A_1\rightarrow A_2$ are in its domain and $Ff=Ff'$ then $f=f'$.  In the case of an additive functor, one replaces the pair $f,f'$ by $f-f',0$ and the condition becomes $Ff=0$ implies $f=0$.  In the case that $F$ is an exact functor between abelian categories then, since $F{\rm (co)ker}(f) ={\rm (co)ker}(Ff)$, one deduces that $F$ is faithful iff the object kernel is zero, that is iff $FA=0$ implies $A=0$.  So if $F$ {\bf reflects isomorphism}, meaning that $FA_1\simeq FA_2$ implies $A_1\simeq A_2$ then $F$ is faithful.  For the converse, if $g:FA_1\rightarrow FA_2$ is an isomorphism then, by assumption, there is a morphism $f:A_1\rightarrow A_2$ with $g=Ff$.  If either ${\rm ker}(f)$ or ${\rm coker}(f)$ were non-zero then, by exactness and faithfulness of $F$, $Ff$ would have a non-zero kernel or cokernel, contradiction.  So $f$ is an isomorphism.
\end{proof}

\begin{prop}\label{fulliso} The following are equivalent for a morphism $ F:{\mathcal A}\rightarrow {\mathcal B} $ in $ {\mathbb A}{\mathbb B}{\mathbb E}{\mathbb X} $:

\noindent (i) $F$ is monic in $ {\mathbb A}{\mathbb B}{\mathbb E}{\mathbb X} $;

\noindent (ii) $F$ is full on isomorphisms in the strong sense of \ref{abexmono}

\noindent (iii) $F$ is faithful and full on isomorphisms.
\end{prop}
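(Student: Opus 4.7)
The previous lemma \ref{abexmono} already establishes (i)$\Rightarrow$(ii) and the equivalence (ii)$\Leftrightarrow$(iii), so the only implication remaining is (ii)$\Rightarrow$(i) (equivalently (iii)$\Rightarrow$(i)). The plan is to prove this directly by constructing, from a given natural isomorphism $\tau:FG \rightarrow FH$, an object-wise preimage using (ii) and then to check naturality using faithfulness (which (ii) supplies via \ref{abexmono}).

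In detail: suppose $F$ satisfies (ii), let $G,H:{\mathcal A}'\rightarrow {\mathcal A}$ be exact functors, and suppose $\tau:FG\rightarrow FH$ is a natural isomorphism. For each object $A'\in {\mathcal A}'$, the component $\tau_{A'}:FGA'\rightarrow FHA'$ is an isomorphism in ${\mathcal B}$, so by (ii) there is an isomorphism $\eta_{A'}:GA'\rightarrow HA'$ with $F\eta_{A'}=\tau_{A'}$. By the equivalence (ii)$\Leftrightarrow$(iii) supplied by \ref{abexmono}, $F$ is faithful, so this $\eta_{A'}$ is unique.

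To finish, I would verify that $A'\mapsto \eta_{A'}$ is natural. Given $f:A_1'\rightarrow A_2'$ in ${\mathcal A}'$, apply $F$ to both sides of the candidate naturality square: one computes
$$F(\eta_{A_2'}\circ Gf)=\tau_{A_2'}\circ FGf=FHf\circ \tau_{A_1'}=F(Hf\circ \eta_{A_1'}),$$
where the middle equality is naturality of $\tau$. Faithfulness of $F$ then gives $\eta_{A_2'}\circ Gf=Hf\circ \eta_{A_1'}$, so $\eta:G\rightarrow H$ is a natural transformation, necessarily a natural isomorphism since each $\eta_{A'}$ is an isomorphism, and $F\eta=\tau$ by construction. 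This is exactly the condition that $F$ is monic in ${\mathbb A}{\mathbb B}{\mathbb E}{\mathbb X}$.

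There is no serious obstacle here, because all the substance is already in \ref{abexmono}: that lemma isolates the delicate half (producing a morphism from an isomorphism between images, and showing faithfulness is implied), while the present proposition just needs object-wise application of (ii) and a faithfulness argument for naturality. The only thing to keep in mind is that the definition of monomorphism in this 2-category is phrased up to natural isomorphism (not equality), which is precisely why (ii) is formulated as full on isomorphisms rather than full tout court.
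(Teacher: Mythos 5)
Your proposal is correct and follows essentially the same route as the paper: the paper likewise reduces everything to (ii)$\Rightarrow$(i), chooses the objectwise preimages $\eta_{A'}$ via the strong fullness on isomorphisms, and verifies naturality by applying $F$ to the naturality square and invoking faithfulness (phrased there as the vanishing of the arrow-kernel of $F$, established in the proof of \ref{abexmono}). No gaps.
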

\begin{proof} (i)$\Leftrightarrow$(ii) One direction is the lemma above. For the other suppose that $ F $ is full on isomorphisms in the strong sense and that we have $ G,H:{\mathcal A}'\rightarrow {\mathcal A} $ and a natural equivalence $ \tau :FG\rightarrow FH$. So for each $ A'\in {\mathcal A}' $ we have an isomorphism $ \tau _{A'}:FGA'\rightarrow FHA'$; by assumption there is an isomorphism which we will denote $ \eta_{A'}:GA'\rightarrow HA' $ such that $ F\eta_{A'}=\tau _{A'}$. We have to show that the $ \eta_{A'} $ fit together to form a natural transformation from $ G $ to $ H$.

Given $ f:A'\rightarrow A'' $ in $  {\mathcal A} $ we consider the diagram

$\xymatrix{GA' \ar[r]^{\eta_{A'}} \ar[d]_{Gf} & HA' \ar[d]^{Hf} \\ GA'' \ar[r]_{\eta_{A''}} & HA''}$

\noindent and the difference $ Hf.\eta_{A'}-\eta_{A''}.Gf$; apply $ F $ to obtain the diagram

$\xymatrix{FGA' \ar[r]^{F\eta_{A'}=\tau_{A'}} \ar[d]_{GGf} & FHA' \ar[d]^{FHf} \\ FGA'' \ar[r]_{F\eta_{A''}=\tau_{A''}} & FHA''}$

\noindent which commutes, that is $ F(Hf.\eta_{A'}-\eta_{A''}.Gf)=0$, so it will suffice to show that the kernel of $ F $ in the sense of arrows is zero.  But we have this from (the proof of) \ref{abexmono}, which also gives us the equivalence of (ii) and (iii).
\end{proof}

So a monomorphism in $ {\mathbb A}{\mathbb B}{\mathbb E}{\mathbb X} $ is, in particular, an embedding of an abelian (i.e.~exact) subcategory and any such embedding which is full (e.g.~the embedding of the category of finite abelian groups in the category of finitely generated ones) is a monomorphism.

\begin{example}\label{mononotker} A monomorphism in $ {\mathbb A}{\mathbb B}{\mathbb E}{\mathbb X} $ need not be a kernel.

Let $ R $ be a tame hereditary finite-dimensional algebra, set $ {\mathcal B}={\rm mod}\mbox{-}R $ and let $ {\mathcal A} $ be the full, abelian, subcategory of regular modules. Then $ {\mathcal A}\rightarrow {\mathcal B} $ is a morphism in $ {\mathbb A}{\mathbb B}{\mathbb E}{\mathbb X} $ and is a full embedding so certainly is a monomorphism in $ {\mathbb A}{\mathbb B}{\mathbb E}{\mathbb X}$. If it were the kernel of an exact functor then ${\mathcal A}$ would be a Serre subcategory - not so since $ R_R $ embeds in a regular module and so the Serre subcategory generated by $ {\mathcal A} $ is all of $ {\mathcal B}$.
\end{example}

\begin{example}\label{abexmononotfull} A monomorphism in $ {\mathbb A}{\mathbb B}{\mathbb E}{\mathbb X} $ need not be full.  Consider the non-full functor $F:{\rm mod}\mbox{-}k\rightarrow {\rm mod}\mbox{-}kA_2$, where $A_2$ is the quiver $\bullet \rightarrow \bullet$, which takes $k_k$ to the $kA_2$-module ${kA_2}$.  If  $P, Q$ are in the image of $F$, that is, are finitely generated free $kA_2$-modules, then it is easily computed that any isomorphism $P\rightarrow Q$ is in the image of $F$.  So, by \ref{fulliso}, $F$ is a monomorphism in $ {\mathbb A}{\mathbb B}{\mathbb E}{\mathbb X} $ but clearly $F$ is not full.
\end{example}

\begin{example}\label{fullisonotmono} Consider the functor $ {\rm mod}\mbox{-}kA_2\rightarrow {\rm mod}\mbox{-}k $ which takes a representation $ V_1\xrightarrow{\alpha _V}V_2 $ of the quiver $A_2$ to $ V_1\oplus V_2 $ and which has the obvious action on morphisms (i.e.~takes $ (f_1,f_2) $ to $ f_1\oplus f_2$). This functor is clearly exact and faithful but is not full on isomorphisms hence is not a monomorphism in $ {\mathbb A}{\mathbb B}{\mathbb E}{\mathbb X}$.
\end{example}

Say that $ F:{\mathcal A}\rightarrow {\mathcal B} $ in $ {\mathbb A}{\mathbb B}{\mathbb E}{\mathbb X}$ is an {\bf epimorphism} if whenever $ G,H:{\mathcal B}\rightarrow {\mathcal C}$ are such that there is a natural equivalence $ \tau :GF\rightarrow HF $ then there is a natural equivalence $ \mu :G\rightarrow H $ such that $ \mu F=\tau $.

\begin{example}\label{defnotepi} A morphism $ F:{\mathcal A}\rightarrow {\mathcal B} $ in $ {\mathbb A}{\mathbb B}{\mathbb E}{\mathbb X} $ with $ \langle F{\mathcal A}\rangle ={\mathcal B} $ need not be an epimorphism.

Let $ F:{\rm mod}\mbox{-}(k\times k) \rightarrow {\rm mod}\mbox{-}kA_2 $ be the functor which takes $ (V_1,V_2) $ to the representation $ V_1\xrightarrow{0}V_2 $ of $ A_2$ and consider the functors $ {\rm Id},G:{\rm mod}\mbox{-}k[A_2]\rightarrow {\rm mod}\mbox{-}kA_2 $ where $ G $ takes a representation $ W_1\xrightarrow{\alpha }W_2 $ to $ W_1\xrightarrow{0}W_2 $.  Both $F$ and $G$ have the obvious definitions on morphisms. It is easily checked that these functors are exact and clearly $ GF={\rm Id}.F $ but there is no natural equivalence $ {\rm Id}\rightarrow G$, so $ F $ is not an epimorphism.

On the other hand, each of the two simple representations $ k\rightarrow 0 $ and $ 0\rightarrow k $ of $A_2$ is in the image of $ F $ so, since every object of ${\rm mod}\mbox{-}kA_2$ has finite length, the Serre subcategory of $ {\rm mod}\mbox{-}k[A_2] $ generated by the image of $ F $ is all of the category.
\end{example}

We do not have a characterisation of epimorphisms in $ {\mathbb A}{\mathbb B}{\mathbb E}{\mathbb X} $, but note the following.

\begin{prop}\label{episindef} (1) If $ {\mathcal A}\in {\mathbb A}{\mathbb B}{\mathbb E}{\mathbb X}$ and $ {\mathcal S}\in {\rm Ser}({\mathcal A})$ then the canonical localisation functor $ \pi :{\mathcal A}\rightarrow {\mathcal A}/{\mathcal S} $ is an epimorphism in $  {\mathbb A}{\mathbb B}{\mathbb E}{\mathbb X}$.

\noindent (2) If $ F:{\mathcal A}\rightarrow {\mathcal B} $ is an epimorphism in $ {\mathbb A}{\mathbb B}{\mathbb E}{\mathbb X}$ then the Serre subcategory, $ \langle F{\mathcal A}\rangle$, generated by $F{\mathcal A}$ is ${\mathcal B}$.
\end{prop}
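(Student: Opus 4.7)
The plan for part (1) is to exploit the fact that the localisation functor $\pi:{\mathcal A}\to{\mathcal A}/{\mathcal S}$ is the identity on objects. Given exact $G,H:{\mathcal A}/{\mathcal S}\to{\mathcal C}$ and a natural equivalence $\tau:G\pi\to H\pi$, I would define $\mu_X:=\tau_X$ at each object $X$ of ${\mathcal A}/{\mathcal S}$; then $\mu\pi=\tau$ on the nose. What remains is to verify that $\mu$ is natural with respect to morphisms of ${\mathcal A}/{\mathcal S}$. Using the description of such morphisms as equivalence classes of arrows $A'\to B/B'$ with $A/A'\in{\mathcal S}$ and $B'\in{\mathcal S}$, a general morphism $\varphi:\pi A\to\pi B$ in ${\mathcal A}/{\mathcal S}$ factors as $\pi(q)^{-1}\,\pi(g)\,\pi(j)^{-1}$, where $g:A'\to B/B'$, $j:A'\hookrightarrow A$, $q:B\twoheadrightarrow B/B'$ are morphisms in ${\mathcal A}$ and $\pi(j),\pi(q)$ are isomorphisms (their kernels/cokernels lie in ${\mathcal S}$). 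Naturality of $\mu$ at $\pi(g)$ is literally naturality of $\tau$ at $g$; naturality at an iso of the form $\pi(s)^{-1}$ follows by inverting the naturality square at $s$, using that the exact functors $G,H$ preserve isomorphisms so that $G\pi(s),H\pi(s)$ are invertible; and naturality is closed under composition, giving the result.

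For part (2), my plan is by contrapositive. Let ${\mathcal S}:=\langle F{\mathcal A}\rangle$ and suppose ${\mathcal S}\neq{\mathcal B}$. Take ${\mathcal C}:={\mathcal B}/{\mathcal S}$ and let $G:{\mathcal B}\to{\mathcal C}$ be the constant zero functor (exact, since it sends every short exact sequence to $0\to 0\to 0\to 0\to 0$) and $H:=\pi:{\mathcal B}\to{\mathcal B}/{\mathcal S}$ the localisation. For each $A\in{\mathcal A}$ we have $FA\in F{\mathcal A}\subseteq{\mathcal S}$, hence $HFA=\pi FA=0=GFA$, and the component-wise zero arrow furnishes a natural equivalence $\tau:GF\to HF$. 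If $F$ were an epimorphism in ${\mathbb A}{\mathbb B}{\mathbb E}{\mathbb X}$, there would be a natural equivalence $\mu:G\to H$ with $\mu F=\tau$; but then for every $B\in{\mathcal B}$ the component $\mu_B:0\to\pi B$ would be an isomorphism, so $\pi B=0$ in ${\mathcal B}/{\mathcal S}$, i.e.~$B\in{\mathcal S}$, contradicting ${\mathcal S}\neq{\mathcal B}$.

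The main obstacle I anticipate is the bookkeeping in part (1): checking that $\mu_X:=\tau_X$ is genuinely natural across all morphisms of ${\mathcal A}/{\mathcal S}$, not merely those in the image of $\pi$. Once the calculus-of-fractions decomposition $\varphi=\pi(q)^{-1}\pi(g)\pi(j)^{-1}$ is in hand, this reduces to inverting naturality squares for $j$ and $q$ and composing with the naturality square for $g$, so the argument is essentially formal. Part (2) is a one-line diagram chase with the zero functor into the quotient, and the construction naturally explains why "generating ${\mathcal B}$ as a Serre subcategory" is only a necessary, not sufficient, condition for being an epimorphism (as already shown by Example \ref{defnotepi}).
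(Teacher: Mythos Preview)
Your proposal is correct and follows essentially the same approach as the paper. In part (1) you define $\mu$ exactly as the paper defines its $\eta$ (using that $\pi$ is the identity on objects) and verify naturality via the same calculus-of-fractions decomposition $\pi(q)^{-1}\pi(g)\pi(j)^{-1}$; in part (2) your contrapositive with the pair $(0,\pi):\mathcal{B}\rightrightarrows\mathcal{B}/\langle F\mathcal{A}\rangle$ is precisely the paper's direct argument, just phrased in the other direction.
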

\begin{proof} (1) If $ G,H:{\mathcal A}/{\mathcal S}\rightarrow {\mathcal B} $ are such that there is a natural equivalence $ \tau :G\pi \rightarrow H\pi  $ then for $ A\in {\mathcal A} $ we have an isomorphism $ \tau _{{\mathcal A}}:G\pi A\rightarrow H\pi A$. Since we may regard $ {\mathcal A}/{\mathcal S} $ as having objects those of $ {\mathcal A} $ (but different morphisms) we can define $ \eta:G\rightarrow H $ to have component $ \eta_{\pi A}=\tau _A $ at $ \pi A$. It must be checked that this is well-defined and that these components cohere to form a natural transformation. Both come down to showing that if $ f:\pi A\rightarrow \pi A' $ then $ Hf.\tau _A=\tau _{A'}.Gf$. But $ f\simeq \pi g $ for some $g:A_1\rightarrow A_1'$ where there is a monomorphism $A_1\rightarrow A$ with cokernel in ${\mathcal S}$ and an epimorphism $A' \rightarrow A_1'$ with kernel in ${\mathcal S}$, in the sense that $f$ is the composition shown (using the indicated inverses of those morphisms, both of which become invertible in ${\mathcal A}/{\mathcal S}$).

$\xymatrix{A_1 \ar[r] \ar[d]_g & A \\ A_1' & A' \ar[l]}$  $\xymatrix{\pi A_1 \ar@/_/[r]_{\simeq} \ar[d]_{\pi g} & \pi A \ar@{.>}@/_/[l]\ar@{.>}[d]^f \\ \pi A_1' \ar@{.>}@/_/[r]_{\simeq} & \pi A' \ar@/_/[l]}$

We then apply $G$ and $H$ to obtain the diagram shown next

$\xymatrix{G\pi A \ar[rrr]^{Gf} \ar[ddd]_{\tau_A} & & & G\pi A' \ar[ddd]^{\tau_{A'}} \ar@{<->}[dl] \\ & G\pi A_1 \ar[r]^{G\pi g} \ar@{<->}[ul] \ar[d]_{\tau_{A_1}} & G \pi A_1' \ar[d]^{\tau_{A_1'}} \\ & H\pi A_1 \ar[r]^{H\pi g} \ar@{<->}[dl] & H \pi A_1' \\ H\pi A \ar[rrr]_{Hf} & & & H\pi A' \ar@{<->}[ul] }$

The smaller quadrilaterals commute and so, therefore, does the outer one, as required.

(2) Consider $\xymatrix{{\mathcal A} \ar[r]^F & {\mathcal B} \ar@/^/[r]^\pi \ar@/_/[r]_0 & {\mathcal B}/\langle F{\mathcal A} \rangle}$.  Since $\pi F =0= 0.F$ we must have $\pi \simeq 0$ hence $\langle F{\mathcal A} \rangle = {\mathcal B}$.
\end{proof}

\subsection{Pullbacks in $ {\mathbb A}{\mathbb B}{\mathbb E}{\mathbb X}$}\label{secabexpbk}

We show that $ {\mathbb A}{\mathbb B}{\mathbb E}{\mathbb X}$ has pullbacks, hence a notion of ``base change" (see Section \ref{secrngtoab}).

Given a diagram

$\xymatrix{  & {\mathcal A} \ar[d]^{F} \\ {\mathcal B} \ar[r]_{G} & {\mathcal C} }$

\noindent in $ {\mathbb A}{\mathbb B}{\mathbb E}{\mathbb X}$, we construct the following category $ {\mathcal P}$. The objects of $ {\mathcal P} $ are triples $ (A,B,\gamma ) $ where $ A $ is an object of $ {\mathcal A}$, $ B $ an object of $ {\mathcal B} $ and $ \gamma :FA\rightarrow GB $ is an isomorphism in $ {\mathcal C}$. A morphism from $ (A,B,\gamma ) $ to $ (A',B',\gamma ' ) $ is a pair $ (f:A\rightarrow A', g:B\rightarrow B') $ such that $ \gamma ' .Ff=Gg.\gamma $.

$\xymatrix{ FA \ar[d]_{Ff} \ar[r]^{\gamma } & GB \ar[d]^{Gg} \\ FA' \ar[r]_{\gamma ' } & GB' }$.

Clearly this gives a category and we have the obvious functors $ F':{\mathcal P}\rightarrow {\mathcal B} $ and $ G':{\mathcal P}\rightarrow {\mathcal A} $ with, for instance, $ G' $ taking $ (A,B,\gamma ) $ to $ A $ and taking $ (f,g) $ to $ f$.  We also have the natural isomorphism $FG'\Rightarrow GF'$ with component $\gamma$ at $(A,B,\gamma)$, which is a 2-arrow of ${\mathbb A}{\mathbb B}{\mathbb E}{\mathbb X}$.  We show that $ {\mathcal P} $ is abelian, that $ F' $ and $ G' $ are exact and that we have constructed a pullback for the diagram in $ {\mathbb A}{\mathbb B}{\mathbb E}{\mathbb X}$.

\begin{lemma}\label{abexpbkconstr} $ {\mathcal P} $ is an abelian category and $ F', G' $ are exact.
\end{lemma}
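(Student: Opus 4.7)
The plan is to define all the abelian structure of $\mathcal{P}$ componentwise, using the exactness of $F$ and $G$ to ensure that the gluing isomorphism $\gamma$ transports correctly to kernels, cokernels, biproducts and images.

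First I would check that $\mathcal{P}$ is additive: a zero object is $(0_{\mathcal{A}},0_{\mathcal{B}},0)$, since additive functors preserve zero objects, and the biproduct of $(A,B,\gamma)$ and $(A',B',\gamma')$ is $(A\oplus A', B\oplus B', \gamma\oplus \gamma')$ with the obvious projections and injections, valid because $F$ and $G$ preserve finite direct sums.

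The main step is constructing kernels and cokernels. Given $(f,g):(A,B,\gamma)\to (A',B',\gamma')$, I would form $\mathrm{ker}(f)$ in $\mathcal{A}$ and $\mathrm{ker}(g)$ in $\mathcal{B}$ with canonical monos $\iota_f,\iota_g$. By exactness $F(\mathrm{ker}\,f)=\mathrm{ker}(Ff)$ as a subobject of $FA$, and similarly for $G$. The equation $\gamma'\circ Ff=Gg\circ \gamma$ forces $Gg\circ \gamma\circ F\iota_f=\gamma'\circ Ff\circ F\iota_f=0$, so $\gamma\circ F\iota_f$ factors through $G\iota_g$; as $\gamma$ is an isomorphism the resulting map $\gamma_K:F(\mathrm{ker}\,f)\to G(\mathrm{ker}\,g)$ is an isomorphism. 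Then $(\mathrm{ker}\,f,\mathrm{ker}\,g,\gamma_K)$ together with $(\iota_f,\iota_g)$ is routinely verified to be a kernel in $\mathcal{P}$. Cokernels are constructed dually, $\gamma$ inducing an isomorphism $\gamma_C:F(\mathrm{coker}\,f)\to G(\mathrm{coker}\,g)$ on the quotients.

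To confirm the abelian axiom that every monomorphism is a kernel and every epimorphism a cokernel, I would pass through the image factorisation. Factor $f=m_f e_f$ and $g=m_g e_g$ in $\mathcal{A}$ and $\mathcal{B}$; the same argument as above yields an isomorphism $\gamma_I:F(\mathrm{im}\,f)\to G(\mathrm{im}\,g)$, and $(f,g)$ factors as the componentwise epi $(e_f,e_g)$ followed by the componentwise mono $(m_f,m_g)$ through $(\mathrm{im}\,f,\mathrm{im}\,g,\gamma_I)$. Using the componentwise description of kernels and cokernels, one sees that $(f,g)$ is monic in $\mathcal{P}$ exactly when $f,g$ are monic, and similarly for epimorphisms, so the image factorisation in $\mathcal{P}$ identifies any mono with the kernel of its cokernel and any epi with the cokernel of its kernel.

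Exactness of $F'$ and $G'$ is then immediate: by construction, kernels, cokernels and biproducts in $\mathcal{P}$ project componentwise onto the corresponding data in $\mathcal{A}$ and $\mathcal{B}$. The main obstacle is simply the bookkeeping that $\gamma_K$, $\gamma_I$ and $\gamma_C$ are well-defined isomorphisms compatible with the relevant structure maps, but each such verification reduces to a short diagram chase using exactness of $F$ and $G$ and the fact that $\gamma$ is invertible.
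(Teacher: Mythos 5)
Your proposal is correct and follows essentially the same route as the paper: componentwise biproducts, kernels and cokernels with the gluing isomorphism obtained by restricting/corestricting $\gamma$ (using exactness of $F$ and $G$ and invertibility of $\gamma,\gamma'$), the observation that $(f,g)$ is monic or epic iff both components are, and exactness of $F'$, $G'$ read off from the componentwise description. The image-factorisation argument you give for ``every mono is a kernel'' is exactly the ``similar'' step the paper leaves implicit, so nothing is missing.
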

\begin{proof} We define addition of morphisms as follows: given $ (f,g), (f',g') : (A,B,\gamma )\rightarrow (A'B',\gamma ' ) $ we have the commutative diagrams

$\xymatrix{ FA \ar[d]_{Ff} \ar[r]^{\gamma } & GB \ar[d]^{Gg} \\ FA' \ar[r]_{\gamma ' } & GB' }$ and $\xymatrix{ FA \ar[d]_{Ff'} \ar[r]^{\gamma } & GB \ar[d]^{Gg'} \\ FA' \ar[r]_{\gamma ' } & GB' }$

\noindent and hence $ \gamma ' (Ff+Ff')=Gg.\gamma +Gg'.\gamma =(Gg+Gg')\gamma $, so define $ (f,g)+(f'g') $ to be $ (f+f',g+g')$. In this way we have a preadditive category.

It is easy to check that $ (A\oplus A',B\oplus B',\gamma \oplus \gamma ' ) $ is the (co)product of $ (A,B,\gamma ) $ and $ (A',B',\gamma ' )$ in ${\mathcal P}$.

Given $ (f,g):(A,B,\gamma )\rightarrow (A',B'\gamma ' ) $ we have the exact sequences $ 0\rightarrow {\rm ker}(f)\rightarrow A\xrightarrow{f} A'\rightarrow {\rm coker}(f)\rightarrow 0 $ and $ 0\rightarrow {\rm ker}(g)\rightarrow B\xrightarrow{g} B'\rightarrow {\rm coker}(g)\rightarrow 0$. Since $ F $ and $ G $ are exact these give exact sequences $ 0\rightarrow F{\rm ker}(f)={\rm ker}(Ff)\rightarrow FA\rightarrow FA' $ and similarly for $ G $ and for the cokernel sequences. Then it is easily checked that $ ({\rm ker}(f),{\rm ker}(g),\gamma_0) $ where $ \gamma_0 $ is the restriction/corestriction of $ \gamma $, is the kernel of $ (f,g) $ and the description of the cokernel is dual. In particular, $ (f,g) $ is a monomorphism, respectively epimorphism, iff both $ f $ and $ g $ are. Showing that every monomorphism is a kernel and every epimorphism a cokernel is similar. This also gives the description of exact sequences in $ {\mathcal P}$, from which it is obvious that $ F' $ and $ G' $ are exact.
\end{proof}

\begin{theorem}\label{abexpbk} $ {\mathbb A}{\mathbb B}{\mathbb E}{\mathbb X} $ has pullbacks.
\end{theorem}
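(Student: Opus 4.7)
The plan is to verify that the triple $(\mathcal{P}, F', G')$ constructed above, together with the natural isomorphism $\gamma: FG' \Rightarrow GF'$, satisfies the $2$-categorical (pseudo-)pullback universal property in $\mathbb{ABEX}$. Lemma \ref{abexpbkconstr} already supplies the essential algebraic content — $\mathcal{P}$ is abelian and the projections are exact — so the remaining work is to check skeletal smallness and the universal property at the $1$-cell and $2$-cell levels.

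First I would note that $\mathcal{P}$ is skeletally small: an isomorphism class of objects is determined by an isomorphism class of $A \in \mathcal{A}$, one of $B \in \mathcal{B}$, and an element of the set $\mathcal{C}(FA, GB)$, so the collection of such classes is a set. Next, given any $\mathcal{D} \in \mathbb{ABEX}$, exact functors $H: \mathcal{D} \to \mathcal{A}$, $K: \mathcal{D} \to \mathcal{B}$, and a natural isomorphism $\sigma: FH \Rightarrow GK$, I would define $L: \mathcal{D} \to \mathcal{P}$ on objects by $LD = (HD, KD, \sigma_D)$ and on morphisms by $L(h) = (Hh, Kh)$. The naturality of $\sigma$ at $h$ is exactly the compatibility condition $\sigma_{D'} \cdot FHh = GKh \cdot \sigma_D$ required for $(Hh, Kh)$ to be a morphism in $\mathcal{P}$, so $L$ is a well-defined additive functor. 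Exactness of $L$ follows immediately from the description of exact sequences in $\mathcal{P}$ established in the proof of Lemma \ref{abexpbkconstr} (componentwise exactness) together with exactness of $H$ and $K$. By construction $G'L = H$ and $F'L = K$ strictly, and the whiskered $2$-cell $\gamma L$ agrees with $\sigma$ componentwise since $\gamma_{LD} = \sigma_D$.

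For the $2$-dimensional part of the universal property I would argue uniqueness up to unique natural isomorphism. Suppose $L': \mathcal{D} \to \mathcal{P}$ is another exact functor equipped with natural isomorphisms $\alpha: G'L' \Rightarrow H$ and $\beta: F'L' \Rightarrow K$ satisfying the compatibility $\sigma \cdot F\alpha = G\beta \cdot (\gamma L')$. Writing $L'D = (A_D, B_D, \delta_D)$, the pair $(\alpha_D, \beta_D): L'D \to LD$ is a morphism in $\mathcal{P}$ precisely because of that compatibility condition, and it is an isomorphism since both components are; naturality in $D$ descends from naturality of $\alpha$ and $\beta$, yielding a natural isomorphism $L' \Rightarrow L$ whose projections are $\alpha$ and $\beta$. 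Uniqueness of this mediating $2$-cell is clear because its components are forced by $\alpha$ and $\beta$.

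The only genuine obstacle is the $2$-categorical bookkeeping — tracking that the mediating data exist and are compatible up to natural isomorphism rather than equality, and that the various coherence squares involving $\sigma$, $\gamma$, $\alpha$, $\beta$ commute. Once the definition of $L$ on objects is read off the triple structure of $\mathcal{P}$, the remaining verifications reduce to the naturality squares of $\sigma$ (for existence) and the compatibility hypothesis on $\alpha, \beta$ (for uniqueness); no further calculation beyond what Lemma \ref{abexpbkconstr} provides is required.
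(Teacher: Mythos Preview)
Your proof is correct and follows essentially the same approach as the paper: both define the mediating functor $L$ componentwise by $LD=(HD,KD,\sigma_D)$ and then verify the 2-categorical universal property directly from the description of $\mathcal{P}$. The only differences are in emphasis --- you make explicit the skeletal smallness of $\mathcal{P}$ and the exactness of $L$ (which the paper leaves implicit), while the paper spells out the modification clause (given a 2-cell between two cones, produce the induced 2-cell $e:L\Rightarrow L'$) rather than your essential-uniqueness formulation; these are equivalent pieces of the pseudo-pullback universal property and each is routine once the other is in hand.
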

\begin{proof} We show that $ {\mathcal P} $ constructed above is the pullback of the diagram we started with. So suppose that we have a diagram

$\xymatrix{{\mathcal D} \ar@/_/[ddr]_K \ar@/^/[drr]_H  & & \\ & {\mathcal P} \ar[r]^{G'} \ar[d]_{F'} & {\mathcal A} \ar[d]^{F} \\ & {\mathcal B} \ar[r]^{G} & {\mathcal C}}$

\noindent and a natural isomorphism $ \tau :FH\rightarrow GK$. Then we define $ L:{\mathcal D}\rightarrow {\mathcal P} $ by taking an object $ D $ to $ (HD,KD, \tau _D) $ and taking a morphism $ f:D\rightarrow D' $ to $ (Hf,Kf)$, noting that the diagram

$\xymatrix{ FHD \ar[r]^{\tau _D} \ar[d]_{FHf} & GKD \ar[d]^{GKf} \\ FHD' \ar[r]_{\tau _{D'}} & GKD' }$

\noindent commutes.  It is quickly checked that this functor fills in the diagram (with even strictly commuting triangles) and its uniqueness (to natural isomorphism) follows using the definitions of $ F' $ and $ G'$.  More precisely, if we also have $L':{\mathcal D} \rightarrow {\mathcal P}$ and natural isomorphisms $\eta:H\Rightarrow G'L'$, $\eta':K\Rightarrow F'L'$ then for $D\in {\mathcal D}$, $L'D$ has the form $(G'L'D, F'L'D, \delta_D)$ where $\delta_D= G\eta'_D \tau_D (F\eta_D)^{-1}$ (let us fix, because we actually have a cone over the whole diagram, the arrow from ${\mathcal D}$ to ${\mathcal C}$ to be $GK$).  So we have a natural isomorphism from $L$ to $L'$ with component at $D$ taking $LD=(HD, KD, \tau_D)$ to $L'D=(G'L'D, F'L'D, \delta_D)$ by the pair $(\eta_D, \eta'_D)$.

In the 2-categorical context, the notion of pullback (and more general limits) usually requires more, namely we must show that if we have two such cones at ${\mathcal D}$ over the initial diagram, say

$\xymatrix{ {\mathcal D} \ar[r]^{H} \ar[d]_{K} & {\mathcal A} \ar[d]^{F} \\ {\mathcal B} \ar[r]_{G} & {\mathcal C} }$ with natural isomorphism $\tau:FH\Rightarrow GK$

and  $\xymatrix{ {\mathcal D} \ar[r]^{H'} \ar[d]_{K'} & {\mathcal A} \ar[d]^{F} \\ {\mathcal B} \ar[r]_{G} & {\mathcal C} }$ with natural isomorphism $\tau':FH'\Rightarrow GK'$,

\noindent which are pullbacks in this sense, hence with natural isomorphisms $\eta:H\Rightarrow G'L$, $\eta':K\Rightarrow F'L$ and $\zeta:H'\Rightarrow G'L'$, $\zeta':K'\Rightarrow F'L'$, and if we have a modification $m$ (see, e.g.~\cite[B1.1.2]{Joh1}) from the first to the second, that is, natural transformations $m_{\mathcal A}:H\Rightarrow H'$ and $m_{\mathcal B}: K\Rightarrow K'$ such that $\tau'\cdot Fm_{\mathcal A} = m_{\mathcal B}G \cdot \tau$ ($\ast$),

$\xymatrix{ {\mathcal D} \ar[r]^{H} & {\mathcal A} \ar@{=>}[d]^{Fm_{\mathcal A}} \ar[r]^{F} & {\mathcal C} \\ {\mathcal D} \ar[r]^{H'} & {\mathcal A} \ar@{=>}[d]^{\tau'} \ar[r]^{F} & {\mathcal C} \\ {\mathcal D} \ar[r]^{K'} & {\mathcal B}  \ar[r]^{G} & {\mathcal C} }$ = $\xymatrix{ {\mathcal D} \ar[r]^{H} & {\mathcal A} \ar@{=>}[d]^{\tau} \ar[r]^{F} & {\mathcal C} \\ {\mathcal D} \ar[r]^{K} & {\mathcal B} \ar@{=>}[d]^{m_{\mathcal B}G} \ar[r]^{G} & {\mathcal C} \\ {\mathcal D} \ar[r]^{K'} & {\mathcal B}  \ar[r]^{G} & {\mathcal C} }$,

\noindent then there is a natural transformation $e:L\Rightarrow L': {\mathcal D} \rightarrow {\mathcal P}$ such that the following pairs of natural transformations are equal:

$\xymatrix{ {\mathcal D}  \ar[rr]^{H} & \ar@{=>}[d]^{\eta} & {\mathcal A} \\ {\mathcal D} \ar[r]^{L} & {\mathcal P} \ar@{=>}[d]^{G'e} \ar[r]^{G'} & {\mathcal A} \\ {\mathcal D'} \ar[r]^{L'} & {\mathcal P} \ar[r]^{G'} & {\mathcal A} }$
= $\xymatrix{ {\mathcal D}  \ar[rr]^{H} & \ar@{=>}[d]^{m_{\mathcal A}} &{\mathcal A} \\ {\mathcal D'}  \ar[rr]^{H'} & \ar@{=>}[d]^{\zeta} & {\mathcal A} \\ {\mathcal D'} \ar[r]^{L'} & {\mathcal P} \ar[r]^{G'} & {\mathcal A} }$

$\xymatrix{ {\mathcal D}  \ar[rr]^{K} & \ar@{=>}[d]^{\eta'} & {\mathcal B} \\ {\mathcal D} \ar[r]^{L} & {\mathcal P} \ar@{=>}[d]^{F'e} \ar[r]^{F'} & {\mathcal B} \\ {\mathcal D'} \ar[r]^{L'} & {\mathcal P} \ar[r]^{F'} & {\mathcal B} }$ =
$\xymatrix{ {\mathcal D}  \ar[rr]^{K} & \ar@{=>}[d]^{m_{\mathcal B}} & {\mathcal B} \\ {\mathcal D'}  \ar[rr]^{K'} & \ar@{=>}[d]^{\zeta'} & {\mathcal B} \\ {\mathcal D'} \ar[r]^{L'} & {\mathcal P} \ar[r]^{F'} & {\mathcal B} }$.

Note that $L:{\mathcal D} \rightarrow {\mathcal P}$ takes an object $D$ of ${\mathcal D}$ to $(\eta_D HD, \eta'_D KD, \eta'_D\tau_D \eta_D^{-1})$ and $L'D = (\zeta_D H'D, \zeta'_D K'D, \zeta'_D\tau'_D \zeta_D^{-1})$.  Also note that the data $m_{\mathcal A}$ at $D$ is $(m_{\mathcal A})_D: HD \rightarrow H'D$ and similarly for $m_{\mathcal B}$.  We define $e$ at $D$ to be the morphism $e_D:(\eta_D HD, \eta'_D KD, \eta'_D\tau_D \eta_D^{-1})  \rightarrow (\zeta_D H'D, \zeta'_D K'D, \zeta'_D\tau'_D \zeta_D^{-1})$ of ${\mathcal P}$ with components $\big( \zeta_D (m_{\mathcal A})_D \eta_D^{-1}, \zeta'_D (m_{\mathcal B})_D (\eta'_D)^{-1}\big)$.  By ($*$) this is indeed an arrow of ${\mathcal P}$ and one can verify that it does satisfy the required conditions.
\end{proof}

\begin{example} Let $k$ be a field and consider the two representation embeddings $F_1,F_2:{\rm mod}\mbox{-}k[T] \rightarrow {\rm mod}\mbox{-}k\widetilde{A}_1$ from $k[T]$-modules to representations of the Kronecker quiver $\xymatrix{\bullet \ar@/^/[r] \ar@/_/[r] & \bullet}$, where $F_1$ takes a $k[T]$-module, thought of as a $k$-vectorspace $V$ with a linear transformation $T_V$, to the Kronecker quiver representation $\xymatrix{V \ar@/^/[r]^{T_V} \ar@/_/[r]_{1_V} & V}$ and where the image of $(V,T_V)$ under $F_2$ is $\xymatrix{V \ar@/^/[r]^{1_V} \ar@/_/[r]_{T_V}& V}$.  The actions of $F_1$ and $F_2$ on morphisms are the obvious ones.  We compute the pullback of these two functors.

An object of this pullback has the form $((V,T_V), (W,T_W), \gamma)$ where $\gamma$ is given by a pair of isomorphisms $(\gamma_1,\gamma_2)$ making the following diagrams commute.

$\xymatrix{V \ar[r]^{T_V} \ar[d]_{\gamma_1} & V \ar[d]^{\gamma_2} \\ W \ar[r]_{1_W} & W}$
$\xymatrix{V \ar[r]^{1_V} \ar[d]_{\gamma_1} & V \ar[d]^{\gamma_2} \\ W \ar[r]_{T_W} & W}$

So $T_V$ must be an isomorphism with $T_W =\gamma_2 T_V^{-1}\gamma_2^{-1}$ being its inverse up to a change of basis.  After describing the morphisms it is easily checked that the pullback category ${\mathcal K}$ is equivalent to the category ${\rm mod}\mbox{-}k[T,T^{-1}]$, which is what one would reasonably expect it to be.
\end{example}

\subsection{$ {\mathbb A}{\mathbb B}{\mathbb E}{\mathbb X} $ is finitely accessible}\label{secabexfinacc}

We show that $ {\mathbb A}{\mathbb B}{\mathbb E}{\mathbb X} $ is, in some 2-category sense, finitely accessible.  That is, we show that there is a set of finitely presented objects such that each object of $ {\mathbb A}{\mathbb B}{\mathbb E}{\mathbb X} $ is a directed colimit of copies of these objects.  There is a variety of notions of directed colimit in the 2-categorical context, depending on the level at which diagrams are required to commute, as opposed to commute up to natural isomorphisms and so our statement refers to a particular interpretation of the words ``finitely accessible".

A {\bf directed system}, see \cite[B1.1.6]{Joh1} or \cite[\S5.4.2]{MakPar}, in $ {\mathbb A}{\mathbb B}{\mathbb E}{\mathbb X} $ is the data $\big( ({\mathcal A}_\lambda)_{\lambda\in \Lambda}, (f_{\lambda \mu }:A_\lambda \rightarrow A_\mu)_{\lambda <\mu}, (\phi_{\lambda \mu \nu}:f_{\mu\nu}f_{\lambda\mu}\Rightarrow f_{\lambda\nu})_{\lambda<\mu <\nu}\big)$ where each $\phi_{\lambda\mu\nu}$ is a natural isomorphism and where we assume the coherence condition $\phi_{\lambda\nu\rho} \phi_{\lambda\mu\nu} = \phi_{\lambda\mu\rho} \phi_{\mu\nu\rho}$.

$\xymatrix{{\mathcal A}_\lambda \ar[r]^{f_{\lambda \mu}} \ar[rrd]_{f_{\lambda \nu}} \ar[rrdd]_{f_{\lambda \rho}} & {\mathcal A}_\mu \druppertwocell^{f_{\mu \nu}}{\phi_{\lambda \mu \nu}} \\
& & {\mathcal A}_\nu \duppertwocell^{f_{\nu \rho}}{\phi_{\lambda \nu \rho}} \\ & & {\mathcal A}_\rho} $
$\xymatrix{{\mathcal A}_\lambda \ar[r]{f_{\lambda \mu}} \ddrrlowertwocell_{f_{\lambda \rho}}{\phi_{\lambda \mu \rho}} & {\mathcal A}_\mu \ar[dr]^{f_{\mu \nu}} \ar[rdd]_{f_{\mu \rho}} \\
& & {\mathcal A}_\nu \duppertwocell^{f_{\nu \rho}}{\phi_{\lambda \mu \rho}} \\ & & {\mathcal A}_\rho}$

There is a corresponding notion of directed colimit which, cf.~the proof of \ref{abexpbk}, has a clause involving modifications.  In the next result we use that but it will be seen that, for our main result, we are able to simplify matters by using strictly directed diagrams.

By a {\bf cocone} on such a directed system we mean an object ${\mathcal A}$, arrows $(f_{\lambda \infty}:{\mathcal A}_\lambda \rightarrow {\mathcal A})_\lambda$ and natural isomorphisms $(\theta_{\lambda \mu}: f_{\mu \infty}f_{\lambda \mu} \Rightarrow f_{\lambda \infty}\big)_{\lambda <\mu}$ such that $\theta_{\lambda \mu}\theta_{\mu \nu} = \theta_{\lambda \nu}$.  This is a {\bf direct limit} (or {\bf directed colimit}) of the system if, given any cocone $\big({\mathcal B}, (g_{\lambda}:{\mathcal A}_\lambda \rightarrow {\mathcal B})_\lambda, \zeta_{\lambda \mu}: g_{\mu }f_{\lambda \mu} \Rightarrow g_{\lambda}\big)$, there is $h:{\mathcal A} \rightarrow {\mathcal B}$ and there are natural isomorphisms $\eta_\lambda: hf_{\lambda \infty} \Rightarrow g_\lambda$ with $\eta_\lambda \theta_{\lambda \mu} = \zeta_{\lambda \mu}\eta_\mu$ for all $\lambda < \mu$, plus a clause involving modifications which we will not spell out here.

\begin{prop}\label{abexlims} The 2-category $ {\mathbb A}{\mathbb B}{\mathbb E}{\mathbb X} $ has colimits of weakly directed diagrams.
\end{prop}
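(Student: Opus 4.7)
The plan is to build the colimit ${\mathcal A}_\infty$ explicitly as a bicategorical filtered colimit of abelian categories, then verify its universal property. Objects of ${\mathcal A}_\infty$ are pairs $(\lambda,A)$ with $\lambda\in \Lambda$ and $A\in {\mathcal A}_\lambda$, and the hom-sets are the filtered colimits of abelian groups
$${\rm Hom}_{{\mathcal A}_\infty}\big((\lambda,A),(\mu,B)\big) = \operatorname*{colim}_{\nu \geq \lambda, \mu} {\rm Hom}_{{\mathcal A}_\nu}(f_{\lambda\nu}A, f_{\mu\nu}B),$$
where the transition from level $\nu$ to $\nu'\geq \nu$ sends a morphism $g:f_{\lambda\nu}A \to f_{\mu\nu}B$ to the composite
$$f_{\lambda\nu'}A \xrightarrow{(\phi_{\lambda\nu\nu'})_A^{-1}} f_{\nu\nu'}f_{\lambda\nu}A \xrightarrow{f_{\nu\nu'}g} f_{\nu\nu'}f_{\mu\nu}B \xrightarrow{(\phi_{\mu\nu\nu'})_B} f_{\mu\nu'}B.$$
The cocycle condition on the $\phi$'s makes this colimit system coherent, and composition in ${\mathcal A}_\infty$ is defined by choosing a common level at which to represent both morphisms.

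First I would check that ${\mathcal A}_\infty$ is a well-defined preadditive category and show it is abelian: any finite diagram in ${\mathcal A}_\infty$ is representable at some common level $\nu$, where its kernel, cokernel or biproduct may be computed in the abelian category ${\mathcal A}_\nu$; exactness of the transition functors $f_{\nu\nu'}$ ensures this (co)limit in ${\mathcal A}_\infty$ is independent of the choice of $\nu$ up to canonical isomorphism, and that every monic is a kernel and every epic a cokernel reduces to the same statement in ${\mathcal A}_\nu$. The fact that filtered colimits in ${\bf Ab}$ are exact is used throughout.

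Next, I would define the cocone functors $f_{\lambda\infty}: {\mathcal A}_\lambda \to {\mathcal A}_\infty$ by $A \mapsto (\lambda,A)$, with action on morphisms given by the canonical map into the colimit at stage $\nu=\lambda$. These are exact because the calculation of kernels and cokernels in ${\mathcal A}_\infty$ can be done at level $\lambda$. The required coherence 2-isomorphisms $\theta_{\lambda\mu}: f_{\mu\infty}f_{\lambda\mu} \Rightarrow f_{\lambda\infty}$ are the canonical isomorphisms $(\mu,f_{\lambda\mu}A) \cong (\lambda,A)$ built into the colimit structure, and the cocycle condition $\theta_{\lambda\mu}\theta_{\mu\nu} = \theta_{\lambda\nu}$ follows from the coherence of the $\phi$'s. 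For the one-dimensional universal property, given a cocone $\big({\mathcal B},(g_\lambda)_\lambda,(\zeta_{\lambda\mu})_{\lambda<\mu}\big)$, define $h:{\mathcal A}_\infty \to {\mathcal B}$ on objects by $h(\lambda,A)=g_\lambda A$ and on a morphism represented by $g:f_{\lambda\nu}A \to f_{\mu\nu}B$ at level $\nu$ by the composite using $g_\nu g$ together with the $\zeta$'s to identify $g_\nu f_{\lambda\nu}A$ with $g_\lambda A$ and similarly for $B$; the $\zeta$-cocycle condition ensures this is independent of the representative. The required natural isomorphisms $\eta_\lambda: hf_{\lambda\infty} \Rightarrow g_\lambda$ are then essentially tautological, and compatibility $\eta_\lambda \theta_{\lambda\mu} = \zeta_{\lambda\mu}\eta_\mu$ is built into the definition.

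The main obstacle is the modification clause of the 2-categorical universal property: given two such comparison functors $L,L':{\mathcal D} \to {\mathcal A}_\infty$ and a modification between the induced cocones, one must produce a natural transformation $e:L\Rightarrow L'$ satisfying pasting equalities analogous to those handled at the end of the proof of \ref{abexpbk}. The key point is that every morphism in ${\mathcal A}_\infty$ is represented at some level $\nu$, so the component of $e$ at an object of ${\mathcal D}$ can be built from the modification's components at some sufficiently large level, and its naturality follows from naturality of the modification components at each ${\mathcal A}_\nu$ together with the fact that two parallel morphisms in ${\mathcal A}_\infty$ which agree after passing to a higher level are already equal in the colimit. The routine but tedious task is to verify at each stage that the relevant diagrams of coherence isomorphisms $\phi,\theta,\zeta$ commute by appeal to the cocycle conditions assumed at the outset.
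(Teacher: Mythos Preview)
Your construction is correct and is essentially the same as the paper's, differing only in a standard packaging choice: the paper takes the objects of the colimit to be \emph{equivalence classes} of objects of $\bigcup_\lambda {\mathcal A}_\lambda$ under the relation generated by $A_\lambda \sim f_{\lambda\mu}A_\lambda$ (and similarly for arrows, further identifying the components of each $\phi_{\lambda\mu\nu}$ with identities), whereas you keep all pairs $(\lambda,A)$ as distinct objects and instead build the identifications into the hom-sets as a filtered colimit of abelian groups. The two constructions yield equivalent categories, and both verify abelianness in the same way, by lifting any finite diagram to a single ${\mathcal A}_\nu$ and using exactness of the transition functors. Your version has the minor advantage that well-definedness of composition and of the comparison functor $h$ follows directly from the universal property of the colimit of hom-groups, avoiding the explicit zig-zag bookkeeping the paper uses; the paper's version has the minor advantage that the cocone maps $f_{\lambda\infty}$ land in a category where $f_{\mu\infty}f_{\lambda\mu}$ and $f_{\lambda\infty}$ can be taken literally equal rather than merely isomorphic. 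Neither difference is substantive.
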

\begin{proof} Suppose we are given a weakly directed system $\big( ({\mathcal A}_\lambda)_{\lambda\in \Lambda}, (f_{\lambda \mu }:A_\lambda \rightarrow A_\mu)_{\lambda <\mu}, (\phi_{\lambda \mu \nu}:f_{\mu\nu}f_{\lambda\mu}\Rightarrow f_{\lambda\nu})_{\lambda<\mu <\nu}\big)$ as above.  We define the category ${\mathcal A}$ which will be the colimit, as follows.

For the objects of ${\mathcal A}$ we take the equivalence classes of objects of $\bigcup_\lambda {\mathcal A}_\lambda$ under the equivalence relation $\sim$ generated by setting $A_\lambda \sim f_{\lambda \mu}A_\lambda$.  So this identifies $f_{\lambda\nu} A_\lambda$ and $f_{\mu \nu}f_{\lambda \mu}A_\lambda$ whenever $\lambda < \mu < \nu$.  Therefore $A_\lambda \sim A_\mu$ iff there are $\lambda_0=\lambda, \lambda_1, \dots, \lambda_n=\mu$ and objects $A_{\lambda_0}=A_\lambda, A_{\lambda_i} \in {\mathcal A}_{\lambda_i}, \dots, A_{\lambda_n}=A_\mu$ with, for each $i$, either $\lambda_i <\lambda_{i+1}$ or $\lambda_{i+1}<\lambda_i$ and correspondingly $A_{\lambda_{i+1}}=f_{\lambda_i \lambda_{i+1}}A_{\lambda_i}$ or $A_{\lambda_i} = f_{\lambda_{i+1} \lambda_i}A_{\lambda_{i+1}}$.  Note that in this case if $\nu >\lambda_0,\dots, \lambda_n$ then the objects $f_{\lambda \nu}A_\lambda$ and $f_{\mu \nu}A_\mu$ are objects in ${\mathcal A}_\nu$ which are isomorphic by a sequence of components of some of the $\phi_{ijk}$ and their inverses (depending on a choice of ``zig-zag'' between $A_\lambda$ and $A_\mu$).  We continue to use the obvious subscript notation to indicate which category an object lies in.

Similarly we define an equivalence relation $\sim$ on arrows to be that generated by setting $g:A_\lambda \rightarrow B_\lambda$ to be equivalent to $ f_{\lambda \mu}g$ for all $\mu > \lambda$ and by setting $1_{A_\lambda}$ to be equivalent to the component of $\phi_{\lambda \mu \nu}$ at ${A_\lambda}$ for all $A_\lambda$ and $\lambda<\mu <\nu$.  The arrows from $A_\lambda/\sim$ to $B_\nu/\sim$ are the equivalence classes of arrows from some $A_\mu \in A_\lambda/\sim$ to some $B_\mu \in B_\nu/\sim$.

To define composition of such arrows it is sufficient to consider the case of arrows $g:A_\lambda \rightarrow B_\lambda$ and $h:B_\mu \rightarrow C_\mu$ when $B_\lambda \sim B_\mu$.  Using notation as above for a zig-zag between $B_\lambda$ and $B_\mu$, choose $\nu>\lambda_0,\dots, \lambda_n$ and note that the objects $f_{\lambda \nu}B_\lambda$ and $f_{\mu\nu}B_\mu$ are connected by a sequence, $k$ say, of forward images of components of the $\phi_{\rho \sigma \tau}$ and their inverses - arrows in the equivalence class of the identity map of the object $B_\lambda/\sim$.  We define the composition to be $f_{\mu\nu}h .k. f_{\lambda \nu}g$.

It has to be checked that the result is abelian, that the obvious $f_{\lambda \infty}:{\mathcal A}_\lambda \rightarrow {\mathcal A}$ are exact (that is clear), and that ${\mathcal A}$ has the universal property including the modifications clause.  Checking that ${\mathcal A}$ is abelian may be done by using that, given any finite diagram in ${\mathcal A}$, one may choose representatives of the objects and arrows in it and then find a single ${\mathcal A}_\nu$ in which there is an actual diagram of the same sort which represents the original one.  The directed colimit property may be shown by arguing rather as in the proof of \ref{limofabs} below.
\end{proof}

Although we use a weak notion of directed system and directed colimit in $ {\mathbb A}{\mathbb B}{\mathbb E}{\mathbb X} $ we will now see that if we have a directed colimit in the category ${\mathcal Rng}$ of rings then this induces a diagram of associated free abelian categories which even has strictly commuting compositions, that is, with $f_{\mu \nu}f_{\lambda \nu} =f_{\lambda \nu}$, hence with each $\phi_{\mu \nu\lambda}$ being the identity.  We do this by choosing specific copies of free abelian categories, as follows (this will refer to the connections with model theory, for which see, e.g., \cite{PreNBK}).

Suppose that $f:R\rightarrow S$ is a morphism of rings.  If $\phi$ is a formula (treated as a string of symbols) in the language of $R$-modules then we define $f_\ast \phi$ to be the formula in the language of $S$-modules which is obtained by replacing each occurrence of (the function symbol corresponding to) an element of $R$ by (the function symbol corresponding to) its image in $S$.  It is a result of Burke (\cite[3.2.5]{BurThes}; see, e.g., \cite[10.2.30]{PreNBK}) that the free abelian category can be regarded (up to equivalence of categories) as having objects the pairs, $\phi/\psi$, of pp formulas and having equivalence classes of certain pp formulas as its morphisms.  Clearly $f_\ast$ immediately gives a map from the objects of ${\rm Ab}(R)$, so defined, to ${\rm Ab}(S)$; it also gives a map on morphisms - given a morphism in ${\rm Ab}(R)$ from $\phi /\psi$ to $\phi'/\psi'$ we choose a representative formula $\rho$ which defines it and then, one may check, $f_\ast \rho$ defines a map from $f_\ast \phi /f_\ast \psi$ to $f_\ast \phi'/f_\ast \psi'$ which, again, one may check (for, if formulas are equivalent on $R$-modules then they are equivalent on $S$-modules regarded as $R$-modules), is independent of choice of representing formula $\rho$.  In this way $f$, through $f_\ast$, induces what is clearly a functor, let us denote it ${\rm Ab}(f)$, from ${\rm Ab}(R)$ to ${\rm Ab}(S)$.  We said this for rings but all of it applies equally for small preadditive categories ${\mathcal R}$, ${\mathcal S}$ in place of $R$ and $S$.

The point of this construction is that if we are also given a homomorphism $g:S\rightarrow T$ then ${\rm Ab}(g){\rm Ab}(f) ={\rm Ab}(gf)$ - equality, not natural isomorphism.  Therefore, given a (directed) diagram $\Delta$ in ${\mathcal Rng}$, we have a (directed) diagram ``${\rm Ab}(\Delta)$" in $ {\mathbb A}{\mathbb B}{\mathbb E}{\mathbb X} $ where any commutativity relations in the original diagram $\Delta$ are replaced by strict commutativity relations between the corresponding functors; we will use the term {\bf strictly directed} for such directed diagrams.  This will allow us, at least for our considerations, to keep things simple, though in a way which, no doubt, is not actually necessary.

\begin{prop}\label{limofabs} Suppose that $ R=\varinjlim R_\lambda  $ in $ {\mathcal Rng}$, more generally, suppose $ {\mathcal R}=\varinjlim {\mathcal R}_\lambda  $ is a strict colimit of a strictly directed diagram in $ {\mathbb P}{\mathbb R}{\mathbb E}{\mathbb A}{\mathbb D}{\mathbb D} $. Then $ {\rm Ab}({\mathcal R})=\varinjlim {\rm Ab}({\mathcal R}_\lambda ) $ in $ {\mathbb A}{\mathbb B}{\mathbb E}{\mathbb X}$ (and, if appropriate choices of copies of the ${\rm Ab}({\mathcal R}_\lambda)$ are made, this may be taken to be a strict direct limit).
\end{prop}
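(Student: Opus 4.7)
The plan is to exploit the strictness of the $ {\rm Ab}(-) $ construction described just before the statement together with the universal property of the free abelian category (Theorem \ref{freeabthm}). Since $ {\rm Ab}(gf) = {\rm Ab}(g) {\rm Ab}(f) $ on the nose, the strict colimit cocone $ h_\lambda : {\mathcal R}_\lambda \rightarrow {\mathcal R} $ in $ {\mathbb P}{\mathbb R}{\mathbb E}{\mathbb A}{\mathbb D}{\mathbb D} $ induces a strictly commuting family of exact functors $ {\rm Ab}(h_\lambda) : {\rm Ab}({\mathcal R}_\lambda) \rightarrow {\rm Ab}({\mathcal R}) $. Moreover the pp-formula description of $ {\rm Ab}(-) $ shows that these are compatible with the canonical embeddings, $ {\rm Ab}(h_\lambda) i_\lambda = i h_\lambda $ (and similarly $ {\rm Ab}(f_{\lambda\mu}) i_\lambda = i_\mu f_{\lambda\mu} $). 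This gives the candidate strict cocone in $ {\mathbb A}{\mathbb B}{\mathbb E}{\mathbb X} $.

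To verify universality, take any cocone $ \big( {\mathcal B}, (g_\lambda : {\rm Ab}({\mathcal R}_\lambda) \rightarrow {\mathcal B})_\lambda, (\zeta_{\lambda\mu} : g_\mu {\rm Ab}(f_{\lambda\mu}) \Rightarrow g_\lambda)_{\lambda<\mu} \big) $ in $ {\mathbb A}{\mathbb B}{\mathbb E}{\mathbb X} $ satisfying the coherence condition. Precomposing with the embeddings $ i_\lambda $ yields additive functors $ \alpha_\lambda = g_\lambda i_\lambda : {\mathcal R}_\lambda \rightarrow {\mathcal B} $ and natural isomorphisms $ \zeta_{\lambda\mu} i_\lambda : \alpha_\mu f_{\lambda\mu} \Rightarrow \alpha_\lambda $, forming a (weak) cocone on the diagram $ ({\mathcal R}_\lambda) $ in $ {\mathbb P}{\mathbb R}{\mathbb E}{\mathbb A}{\mathbb D}{\mathbb D} $. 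The (pseudo-)colimit property of $ {\mathcal R} $ then produces an additive functor $ \alpha : {\mathcal R} \rightarrow {\mathcal B} $ together with natural isomorphisms $ \eta_\lambda : \alpha h_\lambda \Rightarrow \alpha_\lambda $ compatible with the $ \zeta_{\lambda\mu} i_\lambda $. Theorem \ref{freeabthm} then extends $ \alpha $, uniquely up to natural equivalence, to an exact functor $ h : {\rm Ab}({\mathcal R}) \rightarrow {\mathcal B} $ with $ h i \simeq \alpha $.

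It remains to produce the required natural isomorphisms $ h \cdot {\rm Ab}(h_\lambda) \Rightarrow g_\lambda $. Both sides are exact functors $ {\rm Ab}({\mathcal R}_\lambda) \rightarrow {\mathcal B} $; precomposing with $ i_\lambda $ gives $ h \cdot {\rm Ab}(h_\lambda) \cdot i_\lambda = h i h_\lambda \simeq \alpha h_\lambda $ and $ g_\lambda i_\lambda = \alpha_\lambda $, and these are naturally isomorphic via $ \eta_\lambda $. The essential uniqueness clause of Theorem \ref{freeabthm} then lifts this to the required natural isomorphism between the exact functors themselves. Uniqueness of $ h $ up to natural equivalence is handled similarly: any two candidates agree (up to natural iso) after precomposition with $ i $, so by Theorem \ref{freeabthm} they agree up to natural iso everywhere.

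The main obstacle I anticipate is the bookkeeping for the modification clause of the 2-categorical direct limit, analogous to the lengthy verification at the end of the proof of \ref{abexpbk}. Given a modification between two competing cocones at $ {\mathcal D} $ in $ {\mathbb A}{\mathbb B}{\mathbb E}{\mathbb X} $, one must build a single 2-cell between the induced exact functors out of $ {\rm Ab}({\mathcal R}) $ and check the coherence squares. The strategy is again to restrict everything to the images of the $ i_\lambda $, assemble the data into a natural transformation between the two restrictions to $ i({\mathcal R}) \subseteq {\rm Ab}({\mathcal R}) $, and invoke the essential uniqueness of Theorem \ref{freeabthm} to promote it to a 2-cell between the exact extensions; the strictness of the diagram of $ {\rm Ab}({\mathcal R}_\lambda) $'s keeps this tractable compared to the fully weak case.
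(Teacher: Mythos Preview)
Your overall strategy matches the paper's: restrict the given exact cocone on the $\mathrm{Ab}({\mathcal R}_\lambda)$ to an additive cocone on the ${\mathcal R}_\lambda$ via the canonical embeddings, pass to the colimit ${\mathcal R}$, and then extend to $\mathrm{Ab}({\mathcal R})$ using Theorem~\ref{freeabthm}.

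There is, however, a genuine gap at the step where you write ``The (pseudo-)colimit property of ${\mathcal R}$ then produces an additive functor $\alpha:{\mathcal R}\to{\mathcal B}$\dots''. The hypothesis is only that ${\mathcal R}$ is the \emph{strict} colimit of the strictly directed system; you have not been given, and have not argued, that this strict colimit enjoys the universal property with respect to \emph{weak} cocones (those with nontrivial $\zeta_{\lambda\mu}$). That is precisely what the paper's proof supplies rather than assumes. Working with rings, the paper fixes a base index $\lambda$, sets $S=g_\lambda R_\lambda\in{\mathcal B}$, and uses the coherence condition $\rho_{\mu\nu}\rho_{\lambda\mu}=\rho_{\lambda\nu}$ to transport each $g_\mu R_\mu$ along the isomorphism $(\rho_{\lambda\mu})_{R_\lambda}$ back to $S$. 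The resulting maps $(\rho_{\lambda\mu})_{R_\lambda}\cdot(g_\mu\!\upharpoonright_{R_\mu}):R_\mu\to S$ then form a \emph{strictly} commuting cocone in ${\mathcal Rng}$, to which the strict colimit hypothesis applies directly, yielding $h_0:R\to S$. Only after this strictification does one invoke Theorem~\ref{freeabthm}. Your argument needs either this explicit strictification step, or an independent lemma that strict filtered colimits in ${\mathbb P}{\mathbb R}{\mathbb E}{\mathbb A}{\mathbb D}{\mathbb D}$ are automatically pseudo-colimits; the paper chooses the former route.
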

\begin{proof} (We prove it for rings, the modifications for the more general case being minor.  Also, to keep the notation natural, we write $R$ for the single object of the ring $R$ regarded as a 1-point category.) The $f_{\lambda \mu}:R_\lambda \rightarrow R_\mu$ induce ${\rm Ab}(f_{\lambda \mu}):{\rm Ab}(R_\lambda) \rightarrow {\rm Ab}(R_\mu)$ which, as we have seen above, may be taken to form a strict directed system in ${\mathbb A}{\mathbb B}{\mathbb E}{\mathbb X}$.  Similarly we may take a strictly commuting cocone on this diagram formed by the ${\rm Ab}(f_{\lambda \infty}): {\rm Ab}(R_\lambda) \rightarrow {\rm Ab}(R)$, where $f_{\lambda \infty}:R_\lambda \rightarrow R$ are the maps in ${\mathcal Rng}$ to the direct limit.  We claim that this is a directed colimit in ${\mathbb A}{\mathbb B}{\mathbb E}{\mathbb X}$.

So suppose that we have ${\mathcal B}\in {\mathbb A}{\mathbb B}{\mathbb E}{\mathbb X}$ and for each $\lambda$ an exact functor $g_\lambda: {\rm Ab}(R_\lambda) \rightarrow {\mathcal B}$ such that for each $\mu > \lambda$ there is a natural isomorphism $\rho_{\lambda\mu}: g_\mu {\rm Ab}(f_{\lambda \mu}) \rightarrow g_\lambda$ such that these cohere to make a cocone on the ${\rm Ab}(-)$ diagram; in particular $\rho_{\mu \nu} \rho_{\lambda \mu} = \rho_{\lambda \nu}$.  The canonical embeddings $R_\lambda \rightarrow {\rm Ab}(R_\lambda)$, when composed with the $g_\lambda$, give objects $g_\lambda R_\lambda$ of ${\mathcal B}$ which are linked by the strictly directed system of isomorphisms $((\rho_{\lambda \mu})_{(R_\lambda)})^{-1}:g_\lambda R_\lambda \rightarrow g_\mu R_\mu$.  We choose and fix some $\lambda\in \Lambda$, set $S=g_\lambda R_\lambda$ and note that the $(\rho_{\lambda \mu})_{R_\lambda} (g_\mu \upharpoonright_{R_\mu})$ form a (strictly commuting) cocone in ${\mathcal Rng}$ from the family $(R_\mu)_{\mu\geq \lambda}$ to $S$, where $g_\mu \upharpoonright_{R_\mu}$ denotes the ring homomorphism from $R_\mu$ to $g_\mu R_\mu$ induced by $g_\mu$.  Hence there is an induced morphism, $h_0$, from $R$ to $S$.  We have to extend this to an exact functor $h:{\rm Ab}(R) \rightarrow {\mathcal B}$ which will form a cone (in the 2-category sense) on the directed system $({\rm Ab}(R_\mu))_\mu$.

We note that the images of the $\rho_{\lambda \mu}$ form a strictly cohering (as $\mu$ varies) system of objects and arrows of ${\mathcal B}$ which, together, form a copy of ${\rm Ab}(S)$ and which, by our previous specific construction of ${\rm Ab}(R)$ as a direct limit of the ${\rm Ab}(R_\mu)$, induce, {\it via} the $g_\mu$ (then the $\rho_{\lambda \mu}$) an (exact) functor $h$, extending $h_0$, from ${\rm Ab}(R)$ to this copy of ${\rm Ab}(S)$ and thence to ${\mathcal B}$.  This functor $h$ is, by this construction (admittedly, hardly in the spirit of 2-category theory), such that $h {\rm Ab}(f_{\mu \infty})=g_\lambda$ and so we obtain the statement of the theorem.
\end{proof}

\begin{cor}\label{ablimfinite} Given a skeletally small preadditive category $ {\mathcal R} $ let $ \{ {\mathcal R}_\lambda \}_\lambda  $ be the directed system of its full subcategories with finitely many objects. Then $ {\rm Ab}({\mathcal R})=\varinjlim {\rm Ab}({\mathcal R}_\lambda )$. If $ {\mathcal S}\in {\rm Ser}({\rm Ab}({\mathcal R})) $ then $ {\rm Ab}({\mathcal R})/{\mathcal S}=\varinjlim {\rm Ab}({\mathcal R}_\lambda )/{\mathcal S}_\lambda  $ where $ {\mathcal S}_\lambda ={\mathcal S}\cap {\rm Ab}({\mathcal R}_\lambda )$.
\end{cor}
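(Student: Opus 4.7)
The first assertion is a direct application of Proposition~\ref{limofabs}. The category ${\mathcal R}$ is (tautologically) the strict colimit in ${\mathbb P}{\mathbb R}{\mathbb E}{\mathbb A}{\mathbb D}{\mathbb D}$ of the directed system of its finite full subcategories ${\mathcal R}_\lambda$ (directed by inclusion, with identity coherence 2-cells), so I would simply invoke \ref{limofabs} applied to this system, choosing the specific copies of ${\rm Ab}({\mathcal R}_\lambda)$ constructed via pp-formulas as in Burke's description so as to get a strictly directed diagram with strict colimit ${\rm Ab}({\mathcal R})$.

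For the second assertion, the plan is to compare universal properties. First note that since the canonical exact functor ${\rm Ab}({\mathcal R}_\lambda) \to {\rm Ab}({\mathcal R})$ is exact (it is a morphism in ${\mathbb A}{\mathbb B}{\mathbb E}{\mathbb X}$), the preimage ${\mathcal S}_\lambda$ of ${\mathcal S}$ is indeed a Serre subcategory of ${\rm Ab}({\mathcal R}_\lambda)$ (closure under subobjects, quotients and extensions is preserved by exact-functor preimages). Similarly, for $\lambda < \mu$, the transition functor ${\rm Ab}({\mathcal R}_\lambda) \to {\rm Ab}({\mathcal R}_\mu)$ sends ${\mathcal S}_\lambda$ into ${\mathcal S}_\mu$, so the universal property of the Serre quotient induces strictly commuting exact functors ${\rm Ab}({\mathcal R}_\lambda)/{\mathcal S}_\lambda \to {\rm Ab}({\mathcal R}_\mu)/{\mathcal S}_\mu$ and ${\rm Ab}({\mathcal R}_\lambda)/{\mathcal S}_\lambda \to {\rm Ab}({\mathcal R})/{\mathcal S}$. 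The latter form a cocone, so they induce a canonical exact comparison functor $\Phi: \varinjlim {\rm Ab}({\mathcal R}_\lambda)/{\mathcal S}_\lambda \to {\rm Ab}({\mathcal R})/{\mathcal S}$.

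To show $\Phi$ is an equivalence I would check essential surjectivity and fully faithfulness directly, using the concrete construction of the colimit in the proof of \ref{abexlims}. Essential surjectivity is straightforward: any object of ${\rm Ab}({\mathcal R})/{\mathcal S}$ is represented by an object of ${\rm Ab}({\mathcal R})$, which by the first part is isomorphic to the image of some $A_\lambda \in {\rm Ab}({\mathcal R}_\lambda)$, and the class of $A_\lambda$ in the colimit $\varinjlim {\rm Ab}({\mathcal R}_\lambda)/{\mathcal S}_\lambda$ maps to the given object. For fully faithfulness, one uses that a morphism in ${\rm Ab}({\mathcal R})/{\mathcal S}$ is represented by a roof $A \leftarrow A' \to B/B'$ with $A/A', B' \in {\mathcal S}$; by the first part every such roof can be realized (up to the equivalences used in forming both quotients) inside some ${\rm Ab}({\mathcal R}_\lambda)$, where the subobjects and quotients involved are automatically in ${\mathcal S}_\lambda$, giving a preimage under $\Phi$; and two roofs agreeing after quotienting by ${\mathcal S}$ can be pushed to a common stage where they already agree after quotienting by ${\mathcal S}_\lambda$. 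The main obstacle I anticipate is the bookkeeping in this fully faithfulness step: one has to chase the equivalence relations defining the Serre quotient through the equivalence relations defining the weak direct limit of \ref{abexlims}, and confirm that the possibility of enlarging $\lambda$ enough to witness an equality simultaneously at the level of the colimit and at the level of each quotient. Once that compatibility is in hand, both $\Phi$ and its inverse are determined, and the colimit identification follows.
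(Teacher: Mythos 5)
Your proposal is correct and follows the paper's own route: the first assertion is obtained exactly as in the paper by applying \ref{limofabs} to the strictly directed (by inclusion) system of finite full subcategories, after tacitly replacing ${\mathcal R}$ by a small equivalent category so that the system is indexed by a set. For the second assertion the paper says only ``similarly, making use of the observations before \ref{limofabs}''; your comparison-functor argument (exact functors induced on the Serre quotients, then essential surjectivity and full faithfulness checked by realising objects, roofs and eventual agreements at finite stages of the colimit constructed in \ref{abexlims}) is the natural filling-in of that remark and is sound.
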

\begin{proof} First replace ${\mathcal R}$ by a small category to which it is equivalent, so that we have a directed system over a set.  Then note that this directed system in ${\mathbb P}{\mathbb R}{\mathbb E}{\mathbb A}{\mathbb D}{\mathbb D} $  is strictly directed (by inclusions) and has ${\mathcal R}$ as its strict direct limit so, by \ref{limofabs}, we have the first assertion.  Similarly, and making use of the observations before \ref{limofabs}, we obtain the second statement.
\end{proof}

\begin{lemma}\label{abisquot} Each $ {\mathcal A}\in {\mathbb A}{\mathbb B}{\mathbb E}{\mathbb X} $ is equivalent to $ {\rm Ab}({\mathcal R})/{\mathcal S} $ for some small preadditive $ {\mathcal R} $ and some Serre subcategory $ {\mathcal S} $ of $ {\rm Ab}({\mathcal R})$.
\end{lemma}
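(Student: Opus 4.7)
The plan is to take ${\mathcal R} = {\mathcal A}$ itself, regarded just as a (skeletally small) preadditive category, and apply the free abelian category construction to the identity functor. By Theorem \ref{freeabthm}, the additive functor ${\rm id}_{\mathcal A} : {\mathcal A} \to {\mathcal A}$ factors through the canonical embedding $i : {\mathcal A} \to {\rm Ab}({\mathcal A})$ via a unique-to-natural-equivalence exact functor $F : {\rm Ab}({\mathcal A}) \to {\mathcal A}$ with $F \circ i \simeq {\rm id}_{\mathcal A}$. This $F$ is the morphism in ${\mathbb A}{\mathbb B}{\mathbb E}{\mathbb X}$ that we will exhibit as a localization.

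Next I would verify that $F$ is a localization in the sense defined just before \ref{freeabthm}, that is, that its image is full in ${\mathcal A}$ and contains an isomorphic copy of every object of ${\mathcal A}$. Essential surjectivity is immediate, since for any $A \in {\mathcal A}$ the object $i(A) \in {\rm Ab}({\mathcal A})$ satisfies $F(i(A)) \simeq A$. Fullness of the image is equally direct: given $A, B \in {\mathcal A}$ and $f : A \to B$ in ${\mathcal A}$, the morphism $i(f) : i(A) \to i(B)$ in ${\rm Ab}({\mathcal A})$ maps under $F$ to a morphism naturally identified with $f$, using the natural isomorphism $F \circ i \simeq {\rm id}_{\mathcal A}$.

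Having established that $F$ is a localization, I would invoke the standard fact recalled in this section that any localization $F : {\mathcal A}' \to {\mathcal B}'$ in ${\mathbb A}{\mathbb B}{\mathbb E}{\mathbb X}$ induces an equivalence ${\mathcal A}'/{\rm ker}(F) \xrightarrow{\simeq} {\mathcal B}'$. Applying this to our $F : {\rm Ab}({\mathcal A}) \to {\mathcal A}$ and setting ${\mathcal S} = {\rm ker}(F)$, which is a Serre subcategory of ${\rm Ab}({\mathcal A})$ by the remarks preceding \ref{freeabthm}, we obtain the required equivalence ${\mathcal A} \simeq {\rm Ab}({\mathcal A})/{\mathcal S}$.

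There is no real obstacle here; the statement is essentially a direct combination of the universal property of ${\rm Ab}({\mathcal A})$ with the basic dictionary between exact functors and Serre quotients. The only point that requires a moment of care is the verification that the image of $F$ is full, and this is where the particular choice ${\mathcal R} = {\mathcal A}$ (rather than some proper subcategory of generators) pays off, since the extension $F$ then splits the Yoneda-style embedding $i$ up to natural isomorphism and so automatically hits every morphism of ${\mathcal A}$.
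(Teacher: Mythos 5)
Your strategy differs from the paper's: you work with the counit $F:{\rm Ab}({\mathcal A})\rightarrow {\mathcal A}$ of the free abelian category construction, whereas the paper sets ${\mathcal D}={\rm Ex}({\mathcal A},{\bf Ab})$, realises it as a definable subcategory of some ${\rm Mod}\mbox{-}{\mathcal R}$, and reads off ${\mathcal A}\simeq {\rm fun}({\mathcal D})$ as a Serre quotient of ${\rm fun}({\rm Mod}\mbox{-}{\mathcal R})={\rm Ab}({\mathcal R})^{\rm op}$ using \ref{defisexfun} and the correspondence of \ref{ppsortquot}. Your route can be made to work, and the conclusion you want is true, but there is a genuine gap at exactly the step you call ``equally direct'': fullness. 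Fullness of the image of $F$ (or even the weaker property actually needed, fullness of the induced faithful, essentially surjective functor ${\rm Ab}({\mathcal A})/{\rm ker}(F)\rightarrow {\mathcal A}$) quantifies over \emph{all} pairs of objects $X,Y$ of ${\rm Ab}({\mathcal A})$: every $h:FX\rightarrow FY$ in ${\mathcal A}$ must come from a morphism $X\rightarrow Y$ (or from a suitable roof). You verify this only for $X=i(A)$, $Y=i(B)$ in the image of the unit. That is a very thin slice of ${\rm Ab}({\mathcal A})$: the free abelian category is built by freely adjoining kernels and cokernels, so a typical object is a formal subquotient of some $i(A)$ and is not isomorphic to any $i(A)$. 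Nor does the general case reduce to yours: although $FX$ is isomorphic in ${\mathcal A}$ to some $F(i(A))$, transporting $h$ along such an isomorphism and lifting the result to $i(f)$ does not produce a morphism $X\rightarrow Y$ unless one can also lift the comparison isomorphisms $FX\cong F(i(A))$ to ${\rm Ab}({\mathcal A})$ --- which is the fullness problem all over again.

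To close the gap one needs a real argument. One clean option: show that an exact functor ${\rm Ab}({\mathcal A})\rightarrow {\mathcal B}$ annihilates ${\mathcal S}={\rm ker}(F)$ if and only if its restriction along $i$ is exact as a functor ${\mathcal A}\rightarrow {\mathcal B}$ (for the forward direction, the objects of ${\rm Ab}({\mathcal A})$ measuring the failure of $i$ to preserve a given short exact sequence are killed by $F$, since $Fi\simeq {\rm id}_{\mathcal A}$ is exact, hence lie in ${\mathcal S}$; for the converse, the uniqueness clause of \ref{freeabthm} identifies the extension of an exact $\alpha$ with $\alpha F$, which kills ${\mathcal S}$). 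This yields ${\rm Ex}({\rm Ab}({\mathcal A})/{\mathcal S},{\mathcal B})\simeq {\rm Ex}({\mathcal A},{\mathcal B})$ naturally in ${\mathcal B}$, whence ${\rm Ab}({\mathcal A})/{\mathcal S}\simeq {\mathcal A}$. Alternatively, follow the paper and obtain the quotient presentation from the definable-category machinery. Either way, the fullness you assert is a consequence of such an argument, not an immediate computation with the unit.
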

\begin{proof} To see this we may, for example, set $ {\mathcal D}={\rm Ex}({\mathcal A},{\bf Ab}) $ - a definable category, hence a definable subcategory of some $ {\rm Mod}\mbox{-}{\mathcal R}$. Then $ {\mathcal A} $ is a quotient of $ {\rm fun}({\rm Mod}\mbox{-}{\mathcal R})={\rm Ab}({\mathcal R})^{\rm op} $ by some Serre subcategory.
\end{proof}

\begin{cor}\label{abaslim} If $ {\mathcal A}\in {\mathbb A}{\mathbb B}{\mathbb E}{\mathbb X} $ then $ {\mathcal A}\simeq \varinjlim {\mathcal A}_\lambda  $ where each category $ {\mathcal A}_\lambda  $ is equivalent to one the form $ {\rm Ab}(R_\lambda )/{\mathcal S}_\lambda  $ with $ R_\lambda  $ a ring and $ {\mathcal S}_\lambda \in {\rm Ser}({\rm Ab}(R_\lambda ))$.
\end{cor}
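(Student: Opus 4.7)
The plan is to combine the three preceding results: Lemma \ref{abisquot}, Corollary \ref{ablimfinite}, and a standard Morita-style reduction from preadditive categories with finitely many objects to rings.

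First, I would apply Lemma \ref{abisquot} to write ${\mathcal A} \simeq {\rm Ab}({\mathcal R})/{\mathcal S}$ for some skeletally small preadditive ${\mathcal R}$ and some ${\mathcal S} \in {\rm Ser}({\rm Ab}({\mathcal R}))$. Then Corollary \ref{ablimfinite} immediately gives the presentation ${\mathcal A} \simeq \varinjlim_\lambda {\rm Ab}({\mathcal R}_\lambda)/{\mathcal S}_\lambda$ where ${\mathcal R}_\lambda$ runs through the full subcategories of (a small model of) ${\mathcal R}$ having finitely many objects and ${\mathcal S}_\lambda = {\mathcal S} \cap {\rm Ab}({\mathcal R}_\lambda)$.

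The remaining step is to replace each finite-object preadditive category ${\mathcal R}_\lambda$, whose objects are $X_1^\lambda,\dots,X_{n_\lambda}^\lambda$, by an honest ring $R_\lambda$ in such a way that ${\rm Ab}({\mathcal R}_\lambda) \simeq {\rm Ab}(R_\lambda)$ via an equivalence that carries ${\mathcal S}_\lambda$ to some Serre subcategory $\widetilde{\mathcal S}_\lambda$ of ${\rm Ab}(R_\lambda)$. The natural choice is $R_\lambda = {\rm End}\bigl(\bigoplus_{i=1}^{n_\lambda} X_i^\lambda\bigr)$, computed in the additive hull ${\mathcal R}_\lambda^{\rm add}$. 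The inclusion ${\mathcal R}_\lambda \hookrightarrow {\mathcal R}_\lambda^{\rm add}$ induces an equivalence on module categories, and the object $\bigoplus X_i^\lambda$ is a progenerator of ${\mathcal R}_\lambda^{\rm add}$, so by Morita equivalence ${\mathcal R}_\lambda^{\rm add}\mbox{-}{\rm Mod} \simeq R_\lambda\mbox{-}{\rm Mod}$. Since, by Theorem \ref{freeabthm}, the free abelian category on a preadditive category depends only on its category of modules (equivalently, it is characterised by a universal property that is invariant under additive-equivalence of the source), we obtain ${\rm Ab}({\mathcal R}_\lambda) \simeq {\rm Ab}({\mathcal R}_\lambda^{\rm add}) \simeq {\rm Ab}(R_\lambda)$.

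Transporting ${\mathcal S}_\lambda$ across this equivalence to $\widetilde{\mathcal S}_\lambda \subseteq {\rm Ab}(R_\lambda)$, and noting that the transition functors ${\rm Ab}({\mathcal R}_\lambda)/{\mathcal S}_\lambda \to {\rm Ab}({\mathcal R}_\mu)/{\mathcal S}_\mu$ correspond under these equivalences to exact functors ${\rm Ab}(R_\lambda)/\widetilde{\mathcal S}_\lambda \to {\rm Ab}(R_\mu)/\widetilde{\mathcal S}_\mu$, the directed colimit in Corollary \ref{ablimfinite} rewrites as ${\mathcal A} \simeq \varinjlim_\lambda {\rm Ab}(R_\lambda)/\widetilde{\mathcal S}_\lambda$, as required. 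The main obstacle I expect is bookkeeping rather than a genuine difficulty: one must check that the Morita replacement ${\mathcal R}_\lambda \leadsto R_\lambda$ can be made functorially enough (or at least up to natural isomorphism) to turn the directed system of \ref{ablimfinite} into a directed system of ring-based free abelian categories; however, since ${\mathbb A}{\mathbb B}{\mathbb E}{\mathbb X}$ is a 2-category and \ref{abexlims} allows weakly directed systems, replacing the ${\rm Ab}({\mathcal R}_\lambda)$ by equivalent copies ${\rm Ab}(R_\lambda)$ does not disturb the colimit.
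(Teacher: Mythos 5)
Your proposal is correct and follows essentially the same route as the paper: \ref{abisquot} to present ${\mathcal A}$ as ${\rm Ab}({\mathcal R})/{\mathcal S}$, then \ref{ablimfinite} for the directed colimit over finite-object full subcategories, then the Morita replacement of each ${\mathcal R}_\lambda$ by ${\rm End}\bigl(\bigoplus_i X_i^\lambda\bigr)$. You simply spell out in more detail the step ${\rm Ab}({\mathcal R}_\lambda)\simeq {\rm Ab}(R_\lambda)$, which the paper asserts in one parenthetical clause.
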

\begin{proof} By \ref{abisquot}, $ {\mathcal A}$ is equivalent to some ${\rm Ab}({\mathcal R})/{\mathcal S} $ and, by \ref{ablimfinite}, $ {\rm Ab}({\mathcal R})=\varinjlim {\rm Ab}({\mathcal R}_\lambda ) $ where each $ {\mathcal R}_\lambda  $ has just finitely many objects and hence is equivalent to a ring $ R_\lambda  $ (with $ {\rm Ab}({\mathcal R}_\lambda )\simeq {\rm Ab}(R_\lambda )$).
\end{proof}

When using this result we will typically write $ {\mathcal A} = \varinjlim {\mathcal A}_\lambda  $ in line with our use of ``$=$" between categories to mean naturally equivalent.

We will say that a category ${\mathcal A} \in {\mathbb A}{\mathbb B}{\mathbb E}{\mathbb X}$ is {\bf finitely presented} if whenever we have a directed system $\big( ({\mathcal B}_\lambda)_{\lambda\in \Lambda}, (g_{\lambda \mu }:B_\lambda \rightarrow B_\mu)_{\lambda <\mu}, (\phi_{\lambda \mu \nu}:g_{\mu\nu}g_{\lambda\mu}\Rightarrow g_{\lambda\nu})_{\lambda<\mu <\nu}\big)$ with direct limit $\big({\mathcal B}, (g_{\lambda \infty}:{\mathcal B}_\lambda \rightarrow {\mathcal B})_\lambda, \theta_{\lambda \mu}: g_{\mu \infty}g_{\lambda \mu} \Rightarrow g_{\lambda \infty}\big)$ and an exact functor $h:{\mathcal A} \rightarrow {\mathcal B}$ there is a factorisation through the directed system in the sense that there is $\lambda$, $h_\lambda :{\mathcal A} \rightarrow {\mathcal B}_\lambda$ and a natural isomorphism $\eta: h \Rightarrow g_{\lambda \infty}h_\lambda$ for some some $\lambda$.  Note that this then induces a family of arrows $h_\mu =g_{\lambda \mu}h_\lambda : {\mathcal A} \rightarrow {\mathcal B}_\mu$ and a coherent (using the $\phi_{\lambda \mu \nu}$) family $\theta_{\lambda \mu}^{-1} \eta: h\Rightarrow g_{\mu \infty} h_\mu$ of natural equivalences.  Write ${\mathbb A}{\mathbb B}{\mathbb E}{\mathbb X}^{\rm fp}$ for the full sub-2-category on the finitely presented objects.

\begin{theorem}\label{abfprng} (1) If $ R $ is a ring then $R$ is finitely presented iff $ {\rm Ab}(R)\in {\mathbb A}{\mathbb B}{\mathbb E}{\mathbb X}^{\rm fp}$.

\noindent (2) $ {\mathcal A}\in {\mathbb A}{\mathbb B}{\mathbb E}{\mathbb X}^{\rm fp} $ iff $ {\mathcal A}\simeq {\rm Ab}(R)/{\mathcal S} $ for some finitely presented ring $ R $ and finitely generated Serre subcategory of ${\rm Ab}({\mathcal R})$, where by that we mean that $ {\mathcal S}=\langle S\rangle  $ for some $ S\in {\rm Ab}(R)$.
\end{theorem}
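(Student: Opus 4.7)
For part (1), assume first that $R$ is finitely presented and apply the universal property \ref{freeabthm}: an exact functor $h:{\rm Ab}(R)\to{\mathcal B}$ is the same as a pair $(B,\,\phi:R\to{\rm End}_{\mathcal B}(B))$ with $B=h\bar R$. In a directed colimit ${\mathcal B}=\varinjlim{\mathcal B}_\lambda$ in ${\mathbb A}{\mathbb B}{\mathbb E}{\mathbb X}$, the construction in the proof of \ref{abexlims} shows that $B$ is represented by some $B_\mu\in{\mathcal B}_\mu$ and that ${\rm End}_{\mathcal B}(B)=\varinjlim_{\nu\geq\mu}{\rm End}_{{\mathcal B}_\nu}(g_{\mu\nu}B_\mu)$ as a filtered colimit of rings; finite presentability of $R$ in ${\mathcal Rng}$ makes $\phi$ factor through one stage, which factors $h$. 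Conversely, if ${\rm Ab}(R)$ is finitely presented, write $R=\varinjlim R_\alpha$ with each $R_\alpha$ a finitely presented ring; \ref{limofabs} yields ${\rm Ab}(R)=\varinjlim{\rm Ab}(R_\alpha)$, and the identity factors as $g_\alpha h\cong{\rm id}$ for some $h:{\rm Ab}(R)\to{\rm Ab}(R_\alpha)$. The object $h\bar R\in{\rm Ab}(R_\alpha)$ satisfies $g_\alpha(h\bar R)\cong\bar R$ in the colimit, so the construction of arrows in \ref{abexlims} gives $\beta\geq\alpha$ with $g_{\alpha\beta}(h\bar R)\cong\bar R_\beta$ in ${\rm Ab}(R_\beta)$ (use that a zigzag giving the isomorphism, together with the relations $ff'=1$ and $f'f=1$, is eventually realised at one level in a strictly directed diagram). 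Replacing $\alpha$ by $\beta$, the induced ring map $R\to{\rm End}_{{\rm Ab}(R_\beta)}(\bar R_\beta)=R_\beta$ splits the colimit map $R_\beta\to R$, exhibiting $R$ as a retract of the finitely presented ring $R_\beta$, hence itself finitely presented.

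For the ($\Leftarrow$) direction of (2), given an exact $h:{\rm Ab}(R)/\langle S\rangle\to\varinjlim{\mathcal B}_\lambda$, precompose with the localisation $\pi:{\rm Ab}(R)\to{\rm Ab}(R)/\langle S\rangle$. By part (1), ${\rm Ab}(R)$ is finitely presented, so $h\pi$ factors through some ${\mathcal B}_\mu$ as $g_{\mu\infty}k$; then $g_{\mu\infty}k(S)\cong h\pi(S)=0$ in ${\mathcal B}$, so $k(S)$ is already zero in some ${\mathcal B}_\nu$ with $\nu\geq\mu$ (again by \ref{abexlims}), and $g_{\mu\nu}k$ kills $\langle S\rangle$ and so descends to a functor ${\mathcal A}\to{\mathcal B}_\nu$. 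Since $\pi$ is an epimorphism in ${\mathbb A}{\mathbb B}{\mathbb E}{\mathbb X}$ by \ref{episindef}, the factorisation of $h\pi$ lifts to one of $h$.

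For the ($\Rightarrow$) direction of (2), combine \ref{abisquot}, \ref{ablimfinite}, the expression of every ring as a filtered colimit of finitely presented rings, and the expression of every Serre subcategory as a filtered colimit of its singly generated sub-Serre subcategories, to write ${\mathcal A}\simeq\varinjlim{\rm Ab}(R_\alpha)/\langle S_\alpha\rangle$ with each $R_\alpha$ finitely presented and each $S_\alpha$ a single object. Finite presentability of ${\mathcal A}$ then produces a retraction ${\mathcal A}\xrightarrow{h}{\rm Ab}(R)/\langle S\rangle\xrightarrow{\pi}{\mathcal A}$ with $\pi h\cong{\rm id}_{\mathcal A}$; the retraction $\pi$ is exact, essentially surjective, and full (the last because $\pi h\cong{\rm id}$), hence a localisation, so ${\mathcal A}\simeq{\rm Ab}(R)/\tilde{\mathcal N}$ for some Serre subcategory $\tilde{\mathcal N}\supseteq\langle S\rangle$ of ${\rm Ab}(R)$. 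The main obstacle is to upgrade this to a singly generated quotient, that is to show $\tilde{\mathcal N}=\langle T\rangle$ for some single $T\in{\rm Ab}(R)$. The intended route is to use that ${\rm Ab}(R)$ is generated as an abelian category by the single object $\bar R$, together with the right adjoint $i$ of the localisation (recalled in Section \ref{secabex}), and to identify $\tilde{\mathcal N}$ with the Serre subcategory generated by the kernel and cokernel of the unit $\eta_{\bar R}:\bar R\to i\pi\bar R$, which is a single object up to direct sum. The delicate point is that $i$ is not exact, so naturality of $\eta$ must be supplemented by an induction over the abelian-category construction of objects from $\bar R$, or by reapplying finite presentability of ${\mathcal A}$ to the directed decomposition $\tilde{\mathcal N}=\varinjlim\langle T_\beta\rangle$ into its singly generated sub-Serre subcategories and then showing that the section ${\mathcal A}\to{\rm Ab}(R)/\langle T_{\beta_0}\rangle$ forces the further quotient to ${\mathcal A}$ to be an equivalence.
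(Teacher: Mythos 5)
Your treatment of part (1) and of the ($\Leftarrow$) direction of part (2) follows the paper's own argument (directed systems of endomorphism rings of representatives of the distinguished object; lifting $h\pi$ through the system, killing $S$ at a later stage, and descending along the epimorphism $\pi$) and is sound. The genuine gap is in the ($\Rightarrow$) direction of (2), and it is twofold. First, the inference ``$\pi h\cong{\rm id}_{\mathcal A}$, hence $\pi$ is full, hence a localisation'' is invalid: an exact, essentially surjective functor with a section need be neither full nor a Serre quotient. For instance $F:{\rm mod}\mbox{-}k\times{\rm mod}\mbox{-}k\rightarrow{\rm mod}\mbox{-}k$, $F(V,W)=V\oplus W$, has the exact section $V\mapsto(V,0)$ and zero kernel, but is not full and is not a localisation. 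In your combined system indexed by pairs $(R_\alpha,S_\alpha)$ the transition maps are induced by ring homomorphisms and are not epimorphisms of ${\mathbb A}{\mathbb B}{\mathbb E}{\mathbb X}$ (cf.\ \ref{defnotepi}), so nothing upgrades the retraction ${\mathcal A}\xrightarrow{h}{\rm Ab}(R)/\langle S\rangle\xrightarrow{\pi}{\mathcal A}$ to a quotient functor; the identification ${\mathcal A}\simeq{\rm Ab}(R)/\tilde{\mathcal N}$ is therefore unjustified. Second, even granting that identification, you explicitly leave the single-generation of $\tilde{\mathcal N}$ as an ``intended route''; the route {\it via} the unit $\eta_{\bar R}$ founders, as you yourself note, on the non-exactness of the right adjoint $i$, and is not carried out.

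The paper avoids both difficulties by performing the two decompositions in the opposite order. It first writes ${\mathcal A}={\rm Ab}({\mathcal R})/{\mathcal S}=\varinjlim_{S\in{\mathcal S}}{\rm Ab}({\mathcal R})/\langle S\rangle$, a system whose transition maps and colimit maps are themselves quotient functors and hence epimorphisms of ${\mathbb A}{\mathbb B}{\mathbb E}{\mathbb X}$ by \ref{episindef}(1); this is exactly what allows a one-sided inverse to collapse the retraction to an equivalence, so that ${\mathcal A}\simeq{\rm Ab}({\mathcal R})/\langle S\rangle$ with the Serre subcategory already singly generated. Only afterwards does it decompose over the finite full subcategories of ${\mathcal R}$ (\ref{ablimfinite}, \ref{abaslim}) to replace ${\mathcal R}$ by a finitely presented ring. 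Your own second fallback --- reapplying finite presentability to $\tilde{\mathcal N}=\varinjlim\langle T_\beta\rangle$ --- is essentially this step of the paper, and it works precisely because there the structure maps are localisations; that should be the main argument rather than an afterthought, and the claim that a section alone makes $\pi$ a localisation should be dropped.
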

\begin{proof} (1)($\Leftarrow $) Set $ R=\varinjlim R_\lambda  $ with $ R_\lambda  $ finitely presented so, by \ref{limofabs}, $ {\rm Ab}(R)=\varinjlim {\rm Ab}(R_\lambda ) $ and hence $ {\rm id}_{{\rm Ab}(R)} $ factors, up to natural equivalence, through some $ {\rm Ab}(R_\lambda )$. The distinguished objects $ R $ of $ {\rm Ab}(R ) $ and $ R_\lambda  $ of $ {\rm Ab}(R_\lambda ) $ are, therefore, isomorphic.

\noindent ($\Rightarrow $) Suppose that $\big({\mathcal B}, (g_{\lambda \infty}:{\mathcal B}_\lambda \rightarrow {\mathcal B})_\lambda, \theta_{\lambda \mu}: g_{\mu \infty}g_{\lambda \mu} \Rightarrow g_{\lambda \infty}\big)$, with $\theta_{\lambda \mu} \theta_{\mu \nu} = \theta_{\lambda \nu}$, is a directed colimit in $ {\mathbb A}{\mathbb B}{\mathbb E}{\mathbb X} $ of a system $\big( ({\mathcal B}_\lambda)_{\lambda\in \Lambda}, (g_{\lambda \mu }:B_\lambda \rightarrow B_\mu)_{\lambda <\mu}, (\phi_{\lambda \mu \nu}:g_{\mu\nu}g_{\lambda\mu}\rightarrow g_{\lambda\nu})_{\lambda<\mu <\nu}\big)$ as above and suppose that we have $ h:{\rm Ab}(R)\rightarrow {\mathcal B}$, hence, writing $R$ in place of $\ast_R$, $ R\rightarrow {\rm Ab}(R)\rightarrow {\mathcal B} $ with image $ R'$, say, in $ {\mathcal B}$.  We may (e.g.~see the proof of \ref{abexlims}) replace ${\mathcal B}$ with an equivalent subcategory where every object and arrow is in the image of some $g_{\lambda \infty}$ so we may assume that $R'$ is the image, $g_{\lambda\infty}B_0$ for some object $B_0$ of some chosen and then fixed ${\mathcal B}_\lambda$.  Define the directed system in $ {\mathcal Rng} $ (the objects being regarded as one-point preadditive categories) with index set $\{ \mu: \mu \geq \lambda\}$ and which, at $\mu>\lambda$ has the object $f_{\lambda\mu}B_0$ and, for $\mu < \nu$ has the functor/ring homomorphism $(\phi_{\lambda\mu\nu})_{B_0}g_{\mu\nu}: g_{\lambda\mu}B_0 \rightarrow g_{\lambda\nu}B_0$.  That this is a directed system in $ {\mathcal Rng} $ follows from the coherence condition on the $\phi$ in the definition of directed system.  Since $ {\mathcal B}=\varinjlim {\mathcal B}_\lambda  $, the direct limit of this system is (isomorphic to) $ R' $.  Since $ R $ is finitely presented this functor $ R\rightarrow {\mathcal B} $ therefore factors through the system at some $ f_{\lambda\mu}B_0$ and then that map induces $ h_\lambda:{\rm Ab}(R)\rightarrow {\mathcal B}_\lambda  $ which lifts $ h:{\rm Ab}(R)\rightarrow B $ through the system in the sense described in the definition of finitely presented, as required.

(2)($\Rightarrow $) The category ${\mathcal A}$ is, by \ref{abisquot}, up to equivalence, ${\rm Ab}({\mathcal R})/{\mathcal S}$ for some ${\mathcal R}$ and ${\mathcal S} \in {\rm Ser}({\rm Ab}({\mathcal R}))$.  We have $ {\mathcal S}=\varinjlim _{S\in {\mathcal S}}\langle S\rangle  $ and, correspondingly, $ {\mathcal A}={\rm Ab}({\mathcal R})/{\mathcal S}=\varinjlim {\rm Ab}({\mathcal R})/\langle S\rangle  $ so, if $ {\mathcal A} $ is finitely presented, already $ {\mathcal A}={\rm Ab}({\mathcal R})/\langle S\rangle  $ for some $ S\in {\mathcal S}$. Also (\ref{abaslim}), $ {\mathcal A}=\varinjlim {\rm Ab}(R)/{\rm Ab}(R)\cap \langle S\rangle  $ where the limit is taken over (rings equivalent to) the finite sets of objects of $ {\mathcal R}$, and so, if $ {\mathcal A} $ is finitely presented then it has the form claimed.

\noindent ($\Leftarrow $) Suppose we have $ {\mathcal A} $ as in the statement and, using brief notation, $ {\mathcal A}\rightarrow {\mathcal B}=\varinjlim {\mathcal B}_\lambda  $ a directed colimit in $ {\mathbb A}{\mathbb B}{\mathbb E}{\mathbb X}$. The composition $ R\rightarrow {\rm Ab}(R)\rightarrow {\mathcal A}={\rm Ab}(R)/{\mathcal S}\rightarrow {\mathcal B} $ lifts through some $k:{\rm Ab}(R) \rightarrow {\mathcal B}_\lambda$ as indicated, using the data of the directed system to construct a directed system of rings as in the first part of the proof and then using that $ R $ is finitely presented.

$\xymatrix{R \ar[r] \ar@{.>}[dr] & {\rm Ab}(R) \ar[r] \ar@{.>}[d]_{k} & {\mathcal A} \ar[dr] \ar@{.>}[d] \\ & {\mathcal B}_\lambda \ar[r] \ar@/_/[rr] & {\mathcal B}_\mu \ar[r] & {\mathcal B}}$

\noindent The composition of $ R\rightarrow {\mathcal B}_\lambda  $ with $ B_\lambda \rightarrow {\mathcal B} $ induces an exact functor $ {\rm Ab}({\mathcal R})\rightarrow {\mathcal B}$ which must be equivalent to the given functor from ${\rm Ab}(R)$ to ${\mathcal B}$.

Since $ {\mathcal S}=\langle S\rangle  $ and $ {\rm Ab}(R) \rightarrow {\mathcal A}\rightarrow {\mathcal B} $ sends $ S $ to $ 0$, there is $ \mu \geq \lambda  $ such that the composite $ {\mathcal S}\rightarrow {\rm Ab}(R)\rightarrow {\mathcal B}_\lambda \rightarrow {\mathcal B}_\mu  $ is $ 0$ and so there is $ {\mathcal A}\rightarrow {\mathcal B}_\mu  $ as shown making the square commute.

The composites $ R\rightarrow {\rm Ab}(R)\rightarrow {\mathcal A}\rightarrow {\mathcal B} $ and $ R\rightarrow {\rm Ab}(R)\rightarrow {\mathcal B}_\lambda \rightarrow {\mathcal B}_\mu \rightarrow {\mathcal B} $ agree up to isomorphism on $ R $ and so $ {\rm Ab}(R)\rightarrow {\mathcal A}\rightarrow {\mathcal B} $ and $ {\rm Ab}(R)\rightarrow {\mathcal B}_\mu \rightarrow {\mathcal B} $ are, by \ref{freeabthm}, naturally equivalent; both have kernel $ {\mathcal S}$, so they induce an equivalence between $ {\mathcal A}\rightarrow {\mathcal B} $ and $ {\mathcal A}\rightarrow {\mathcal B}_\mu \rightarrow {\mathcal B}$, as required.
\end{proof}

Thus $ {\mathbb A}{\mathbb B}{\mathbb E}{\mathbb X} $ has a set of objects which are finitely presented and such that every object of $ {\mathbb A}{\mathbb B}{\mathbb E}{\mathbb X} $ is equivalent to the directed colimit of a diagram composed of copies of these.

\subsection{Abelian categories as schemes}\label{secabsch}

The replacement of a ring by its free abelian category and the role that small abelian categories play, as categories of imaginaries, in the model theory of additive structures (\cite{PreNBK}, \cite{PreDefAddCat}, \cite{PreAxtFlat}), strongly suggest the heuristic that small abelian categories are a generalisation of (some aspects of) rings.  This, especially in view of the connection with Ziegler and rep-Zariski spectra (e.g., \cite{PreNBK}, \cite{PreRajShv}), in turn suggests developing some additive version of the theory of schemes with $ {\mathbb A}{\mathbb B}{\mathbb E}{\mathbb X}$ playing the role of the category of commutative rings and the category ${\bf Ab}$ replacing the category of sets in the functor-of-points approach (and with the associated presheaves of abelian categories - see, e.g.~\cite{PreRajShv} - being the analogues of varieties and schemes).

In fact, there seem to be two natural ways of embedding the category of (all unital) rings into this context.  One is {\it via} the free abelian category construction (see \ref{freeabthm}) but the resulting ``geometry'' is really that of the representation theory of the ring, rather than that of the ring itself.  An embedding that perhaps better reflects classical algebraic geometry is obtained by taking a ring $R$ to ${\mathcal A}(R)$ - the smallest abelian subcategory of ${\rm Mod}\mbox{-}R$ which contains the category of finitely presented modules; in this case, however, it seems that we should restrict ring morphisms to be the flat ones, and we do lose information, see below.

We now make the obvious definitions but must point out that the notion of ``affine'' scheme in this context is unclear, for instance since many non-affine schemes, such as the projective line can already be found as the associated geometry of a category in $ {\mathbb A}{\mathbb B}{\mathbb E}{\mathbb X}$.

\subsubsection{The functor of points view}\label{secfunpt}

Recall that if $ Z={\rm Spec}(R) $ is an affine scheme then by a scheme over $ Z $ one means a morphism $ X\rightarrow Z $ of schemes. If $ X $ is affine, say $ X={\rm Spec}(S)$, then this is equivalent to a morphism $ R\rightarrow S $ in the category of (commutative) rings. If we are thinking of $ {\mathbb A}{\mathbb B}{\mathbb E}{\mathbb X} $ as a generalisation of the category of commutative rings and if we assume that any reasonable embedding of the latter category in the former is covariant, then it seems reasonable to say that a {\bf scheme over} $ {\mathcal A}\in {\mathbb A}{\mathbb B}{\mathbb E}{\mathbb X} $ is a morphism $ f:{\mathcal A}\rightarrow {\mathcal B} $ in $ {\mathbb A}{\mathbb B}{\mathbb E}{\mathbb X}$. By the anti-equivalence between $ {\mathbb A}{\mathbb B}{\mathbb E}{\mathbb X} $ and $ {\mathbb D}{\mathbb E}{\mathbb F} $ we can extend the terminology, by saying that an {\bf scheme over} $ {\mathcal C}={\rm Ex}({\mathcal A},{\bf Ab}) $ is a morphism $ {\mathcal D}={\rm Ex}({\mathcal B},{\bf Ab})\rightarrow {\mathcal C}$. In model-theoretic terms this is a coherent way of interpreting in each $ D\in {\mathcal D} $ some $ {\mathcal C}$-structure.

For example, a morphism $ f:R\rightarrow S $ of rings induces an exact functor $ {\rm Ab}(f):{\rm Ab}(R)\rightarrow {\rm Ab}(S) $ and the corresponding morphism of definable categories is just the induced restriction-of-scalars functor ${\rm Mod}\mbox{-}S\rightarrow {\rm Mod}\mbox{-}R $ which clearly fits the description of being a coherent way of interpreting an $ R$-module in each $ S$-module.

An alternative view of a morphism $ X\rightarrow Z $ of schemes is that it is an $ X$-point of $ Z$, so we may extend that terminology also, saying that a morphism $ {\mathcal A}\rightarrow {\mathcal B} $ in $ {\mathbb A}{\mathbb B}{\mathbb E}{\mathbb X} $ is, as well as a scheme over $ {\mathcal A}$, a $ {\mathcal B}$-{\bf point} of $ {\mathcal A}$. The collection of $ {\mathcal B}$-points of $ {\mathcal A}$, being just $ {\rm Ex}({\mathcal B},{\mathcal A})$, has a natural structure of an additive category.

A particularly important case is where $ {\mathcal A}={\rm Ab}({\mathbb Z}) $ is the free abelian category of $ {\mathbb Z}$. A scheme over $ {\rm Ab}({\mathbb Z}) $ is an exact functor $ {\rm Ab}({\mathbb Z})\rightarrow {\mathcal B} $ to a small abelian category. By \ref{freeabthm} and since $ {\mathbb Z} $ has just the one object, and since there is no choice about where to send the endomorphisms of that object, a $ {\mathcal B}$-point of $ {\rm Ab}({\mathbb Z}) $ is simply an object of $ {\mathcal B}$. The corresponding morphisms of definable categories are just the functors in ${\mathbb D}{\mathbb E}{\mathbb F}$ from $ {\mathcal D}={\rm Ex}({\mathcal B},{\bf Ab}) $ to $ {\rm Ex}({\rm Ab}({\mathbb Z}),{\bf Ab})={\bf Ab} $ and we know that the category of these is $ {\rm fun}({\mathcal D})={\mathcal B}$. That is, a scheme over $ {\rm Ab}({\mathbb Z}) $ is just a pp-pair (in the theory of some definable additive category) and the collection of $ {\mathcal B}$-points of $ {\rm Ab}({\mathbb Z}) $ is the category $ {\mathcal B} $ itself (in this case an abelian category though we have already noted, \ref{notab}, that $ {\rm Ex}({\mathcal A},{\mathcal B}) $ is not in general abelian).

This also suggests the view that fixing $ {\mathcal A} $ is fixing an abelian language and then a functor from a definable category to $ {\rm Ex}({\mathcal A},{\bf Ab}) $ is a generalised pp formula, picking out not just a single abelian group (and by implication its associated imaginary sorts) but a fixed collection of sorts and maps between them (and their associated sorts). Then a morphism $ {\mathcal D}= {\rm Ex}({\mathcal B},{\bf Ab}) \rightarrow {\mathcal C}= {\rm Ex}({\mathcal A},{\bf Ab}) $ in ${\mathbb D}{\mathbb E}{\mathbb F}$ is an interpretation of an (exact) $ {\mathcal A}$-structure in each (exact) $ {\mathcal B}$-structure.

\subsubsection{Rings to Abelian Categories}\label{secrngtoab}

We describe two ways of embedding the category $ {\mathcal Rng} $ of rings, more generally the 2-category $ {\mathbb P}{\mathbb R}{\mathbb E}{\mathbb A}{\mathbb D}{\mathbb D}$ of skeletally small preadditive categories, into ${\mathbb A}{\mathbb B}{\mathbb E}{\mathbb X}$.  For simplicity, we will deal just with rings (usually we think of ${\mathcal Rng} $ as an ordinary category but if $f,g:R\rightarrow S$ are homomorphisms of rings then a natural transformation/2-arrow from $f$ to $g$ is just a homomorphism from $S$ regarded as an $R$-module {\it via} $f$ to $S$ regarded as an $R$-module {\it via} $g$.)

The first is the free-abelian weak 2-functor $ {\rm Ab}(-)$, which takes $ {\mathcal R}\in {\mathbb P}{\mathbb R}{\mathbb E}{\mathbb A}{\mathbb D}{\mathbb D}$, to its free abelian category $ {\rm Ab}({\mathcal R}) $ (composing with duality also gives the contravariant version, taking $ {\mathcal R} $ to $ {\rm Ab}({\mathcal R})^{\rm op}$).  The corresponding definable category is $ {\rm Ex}({\rm Ab}(R)^{({\rm op})},{\bf Ab}) \simeq R^{({\rm op})}\mbox{-}{\rm Mod} $ whose (rep-Zariski) spectrum is in general much larger than any notion of $ {\rm Spec}(R)$. So, while that embedding is a natural one, it seems not to be the geometrically natural one (unless we restrict to, say, the subcategory of injectives).  Nevertheless, it is there and probably should be seen as describing an embedding of the representation theory of a ring rather than the geometry of a ring.

\begin{lemma}\label{evalatM}\marginpar{evalatM} If $ M\in R\mbox{-}{\rm Mod} $ then the exact functor it induces from $ {\rm Ab}(R) $ to $ {\bf Ab} $ by virtue of commutativity of the diagram shown

$\xymatrix{R \ar[rr] \ar[dr] & & {\rm Ab}(R) \ar@{.>}[dl]^{{\rm ev}_M } \\ & {\bf Ab}}$

\noindent  is $ F\in (R\mbox{-}{\rm mod},{\bf Ab})^{\rm fp} \mapsto \overline{F}M$ where the latter is the extension of $F$ to a functor on $R\mbox{-}{\rm Mod}$ which commutes with direct limits. We denote this by $ {\rm ev}_M $.
\end{lemma}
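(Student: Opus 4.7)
The plan is to use the identification $\mathrm{Ab}(R) \simeq (R\mbox{-}\mathrm{mod},\mathbf{Ab})^{\mathrm{fp}}$ from Theorem \ref{freeabthm} and then apply the uniqueness-to-natural-equivalence clause of that theorem. Explicitly, I will define the candidate functor $\mathrm{ev}_M : \mathrm{Ab}(R) \to \mathbf{Ab}$ by $F \mapsto \overline{F}M$, show that it is exact, and show that its composition with $i : R \to \mathrm{Ab}(R)$ agrees (as an additive functor from the one-object category $R$ to $\mathbf{Ab}$) with the module $M$. Then Theorem \ref{freeabthm} forces $\mathrm{ev}_M$ to be the induced functor $\alpha'_M$ in the triangle, up to natural equivalence.

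First I would recall that each $F \in (R\mbox{-}\mathrm{mod},\mathbf{Ab})^{\mathrm{fp}}$ has a canonical extension $\overline{F}: R\mbox{-}\mathrm{Mod} \to \mathbf{Ab}$ which is uniquely determined by the requirement that it commute with direct limits and agree with $F$ on $R\mbox{-}\mathrm{mod}$. The assignment $F \mapsto \overline{F}$ is functorial (via componentwise action of natural transformations on direct limits), so $\mathrm{ev}_M = (-)M \circ \overline{(-)}$ is certainly an additive functor from $\mathrm{Ab}(R)$ to $\mathbf{Ab}$.

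Next I would verify exactness of $\mathrm{ev}_M$. A short exact sequence $0 \to F' \to F \to F'' \to 0$ in $(R\mbox{-}\mathrm{mod},\mathbf{Ab})^{\mathrm{fp}}$ is, by definition, componentwise exact. Since $M = \varinjlim M_\lambda$ with $M_\lambda \in R\mbox{-}\mathrm{mod}$ and since each $\overline{F}$ is defined so as to commute with this direct limit, one has $\overline{F}M = \varinjlim F(M_\lambda)$, and similarly for $F'$ and $F''$. Exactness of filtered colimits in $\mathbf{Ab}$ then gives exactness of $0 \to \overline{F'}M \to \overline{F}M \to \overline{F''}M \to 0$, so $\mathrm{ev}_M$ is exact.

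Finally I would check the commutativity of the triangle at the distinguished object. The image of $\ast_R$ under $i$ is the representable $((\ast_R,-),-) = \mathrm{Hom}_{R\mbox{-}\mathrm{mod}}({}_RR,-)$, and this functor itself already commutes with direct limits (as ${}_RR$ is finitely presented), so its extension is again $\mathrm{Hom}({}_RR,-)$, whose value at $M$ is naturally isomorphic to $M$. Moreover, the action of $\mathrm{ev}_M \circ i$ on the endomorphism ring of $\ast_R$ reproduces the $R$-action on $M$, since precomposition by an element $r \in R = \mathrm{End}(\ast_R)$ on $\mathrm{Hom}({}_RR,M)$ corresponds to right multiplication, hence to the action of $r$ under the isomorphism $\mathrm{Hom}({}_RR,M) \cong M$. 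Thus $\mathrm{ev}_M \circ i$ agrees with the functor $\alpha_M : R \to \mathbf{Ab}$ defined by $M$, and the uniqueness-to-natural-equivalence clause of Theorem \ref{freeabthm} identifies $\mathrm{ev}_M$ with the canonical extension. The only subtle step is the exactness verification, and it reduces to the commutation of $\overline{(-)}$ with direct limits combined with exactness of filtered colimits in $\mathbf{Ab}$.
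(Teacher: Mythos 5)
Your proof is correct. The paper states this lemma without giving a proof, treating it as immediate from Theorem \ref{freeabthm}; your argument --- checking that $F\mapsto \overline{F}M$ is exact (by writing $M$ as a directed colimit of finitely presented modules, using that (co)kernels in $(R\mbox{-}{\rm mod},{\bf Ab})^{\rm fp}$ are computed componentwise and that filtered colimits in ${\bf Ab}$ are exact) and that its composition with $i$ recovers $M$ as an $R$-module, then invoking the uniqueness-to-natural-equivalence clause --- is precisely the verification the statement implicitly relies on.
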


\begin{cor}\label{abfrng}\marginpar{abfrng} Any morphism $ f:R\rightarrow S $ of rings induces a unique-to-natural-equivalence (or even unique, see Section \ref{secabexfinacc}) exact functor ${\rm Ab}(f):{\rm Ab}(R)\rightarrow {\rm Ab}(S) $ as shown.

$\xymatrix{R \ar[r] \ar[d]_f & {\rm Ab}(R) \ar@{.>}[d]^{{\rm Ab}(f)} \\ S \ar[r] & {\rm Ab}(S)}$

If $ F\in {\rm Ab}(R)=(R\mbox{-}{\rm mod},{\bf Ab})^{\rm fp} $ then $ {\rm Ab}(f)F $ is defined as a functor from $ S\mbox{-}{\rm mod} $ to $ {\bf Ab} $ by $ {\rm Ab}(f)F.(_SM)=\overline{F}(_RM)$.
\end{cor}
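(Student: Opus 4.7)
The plan is to apply Theorem \ref{freeabthm} directly to the composite additive functor $R \xrightarrow{f} S \hookrightarrow {\rm Ab}(S)$, where $S \hookrightarrow {\rm Ab}(S)$ is the canonical embedding. Since ${\rm Ab}(S)$ is abelian, the universal property of the free abelian category gives an exact functor, unique up to natural equivalence, from ${\rm Ab}(R)$ to ${\rm Ab}(S)$ factoring this composite through $R \hookrightarrow {\rm Ab}(R)$; this is the required ${\rm Ab}(f)$, and the displayed square commutes by construction. (The uniqueness-to-equivalence can be upgraded to strict uniqueness by the discussion preceding \ref{limofabs}, where ${\rm Ab}(f)$ was constructed at the level of pp-formulas via $f_\ast$.)

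For the explicit description on objects, the strategy is to use \ref{evalatM} together with the same universal property. Given $_SM \in S\mbox{-}{\rm Mod}$, which becomes $_RM$ via restriction of scalars along $f$, we have two exact functors ${\rm Ab}(R) \to {\bf Ab}$: on the one hand ${\rm ev}_{_RM}$, and on the other hand the composition ${\rm ev}_{_SM} \circ {\rm Ab}(f)$. Both are exact (the second as the composition of exact functors), and when restricted along the canonical embedding $R \hookrightarrow {\rm Ab}(R)$ both send the distinguished object to $M$ with the $R$-action factoring through $f$. By the uniqueness clause of \ref{freeabthm} there is a natural equivalence
\[ {\rm ev}_{_SM} \circ {\rm Ab}(f) \;\simeq\; {\rm ev}_{_RM} . \]
Evaluating this natural equivalence at $F \in {\rm Ab}(R) = (R\mbox{-}{\rm mod},{\bf Ab})^{\rm fp}$ and unfolding the definition of ${\rm ev}$ from \ref{evalatM} yields $\overline{{\rm Ab}(f)F}(_SM) = \overline{F}(_RM)$, which is the stated formula (here $\overline{{\rm Ab}(f)F}$ denotes the direct-limit-preserving extension of ${\rm Ab}(f)F$ to $S\mbox{-}{\rm Mod}$, agreeing with ${\rm Ab}(f)F$ on finitely presented left $S$-modules).

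I do not expect any genuine obstacle: both assertions are immediate consequences of \ref{freeabthm} applied twice (once to construct ${\rm Ab}(f)$, once to identify it on points). The only mildly delicate point is the coherent use of the direct-limit extension $\overline{(-)}$ when one is handling arbitrary (not necessarily finitely presented) modules $_SM$, so in the formula one should be careful to note that ${\rm Ab}(f)F$ is a priori defined on $S\mbox{-}{\rm mod}$ and must be extended to all of $S\mbox{-}{\rm Mod}$ to evaluate at $_SM$; with that convention fixed, the displayed identity is exactly what the universal property forces.
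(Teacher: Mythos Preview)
Your proposal is correct and matches the paper's approach. The paper gives no formal proof for this corollary, treating it as immediate from \ref{freeabthm} and \ref{evalatM}; the discussion immediately following the corollary spells out exactly your key identity ${\rm ev}_{_SM}\circ{\rm Ab}(f)\simeq{\rm ev}_{_RM}$ via the commutative triangle obtained by stacking $R\to S\to{\bf Ab}$ under the square, which is precisely your second paragraph.
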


The induced effect on modules, that is, the restriction-of-scalars functor from ${\rm Mod}\mbox{-}S$ to ${\rm Mod}\mbox{-}R$, can be seen from the commutative diagram below,

$\xymatrix{R \ar[r] \ar[d]_f & {\rm Ab}(R) \ar@{.>}[d]^{{\rm Ab}(f)} \\ S \ar[r] \ar[dr]_M & {\rm Ab}(S) \ar[d]^{{\rm ev}_M} \\ & {\bf Ab}}$

\noindent noting that the composition $ Mf $ is $ _RM:R\rightarrow {\bf Ab} $ and that, by the above lemma, the, exact, composition $ {\rm ev}_{_SM}{\rm Ab}(f) $ must be equivalent to $ {\rm ev}_{_RM}$.

If $ R\xrightarrow{f} S\xrightarrow{g} T $ in $ {\mathcal Rng} $ then $ {\rm Ab}(gf) $ is naturally equivalent to (can even, as in Section \ref{secabexfinacc}, be taken to equal) $ {\rm Ab}(g){\rm Ab}(f) $; so $ {\rm Ab}(-) $ is a type of 2-functor.

\vspace{4pt}

A possibly better embedding of algebraic geometry {\it per se} is the replacement of $ R $ by $ {\mathcal A}(R) $ (which, in the case that $R$ is right coherent, is just the category ${\rm mod}\mbox{-}R$).  In the case that $ R $ is commutative noetherian this is equivalent, {\it via} the the category of injective $R$-modules (on account of the natural bijection between primes and indecomposable injectives), to considering $ {\rm Spec}(R)$. It turns out, however, that $ R\mapsto  {\mathcal A}(R) $ is functorial only if we restrict to flat morphisms between rings. There is also the issue that this process seems not to capture closed subschemes - just the reduced variety (though this is not surprising since we are capturing only $ {\rm Spec}(R) $).

Using that $ {\rm Ex}({\rm Ab}({\mathcal R}),{\bf Ab})\simeq {\mathcal R}\mbox{-}{\rm Flat} $ (\ref{cohexgen}) we have the following.  Recall from \cite[\S 6]{PreRajShv} that ${\mathcal A}(R)=(R\mbox{-}{\rm mod}, {\bf Ab})/\{F: F(_RR)=0\}$.

\begin{prop}\label{Sfpflat} If $ f:R\rightarrow S $ is a homomorphism of rings then this induces, {\it via} $ {\rm Ab}(f)$ in the representation of ${\mathcal A}(R)$ as a quotient category of ${\rm Ab}(R)$, an exact functor $ {\mathcal A}(f):{\mathcal A}(R)\rightarrow {\mathcal A}(S) $ iff $ S\in \langle _R R\rangle  $ that is, iff $ _RS $ is fp-flat in the terminology of \cite{GarkGendual}.
In particular if $_RS$ is flat then $ {\mathcal A}(f) $ is an exact functor and, if $ R $ is right coherent, then this condition is also necessary.
\end{prop}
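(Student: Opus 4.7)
The plan is to exploit the description of ${\mathcal A}(R)$ as the quotient of ${\rm Ab}(R)$ recalled just before the proposition and reduce the whole question to the behaviour of ${\rm Ab}(f)$ on a specific Serre subcategory.  Write ${\mathcal S}_R:=\{F\in {\rm Ab}(R): F(_RR)=0\}$ and ${\mathcal S}_S:=\{G\in {\rm Ab}(S): G(_SS)=0\}$, so that ${\mathcal A}(R)\simeq {\rm Ab}(R)/{\mathcal S}_R$ and ${\mathcal A}(S)\simeq {\rm Ab}(S)/{\mathcal S}_S$.  Since the localisation functors $\pi_R,\pi_S$ and ${\rm Ab}(f)$ are all exact, a (necessarily exact) functor ${\mathcal A}(f)$ factoring $\pi_S\cdot{\rm Ab}(f)$ through $\pi_R$ exists if and only if ${\rm Ab}(f)$ carries ${\mathcal S}_R$ into ${\mathcal S}_S$.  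So the entire issue is whether that inclusion holds.

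Next I would compute the condition explicitly.  By \ref{evalatM} and \ref{abfrng}, for $F\in {\rm Ab}(R)$ one has $({\rm Ab}(f)F)(_SS)=\overline{F}(_RS)$, where $_RS$ denotes $S$ regarded as a left $R$-module via $f$ and $\overline{F}$ is the unique extension of $F$ to $R\mbox{-}{\rm Mod}$ commuting with direct limits.  Hence ${\rm Ab}(f)({\mathcal S}_R)\subseteq {\mathcal S}_S$ reads: whenever $F\in {\rm Ab}(R)$ satisfies $F(_RR)=0$, one has $\overline{F}(_RS)=0$.  Under the standard correspondence between Serre subcategories of ${\rm Ab}(R)$ and definable subcategories of $R\mbox{-}{\rm Mod}$, ${\mathcal S}_R$ is precisely the Serre subcategory attached to the definable subcategory $\langle _RR\rangle$, so this is equivalent to $_RS\in \langle _RR\rangle$, which is the definition of $_RS$ being fp-flat in \cite{GarkGendual}.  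That establishes the first ``iff''.

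For the final sentence, suppose $_RS$ is flat.  By Lazard's theorem it is a directed colimit of finitely generated free left $R$-modules; since $\langle _RR\rangle$ is a definable subcategory (hence closed under finite direct sums and directed colimits) and contains $_RR$, we get $_RS\in \langle _RR\rangle$, so ${\mathcal A}(f)$ exists.  Conversely, when $R$ is right coherent, \ref{cohexgen} gives that $R\mbox{-}{\rm Flat}$ is itself a definable subcategory of $R\mbox{-}{\rm Mod}$; since it contains $_RR$ we obtain the inclusion $\langle _RR\rangle\subseteq R\mbox{-}{\rm Flat}$, and so the existence of ${\mathcal A}(f)$ forces $_RS$ to be flat.

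The one step requiring a little care is the identification of ${\mathcal S}_R$ with the Serre subcategory corresponding to $\langle _RR\rangle$: the content is that for a finitely presented functor $F$, $\overline{F}$ vanishes on every module in $\langle _RR\rangle$ iff $\overline{F}(_RR)=F(_RR)=0$, which follows because definable subcategories are cut out by the vanishing of such $\overline{F}$ and because the closure operations (direct sums, directed colimits, products, pure subobjects) generating $\langle _RR\rangle$ from $_RR$ all preserve the vanishing of $\overline{F}$.  Everything else is just the universal property of quotient categories.
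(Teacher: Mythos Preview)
Your proof is correct and follows essentially the same approach as the paper: reduce the existence of ${\mathcal A}(f)$ to the inclusion ${\rm Ab}(f)({\mathcal S}_R)\subseteq {\mathcal S}_S$, compute $({\rm Ab}(f)F)(_SS)=\overline F(_RS)$ via \ref{abfrng}, and identify the resulting vanishing condition with $_RS\in\langle _RR\rangle$. Your argument is in fact more detailed than the paper's, which gives only the core computation and leaves the ``In particular'' clause (Lazard for flat $\Rightarrow$ fp-flat, and $\langle _RR\rangle\subseteq R\mbox{-}{\rm Flat}$ in the coherent case) implicit.
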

\begin{proof} For $ F\in {\rm Ab}(R)$, $ {\rm Ab}(f)F(_S S)=\overline{F}(_RS) $ which will be $ 0 $ for all such $ F $ iff $ _RS\in \langle _R R\rangle $. Thus $ _RS\in \langle _R R\rangle$  iff $ {\rm Ab}(f){\mathcal Z}_R\subseteq {\mathcal Z}_S$ (where ${\mathcal Z}_R$ denotes the Serre subcategory of functors which annihilate the module $R$).
\end{proof}

If $R$ is right coherent, so ${\mathcal A}(R)={\rm mod}\mbox{-}R$, then the functor ${\mathcal A}(f)$  is just  $M_R \mapsto M\otimes_RS_S$.  For, since $_RS$ is fp-flat and every $R$-module in ${\mathcal A}(R)$ is finitely presented, that functor is exact; also the two functors agree on $R_R$, taking it to $S_S$; therefore, by exactness, these functors agree on (projective presentations of) finitely presented modules.  Indeed, by \cite[3.1]{Garkrelhom} (and \ref{abfrng}) this description of ${\mathcal A}(f)$ is valid for any ring $R$.

In fact, and somewhat explaining this (in view of \ref{cohexgendual}) we have the following.

\begin{prop}\label{Sfpflat2} If $ f:R\rightarrow S $ is a homomorphism of rings then the restriction of scalars functor $ {\rm Mod}\mbox{-}S\rightarrow {\rm Mod}\mbox{-}R $ takes $ {\rm Inj}\mbox{-}S $ to $ {\rm Inj}\mbox{-}R $ iff $ _RS $ is flat.
\end{prop}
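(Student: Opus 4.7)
The plan is to base everything on the Hom--tensor adjunction for the ring homomorphism $f:R\to S$. Viewing $S$ as an $(R,S)$-bimodule via $f$, the functor $-\otimes_R S:{\rm Mod}\mbox{-}R \to {\rm Mod}\mbox{-}S$ is left adjoint to restriction of scalars $(-)_R:{\rm Mod}\mbox{-}S \to {\rm Mod}\mbox{-}R$, so that for every $M\in {\rm Mod}\mbox{-}R$ and every $E\in {\rm Mod}\mbox{-}S$ one has a natural isomorphism
$$\mathrm{Hom}_S(M\otimes_R S,\,E) \;\cong\; \mathrm{Hom}_R(M,\,E_R).$$
Both directions of the proposition will be read off from this isomorphism.

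For the ``if'' direction, suppose $_R S$ is flat and let $E$ be an injective right $S$-module. Then $\mathrm{Hom}_S(-,E)$ is exact on ${\rm Mod}\mbox{-}S$ (by injectivity of $E$), and $-\otimes_R S$ is exact on ${\rm Mod}\mbox{-}R$ (by flatness of $_RS$). Their composition is $\mathrm{Hom}_R(-,E_R)$, which is therefore exact on ${\rm Mod}\mbox{-}R$, so $E_R$ is injective.

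For the ``only if'' direction, pick an injective cogenerator $E$ of ${\rm Mod}\mbox{-}S$; the standard choice $E=\mathrm{Hom}_{\mathbb{Z}}(S,\mathbb{Q}/\mathbb{Z})$ works. By hypothesis $E_R$ is injective in ${\rm Mod}\mbox{-}R$, so $\mathrm{Hom}_R(-,E_R)$ is exact. Via the adjunction above this says that $\mathrm{Hom}_S(-\otimes_R S,E)$ is exact as a functor on ${\rm Mod}\mbox{-}R$. Since $E$ is a cogenerator of ${\rm Mod}\mbox{-}S$, the functor $\mathrm{Hom}_S(-,E)$ reflects exactness, and therefore $-\otimes_R S$ is exact; equivalently, $_RS$ is flat.

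No real obstacle is expected: the only thing to verify carefully is the variance/bimodule conventions for the adjunction (so that restriction of scalars really is the right adjoint of $-\otimes_R S$) and the existence of an injective cogenerator of ${\rm Mod}\mbox{-}S$, both of which are standard. One could alternatively argue the converse by a direct Baer-type test, but the cogenerator argument is cleaner and sits well with the functorial style of the surrounding text.
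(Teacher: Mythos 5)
Your proof is correct and is essentially the paper's argument: the forward direction is the Hom--tensor adjunction for $-\otimes_R S \dashv (-)_R$ combined with exactness of $-\otimes_R S$ (the paper packages this as the change-of-rings isomorphism ${\rm Ext}^1_R(N_R,M_R)\simeq {\rm Ext}^1_S(N\otimes_RS,M_S)$), and the converse in both cases comes down to applying the hypothesis to the single character module ${\rm Hom}_{\mathbb Z}(S,{\mathbb Q}/{\mathbb Z})$. The only difference is cosmetic: the paper cites the criterion ``$_RS$ is flat iff ${\rm Hom}_{\mathbb Z}(S,{\mathbb Q}/{\mathbb Z})_R$ is injective'' from Stenstr\"{o}m, whereas you re-derive the relevant direction via the injective-cogenerator property.
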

\begin{proof} If $ f $ is flat then for $ N_R $ and $ M_S$, $ {\rm Ext}^1_R(N_R,M_R) \simeq  {\rm Ext}^1_S(N\otimes _RS,M_S) $ so, if $ M_S $ is injective then so is $ M_R$.

For the converse we use that a module $ M_R $ is flat iff $ {\rm Hom}_{\mathbb Z}(M,{\mathbb Q}/{\mathbb Z}) $ is an injective $ R$-module (e.g. \cite[I.10.5]{Ste}). In particular $ S_R $ is flat iff $ {\rm Hom}_{\mathbb Z}(S,{\mathbb Q}/{\mathbb Z})_R $ is injective, so, if restriction of scalars preserves injectives then $ S_R $ is flat.
\end{proof}

That is, attempting to use $ {\mathcal A}(R) $ in place of $ {\rm Ab}(R) $ is no more than using $ {\rm Ab}(R) $ then trying to restrict to ${\rm Inj}\mbox{-}R$.

\begin{example}\label{nonredsch}\marginpar{nonredsch} Let $ R=k[\epsilon] = k[X]/\langle X\rangle ^2$
The definable category $ R\mbox{-}{\rm Proj} $ corresponding to $ {\mathcal A}(R)={\rm mod}\mbox{-}R $ has no proper definable subcategories. Yet $ R $ is the coordinate ring of a non-reduced scheme which has the proper subscheme corresponding to $ R\rightarrow R/\langle \epsilon\rangle  $ - a non-flat morphism.
\end{example}

This shows that the second embedding does not capture all the geometry we would wish; the first embedding does preserve all the structure but in general represents much more than the geometry of which $R$ is the coordinate ring.

\section{The 2-category of definable additive categories}\label{secdefcats}

A category is {\bf definable} if it is equivalent to a {\bf definable subcategory} of a module category $ {\rm Mod}\mbox{-}{\mathcal R} $, meaning a full subcategory which is closed under direct products, direct limits and pure subobjects.

Examples include, as well as module categories, the finitely accessible additive categories with products; in particular locally finitely presentable (in the terminology of \cite{AdRo}, \cite{GabUl}) additive categories are definable.  So both the category of torsionfree and the category of torsion ({\it sic}) abelian groups are definable.  Any definable subcategory of a definable category is definable.  The category of injective abelian groups is definable but has no non-zero finitely presented object.  The book \cite{PreNBK} is, in part, a compendium of examples of these.

Those are the objects of $ {\mathbb D}{\mathbb E}{\mathbb F}$; the morphisms are those functors which commute with direct products and direct limits (equivalently, see \cite[\S 25]{PreDefAddCat}, the model-theoretic interpretation functors).  These categories were originally studied in the context of the model theory of modules (see \cite{Zie}, \cite{PreBk}), as those subclasses of module categories which are axiomatised by implications between pp formulas; they were focussed on and named by Crawley-Boevey (\cite{C-BTrond}).  In fact they arise in a variety of contexts and, as seen in \ref{3cats}, they may be (re)presented in diverse ways.

Recall that ${\rm fun}({\mathcal D})$ denotes the skeletally small abelian category $({\mathcal D},{\bf Ab})^{\rightarrow \prod}$ and that in case ${\mathcal D}={\rm Mod}\mbox{-}{\mathcal R}$ this category, which we also write as ${\rm fun}\mbox{-}{\mathcal R}$, is equivalent to $({\rm mod}\mbox{-}{\mathcal R}, {\bf Ab})^{\rm fp}$.  In \cite[\S 12.3]{PreNBK}, ${\rm fun}({\mathcal D})$ was defined {\it via} localisation from the case ${\mathcal D} = {\rm Mod}\mbox{-}{\mathcal R}$ and here we are using the main theorem, 12.10, of \cite{PreDefAddCat} to define it as above directly from ${\mathcal D}$.  There is a natural correspondence, {\it via} annihilation, between definable subcategories of ${\mathcal D}$ and Serre subcategories of ${\rm fun}({\mathcal D})$.

\begin{theorem}\label{ppsortquot} (\cite{HerzCat},\cite{KraHab}; see, e.g., \cite[12.3.20]{PreNBK}) If $ {\mathcal D}' $ is a definable subcategory of the definable category $ {\mathcal D} $ and $ {\mathcal S}_{{\mathcal D}'} =\{ F\in {\rm fun}({\mathcal D}): FD=0 \mbox{ for all } D\in {\mathcal D}'\} $ denotes its annihilator in $ {\rm fun}({\mathcal D}) $ then ${\mathcal S}_{{\mathcal D}'}$ is a Serre subcategory of ${\rm fun}({\mathcal D})$ and the quotient category $ {\rm fun}({\mathcal D})/{\mathcal S}_{{\mathcal D}} $ is, in a natural way, naturally equivalent to $ {\rm fun}({\mathcal D}')$.  Every Serre subcategory of ${\rm fun}({\mathcal D})$ arises in this way.
\end{theorem}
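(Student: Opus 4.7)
The plan is to produce an exact restriction functor from ${\rm fun}({\mathcal D})$ to ${\rm fun}({\mathcal D}')$ whose kernel is ${\mathcal S}_{{\mathcal D}'}$, identify the induced functor on the quotient as an equivalence, and then extract the Serre-subcategory bijection by transferring from the known module-category case.  First, for each $D\in{\mathcal D}$ the evaluation ${\rm ev}_D:F\mapsto FD$ is an exact functor from ${\rm fun}({\mathcal D})$ to ${\bf Ab}$: the abelian structure on ${\rm fun}({\mathcal D})=({\mathcal D},{\bf Ab})^{\rightarrow\prod}$ is pointwise, because given $\alpha:F\to G$ the pointwise functors $D\mapsto\ker(\alpha_D)$ and $D\mapsto{\rm coker}(\alpha_D)$ still commute with products (kernels and cokernels in ${\bf Ab}$ commute with products) and with directed colimits (directed colimits are exact in ${\bf Ab}$), and so compute the (co)kernel inside ${\rm fun}({\mathcal D})$. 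Hence ${\mathcal S}_{{\mathcal D}'}=\bigcap_{D\in{\mathcal D}'}\ker({\rm ev}_D)$ is a Serre subcategory. Moreover, since ${\mathcal D}'$ is closed under the relevant products and directed colimits formed in ${\mathcal D}$, restriction $R:F\mapsto F|_{{\mathcal D}'}$ takes ${\rm fun}({\mathcal D})$ into ${\rm fun}({\mathcal D}')$, is exact by the same pointwise argument, has $\ker(R)={\mathcal S}_{{\mathcal D}'}$, and so factors through a faithful exact $\overline R:{\rm fun}({\mathcal D})/{\mathcal S}_{{\mathcal D}'}\to{\rm fun}({\mathcal D}')$.

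To prove that $\overline R$ is an equivalence I would reduce to the case ${\mathcal D}={\rm Mod}\mbox{-}{\mathcal R}$, where the assertion is the classical Herzog--Krause correspondence.  Present ${\mathcal D}$ as a definable subcategory of some ${\rm Mod}\mbox{-}{\mathcal R}$, so that ${\rm fun}({\mathcal D})\simeq{\rm fun}\mbox{-}{\mathcal R}/{\mathcal S}_{\mathcal D}$, with ${\mathcal S}_{\mathcal D}$ the annihilator of ${\mathcal D}$ in ${\rm fun}\mbox{-}{\mathcal R}$.  Since ${\mathcal D}'\subseteq{\mathcal D}\subseteq{\rm Mod}\mbox{-}{\mathcal R}$ is then also definable in ${\rm Mod}\mbox{-}{\mathcal R}$, with annihilator $\widetilde{{\mathcal S}_{{\mathcal D}'}}\supseteq{\mathcal S}_{\mathcal D}$, the base case provides ${\rm fun}({\mathcal D}')\simeq{\rm fun}\mbox{-}{\mathcal R}/\widetilde{{\mathcal S}_{{\mathcal D}'}}$.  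Transitivity of Serre localisation (quotienting ${\rm fun}\mbox{-}{\mathcal R}$ first by ${\mathcal S}_{\mathcal D}$, then by the image of $\widetilde{{\mathcal S}_{{\mathcal D}'}}$ in ${\rm fun}({\mathcal D})$, which is precisely ${\mathcal S}_{{\mathcal D}'}$) then gives ${\rm fun}({\mathcal D})/{\mathcal S}_{{\mathcal D}'}\simeq{\rm fun}\mbox{-}{\mathcal R}/\widetilde{{\mathcal S}_{{\mathcal D}'}}\simeq{\rm fun}({\mathcal D}')$ naturally, and the identification is visibly implemented by $\overline R$.

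For the final clause, given a Serre ${\mathcal S}\subseteq{\rm fun}({\mathcal D})$ pull it back along ${\rm fun}\mbox{-}{\mathcal R}\to{\rm fun}({\mathcal D})$ to a Serre subcategory $\widetilde{\mathcal S}\supseteq{\mathcal S}_{\mathcal D}$ in ${\rm fun}\mbox{-}{\mathcal R}$; by the Herzog--Krause correspondence, $\widetilde{\mathcal S}$ is the annihilator of a unique definable subcategory of ${\rm Mod}\mbox{-}{\mathcal R}$, which since $\widetilde{\mathcal S}\supseteq{\mathcal S}_{\mathcal D}$ lies inside ${\mathcal D}$, yielding a definable ${\mathcal D}_{\mathcal S}\subseteq{\mathcal D}$ satisfying ${\mathcal D}_{\mathcal S}=\{D\in{\mathcal D}:FD=0\text{ for all }F\in{\mathcal S}\}$ and ${\mathcal S}_{{\mathcal D}_{\mathcal S}}={\mathcal S}$.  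The main obstacle is precisely that the essential surjectivity and fullness of $\overline R$ are not formal consequences of the pointwise arguments of the first paragraph: they are really the content of the Herzog--Krause theorem transported from ${\rm Mod}\mbox{-}{\mathcal R}$ to ${\mathcal D}$, so this plan is honestly a transfer argument and must invoke the base case established in \cite{HerzCat} and \cite{KraHab}.
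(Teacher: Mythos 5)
The paper gives no proof of this result at all, simply citing Herzog, Krause and \cite[12.3.20]{PreNBK}; your argument --- pointwise (co)kernels in $({\mathcal D},{\bf Ab})^{\rightarrow\prod}$ making each ${\rm ev}_D$ and the restriction $R$ exact with Serre kernel ${\mathcal S}_{{\mathcal D}'}$, followed by transfer from the base case ${\mathcal D}={\rm Mod}\mbox{-}{\mathcal R}$ using transitivity of Serre localisation --- is correct and is precisely the standard route by which the general statement is obtained from those references (the paper itself notes, just before the theorem, that ${\rm fun}({\mathcal D})$ was originally defined in \cite[\S 12.3]{PreNBK} {\it via} localisation from the module-category case). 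Your explicit acknowledgement that fullness and essential surjectivity of $\overline{R}$, and hence the real content, come from the cited base case rather than from the formal pointwise arguments is accurate, so there is nothing to fix.
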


We restate part of \ref{3cats} for convenience of reference.

\begin{theorem}\label{defisexfun} Suppose that ${\mathcal D}$ is a definable category; then ${\mathcal D} \simeq {\rm Ex}({\rm fun}({\cal D}),{\bf Ab})$.

Suppose that ${\mathcal A}$ is a skeletally small abelian category, then ${\mathcal A} \simeq {\rm fun}\big( {\rm Ex}({\mathcal A}, {\bf Ab})\big)$.
\end{theorem}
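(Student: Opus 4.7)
The plan is to construct explicit evaluation functors in both directions and verify they are quasi-inverse equivalences; the two statements are largely complementary, but neither follows directly from the other.

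For the first statement, define $\Phi:{\mathcal D} \to {\rm Ex}({\rm fun}({\mathcal D}),{\bf Ab})$ by $D \mapsto \mathrm{ev}_D$, where $\mathrm{ev}_D(F) = FD$, with the obvious action on morphisms. The functor $\mathrm{ev}_D$ is exact because a sequence in ${\rm fun}({\mathcal D}) = ({\mathcal D},{\bf Ab})^{\to\prod}$ is exact precisely when it is pointwise exact at every $D \in {\mathcal D}$ (equivalently, Theorem \ref{ppsortquot} applied with the zero definable subcategory, whose annihilator is the whole of ${\rm fun}({\mathcal D})$). Faithfulness and fullness of $\Phi$ reduce to the statement that ${\rm fun}({\mathcal D})$ contains enough functors to detect and realize morphisms between objects of ${\mathcal D}$: this is the content of the pp-sort description, i.e., the fact (Theorem \ref{3cats}) that ${\rm fun}({\mathcal D}) \simeq {\mathcal G}^{\mathrm{fp}}$ for a locally coherent Grothendieck category ${\mathcal G}$ in which ${\mathcal D}$ sits as the absolutely pure objects.

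Essential surjectivity of $\Phi$ is the main obstacle. Given $G \in {\rm Ex}({\rm fun}({\mathcal D}),{\bf Ab})$, I would produce $D \in {\mathcal D}$ with $\mathrm{ev}_D \simeq G$ by passing through the third vertex of the triangle of Theorem \ref{3cats}: the functor $G$, being left exact, corresponds to an object of ${\rm Lex}({\rm fun}({\mathcal D})^{\rm op},{\bf Ab}) \simeq {\rm Ind}({\rm fun}({\mathcal D}))$, which is the locally coherent Grothendieck category ${\mathcal G}$; exactness of $G$ forces this object to be absolutely pure, and by the identification ${\mathcal D} = {\rm Abs}({\mathcal G})$ from the diagram following Theorem \ref{3cats}, this object lies in ${\mathcal D}$, with $G$ recovered as evaluation at it. The bulk of the work is unpacking these identifications compatibly - that is exactly what the anti-equivalence in Theorem \ref{3cats} asserts on objects.

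For the second statement, define $\Psi:{\mathcal A} \to {\rm fun}({\rm Ex}({\mathcal A},{\bf Ab}))$ by $A \mapsto \mathrm{ev}_A$. Each $\mathrm{ev}_A$ preserves direct products and direct limits since these are computed pointwise in ${\rm Ex}({\mathcal A},{\bf Ab})$ (products and filtered colimits in ${\bf Ab}$ preserve exactness, so the pointwise limit stays inside the exact subcategory of ${\mathcal A}\mbox{-}{\rm Mod}$). That $\Psi$ is an equivalence is precisely the main theorem \cite[12.10]{PreDefAddCat}, invoked in the paragraph preceding this theorem as the justification for defining ${\rm fun}({\mathcal D})$ intrinsically as $({\mathcal D},{\bf Ab})^{\to\prod}$; alternatively it can be extracted from the essential surjectivity step above applied to ${\mathcal D} = {\rm Ex}({\mathcal A},{\bf Ab})$, combined with the identification of ${\rm fun}({\mathcal D})$ with the finitely presented objects of the associated Grothendieck category and Theorem \ref{3cats}.
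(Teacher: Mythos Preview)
Your proposal is essentially correct and matches the paper's treatment: the paper does not give a self-contained proof of this theorem but presents it explicitly as a restatement of part of Theorem~\ref{3cats} (which in turn is quoted from \cite{PreRajShv}), and the paragraph following the statement describes the equivalences in both directions as evaluation, just as you do. Your more detailed unpacking---passing through ${\rm Abs}({\mathcal G})$ for essential surjectivity and invoking \cite[12.10]{PreDefAddCat} for the second statement---is exactly the content being cited, so there is no substantive difference in approach; you have simply been more explicit about where the work lies, while the paper is content to point to the references.
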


In one direction, the first equivalence takes an object $D$ of ${\mathcal D}$ to ${\rm ev}_D$, the evaluation of $F\in ({\mathcal D},{\bf Ab})^{\rightarrow \prod}$ at $D$.  The {\em description} of the other direction of that equivalence is less canonical because there are many ways in which the objects of ${\mathcal D}$ may be regarded; for instance, $R$-modules may be presented in the usual 1-sorted way but may alternatively be regarded as left exact functors on $({\rm mod}\mbox{-}R)^{\rm op}$ ({\it via} the restricted Yoneda embedding $ {\rm Mod}\mbox{-}R \rightarrow  (({\rm mod}\mbox{-}R)^{\rm op},{\bf Ab}) $ given by $M\mapsto (-,M)\upharpoonright {\rm mod}\mbox{-}R$) or as exact functors on ${\rm fun}\mbox{-}R$ (in model-theoretic terms this is making various choices of language for the structures in ${\mathcal D}$).  In any case, we may represent ${\mathcal D}$ as a definable subcategory of ${\rm Mod}\mbox{-}{\mathcal R}$ for some preadditive ${\mathcal R}$ (where ${\mathcal R}={\rm fun}({\mathcal D})$ would be the canonical choice) and then ${\rm fun}({\mathcal D})$ is a localisation of ${\rm fun}\mbox{-}{\mathcal R} = {\rm Ab}({\mathcal R}^{\rm op})$ so then, given $F$, an exact functor on ${\rm fun}({\mathcal D})$, we define the corresponding object $D$, as a functor from ${\mathcal R}^{\rm op}$ to ${\bf Ab}$, as taking an object $P$ in ${\mathcal R}$ to $F((-,(-,P))_{\mathcal D})$ where $(-,(-,P))_{\mathcal D}$ is the localisation of the representable functor $(-,(-,P))\in {\rm Ab}({\mathcal R}^{\rm op})$ at the Serre subcategory, ${\mathcal S}_{\mathcal D}$.  (The second equivalence is, in each direction, just evaluation.)

\vspace{4pt}

If $ {\mathcal D}={\rm Ex}({\mathcal A},{\bf Ab}) $ then the ({\bf elementary}) {\bf dual} category of $ {\mathcal D} $ is $ {\mathcal D}^{\rm d}={\rm Ex}({\mathcal A}^{\rm op},{\bf Ab}) $ and we have $ {\rm fun}({\mathcal D}^{\rm d})=({\rm fun}({\mathcal D}))^{\rm op}$ which we denote by ${\rm fun}^{\rm d}({\mathcal D})$. For instance the elementary dual of a module category $ {\rm Mod}\mbox{-}{\mathcal R} $ is just $ {\mathcal R}\mbox{-}{\rm Mod}$ and the duality between their functor categories, $({\mathcal R}\mbox{-}{\rm mod}, {\bf Ab})^{\rm fp} \simeq \big( ({\rm mod}\mbox{-}{\mathcal R}, {\bf Ab})^{\rm fp}\big) ^{\rm op}$, is in \cite{GrJeDim} and \cite{AusDual} (at least for ${\mathcal R}$ a ring).  The classical (retrospective) example, apart from the right and left module categories, is the duality, see e.g.~\cite[3.4.24]{PreNBK}, for $ R $ a right coherent ring, between $ {\rm Abs}\mbox{-}R $ (the absolutely pure right modules) and $ R\mbox{-}{\rm Flat} $ (the flat left modules).

\begin{prop}\label{dualindbij} (e.g.~\cite[12.4.1]{PreNBK}) If ${\mathcal D}$ is a definable category and ${\mathcal D}^{\rm d}$ its elementary dual then the duality ${\rm fun}({\mathcal D}) \simeq \big({\rm fun}({\mathcal D}^{\rm d}\big)^{\rm op}$ induces order-preserving bijections ${\rm Ser}({\rm fun}({\mathcal D}))\simeq {\rm Ser}({\rm fun}({\mathcal D}^{\rm d}))$ and ${\rm Sub}({\mathcal D}) \simeq {\rm Sub}({\mathcal D}^{\rm d})$ between the Serre subcategories of ${\rm fun}({\mathcal D})$ and ${\rm fun}({\mathcal D}^{\rm d})$ and between the definable subcategories of ${\mathcal D}$ and ${\mathcal D}^{\rm d}$.
\end{prop}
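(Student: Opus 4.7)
The plan is to reduce the statement to Theorem \ref{ppsortquot} by exploiting the fact that Serre subcategory is a self-dual notion. The only substantive datum needed beyond \ref{ppsortquot} is the equivalence ${\rm fun}({\mathcal D}^{\rm d}) \simeq \big({\rm fun}({\mathcal D})\big)^{\rm op}$, which is essentially the definition of the elementary dual combined with Theorem \ref{defisexfun}.

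First I would observe that for any abelian category ${\mathcal C}$ the posets ${\rm Ser}({\mathcal C})$ and ${\rm Ser}({\mathcal C}^{\rm op})$ are canonically the same: a Serre subcategory is a full subcategory, so determined by its class of objects, and the defining conditions (closure under subobjects, quotient objects, extensions) are manifestly self-dual, since each of these notions in ${\mathcal C}^{\rm op}$ is one of the other two notions in ${\mathcal C}$. Applying this to ${\mathcal C} = {\rm fun}({\mathcal D})$ and using ${\rm fun}({\mathcal D}^{\rm d}) \simeq \big({\rm fun}({\mathcal D})\big)^{\rm op}$ yields an order-preserving bijection
\[
{\rm Ser}({\rm fun}({\mathcal D})) \;\longleftrightarrow\; {\rm Ser}({\rm fun}({\mathcal D}^{\rm d}))
\]
under which a Serre subcategory ${\mathcal S}\subseteq {\rm fun}({\mathcal D})$ corresponds to the same class of objects viewed inside $({\rm fun}({\mathcal D}))^{\rm op}\simeq {\rm fun}({\mathcal D}^{\rm d})$.

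Next I would invoke Theorem \ref{ppsortquot} on each side: it gives inclusion-reversing bijections ${\rm Sub}({\mathcal D}) \leftrightarrow {\rm Ser}({\rm fun}({\mathcal D}))$ and ${\rm Sub}({\mathcal D}^{\rm d}) \leftrightarrow {\rm Ser}({\rm fun}({\mathcal D}^{\rm d}))$ via ${\mathcal D}'\mapsto {\mathcal S}_{{\mathcal D}'}$. Composing the first annihilator bijection, the Serre-subcategory bijection from the previous step, and the inverse of the second annihilator bijection yields a bijection ${\rm Sub}({\mathcal D}) \leftrightarrow {\rm Sub}({\mathcal D}^{\rm d})$; because two order-reversing arrows compose with one order-preserving arrow to an order-preserving map, this second bijection is order-preserving as claimed.

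There is really no serious obstacle here beyond bookkeeping: everything rests on \ref{ppsortquot} plus the identification ${\rm fun}({\mathcal D}^{\rm d}) = \big({\rm fun}({\mathcal D})\big)^{\rm op}$ which was recorded just before the proposition. The only step that warrants a sentence of care is the self-duality of the Serre axioms, and the tracking of variances so that the net map on definable subcategories comes out covariant. If one wanted to be more explicit, one could describe the composite bijection on definable subcategories directly as ${\mathcal D}' \mapsto ({\mathcal D}')^{\rm d}$, where $({\mathcal D}')^{\rm d}$ is cut out inside ${\mathcal D}^{\rm d} = {\rm Ex}({\rm fun}({\mathcal D})^{\rm op},{\bf Ab})$ by the same Serre subcategory ${\mathcal S}_{{\mathcal D}'}\subseteq {\rm fun}({\mathcal D})$ that defines ${\mathcal D}'$; the monotonicity statement then becomes transparent.
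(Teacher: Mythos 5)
The paper does not prove this proposition itself --- it is stated with only a citation to \cite[12.4.1]{PreNBK} --- so there is no in-paper argument to compare against. Your derivation is correct and is the standard one: the self-duality of the Serre axioms, the identification ${\rm fun}({\mathcal D}^{\rm d})\simeq({\rm fun}({\mathcal D}))^{\rm op}$ recorded just before the proposition, and the annihilator correspondence of Theorem \ref{ppsortquot} (whose bijectivity the paper asserts in the surrounding text) assemble exactly as you describe, with the variance bookkeeping coming out order-preserving.
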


Let ${\mathcal D}$ be a definable category.  A monomorphism in ${\mathcal D}$ is {\bf pure} if some ultrapower of it is split.  The pure monomorphisms thus intrinsically defined are the restrictions to ${\mathcal D}$ of the pure monomorphisms in ${\rm Mod}\mbox{-}{\mathcal R}$ (these can be characterised in many ways) whenever ${\mathcal D}$ is embedded as a definable subcategory.  An object $D\in {\mathcal D}$ is {\bf pure-injective} if it is injective over the pure monomorphisms in ${\cal D}$.  Denote by ${\rm Pinj}({\mathcal D})$ the full subcategory of pure-injective objects of ${\mathcal D}$; it is cofinal in ${\mathcal D}$ in the sense that every object $D\in{\mathcal D}$ purely embeds in a pure-injective object, indeed $D$ has a unique-to-isomorphism-over-$D$ minimal such extension, termed its {\bf pure-injective hull} and denoted $H(D)$.

Let ${\rm pinj}({\mathcal D})$ denote the set of isomorphism classes of indecomposable (this excludes $0$) pure-injective objects of ${\mathcal D}$ (it is indeed a set).  The {\bf Ziegler topology} on ${\rm pinj}({\mathcal D})$ has, for a basis of open sets, the sets $(F)=\{ N\in {\rm pinj}({\mathcal D}): FN\neq 0\}$ as $F$ ranges over ${\rm fun}({\mathcal D})$.  The {\bf rep-Zariski topology} on ${\rm pinj}({\mathcal D})$ has for a basis of open sets the $[F]=(F)^{\rm c} =\{ N\in {\rm pinj}({\mathcal D}): FN= 0\}$.  These spaces are denoted ${\rm Zg}({\mathcal D})$ and ${\rm Zar}({\mathcal D})$ respectively.  In the case that ${\mathcal D} ={\rm Mod}\mbox{-}{\mathcal R}$, respectively ${\mathcal D}={\mathcal R}\mbox{-}{\rm Mod}$, we write ${\rm Zg}_{\mathcal R}$, respectively $_{\mathcal R}{\rm Zg}$, and similarly for ${\rm Zar}$.  There are many equivalent ways to define these topologies and much is known about them, for which I refer to \cite{PreNBK}.  And, of course, use of the name ``Zariski" indicates a generalisation of that spectrum (see \cite[p.~200ff.]{PreRem} or \cite[\S\S 14.1, 14.4]{PreNBK}).  Here is the basic connection with definable subcategories.

\begin{theorem}\label{defsubcatzg} (see e.g.~\cite[5.1.4, 12.4.1]{PreNBK}) Let ${\mathcal D}$ be a definable category.  The definable subcategories of ${\mathcal D}$ are in natural bijection with the closed subsets of its Ziegler spectrum  ${\rm Zg}({\mathcal D})$, indeed each definable subcategory is generated as such by the indecomposable pure-injectives in it.
\end{theorem}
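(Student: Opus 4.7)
The plan is to combine Theorem \ref{ppsortquot} (the bijection, {\it via} annihilation, between definable subcategories of ${\mathcal D}$ and Serre subcategories of ${\rm fun}({\mathcal D})$) and Theorem \ref{defisexfun} (${\mathcal D} \simeq {\rm Ex}({\rm fun}({\mathcal D}), {\bf Ab})$, with each $D \in {\mathcal D}$ acting as the exact evaluation functor $F \mapsto FD$) with one non-formal input: every non-zero $F \in {\rm fun}({\mathcal D})$ satisfies $FN \neq 0$ for some $N \in {\rm pinj}({\mathcal D})$.

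First I would set up two maps. For a definable subcategory ${\mathcal D}' \subseteq {\mathcal D}$ put $\Phi({\mathcal D}') = {\mathcal D}' \cap {\rm pinj}({\mathcal D})$; for a subset $X \subseteq {\rm pinj}({\mathcal D})$ let ${\mathcal S}_X = \{F \in {\rm fun}({\mathcal D}) : FN = 0 \mbox{ for all } N \in X\}$ and let $\Psi(X)$ be the definable subcategory corresponding to the Serre subcategory ${\mathcal S}_X$ under \ref{ppsortquot} (that ${\mathcal S}_X$ is Serre uses exactness of evaluation at each $N$, granted by \ref{defisexfun}). Then $\Phi({\mathcal D}') = \bigcap_{F \in {\mathcal S}_{{\mathcal D}'}} [F]$ is Ziegler-closed, and for any Ziegler-closed $X = \bigcap_{F \in {\mathcal Y}} [F]$ the inclusion ${\mathcal Y} \subseteq {\mathcal S}_X$ forces $\Phi\Psi(X) \subseteq X$, while the reverse inclusion is tautological. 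So $\Phi\Psi = {\rm id}$.

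The substantive direction is $\Psi\Phi = {\rm id}$, which reduces to ${\mathcal S}_{{\mathcal D}'} = {\mathcal S}_{\Phi({\mathcal D}')}$. The inclusion $\subseteq$ is immediate. For $\supseteq$, suppose $F \notin {\mathcal S}_{{\mathcal D}'}$, so $FD \neq 0$ for some $D \in {\mathcal D}'$. Since $F$ commutes with direct products and direct limits it preserves pure embeddings, hence $FH(D) \neq 0$; and $H(D) \in {\mathcal D}'$ because every definable subcategory is closed under pure-injective hulls of its objects, so I may assume $D$ is itself pure-injective. Viewing ${\mathcal D}'$ as a definable category in its own right, \ref{ppsortquot} identifies ${\rm fun}({\mathcal D}')$ with ${\rm fun}({\mathcal D})/{\mathcal S}_{{\mathcal D}'}$, and $F$ projects to a non-zero object there which is non-vanishing on $D$. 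The main obstacle is then the detection principle applied inside ${\mathcal D}'$: I need some $N \in {\rm pinj}({\mathcal D}') = \Phi({\mathcal D}')$ on which the image of $F$ is non-zero. The standard route I would follow (see \cite[5.1.4]{PreNBK}) uses Zorn's lemma inside the pure-injective $D$ to extract a pure-injective pure subobject minimal subject to $F$ not vanishing on it, and then shows such a minimal object must be indecomposable.

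The generation clause is then a formal consequence: $\Psi(\Phi({\mathcal D}')) = {\mathcal D}'$ exhibits ${\mathcal D}'$ as recovered from $\Phi({\mathcal D}')$, and any definable subcategory ${\mathcal D}''$ containing $\Phi({\mathcal D}')$ satisfies ${\mathcal S}_{{\mathcal D}''} \subseteq {\mathcal S}_{\Phi({\mathcal D}')} = {\mathcal S}_{{\mathcal D}'}$, hence ${\mathcal D}'' \supseteq {\mathcal D}'$, so ${\mathcal D}'$ is indeed the smallest such.
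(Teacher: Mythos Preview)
The paper does not prove this theorem; it simply records it with a citation to \cite[5.1.4, 12.4.1]{PreNBK}. So there is no ``paper's own proof'' to compare against --- you have in fact gone further than the paper by supplying an argument.

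Your overall architecture is correct and is the standard one: the bijection \ref{ppsortquot} between definable subcategories and Serre subcategories of ${\rm fun}({\mathcal D})$ reduces everything to the single detection principle that a non-zero $F \in {\rm fun}({\mathcal D}')$ is non-zero on some $N \in {\rm pinj}({\mathcal D}')$. Your verification of $\Phi\Psi = {\rm id}$ and the derivation of the generation clause are clean and correct.

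One caution about your sketch of the detection step. The phrase ``Zorn's lemma \dots\ to extract a pure-injective pure subobject minimal subject to $F$ not vanishing on it'' is not quite how the argument in \cite[5.1.4]{PreNBK} runs, and taken literally it is problematic: for a descending chain of direct summands $N_i$ of $D$ with $FN_i \neq 0$, there is no obvious reason the intersection should be a direct summand on which $F$ is still non-zero ($F$ commutes with direct limits, not inverse limits). The actual argument proceeds instead via pp-types --- one picks $a \in \phi(D) \setminus \psi(D)$, extends to a maximal (irreducible) pp-type avoiding $\psi$, and takes the pure-injective hull of a realization --- or, equivalently, via the functor category, using that $F$ has a simple subquotient whose injective hull in ${\rm Fun}({\mathcal D}')$ corresponds to the desired indecomposable pure-injective. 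Since you explicitly defer to the same reference the paper cites, this is a minor imprecision in a parenthetical remark rather than a gap in your argument.
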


The bijection takes ${\mathcal D}' \in {\rm Sub}({\mathcal D})$ to its intersection with ${\rm pinj}({\mathcal D})$ - its {\bf support} - and, in the other direction, takes a subset of ${\rm Zg}({\mathcal D})$ to the definable subcategory that it generates.

Also associated to ${\mathcal D}$ is the locally coherent Grothendieck category which can be obtained as follows:  if ${\mathcal D}$ is a definable subcategory of ${\rm Mod}\mbox{-}{\mathcal R}$ then we know that ${\rm fun}({\mathcal D}) =({\rm mod}\mbox{-}{\mathcal R}, {\bf Ab})^{\rm fp}/{\mathcal S}_{\mathcal D}$.  The Serre subcategory ${\mathcal S}_{\mathcal D}$ generates a hereditary torsion theory of finite type on the locally coherent Grothendieck category $({\rm mod}\mbox{-}{\mathcal R}, {\bf Ab})$ (see, e.g., \cite[\S 11.4]{PreNBK}) and the localisation, $({\rm mod}\mbox{-}{\mathcal R}, {\bf Ab})_{\mathcal D}$, of $({\rm mod}\mbox{-}{\mathcal R}, {\bf Ab})$ at this torsion theory is a locally coherent Grothendieck category - we denote it ${\rm Fun}({\mathcal D})$ - and we have $({\rm Fun}({\mathcal D}))^{\rm fp}={\rm fun}({\mathcal D})$.  These categories are considered in Section \ref{sectopos}.

\begin{rmk}\label{Fundirect} The category ${\rm Fun}({\mathcal D})$ can also be obtained directly as $({\mathcal D}, {\bf Ab})^{\rightarrow}$.  To see this we consider first the case that ${\mathcal D} ={\rm Mod}\mbox{-}{\mathcal R}$ and hence that ${\rm Fun}({\mathcal D}) = ({\rm mod}\mbox{-}{\mathcal R}, {\bf Ab})$; then the equivalence of this category with those functors from ${\rm Mod}\mbox{-}{\mathcal R}$ to ${\bf Ab}$ which commute with direct limits, and hence which are determined by their actions on ${\rm mod}\mbox{-}{\mathcal R}$, is easily seen (and essentially in \cite{AusLg}).  In the general case, ${\mathcal D}$ is a definable subcategory of some module category ${\rm Mod}\mbox{-}{\mathcal R}$ and ${\rm Fun}({\mathcal D})$ is the localisation of ${\rm Fun}\mbox{-}{\mathcal R}$ described above.  But the action of the localisation of $F\in {\rm Fun}\mbox{-}{\mathcal R}$ is just restriction of its action from ${\rm Mod}\mbox{-}{\mathcal R}$ to ${\mathcal D}$.  Since direct limits in ${\mathcal D}$ agree with those in ${\rm Mod}\mbox{-}{\mathcal R}$, it follows that any $F\in {\rm Fun}({\mathcal D})$ does commute with direct limits.  For the converse, if $F\in ({\mathcal D}, {\bf Ab})^{\rightarrow}$ then it will be enough to show that $F$ has an extension to a functor on ${\rm Mod}\mbox{-}{\mathcal R}$ which commutes with direct limits (since then $F$ will be the localisation of this extension and hence in ${\rm Fun}({\mathcal D})$).  But by, e.g.~\cite[2.4]{CrivPreTorr}, any definable subcategory of ${\rm Mod}\mbox{-}{\mathcal R}$ is contravariantly finite=precovering in ${\rm Mod}\mbox{-}{\mathcal R}$ (as well as covariantly finite - see, e.g.~\cite[3.4.42]{PreNBK}) and so its left Kan extension exists and will, being left adjoint to restriction, commute with direct limits, as required.
\end{rmk}

\subsection{An intrinsic definition of definable category?}\label{secintrin}

We have two, equivalent, definitions of the notion of definable category:  as a definable subcategory of a category of ${\mathcal R}$-modules; as the category of exact functors on a small abelian category ${\mathcal A}$.  Both are, however, definitions in terms of some representation, and although we can give some sort of ``intrinsic" definition of ``definable category" (see below) it would be desirable to have a list of (preferably easily-checkable) category-theoretic properties which cuts out exactly the definable categories.

Apart from being additive, we would require having direct products and direct limits.  Since ultraproducts are certain direct limits of direct products these conditions are enough to give a category an internal theory of purity since we can define a pure monomorphism to be one, some ultraproduct of which is split, and similarly we may define pure epimorphisms.  So we might also add the conditions that these have, respectively, cokernels and kernels.  We should also add some ``smallness" condition since the properties mentioned so far are shared by all Grothendieck, indeed AB5, abelian categories and not every such category is definable.

\begin{theorem}\label{defgroth} A Grothendieck abelian category is definable iff it is locally finitely presented.
\end{theorem}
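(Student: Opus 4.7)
The plan is to prove both implications using the representation theorem \ref{defisexfun} together with restricted Yoneda arguments.

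For the direction $(\Leftarrow)$, suppose ${\mathcal G}$ is locally finitely presented Grothendieck. The restricted Yoneda embedding $y:{\mathcal G}\to{\rm Mod}\mbox{-}{\mathcal G}^{\rm fp}$, $y(G)=(-,G)\upharpoonright{\mathcal G}^{\rm fp}$, is fully faithful with image the full subcategory ${\rm Lex}(({\mathcal G}^{\rm fp})^{\rm op},{\bf Ab})$ of left exact functors. I would verify this is a definable subcategory of ${\rm Mod}\mbox{-}{\mathcal G}^{\rm fp}$. Closure under products and direct limits is routine since both operations in ${\bf Ab}$ preserve finite limits. Closure under pure subobjects is the substantive step: for each cokernel sequence $A\to B\to C\to 0$ in ${\mathcal G}^{\rm fp}$, lexness at this sequence decomposes into (i) injectivity of $F(C)\to F(B)$ and (ii) exactness at $F(B)$, each equivalent to the vanishing at $F$ of a specific coherent functor in ${\rm fun}\mbox{-}{\mathcal G}^{\rm fp}$ built from the representables of $A,B,C$. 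By \ref{ppsortquot}, the Serre subcategory generated by these coherent functors yields a definable subcategory, which is exactly ${\rm Lex}(({\mathcal G}^{\rm fp})^{\rm op},{\bf Ab})$.

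For the direction $(\Rightarrow)$, suppose ${\mathcal G}$ is Grothendieck abelian and definable. By \ref{defisexfun} we may write ${\mathcal G}\simeq{\rm Ex}({\mathcal A},{\bf Ab})$ with ${\mathcal A}={\rm fun}({\mathcal G})$, and view ${\mathcal G}$ as a full subcategory of ${\mathcal A}\mbox{-}{\rm Mod}=({\mathcal A},{\bf Ab})$ closed under products and direct limits, both computed pointwise. I would establish that this inclusion admits a left adjoint $L$, then define $P_A:=L((A,-))$ for each $A\in{\mathcal A}$. By Yoneda and adjunction, ${\rm Hom}_{\mathcal G}(P_A,F)=F(A)={\rm ev}_A(F)$ for every $F\in{\mathcal G}$. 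Since direct limits in ${\mathcal G}$ agree with the pointwise direct limits inherited from ${\mathcal A}\mbox{-}{\rm Mod}$, the functor ${\rm ev}_A$ preserves direct limits, so $P_A$ is finitely presented in ${\mathcal G}$. The family $\{P_A\}_{A\in{\mathcal A}}$ generates ${\mathcal G}$: for any nonzero $F\in{\mathcal G}$ there exists $A$ with $F(A)\neq 0$, giving a nonzero morphism $P_A\to F$. Hence ${\mathcal G}$ is locally finitely presented.

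The main obstacle is producing the reflection $L:{\mathcal A}\mbox{-}{\rm Mod}\to{\mathcal G}$. Intrinsic kernels in ${\mathcal G}$ need not coincide with the pointwise kernels in ${\mathcal A}\mbox{-}{\rm Mod}$, so mere closure under products and direct limits does not make the inclusion limit-preserving in the naive sense. The cleanest justification uses that a Grothendieck category is accessible and that the inclusion is accessibly embedded, so the accessible adjoint functor theorem supplies $L$. Alternatively, one exhibits ${\mathcal G}$ as a Giraud subcategory of ${\mathcal A}\mbox{-}{\rm Mod}$ associated with a finite-type hereditary torsion theory whose Gabriel localization functor is the desired $L$; closure of ${\mathcal G}$ under direct limits translates to the torsion theory being of finite type.
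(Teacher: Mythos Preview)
Your overall strategy for $(\Rightarrow)$ is the same as the paper's: realise ${\mathcal G}$ as ${\rm Ex}({\mathcal A},{\bf Ab})\subseteq {\mathcal A}\mbox{-}{\rm Mod}$ with ${\mathcal A}={\rm fun}({\mathcal G})$, produce a left adjoint to the inclusion, and deduce local finite presentability. The differences are in execution. Precisely at the point you call the ``main obstacle'', the paper argues directly that the inclusion $i$ preserves limits: products are clear, and for kernels the paper asserts that if $0\to K\to F\to G$ is exact in ${\mathcal A}\mbox{-}{\rm Mod}$ with $F,G$ exact then $K$ is exact. It then invokes covariant finiteness of definable subcategories (every $M\in{\mathcal A}\mbox{-}{\rm Mod}$ has a ${\mathcal G}$-preenvelope) for the solution-set condition of the classical Adjoint Functor Theorem. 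So the paper meets your worry about kernel preservation head-on rather than routing around it via the accessible AFT; your caution is not unreasonable, but the paper takes the direct line. For the conclusion, the paper observes that since $i$ commutes with directed colimits the reflection is a finite-type hereditary localisation, and then cites the standard fact that finite-type localisations of module categories are locally finitely presented. Your alternative ending, producing explicit finitely presented generators $P_A=L((A,-))$ with ${\rm Hom}_{\mathcal G}(P_A,-)\cong{\rm ev}_A$, is a clean and more self-contained way to finish and avoids that citation. Your treatment of $(\Leftarrow)$ via restricted Yoneda and pp-pairs is fine; the paper does not spell out that direction.
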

\begin{proof} If ${\mathcal G}$ is Grothendieck abelian and definable  set ${\mathcal A} ={\rm fun}({\mathcal G})$.  Then ${\mathcal G}$ is the definable subcategory, ${\rm Ex}({\mathcal A},{\bf Ab})$ of ${\mathcal A}\mbox{-}{\rm Mod}$ consisting of the exact functors.  We claim that the inclusion, $i$, of ${\mathcal G}$ in ${\mathcal A}\mbox{-}{\rm Mod}$ has a left adjoint, that is, that ${\mathcal G}$ is a localisation of ${\mathcal A}\mbox{-}{\rm Mod}$.

First note that $i$ preserves kernels since if $0\rightarrow K \rightarrow F \rightarrow G$ is exact with $F,G \in {\mathcal G}$ then, since those are exact, so is $K$.  Since ${\mathcal G}$ is definable, direct products in ${\mathcal G}$ coincide with those in ${\mathcal A}\mbox{-}{\rm Mod}$.  Thus $i$ preserves limits.  We also have the solution set condition of the Adjoint Functor Theorem because every definable subcategory is covariantly finite and hence, given any $M\in {\mathcal A}\mbox{-}{\rm Mod}$ there is an arrow $M\rightarrow G\in {\mathcal G}$ through which every morphism from $M$ to an object of ${\mathcal G}$ factors.

Therefore $i$ has a left adjoint $Q$ which is left exact and hence $Q$ is a localisation at a hereditary torsion theory.  Since ${\mathcal G}$ is definable the inclusion $i$ commutes with directed colimits so (see \cite[11.1.23]{PreNBK}) this torsion theory is of finite type and hence, see \cite[11.1.27]{PreNBK}, the localised category, ${\mathcal G}$, is locally finitely presented, as required.
\end{proof}

Certainly any definable category ${\mathcal D}$ is accessible:  if we set  $\kappa$ to be $|{\mathcal R}|+\aleph_0$, if ${\mathcal D}$ is a definable subcategory of ${\rm Mod}\mbox{-}{\mathcal R}$, or the number of morphisms in a skeleton of ${\rm fun}({\mathcal D})$ then every object of ${\mathcal D}$ is a structure for a functional language of cardinality $\kappa$ and so (for instance by the downwards Lowenheim-Skolem Theorem) is the direct limit of its subobjects (or pure, or even elementary, subobjects) of cardinality $\leq \kappa$ (or $\kappa^+$ if $\kappa$ is not regular). Furthermore, each object of cardinality $\leq \kappa$ is $ \kappa^+$-presentable, and there is just a set of these up to isomorphism.  Thus ${\mathcal D}$ is $\kappa^+$-accessible (though, as already mentioned, not necessarily finitely accessible).  If we were content to work with infinitary languages then this would be enough (see \cite[5.35]{AdRo}, \cite[9.1.7]{Hod}, also \cite{Hu}).

For an intrinsic definition we suppose just that ${\mathcal D}$ is additive with products and direct limits.  The proof of \ref{funtoCab} below shows that $({\mathcal D}, {\bf Ab})^{\rightarrow \prod}$ is an abelian category.  If we also suppose that ${\mathcal D}$ has a $\varinjlim$-generating set of objects then $({\mathcal D}, {\bf Ab})^{\rightarrow \prod}$ is also skeletally small.  Of course ${\mathcal D}$ then embeds naturally, {\it via} $D\mapsto {\rm ev}_D$, into ${\rm Ex}\big(({\mathcal D}, {\bf Ab})^{\rightarrow \prod}, {\bf Ab}\big)$ and we know, using \ref{defisexfun}, that this will be an equivalence iff ${\mathcal D}$ is definable.  In some sense that is an intrinsic characterisation but it is considerably less satisfactory than would be a list of conditions which could be checked directly, because it is not clear how one might in general check that the embedding is an equivalence.

\subsection{Extending from ${\rm Pinj}({\mathcal D})$ to ${\mathcal D}$}.

If ${\mathcal D}$ is a definable category then any morphism $F:{\mathcal D} \rightarrow {\mathcal C}$ in $ {\mathbb D}{\mathbb E}{\mathbb F} $ restricts to a functor on the full subcategory, $ {\rm Pinj}({\mathcal D}) $, of pure-injective objects of ${\mathcal D}$ to $ {\mathcal C} $ (indeed, see \ref{interpprespi}, to ${\rm Pinj}({\mathcal C})$).  That restricted functor commutes with direct products and with those direct limits of pure-injectives where the direct limit object happens to be in ${\rm Pinj}({\mathcal D})$ (for short we may describe that second condition as ``commuting with those direct limits in $ {\rm Pinj}({\mathcal D})$'').  In this section we consider the converse.  That is:  suppose ${\rm Pinj}({\mathcal D}) \rightarrow {\mathcal C}$ commutes with direct products and with those direct limits diagrams of objects in $ {\rm Pinj}({\mathcal D}) $ whose direct limit also is in $ {\rm Pinj}({\mathcal D}) $; then does this extend to a morphism in $ {\mathbb D}{\mathbb E}{\mathbb F} $ from ${\mathcal D}$ to ${\mathcal C}$?  We show that this is so.  We also consider the question of whether natural transformations between morphisms in $ {\mathbb D}{\mathbb E}{\mathbb F} $ are determined by their restrictions to the pure-injective objects.  Again the answer is positive.

These issues already arise in the proof (\cite[12.10]{PreDefAddCat}) of the fact that, for $ {\mathcal D}\in {\mathbb D}{\mathbb E}{\mathbb F}$, the category $ {\rm fun}({\mathcal D}) $ is equivalent to $ ({\mathcal D},{\bf Ab})^{\rightarrow \prod } $.  Let us outline the shape of that proof since we will be reconsidering parts of it here.  The first part consists of showing that $ ({\rm Pinj}({\mathcal D}),{\bf Ab})^{\prod} \simeq ({\rm Pinj}({\mathcal D}),{\bf Ab})^{\rm fp} $ and then applying the fact from \cite{KraEx} (see \cite[12.2]{PreDefAddCat}) that $ ({\rm Pinj}({\mathcal D}),{\bf Ab})^{\rm fp}\simeq  ({\rm Fun}^{\rm d}({\mathcal D}))^{\rm op} $ in order to identify $ ({\rm Pinj}({\mathcal D}),{\bf Ab})^{\prod} $ with the opposite of the dual ``large'' functor category $ {\rm Fun}^{\rm d}({\mathcal D})$. Each of these latter two categories has a natural action on $ {\rm Pinj}({\mathcal D}) $ and, in the proof of \cite[12.10]{PreDefAddCat}, it is shown that the identification respects this. Then we take a functor $G \in ({\mathcal D},{\bf Ab})^{\rightarrow \prod }$.  The action of that functor on $ {\rm Pinj}({\mathcal D})$ is then shown to coincide with the action on ${\rm Pinj}({\mathcal D})$ of a functor of the form $ F_{\phi/\psi} $ for some pp-pair $ \phi/\psi$.  The proof goes on to show that since these functors agree on $ {\rm Pinj}({\mathcal D}) $ they agree on $ {\mathcal D}$.  We will generalise that last part here, replacing the codomain $ {\bf Ab} $ by an arbitrary definable category and replacing the natural isomorphism between the two functors (the restrictions of $G$ and $F$ to ${\rm Pinj}({\mathcal D})$) by any natural transformation.  And the first part of that proof, which we will re-do rather more cleanly than in \cite{PreDefAddCat} (where category-theoretic and element-based argumentation sit uncomfortably together), will allow us to answer the first question.

We will make use of the fact that every definable category ${\mathcal D}$ has an {\bf elementary cogenerator}, that is  $ N\in {\rm Pinj}({\mathcal D}) $ which is such that every object of $ {\mathcal D} $ is a pure subobject of a direct product of copies of $ N$ (see \cite[9.36]{PreBk} or \cite[5.3.52]{PreNBK}). We write $\langle - \rangle$ for the definable subcategory generated by $(-)$.  First we recall the following.

\begin{prop}\label{interpprespi} (see, e.g., \cite[13.1]{PreDefAddCat}) Suppose that $I:{\mathcal C} \rightarrow {\mathcal D}$ is a morphism in ${\mathbb D}{\mathbb E}{\mathbb F}$. Then $I$ preserves pure embeddings and pure-injectivity. If ${\mathcal D}'\subseteq {\mathcal D}$ is a definable subcategory of ${\mathcal D}$ then $I^{-1}{\mathcal D}' =\{ C\in {\mathcal C}: IC\in {\mathcal D}'\}$ is a definable subcategory of ${\mathcal C}$.
\end{prop}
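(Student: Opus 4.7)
The plan is to transport each of the three claims through $I$ by exploiting intrinsic characterizations that involve only products, filtered colimits, and additivity, all of which $I$ preserves.  Ultraproducts in a definable category are, by construction, filtered colimits of products indexed by the members of the ultrafilter, so $I$ preserves them up to natural isomorphism; and since $I$ is additive it preserves finite biproducts, hence also arbitrary direct sums (as filtered colimits of their finite subsums).

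For the preservation of pure embeddings I would just appeal to the intrinsic description recorded immediately before the proposition: a monomorphism $f$ is pure iff some ultrapower $f^{\mathcal U}$ is a split monomorphism.  If $s$ is a retraction of $f^{\mathcal U}$ in ${\mathcal C}$, then $Is$ gives a retraction of $I(f^{\mathcal U})\cong (If)^{\mathcal U}$ in ${\mathcal D}$, so $If$ is pure.

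For the preservation of pure-injectivity I would use the algebraic compactness characterization: $N$ is pure-injective iff, for every set $X$, the summation morphism $\Sigma\colon N^{(X)}\to N$ extends through the canonical inclusion $\iota\colon N^{(X)}\hookrightarrow N^X$ to some $r\colon N^X\to N$ (see, e.g., \cite{PreNBK}).  Since $I$ preserves $N^{(X)}$, $N^X$, and the diagrammatic recipes for $\iota$ and $\Sigma$, the morphism $Ir$ exhibits $IN$ as pure-injective.  Finally, for the preimage statement recall that a full subcategory of a definable category is definable iff it is closed under direct products, direct limits, and pure subobjects.  The first two closure properties pass to $I^{-1}{\mathcal D}'$ because $I$ preserves them and ${\mathcal D}'$ is closed under them; pure-subobject closure then follows from the first assertion, since a pure mono $C_0\hookrightarrow C$ in ${\mathcal C}$ with $C\in I^{-1}{\mathcal D}'$ is sent to a pure mono $IC_0\hookrightarrow IC$ in ${\mathcal D}$ with target in ${\mathcal D}'$, forcing $IC_0\in{\mathcal D}'$.

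The delicate step is the pure-injectivity preservation: $IN$ can sit inside pure extensions in ${\mathcal D}$ whose codomains are far outside the essential image of $I$, so a prospective splitting cannot be lifted from ${\mathcal C}$ directly.  The algebraic compactness characterization finesses this by rephrasing pure-injectivity as an internal factorization property involving only $N$ and diagrams preserved by any additive functor commuting with products and direct limits.
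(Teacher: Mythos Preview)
The paper does not give a proof here; the proposition is stated with a citation to \cite[13.1]{PreDefAddCat} and used as background.  Your argument is correct and is the natural one: each claim is reduced to an intrinsic characterization (ultrapower-splitting for purity, the summation-map factorization for pure-injectivity, closure under products, direct limits and pure subobjects for definability) phrased solely in terms of operations that any interpretation functor preserves.  One small point you elide: the paper's definition of ``pure'' is stated for monomorphisms, so strictly you should observe that $If$ is monic before concluding it is pure.  This is immediate, since $(If)^{\mathcal U}\circ\Delta_{IA}=\Delta_{IB}\circ If$ with the left-hand side a composite of monomorphisms, forcing $If$ to be monic.
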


\begin{prop}\label{defsubgen} If $ I:{\mathcal D}\rightarrow {\mathcal C} $ is a morphism in ${\mathbb D}{\mathbb E}{\mathbb F}$ then every object of $ \langle I{\mathcal D}\rangle  $ is a pure subobject of an object of the form $ ID$. If $ N $ is an elementary cogenerator for $ {\mathcal D} $ then $ IN $ is an elementary cogenerator for $ \langle I{\mathcal D}\rangle $.
\end{prop}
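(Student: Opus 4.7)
The plan is to prove both assertions together by identifying $\langle I{\mathcal D}\rangle$ with the class
\[
{\mathcal F}=\{C\in {\mathcal C}: C \text{ is a pure subobject of } (IN)^J \text{ for some set } J\}.
\]
By \ref{interpprespi}, $I$ preserves pure monomorphisms, direct products and pure-injectivity, so $IN$ is pure-injective in ${\mathcal C}$. Since $N$ is an elementary cogenerator for ${\mathcal D}$, any $D\in {\mathcal D}$ admits a pure embedding $D\hookrightarrow N^J$ for some $J$; applying $I$ yields a pure embedding $ID\hookrightarrow I(N^J)=(IN)^J$. Thus $I{\mathcal D}\subseteq {\mathcal F}$. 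Conversely $IN\in I{\mathcal D}\subseteq \langle I{\mathcal D}\rangle$, and since $\langle I{\mathcal D}\rangle$ is closed under direct products and pure subobjects, ${\mathcal F}\subseteq \langle I{\mathcal D}\rangle$.

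To close the loop I would show that ${\mathcal F}$ is itself a definable subcategory of ${\mathcal C}$, which then forces $\langle I{\mathcal D}\rangle\subseteq {\mathcal F}$ and hence equality. Closure of ${\mathcal F}$ under direct products is immediate, as pure embeddings into the $(IN)^{J_\alpha}$ combine to a pure embedding into $(IN)^{\bigsqcup_\alpha J_\alpha}$; closure under pure subobjects is just transitivity of pure monomorphisms. The only step which is not formal is closure under direct limits: given a directed system $(Y_\lambda)$ in ${\mathcal F}$ with colimit $Y$, one must produce a pure embedding $Y\hookrightarrow (IN)^K$ for some $K$. I would handle this by embedding $Y$ purely in its pure-injective hull $H(Y)\in \langle I{\mathcal D}\rangle$ and then invoking the fact (standard in the theory of pure-injectives, see \cite{PreNBK}) that any pure-injective in the definable subcategory generated by a pure-injective $P$ is a direct summand, hence pure subobject, of some power $P^K$; applied to $P=IN$ this gives $H(Y)\hookrightarrow (IN)^K$ purely, and composing with $Y\hookrightarrow H(Y)$ places $Y$ in ${\mathcal F}$.

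Once the equality ${\mathcal F}=\langle I{\mathcal D}\rangle$ is established, both assertions are immediate. Every object of $\langle I{\mathcal D}\rangle$ purely embeds in $(IN)^J=I(N^J)$, and $N^J\in {\mathcal D}$ because ${\mathcal D}$ is closed under direct products, so it is a pure subobject of an object of the form $ID'$; this is the first claim. The same statement, combined with pure-injectivity of $IN$, is exactly the defining property of $IN$ being an elementary cogenerator for $\langle I{\mathcal D}\rangle$, which is the second claim. The main obstacle is thus the direct-limit closure of ${\mathcal F}$, which reduces to the cogeneration property of pure-injective generators of a definable subcategory recalled above.
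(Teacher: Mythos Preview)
Your overall architecture matches the paper's: both arguments reduce the first assertion to the second, since once $IN$ is known to be an elementary cogenerator for $\langle I{\mathcal D}\rangle=\langle IN\rangle$, every object of that category purely embeds in some $(IN)^J=I(N^J)$.

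The problem is in your proof that ${\mathcal F}$ is closed under direct limits. You invoke as a ``standard fact'' that any pure-injective in the definable subcategory generated by a pure-injective $P$ is a direct summand of some power $P^K$. Applied to $P=IN$, that statement is literally the assertion that $IN$ is an elementary cogenerator for $\langle IN\rangle$, which is exactly what you are trying to prove; so the argument is circular. Worse, the ``fact'' is false in general: over ${\mathbb Z}$, take $P=\widehat{{\mathbb Z}}_p$ (the $p$-adic integers). Then ${\mathbb Q}_p=\varinjlim(\widehat{{\mathbb Z}}_p\xrightarrow{p}\widehat{{\mathbb Z}}_p\xrightarrow{p}\cdots)$ lies in $\langle P\rangle$, hence so does its summand ${\mathbb Q}$; but every power $P^K$ is reduced (has no nonzero divisible elements), so ${\mathbb Q}$ is not a summand of any $P^K$. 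Thus $P$ is pure-injective yet fails to be an elementary cogenerator for $\langle P\rangle$.

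What makes the proposition work is precisely the hypothesis that $N$ is an elementary cogenerator for ${\mathcal D}$, and the paper's proof uses this in an essential way. One shows that every indecomposable pure-injective $N_1\in\langle IN\rangle$ is a summand of some ultrapower of a power of $IN$ (this is the description of Ziegler support), and then observes that any ultrapower of $N$ lies in ${\mathcal D}$ and hence, because $N$ is an elementary cogenerator there, purely embeds in a power of $N$; applying $I$ transfers this to $IN$. So each such $N_1$ is a summand of a power of $IN$, and then \cite[5.3.50]{PreNBK} gives that $IN$ is an elementary cogenerator. Your argument can be repaired along these lines, but not by the shortcut you attempted.
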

\begin{proof} The first statement will follow from the second by \ref{interpprespi} and since $ I $ commutes with direct products. Since $ N $ is an elementary cogenerator it is clear that $ \langle IN\rangle =\langle I{\mathcal D}\rangle  $ and hence the support of $ IN $ in the Ziegler spectrum of $ {\mathcal C} $ is $ {\rm Zg}(\langle I{\mathcal D}\rangle )$. Therefore every $ N_1\in {\rm Zg}(\langle I{\mathcal D}\rangle ) $ is a direct summand of an ultrapower of (a power of) $ IN $.  Since every ultrapower of $ N $ is pure in a direct power of $ N$, so the same is true of $ IN$, $ N_1 $ is, therefore, a direct summand of a power of $ IN$. Then the fact that every point of ${\rm Zg}(\langle I{\mathcal D}\rangle )$ is a direct summand of a power of $IN$ is enough (see \cite[5.3.50]{PreNBK}) to imply that $ IN $ is an elementary cogenerator of $\langle I{\mathcal D}\rangle$.
\end{proof}

\begin{theorem}\label{extnattran} Suppose that $F, G:{\mathcal D} \rightrightarrows {\mathcal C}$ are functors in ${\mathbb D}{\mathbb E}{\mathbb F}$ and that $\tau':F\upharpoonright {\rm Pinj}({\mathcal D}) \rightarrow G \upharpoonright {\rm Pinj}({\mathcal D})$ is a natural transformation between their restrictions to the pure-injectives of ${\mathcal D}$.  Then there is a, unique, natural transformation $\tau:F\rightarrow G$ which restricts to $\tau'$.  If $\tau'$ is a natural isomorphism then so is $\tau$.
\end{theorem}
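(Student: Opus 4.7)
The plan is to reduce to the case ${\mathcal C}={\bf Ab}$, which amounts to the content of (part of) \cite[12.10]{PreDefAddCat} - reformulated in this section as the statement that restriction ${\rm fun}({\mathcal D})\to ({\rm Pinj}({\mathcal D}),{\bf Ab})^{\prod}$ is fully faithful - and then to reassemble the extension in ${\mathcal C}$ using the equivalence ${\mathcal C}\simeq {\rm Ex}({\rm fun}({\mathcal C}),{\bf Ab})$ of \ref{defisexfun}. Uniqueness is the easy part: for each $D\in {\mathcal D}$ fix a pure embedding $\iota_D:D\to H(D)$ into its pure-injective hull; by \ref{interpprespi} both $F\iota_D$ and $G\iota_D$ are pure monomorphisms, in particular monic, and any $\tau$ extending $\tau'$ must, by naturality, satisfy $G\iota_D\cdot\tau_D=\tau'_{H(D)}\cdot F\iota_D$, which determines $\tau_D$.

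For existence, fix any $X\in {\rm fun}({\mathcal C})\subseteq ({\mathcal C},{\bf Ab})^{\rightarrow\prod}$; the composites $X\circ F$ and $X\circ G$ again commute with direct products and direct limits, and hence lie in ${\rm fun}({\mathcal D})$. Whiskering gives $X\tau':(X\circ F)\upharpoonright {\rm Pinj}({\mathcal D})\Rightarrow (X\circ G)\upharpoonright {\rm Pinj}({\mathcal D})$, and the ${\bf Ab}$-valued case of the theorem extends $X\tau'$ uniquely to some $\alpha(X):X\circ F\Rightarrow X\circ G$ in ${\rm fun}({\mathcal D})$. Uniqueness of extensions, applied to arrows $X\to Y$ in ${\rm fun}({\mathcal C})$, forces naturality of $X\mapsto \alpha(X)_D$ in $X$ for each $D\in {\mathcal D}$. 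So $\alpha(-)_D$ is a natural transformation between the exact evaluation functors ${\rm ev}_{FD},{\rm ev}_{GD}:{\rm fun}({\mathcal C})\to {\bf Ab}$, and \ref{defisexfun} returns a unique $\tau_D:FD\to GD$ in ${\mathcal C}$ satisfying $X(\tau_D)=\alpha(X)_D$ for all $X$.

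Naturality of $\tau$ in $D$ is then immediate from naturality of $\alpha(X)$ in $D$. Restriction to ${\rm Pinj}({\mathcal D})$ recovers $\tau'$: for $N\in {\rm Pinj}({\mathcal D})$, $\alpha(X)_N=X(\tau'_N)$ by construction, and this same formula computes $X(\tau'_N)$ under the equivalence of \ref{defisexfun}, so $\tau_N=\tau'_N$. For the isomorphism clause, applying the construction to $(\tau')^{-1}$ produces a natural transformation $\sigma:G\to F$; then $\sigma\cdot\tau:F\to F$ extends the identity on ${\rm Pinj}({\mathcal D})$ so, by the uniqueness already established, $\sigma\cdot\tau={\rm id}_F$, and symmetrically $\tau\cdot\sigma={\rm id}_G$. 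The main obstacle is the ${\mathcal C}={\bf Ab}$ case, i.e.\ showing that the restriction ${\rm fun}({\mathcal D})\to ({\rm Pinj}({\mathcal D}),{\bf Ab})^{\prod}$ is fully faithful on natural transformations. The discussion preceding the theorem signals that this will be done by re-deriving, more cleanly, the identifications $({\rm Pinj}({\mathcal D}),{\bf Ab})^{\prod}\simeq ({\rm Pinj}({\mathcal D}),{\bf Ab})^{\rm fp}\simeq ({\rm Fun}^{\rm d}({\mathcal D}))^{\rm op}$ from \cite{PreDefAddCat} and \cite{KraEx}; once that is in hand, the argument above is essentially formal.
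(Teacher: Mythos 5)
Your formal reduction of the general case to ${\mathcal C}={\bf Ab}$ is correct and rather elegant: post-composing with each $X\in{\rm fun}({\mathcal C})$, using uniqueness of extensions to force naturality in $X$, and then invoking the full faithfulness of $C\mapsto {\rm ev}_C$ from \ref{defisexfun} to reconstruct $\tau_D$ does work, and your uniqueness argument via the pure-injective hull (using that $G\iota_D$ is a pure monomorphism, hence monic, by \ref{interpprespi}) is essentially the remark the paper makes after its proof. But the proposal has a genuine gap at exactly the point you flag as ``the main obstacle'': the existence of the extension in the ${\bf Ab}$-valued case is never proved, only deferred to ``re-deriving'' the identifications $({\rm Pinj}({\mathcal D}),{\bf Ab})^{\prod}\simeq({\rm Pinj}({\mathcal D}),{\bf Ab})^{\rm fp}\simeq({\rm Fun}^{\rm d}({\mathcal D}))^{\rm op}$. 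Those identifications are object-level statements and do not by themselves yield fullness of restriction on natural transformations; moreover, the paper explicitly says that the step in \cite[12.10]{PreDefAddCat} being generalised here concerns natural \emph{isomorphisms} between two functors agreeing on ${\rm Pinj}({\mathcal D})$, so the arbitrary-natural-transformation statement you are relying on is precisely (a special case of) the theorem being proved, not something available off the shelf. As written, the argument is circular modulo an unproved base case.

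What is missing is the concrete construction the paper uses: given $M\in{\mathcal D}$, choose a pure embedding $i:M\rightarrow N$ with $N$ pure-injective and an ultrafilter ${\mathcal F}$ on a set $I$ such that $M^I/{\mathcal F}$ and $N^I/{\mathcal F}$ are pure-injective; since $F$ and $G$ commute with products and direct limits they commute with these ultrapowers and with the diagonal maps, and the square formed by $GM\rightarrow GN$ and $\Delta_{GM}:GM\rightarrow (GM)^I/{\mathcal F}$ is a pullback (${\rm im}(\Delta_{GN})\cap (GM)^I/{\mathcal F}={\rm im}(\Delta_{GM})$). Chasing $\tau'_N$ and $\tau'_{M^I/{\mathcal F}}$ around this diagram produces a unique $\tau_M:FM\rightarrow GM$, after which independence of the choice of $N$ and naturality are routine. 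Note that this argument makes no use of ${\mathcal C}={\bf Ab}$ and works uniformly for any definable codomain, so once you supply it your reduction step becomes unnecessary; alternatively, keep the reduction but then you must actually prove the fullness of ${\rm fun}({\mathcal D})\rightarrow({\rm Pinj}({\mathcal D}),{\bf Ab})^{\prod}$ by this (or an equivalent) argument rather than cite it.
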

\begin{proof} We use an argument from the proof of \cite[12.10]{PreDefAddCat}.  Let $M\in {\mathcal D}$ and choose a pure embedding $M\xrightarrow{i} N\in {\rm Pinj}({\mathcal D})$ into a pure-injective.  Choose a set $I$ and an ultrafilter ${\mathcal F}$ on $I$ such that both $M^I/{\mathcal F}$ and $N^I/{\mathcal F}$ (and hence also their images under $F$ and $G$) are pure-injective (see, e.g., \cite[[4.2.19]{PreNBK} or \cite[21.3]{PreDefAddCat}).  Consider the diagram shown, where $\Delta_{(-)}$ is the diagonal map into the ultraproduct.

$\xymatrix{ & FN^I/{\cal F} \ar[r]^{\tau'_{(N^I/{\mathcal F})}} & GN^I/{\cal F} \\
FN \ar[ur]^{\Delta_{FN}} \ar[rrr]^{\tau'_N} & & & GN \ar[ul]_{\Delta_{GN}} \\
& FM^I/{\cal F} \ar@{->}[uu]^<<<<{Fi^I/{\mathcal F}} \ar[r]^{\tau'_{(M^I/{\mathcal F})}} & GM^I/{\cal F} \ar@{->}[uu]_<<<<{Gi^I/{\mathcal F}} \\
FM \ar[uu]^{Fi} \ar[ur]_{\Delta_{FM}} & & & GM \ar[uu]_{Gi} \ar[ul]^{\Delta_{GM}} }$

\noindent We note that $F(N^I/{\cal F})= (FN)^I/{\cal F}$, $F\Delta_N =\Delta_{FN}$, $F(i^I/{\mathcal F}) = (Fi)^I/{\mathcal F}$ etc.  Also, \ref{interpprespi}, pure embeddings are taken to (pure) embeddings by $F$ and $G$.  The top square commutes since $\tau$ is a natural transformation, the back for the same reason and, using that $M^I/{\mathcal F}$ is pure-injective and what has just been noted, we see that the sides commute since $F$ and $G$ are functors.

From the construction we have that ${\rm im}(\Delta_{GN}) \,\cap\, \big((GM)^I/{\cal G} \big) = {\rm im}(\Delta_{GM}) \simeq GM$; that is, the right-hand side is a pullback.  So, working round the commutative squares, we obtain a unique map, which we denote $\tau_M$, from $FM$ to $GM$ making the whole diagram commute.  (Note that this is independent of choice of $N$ since, given another choice of initial embedding $i':M\rightarrow N'$, we can use $(i,i'):M\rightarrow N\oplus N'$, to define ``$\tau_M$'' which, one may check, restricts to the otherwise-constructed $\tau_M$s.

Arguing similarly one checks that the $\tau_M$ cohere to form a natural transformation from $F$ to $G$.  The last statement follows easily.
\end{proof}

It is clear from the proof that it is sufficient that the natural transformation $\tau$ be defined on some cofinal class of pure-injectives, indeed being defined on an elementary cogenerator would be enough.

\begin{prop}\label{commprodfp0} Suppose that $ {\mathcal C}, {\mathcal D} $ are additive categories with products and coproducts and suppose that ${\mathcal C}$ is abelian.  Let $ ({\mathcal D},{\mathcal C})^{\prod} $ denote the category of those functors from $ {\mathcal D} $ to $ {\mathcal C} $ which commute with direct products. Suppose that $G\in ({\mathcal D},{\mathcal C})$ is generated.  Then $G\in ({\mathcal D},{\mathcal C})^{\prod}$ iff $G \in ({\mathcal D},{\mathcal C})^{\rm fp}$, the category of finitely presented functors from ${\mathcal D}$ to ${\mathcal C}$.
\end{prop}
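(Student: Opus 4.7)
The plan is to prove both implications by direct manipulation of presentations, using only the product-preservation hypothesis and the Yoneda lemma.

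For the direction $G \in ({\mathcal D},{\mathcal C})^{\rm fp} \Rightarrow G \in ({\mathcal D},{\mathcal C})^{\prod}$: I would pick a finite presentation ${\mathcal D}(D_1,-) \to {\mathcal D}(D_0,-) \to G \to 0$. Each representable ${\mathcal D}(D,-)$ preserves arbitrary products since, by the universal property, ${\mathcal D}(D, \prod X_i) = \prod {\mathcal D}(D, X_i)$. So $G(\prod X_i)$ is the cokernel of $\prod {\mathcal D}(D_1,X_i) \to \prod {\mathcal D}(D_0,X_i)$. The abelian group-valued comparison map $G(\prod X_i) \to \prod G(X_i)$ is then the one induced between these two cokernels; chasing the defining diagram (using that $\prod$ is always left exact and that the comparison map factors through both descriptions) one identifies the two.

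For the converse, the key move is to collapse all relations into a single universal element. Choose a generating epimorphism $r: {\mathcal D}(D,-) \twoheadrightarrow G$ from the hypothesis and let $K = \ker(r)$, so we have $0 \to K \to {\mathcal D}(D,-) \to G \to 0$. Products are always left exact, and both $G$ and ${\mathcal D}(D,-)$ preserve them, so comparison of the two exact sequences $0 \to K(\prod X_i) \to {\mathcal D}(D,\prod X_i) \to G(\prod X_i)$ and $0 \to \prod K(X_i) \to \prod {\mathcal D}(D,X_i) \to \prod G(X_i)$ shows that $K$ preserves products too. Now let $I$ be the (small, after choosing a skeleton of ${\mathcal D}$) collection of pairs $(X,f)$ with $f \in K(X) \subseteq {\mathcal D}(D,X)$, set $Y = \prod_{(X,f)\in I} X$ in ${\mathcal D}$, and let $g = (f)_{(X,f)\in I} \in {\mathcal D}(D,Y) = \prod {\mathcal D}(D,X)$. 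Product-preservation of $K$ places $g$ in $K(Y)$. By Yoneda, $g$ corresponds to a natural transformation $\alpha: {\mathcal D}(Y,-) \to {\mathcal D}(D,-)$ given by $\phi \mapsto \phi \circ g$. The image of $\alpha$ lies in $K$ because $g \in K(Y)$ and $K$ is a subfunctor, and conversely every $h \in K(X)$ equals $\pi_{(X,h)} \circ g$, so lies in the image. Hence ${\rm im}(\alpha) = K$ and we obtain the finite presentation ${\mathcal D}(Y,-) \to {\mathcal D}(D,-) \to G \to 0$.

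The main obstacle I anticipate is set-theoretic: ensuring the indexing class $I$ is small enough that $Y = \prod_I X$ exists in ${\mathcal D}$, which requires some implicit size control on the hypotheses (either passing to a skeleton of ${\mathcal D}$ or exploiting that the relevant categories under consideration are essentially small in the sense used throughout the paper). A secondary point, which may be nothing more than notational, is to confirm what ``finitely presented'' means in $({\mathcal D},{\mathcal C})$ when ${\mathcal C}$ is not $\mathbf{Ab}$; the argument above takes the standard interpretation (presentation by representables), and should go through identically if one replaces representables by their ${\mathcal C}$-enriched analogues, since the Yoneda argument only uses the universal property of products in ${\mathcal D}$.
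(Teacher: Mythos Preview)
Your forward direction matches the paper's.

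In the backward direction there is a gap at the very first line: the hypothesis is only that $G$ is \emph{generated}, which in the paper's usage means there is an epimorphism $\bigoplus_{i\in I}(N_i,-)\twoheadrightarrow G$ from a set-indexed coproduct of representables---not from a single representable $(D,-)$. The content of the paper's proof of this direction is precisely the step you skip: using product-preservation of $G$ one checks that $(\bigoplus_i(\pi_i,-),G):\big((\prod_iN_i,-),G\big)\to\big(\bigoplus_i(N_i,-),G\big)$ is an isomorphism, so any epimorphism $\bigoplus_i(N_i,-)\twoheadrightarrow G$ extends along $\bigoplus_i(N_i,-)\to(\prod_iN_i,-)$, giving $G$ finitely generated. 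This is the same Yoneda-plus-products trick you apply to $K$, just one level earlier.

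Your treatment of the kernel is then the natural second step---and in fact the paper's proof stops at ``finitely generated'' and leaves this iteration implicit---but the size worry you raise is genuine and is not fixed by passing to a skeleton: $\mathcal{D}$ is not assumed skeletally small, so your class $I$ of all pairs $(X,f)$ with $f\in K(X)$ may well be proper, and $Y=\prod_I X$ need not exist. The paper's formulation avoids this for $G$ by taking the index set from a \emph{given} generating family; to run your kernel argument you would likewise first need $K$ to be generated by a set, which in the paper's intended applications is arranged via an elementary cogenerator (see the corollary immediately following).
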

\begin{proof} We recall that a functor $G$ from ${\mathcal D}$ to ${\mathcal C}$ is {\bf finitely generated} if it is a quotient of a (finite direct sum of) representable functor(s) and $G$ is {\bf finitely presented} if the kernel of such a presentation is itself finitely generated.  Beware that, because ${\mathcal D}$ has a proper class of objects, not every functor on it will be {\bf generated}, that is, determined by its action on a set of objects; that is, not every functor has a presentation (let alone a finite one).

For one direction, suppose that $ G\in ({\mathcal D},{\mathcal C})^{\rm fp}$, so there is a morphism $ f:N\rightarrow N' $ in $ {\mathcal D} $ such that $ (N',-)\xrightarrow{(f,-)} (N,-)\rightarrow G\rightarrow 0 $ is exact.  Since both $ (N',-) $ and $ (N,-) $ commute with products (by definition of direct product) and since, as an additive functor category, direct products in $ {\mathcal D},{\mathcal C}) $ are exact, it follows that $ G $ commutes with products.  Thus $({\mathcal D},{\mathcal C})^{\rm fp}$ is a subcategory of $({\mathcal D},{\mathcal C})^{\prod}$.

For the converse, suppose that $ G\in ({\mathcal D},{\mathcal C}) $ commutes with direct products, meaning that for any indexed set $ (N_i)_{i\in I} $ of objects of $ {\mathcal D} $ the canonical map $ \prod G\pi _i:G \prod _iN_i\rightarrow \prod _iGN_i$, where $ \pi _j: \prod N_i\rightarrow N_j $ are the canonical projections, is an isomorphism. Then, in the commutative diagram shown

$\xymatrix{(\bigoplus _j(N_j,-),G) \ar@{<.>}[d]_\simeq & & ((\prod _iN_i,-),G) \simeq G\prod N_i \ar[ll]_{(\bigoplus _j(\pi _j,-),G)} \ar@{.>}[dll]^{\prod G\pi _i}_\simeq \\ \prod _iGN_i \simeq \prod_i ((N_i,-),G)}$

\noindent it follows that $ (\bigoplus _j(\pi _j,-),G)$ is an isomorphism.

Since $ (\bigoplus _i(\pi _i,-),G) $ is restriction of morphisms along $ \bigoplus _i(\pi _i,-): \bigoplus _j(N_j,-) \rightarrow (\prod _iN_i,-) $ it follows that each morphism $ p:\bigoplus _i(N_i,-)\rightarrow G $ has an extension to a morphism $ p':(\prod _iN_i,-)\rightarrow G$. So, if there is an epimorphism from $ \bigoplus _i(N_i,-) $ to $ G $ then there is one from $ (\prod _iN_i,-)$. That is, if $ G $ is generated then $ G $ is finitely generated.
\end{proof}

\begin{cor}\label{commprodfp} Suppose that $ {\mathcal C}, {\mathcal D} $ are definable categories with ${\mathcal C}$ abelian.  Then $ ({\rm Pinj}({\mathcal D}),{\mathcal C})^{\prod} =({\rm Pinj}({\mathcal D}),{\mathcal C})^{\rm fp}$.
\end{cor}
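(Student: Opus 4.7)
The plan is to deduce the corollary from Proposition \ref{commprodfp0} by taking ${\rm Pinj}({\mathcal D})$ in place of the ambient additive category. First I would verify the hypotheses: ${\rm Pinj}({\mathcal D})$ is additive with products (since pure-injectives are closed under direct products in ${\mathcal D}$, and these products agree with those in ${\mathcal D}$) and has coproducts (formed as the pure-injective hull of the coproduct in ${\mathcal D}$); and ${\mathcal C}$, being definable abelian, is additive abelian with products and direct limits, hence coproducts. The easy inclusion $({\rm Pinj}({\mathcal D}),{\mathcal C})^{\rm fp} \subseteq ({\rm Pinj}({\mathcal D}),{\mathcal C})^{\prod}$ is immediate from \ref{commprodfp0}, since representables commute with products and products are exact in the abelian functor category $({\rm Pinj}({\mathcal D}), {\mathcal C})$.

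The substantive task is to show that every product-preserving $G$ is automatically \emph{generated} in the sense required by \ref{commprodfp0}, so that the proposition applies and yields finite generation. For this I would invoke the existence of an elementary cogenerator $N$ of ${\mathcal D}$: every $M \in {\rm Pinj}({\mathcal D})$ is a pure subobject of some product $N^I$, and since $M$ is pure-injective such a pure embedding splits, so $M$ is a direct summand of $N^I$. Because $G$ preserves products, $GN^I = (GN)^I$, and the component $GM$ is the summand of $(GN)^I$ cut out by the splitting idempotent. Thus the value of $G$ everywhere is determined by $GN \in {\mathcal C}$ together with the morphisms of ${\rm Pinj}({\mathcal D})$. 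Enriched-Yoneda then gives a natural transformation $(N,-) \to G$ corresponding to $1_{GN}$ which, combined with the retract structure above and the coproduct structure of the functor category, exhibits $G$ as a quotient of a (possibly large) sum of copies of $(N,-)$; this is precisely the statement that $G$ is generated.

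With ``generated'' in hand, Proposition \ref{commprodfp0} gives that $G$ is finitely generated, say by an epimorphism $(M_0,-) \twoheadrightarrow G$ with $M_0$ a product of objects of ${\rm Pinj}({\mathcal D})$. The kernel $K$ of this map lives in the abelian functor category $({\rm Pinj}({\mathcal D}),{\mathcal C})$, where products (and hence kernels of natural transformations between product-preserving functors) are exact, so $K$ itself preserves products. Running the same argument for $K$ shows $K$ is finitely generated, and therefore $G$ is finitely presented, as required.

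The main obstacle I expect is the rigorous handling of the generation step: while the intuition that $GN$ controls $G$ through the retract structure is transparent, converting this into an actual presentation $(M_0,-) \twoheadrightarrow G$ of the sort required by \ref{commprodfp0} requires care with enriched representables (since $(N,-)$ takes values in $\mathbf{Ab}$ while $G$ takes values in the general abelian category ${\mathcal C}$) and with the size issues inherent in ${\rm Pinj}({\mathcal D})$, which is not skeletally small but becomes ``controllable'' through its Karoubian description in terms of retracts of powers of the single object $N$.
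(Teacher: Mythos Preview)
Your overall strategy matches the paper's exactly: reduce to Proposition~\ref{commprodfp0} by showing that every product-preserving $G$ on ${\rm Pinj}({\mathcal D})$ is generated, exploiting an elementary cogenerator $N$. Your verification of the hypotheses and your closing remark that the kernel of a finite presentation again preserves products (so that finitely generated upgrades to finitely presented) are both fine, and the latter is a point the paper's proof of~\ref{commprodfp0} arguably leaves implicit.

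The weak spot is precisely where you locate it: the generation step. The phrase ``a natural transformation $(N,-)\to G$ corresponding to $1_{GN}$'' does not typecheck: $(N,-)$ is $\mathbf{Ab}$-valued, $G$ is ${\mathcal C}$-valued, and $1_{GN}$ is a morphism in ${\mathcal C}$, not something Yoneda hands you as a single arrow into $G$. Even granting some enriched reading, one such map cannot generate $G$; you need the whole of $GN$ at once. The paper's device (for ${\mathcal C}=\mathbf{Ab}$, which is all that is used downstream) is to take the \emph{canonical} map $c_1:(N^{GN},-)\to G$, which under $((N^{GN},-),G)\simeq G(N^{GN})\simeq (GN)^{GN}$ corresponds to the identity-indexed family $(x)_{x\in GN}$. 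The paper then shows $c_1$ is an epimorphism not by evaluating pointwise but by checking that \emph{every} map $(N',-)\to G$ factors through $c_1$: one lifts along the split inclusion $N'\hookrightarrow N^I$, reads the resulting element of $(GN)^I$ as a function $I\to GN$, and pulls back along the induced $N^{GN}\to N^I$. That factoring argument is the concrete content your sketch is missing.

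Your final paragraph is on target: for general abelian ${\mathcal C}$ the representables $(N,-)$ do not live in $({\rm Pinj}({\mathcal D}),{\mathcal C})$ and the indexing set ``$GN$'' is an object of ${\mathcal C}$, so the argument as written is really a ${\mathcal C}=\mathbf{Ab}$ argument. The paper does not fully resolve this either; immediately after this corollary it specialises to ${\mathcal C}=\mathbf{Ab}$ and recovers the general case only later by a separate reduction.
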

\begin{proof}  By \ref{commprodfp0} it must be shown that every functor $G$ which commutes with products is generated.  First we note, as a general point, that given $ A,B $ in an additive category with coproducts, every morphism from a coproduct $ A^{(I)} $ of copies of $ A $ to $ B $ factors through the canonical morphism $ c:A^{(A,B)}\rightarrow B $ where $ c=(h)_{h\in(A,B)}$. For, if $ f:A^{(I)}\rightarrow B $ is $ f=(f_i)_i $ with $ f_i:A_{(i)}\rightarrow B $ then, for each $ i\in I $, choose an isomorphism $ A_{(i)}\rightarrow A $ (the copy appearing in ``$A^{(A,B)}$", which we assume to be replicated in a specified way at each $ h\in(A,B)$) and define $ h(i) $ to be the composition shown.

$\xymatrix{A_{(i)} \ar[dr]^{f_i} \ar[d] \\ A \ar[r]_{h(i)} & B}$

\noindent Define $ g_i $ to be the composition $ A_{(i)}\rightarrow A\xrightarrow{i_{h(i)}} A^{(A,B)}$, yielding $ g=(g_i)_i:A^{(I)}\rightarrow  A^{(A,B)}$. Then $ cg=f$: it is sufficient to check at each $ i\in I$, where it holds by definition of $ h(i)$, as required.

Now let $ N $ be an elementary cogenerator for $ {\mathcal D}$ and let $G\in ({\rm Pinj}({\mathcal D}),{\mathcal C})^{\prod} $.  So, if $ N'\in {\rm Pinj}({\mathcal D}) $ then there is a split exact sequence of the form $ 0\rightarrow N'\rightarrow N^I \rightarrow N''\rightarrow 0 $ and hence a split exact sequence $ 0\rightarrow (N'',-)\rightarrow (N^I ,-)\rightarrow (N',-)\rightarrow 0 $ in $ ({\rm Pinj}({\mathcal D}),{\mathcal C})$.  In particular any morphism $d':(N',-) \rightarrow G$ lifts to some $d:(N^I,-) \rightarrow G$.  Under the identification $((N^I,-),G) \simeq G(N^I) \simeq (GN)^I$, $d$ corresponds to some $\overline{d}=(d_i)_{i\in I}$ with $d_i\in GN \simeq ((N,-),G)$.  Thus we have an induced map $i\mapsto d_i:I\rightarrow GN$ and hence an induced $\chi_d:N^{GN}\rightarrow N^I$.  That, in turn, induces $(\chi_d,-):(N^I,-) \rightarrow (N^{GN},-)$ and so $(\chi_d,G):((N^{GN},-),G) \rightarrow ((N^I,-),G)$ which takes $c_1\in ((N^{GN},-),G)$ to $d$ where $c_1$ corresponds to $c\in ((N,-)^{(GN)},G)$ (notation as above) {\it via} the isomorphism $((N,-)^{(GN)},G) \simeq ((N,-),G)^{GN} \simeq ((N^{GN},-),G)$ (the latter being that $G$ commutes with products).  In particular $d$, and hence $d'$, factors through $c_1$ and we deduce that $c_1:(N^{GN},-) \rightarrow G$ is an epimorphism.
\end{proof}

Note that an abelian definable category is Grothendieck (hence, \ref{defgroth}, locally finitely presented): it is complete and well-powered, so it is sufficient to check that it has exact direct limits but, being a subcategory of a Grothendieck (functor) category and being closed in that category under direct limits, this follows.

\begin{lemma}\label{funtoCab} Suppose that ${\mathcal D}$ is a definable category and that ${\mathcal C}$ is an abelian definable category.  Then $({\mathcal D}, {\mathcal C})^{\rightarrow \prod}$, that is, ${\mathbb D}{\mathbb E}{\mathbb F}({\mathcal D}, {\mathcal C})$ is a skeletally small abelian category.
\end{lemma}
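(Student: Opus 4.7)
The plan is to exhibit an abelian structure on ${\mathcal E}=({\mathcal D},{\mathcal C})^{\rightarrow\prod}$ via pointwise kernels and cokernels in ${\mathcal C}$, and to deduce skeletal smallness via the 2-anti-equivalence of Theorem \ref{3cats}. Throughout I use that ${\mathcal C}$ is Grothendieck (hence AB5) by the remark preceding the lemma together with \ref{defgroth}.

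For the abelian structure, given $\tau\colon F\Rightarrow G$ in ${\mathcal E}$, I would define $K(D)={\rm ker}(\tau_D)$ and $C(D)={\rm coker}(\tau_D)$ as pointwise (co)kernels in ${\mathcal C}$, and verify that $K,C\in{\mathcal E}$. The kernel $K$ commutes with products (both being limits) and with direct limits (by AB5, filtered colimits are exact in ${\mathcal C}$ and so commute with the finite limit taking kernels). The cokernel $C$ commutes with direct limits since both are colimits. The remaining step---that $C$ commutes with products---reduces to AB4* for ${\mathcal C}$, which I would derive as follows: the proof of \ref{defgroth} exhibits ${\mathcal C}$ as a Gabriel localization of ${\mathcal A}\mbox{-}{\rm Mod}$ of finite type, where ${\mathcal A}={\rm fun}({\mathcal C})$, with exact reflector $Q$ and right-adjoint inclusion $i$; since products in ${\mathcal C}$ agree via $i$ with those in ${\mathcal A}\mbox{-}{\rm Mod}$ (which is AB4*), applying $Q$ converts exactness of products in the module category into exactness of products in ${\mathcal C}$.

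For skeletal smallness, Theorem \ref{3cats} provides the 2-anti-equivalence ${\mathbb D}{\mathbb E}{\mathbb F}\simeq{\mathbb A}{\mathbb B}{\mathbb E}{\mathbb X}^{\rm op}$, under which
\[
{\mathcal E} = {\mathbb D}{\mathbb E}{\mathbb F}({\mathcal D},{\mathcal C}) \;\simeq\; {\rm Ex}({\rm fun}({\mathcal C}),{\rm fun}({\mathcal D})),
\]
the category of exact functors between two skeletally small abelian categories. An additive functor between skeletally small categories is determined by set-sized data on objects and morphisms, so this category has a set of isomorphism classes, and hence so does ${\mathcal E}$. The main obstacle is the AB4* argument for ${\mathcal C}$: for ${\mathcal C}={\bf Ab}$ it is immediate (and this is the instance needed earlier to ensure ${\rm fun}({\mathcal D})=({\mathcal D},{\bf Ab})^{\rightarrow\prod}$ is abelian), but the general case requires the finite-type localization argument above.
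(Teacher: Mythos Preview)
Your proposal follows essentially the same route as the paper: pointwise (co)kernels in ${\mathcal C}$ for the abelian structure, and skeletal smallness via the equivalence ${\mathbb D}{\mathbb E}{\mathbb F}({\mathcal D},{\mathcal C})\simeq{\rm Ex}({\rm fun}({\mathcal C}),{\rm fun}({\mathcal D}))$ coming from \ref{3cats}. The paper likewise simply asserts that in ${\mathcal C}$ ``both direct limits and direct products are exact'' and uses this to see that the pointwise kernel and cokernel again lie in $({\mathcal D},{\mathcal C})^{\rightarrow\prod}$; you are more explicit in isolating AB4$^*$ for ${\mathcal C}$ as the nontrivial ingredient and in sketching a derivation from the finite-type localisation picture of \ref{defgroth}. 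So the overall strategy coincides with the paper's.

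One caution on your AB4$^*$ sketch: the phrase ``applying $Q$ converts exactness of products in the module category into exactness of products in ${\mathcal C}$'' hides a genuine step. The reflector $Q$ is exact but, being a left adjoint, need not commute with products; so from AB4$^*$ in ${\mathcal A}\mbox{-}{\rm Mod}$ one cannot directly read off AB4$^*$ in ${\mathcal C}$. Concretely, for epimorphisms $f_\lambda$ in ${\mathcal C}$ one has $\coker_{{\mathcal A}\mbox{-}{\rm Mod}}(\prod if_\lambda)=\prod\coker_{{\mathcal A}\mbox{-}{\rm Mod}}(if_\lambda)$ by AB4$^*$ in the module category, and what is then needed is that this product of torsion objects is again torsion---which is not automatic for finite-type torsion theories (torsion abelian groups, a finite-type torsion class in ${\rm Mod}\mbox{-}{\mathbb Z}$, are not closed under products). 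The paper does not address this point either, merely asserting exactness of products; so your write-up is no less complete than the paper's, but the localisation argument as stated would need to be expanded before it actually yields AB4$^*$.
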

\begin{proof} It is easily checked that this category (or the equivalent, by \ref{3cats}, category ${\rm Ex}({\rm fun}({\mathcal C}), {\rm fun}({\mathcal D})$) is skeletally small (see the proof of \ref{funpitoCab} below) and additive.  If $\tau:F\rightarrow G$ is an arrow in $({\mathcal D}, {\mathcal C})^{\rightarrow \prod}$ then we can define ${\rm ker}(\tau)$ to be the functor taking $D\in {\mathcal D}$ to ${\rm ker}(\tau_D)$ (which exists since ${\mathcal C}$ is abelian) and having action on arrows given in the obvious way (see the diagram).

$\xymatrix{D \ar[d]_f & & 0 \ar[r] & {\rm ker}(\tau)D \ar[r] \ar@{.>}[d] & FD \ar[d]_{Ff} \ar[r]^{\tau_D} & GD \ar[d]^{Gf} \\ D' & & 0 \ar[r] & {\rm ker}(\tau)D' \ar[r] & FD' \ar[r]_{\tau_{D'}} & GD'}$

\noindent  Since in ${\mathcal C}$ both direct limits and direct products are exact, this functor ${\rm ker}(\tau)$ also commutes with these (as in the proof below) and the inclusion of it into $F$ is, indeed, the kernel of $\tau$.  Dually we obtain ${\rm coker}(\tau)$ also in $({\mathcal D}, {\mathcal C})^{\rightarrow \prod}$.  In a similar way we can see that every monomorphism in $({\mathcal D}, {\mathcal C})^{\rightarrow \prod}$ has all its components monomorphisms and is a cokernel, and dually for epimorphisms.
\end{proof}

\begin{lemma}\label{funpitoCab} Suppose that ${\mathcal D}$ is a definable category and that ${\mathcal C}$ is an abelian definable category.  Then $({\rm Pinj}({\mathcal D}), {\mathcal C})^{\rightarrow \prod}$ is a skeletally small abelian category.
\end{lemma}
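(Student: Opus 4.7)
The plan is to parallel the proof of \ref{funtoCab}, defining the abelian structure pointwise in ${\mathcal C}$, and to handle skeletal smallness by relating $({\rm Pinj}({\mathcal D}), {\mathcal C})^{\rightarrow \prod}$ to $({\mathcal D}, {\mathcal C})^{\rightarrow \prod}$. Given $\tau:F\Rightarrow G$ in $({\rm Pinj}({\mathcal D}), {\mathcal C})^{\rightarrow \prod}$, set $({\rm ker}\,\tau)(N) = {\rm ker}(\tau_N)$ in ${\mathcal C}$ for each pure-injective $N$, with the action on arrows forced by the universal property of kernels in ${\mathcal C}$; define ${\rm coker}\,\tau$ dually. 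The crucial verification is that these constructions again lie in $({\rm Pinj}({\mathcal D}), {\mathcal C})^{\rightarrow \prod}$, i.e., commute with products and with those directed colimits in ${\rm Pinj}({\mathcal D})$ that happen to land in ${\rm Pinj}({\mathcal D})$.

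For that verification I would use that ${\mathcal C}$, being abelian and definable, is Grothendieck by \ref{defgroth}, so directed colimits in ${\mathcal C}$ are exact, and that ${\mathcal C}$, as a definable subcategory of some ${\rm Mod}\mbox{-}{\mathcal R}$ with exact products, has exact products too. Applying $\prod$ and $\varinjlim$ to the left exact sequence $0 \to ({\rm ker}\,\tau)(N) \to FN \to GN$ then commutes these operations past the kernel, and since $F$ and $G$ already preserve products and the relevant directed colimits, one reads off that ${\rm ker}\,\tau$ does too; the cokernel case is dual. Componentwise characterisation of monos and epis, and the fact that every mono is a kernel and every epi a cokernel, then go through exactly as in the proof of \ref{funtoCab}.

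For skeletal smallness the cleanest route is to exhibit the restriction functor $({\mathcal D}, {\mathcal C})^{\rightarrow \prod} \to ({\rm Pinj}({\mathcal D}), {\mathcal C})^{\rightarrow \prod}$ as an equivalence; \ref{extnattran} already supplies fully faithfulness, and \ref{funtoCab} then transfers both skeletal smallness and the abelian structure. The outstanding task, which is the main obstacle, is essential surjectivity: one must extend each $F \in ({\rm Pinj}({\mathcal D}), {\mathcal C})^{\rightarrow \prod}$ to some $\widetilde{F} \in ({\mathcal D}, {\mathcal C})^{\rightarrow \prod}$. My approach would be, given $D \in {\mathcal D}$, to fix a pure embedding $i:D\hookrightarrow N$ into a pure-injective (for example a power of an elementary cogenerator of ${\mathcal D}$, see \ref{defsubgen}) together with an ultrafilter ${\mathcal F}$ on a set $I$ for which both $D^I/{\mathcal F}$ and $N^I/{\mathcal F}$ are pure-injective, and to define $\widetilde{F}(D)$ as the pullback of $F(\Delta_N): FN \to F(N^I/{\mathcal F})$ along $F(i^I/{\mathcal F}): F(D^I/{\mathcal F}) \to F(N^I/{\mathcal F})$, modelled on the diagram in the proof of \ref{extnattran}. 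Independence from the choices of $N$ and $(I, {\mathcal F})$, functoriality in $D$, and preservation of products and directed colimits would then be shown by diagram chases of the same flavour as those in the proof of \ref{extnattran}.
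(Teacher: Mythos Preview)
Your treatment of the abelian structure is essentially the paper's: define kernels and cokernels pointwise in ${\mathcal C}$ and use exactness of products and of directed colimits in the Grothendieck category ${\mathcal C}$ to check that these stay in $({\rm Pinj}({\mathcal D}), {\mathcal C})^{\rightarrow \prod}$.

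For skeletal smallness you take a genuinely different route. The paper does not attempt to prove that restriction $({\mathcal D}, {\mathcal C})^{\rightarrow \prod} \to ({\rm Pinj}({\mathcal D}), {\mathcal C})^{\rightarrow \prod}$ is an equivalence here; instead it reduces to the case ${\mathcal C}={\bf Ab}$, which is already handled inside the proof of \cite[12.10]{PreDefAddCat}, by representing ${\mathcal C}$ as a definable subcategory of some ${\rm Mod}\mbox{-}{\mathcal R}$ and post-composing with evaluation at each sort $P\in{\mathcal R}$. Each such composite lies in $({\rm Pinj}({\mathcal D}),{\bf Ab})^{\rightarrow \prod}$, which is known to be a set, and a functor to ${\mathcal C}$ is determined by this set-indexed family of ${\bf Ab}$-valued functors together with the linking maps; hence only a set of isomorphism classes. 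In the paper's logical order the equivalence you are aiming for is established only later (\ref{detbypinj} and the paragraph following it), and that chain of argument \emph{uses} \ref{funpitoCab}; so your strategy, if carried through, would reorganise the section by front-loading the extension theorem.

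The gap in your sketch is the phrase ``diagram chases of the same flavour as those in the proof of \ref{extnattran}''. In \ref{extnattran} the functors $F,G$ are already defined on all of ${\mathcal D}$, and the pullback argument merely manufactures a \emph{map} $\tau_M$ between already-existing objects; here you are using the pullback to \emph{define} $\widetilde F(D)$, and the burdens are heavier. Independence of the choice of $(I,{\mathcal F})$ and of $N$ is manageable (compare with the parenthetical at the end of the proof of \ref{extnattran}), and functoriality is plausible. But preservation of directed colimits is not of the same flavour: given $D=\varinjlim_\lambda D_\lambda$ you must compare a pullback formed from a single embedding $D\hookrightarrow N$ and a single ultrapower with the directed colimit of pullbacks formed from possibly unrelated embeddings $D_\lambda\hookrightarrow N_\lambda$ and ultrapowers of varying index sets. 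Making these choices cohere, and then showing that the comparison map between the two candidate values of $\widetilde F(D)$ is an isomorphism, is real work that the proof of \ref{extnattran} does not model. The paper sidesteps all of this with the sort-evaluation trick, at the cost of invoking the ${\bf Ab}$ case as a black box.
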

\begin{proof} To see that this functor category is abelian we use that $({\rm Pinj}({\mathcal D}), {\mathcal C})^{\rightarrow \prod}$ is a full subcategory of the (large) functor category $({\rm Pinj}({\mathcal D}), {\mathcal C})$ in which both $\prod$ and $\varinjlim$ are exact, from which it follows easily (see the proof above) that $({\rm Pinj}({\mathcal D}), {\mathcal C})^{\rightarrow \prod}$ is abelian.  For example, if $\tau:F \rightarrow G$ is a morphism in $({\rm Pinj}({\mathcal D}), {\mathcal C})^{\rightarrow \prod}$ then we have, in $({\rm Pinj}({\mathcal D}), {\mathcal C})$, the exact sequence $\sigma: 0\rightarrow {\rm ker}(\tau) \rightarrow F \xrightarrow{\tau} G\rightarrow {\rm coker}(\tau) \rightarrow 0$.  So then, if $D=\varinjlim_\lambda D_\lambda$ is a direct limit in ${\rm Pinj}({\mathcal D})$, then we have the exact sequences $\sigma(D_\lambda)$ forming a directed system of exact sequences in ${\mathcal C}$ with exact direct limit.  But also $\sigma(D)$ is exact and so, comparing these sequences, we deduce that $\varinjlim_\lambda ({\rm ker}(\tau)\cdot (D_\lambda)) ={\rm ker}(\tau) \cdot D$.

To show that the category is skeletally small we use that the proof for the case ${\mathcal C} ={\bf Ab}$ is done within the proof of \cite[12.10]{PreDefAddCat}.  That proof works in the more general case but carrying that through would require setting up quite a bit of the background material (in particular that relating to the category ${\rm Fun}^{\rm d}({\mathcal D})$ which makes an appearance).  An easier alternative is to note that the case ${\mathcal C} ={\bf Ab}$ is enough since we may regard objects of ${\mathcal C}$ as modules over some small preadditive category ${\mathcal R}$, hence as multi-sorted structures with a sort for each object $P$ of ${\mathcal R}$.  Then the composition of a functor preserving products and directed colimits to ${\mathcal C}$ with evalution ($C \mapsto ((P,-),C)$) at a particular sort is a functor (preserving products and directed colimits).  There is just a set of sorts and a set of morphisms between them so, putting together the data from the separate sorts, and since we know that $({\rm Pinj}({\mathcal D}), {\bf Ab})^{\rightarrow \prod}$ is a set, we deduce that $({\rm Pinj}({\mathcal D}), {\mathcal C})^{\rightarrow \prod}$ is a set.
\end{proof}

We will now specialise to the case ${\mathcal C}= {\bf Ab}$; we could continue with the general case but we wish to quote, in the proof of \ref{complissub}, a result which is proved in the case ${\mathcal C}={\bf Ab})$.  In this case also, the proof of the quoted result would generalise easily enough but, rather than do that, we will say how to obtain the general case from what we do.

Let us set ${\mathcal B} = ({\rm Pinj}({\mathcal D}), {\bf Ab})^{\rightarrow \prod}$ and also suppose that ${\mathcal A}$ is a small abelian category such that ${\mathcal D} = {\rm Ex}({\mathcal A}, {\bf Ab})$.  Certainly the action of each object of ${\mathcal A}$ on ${\rm Pinj}({\mathcal D})$ commutes with direct products and those direct limits in ${\rm Pinj}({\mathcal D})$, so we have a functor from ${\mathcal A}$ to ${\mathcal B}$.

\begin{lemma}\label{emblang}  With notation as above, the functor from ${\mathcal A}$ to ${\mathcal B}$ is a faithful, full and exact embedding.
\end{lemma}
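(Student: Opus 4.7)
The plan is to factor the functor ${\mathcal A}\to{\mathcal B}$ as
\[
{\mathcal A}\;\xrightarrow{\;\sim\;}\;{\rm fun}({\mathcal D}) \;\xrightarrow{\;\rho\;}\; {\mathcal B},
\]
where the first map is the equivalence of \ref{defisexfun} sending $A$ to ${\rm ev}_A: D\mapsto D(A)$, and $\rho$ is restriction of functors along ${\rm Pinj}({\mathcal D})\hookrightarrow {\mathcal D}$. It then suffices to verify well-definedness, fullness, faithfulness and exactness for $\rho$. First I would check that $\rho$ lands in ${\mathcal B}$: for $F\in{\rm fun}({\mathcal D})$, the restriction commutes with direct products in ${\rm Pinj}({\mathcal D})$ (which are also direct products in ${\mathcal D}$, since pure-injectivity is preserved under products) and with any direct limit of pure-injectives whose value is again pure-injective (which is then computed in ${\mathcal D}$, and preserved by $F$).

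For fullness and faithfulness of $\rho$ I would invoke \ref{extnattran} directly: any natural transformation $F\upharpoonright{\rm Pinj}({\mathcal D}) \Rightarrow G\upharpoonright{\rm Pinj}({\mathcal D})$ extends uniquely to a natural transformation $F\Rightarrow G$ on all of ${\mathcal D}$, so $\rho$ is fully faithful. Composition with the fully-faithful equivalence ${\mathcal A}\simeq{\rm fun}({\mathcal D})$ yields that ${\mathcal A}\to{\mathcal B}$ is fully faithful, and hence an embedding in the strong sense: an isomorphism $F_A\simeq F_{A'}$ in ${\mathcal B}$ lifts to an isomorphism in ${\rm fun}({\mathcal D})$ and thence to $A\simeq A'$ in ${\mathcal A}$.

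For exactness, both ${\rm fun}({\mathcal D})$ and ${\mathcal B}$ have kernels and cokernels computed pointwise, by the argument used in \ref{funpitoCab}: pointwise exact sequences remain pointwise exact after applying products and direct limits in ${\bf Ab}$, both of which are exact there, so pointwise exactness coincides with categorical exactness in each category. Thus $\rho$ visibly preserves short exact sequences, and the equivalence ${\mathcal A}\simeq{\rm fun}({\mathcal D})$ is automatically exact.

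The only non-formal ingredient is the appeal to \ref{extnattran}, which is precisely what allows one to recover a morphism in ${\rm fun}({\mathcal D})$ (and so in ${\mathcal A}$) from a natural family of maps indexed only by pure-injectives; the remainder is a bookkeeping exercise in pointwise limits, colimits, and exactness of the functor categories involved.
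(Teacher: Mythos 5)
Your proof is correct and follows essentially the same route as the paper's: full faithfulness is obtained from \ref{extnattran} (together with the fact that every object of ${\mathcal D}$ is pure in a pure-injective), and exactness is checked pointwise using that kernels and cokernels in both functor categories are computed objectwise. The explicit factorisation through ${\rm fun}({\mathcal D})$ is only a repackaging of the identification ${\mathcal A}\simeq{\rm fun}({\mathcal D})$ from \ref{defisexfun}, which the paper's argument takes for granted.
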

\begin{proof} If two objects of ${\mathcal A}$ agree on ${\rm Pinj}({\mathcal D})$ then, since every object of ${\mathcal D}$ is pure in a pure-injective object, they agree on all of ${\mathcal D}$; ditto, by \ref{extnattran}, for morphisms between such.  A sequence of morphisms in ${\mathcal A}$ is exact iff it is exact at each object of ${\mathcal D}$ iff  it is exact on each object of ${\rm Pinj}({\mathcal D})$, in other words, iff its image in ${\mathcal B}$ is exact; the fact that a sequence of functors/objects of ${\mathcal A}$ which is is exact on ${\rm Pinj}({\mathcal D})$ is exact on ${\mathcal D}$ follows again because every object of ${\mathcal D}$ is pure in an object of ${\rm Pinj}({\mathcal D})$.

By \ref{extnattran} the embedding of ${\mathcal A}$ into ${\mathcal B}$ is full.

Note, for later use, that this proof works with any definable abelian category ${\mathcal C}$ in place of ${\bf Ab}$.
\end{proof}

We are going to prove that the embedding of ${\mathcal A}$ into ${\mathcal B}$ is an equivalence.  In model-theoretic terms, regarding objects of ${\rm Pinj}({\mathcal D})$ as functors on ${\mathcal A}$ is regarding them as ${\mathcal A}$-structures (structures for the language of ${\mathcal A}$-modules) but, as we have just seen, and this is said in more detail below, they are also ${\mathcal B}$-structures.  So our question about extending functors from ${\rm Pinj}({\mathcal D})$ to ${\mathcal D}$ is equivalent to asking whether every ${\mathcal A}$-structure in the definable category ${\mathcal D}$ is also a ${\mathcal B}$-structure.

\begin{lemma}\label{embpinj}\marginpar{embpinj} Suppose that ${\mathcal D}$ is a definable category.  Let ${\mathcal B} = ({\rm Pinj}({\mathcal D}), {\bf Ab})^{\rightarrow \prod}$ - a skeletally small abelian category.  Then there is a natural full embedding of ${\rm Pinj}({\mathcal D})$ into the definable category $\overline{\mathcal D} = {\rm Ex}({\mathcal B}, {\bf Ab})$.  Each object $N\in {\rm Pinj}({\mathcal D})$ is pure-injective as a ${\mathcal B}$-structure.
\end{lemma}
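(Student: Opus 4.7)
The plan is to define the embedding by $N \mapsto {\rm ev}_N$, where ${\rm ev}_N : {\mathcal B} \to {\bf Ab}$ is evaluation $F \mapsto FN$. The first task is to verify that ${\rm ev}_N$ is exact, hence an object of $\overline{\mathcal D}$: this uses the observation, already used in the proof of \ref{funpitoCab}, that kernels and cokernels in ${\mathcal B}$ are computed pointwise on ${\rm Pinj}({\mathcal D})$, so evaluating an exact sequence of ${\mathcal B}$ at $N$ yields an exact sequence of abelian groups. Functoriality of $N \mapsto {\rm ev}_N$ under $f \mapsto (F \mapsto Ff)$ is then immediate.

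For faithfulness I would invoke \ref{emblang}, which provides a full, faithful, exact embedding ${\mathcal A} = {\rm fun}({\mathcal D}) \hookrightarrow {\mathcal B}$. Via the correspondence in \ref{3cats} this induces an interpretation functor $I : \overline{\mathcal D} \to {\mathcal D}$ given by precomposition, and a direct check shows that $I({\rm ev}_N)$ is $N$ regarded as an object of ${\rm Ex}({\mathcal A}, {\bf Ab}) = {\mathcal D}$. Consequently the composite ${\rm Pinj}({\mathcal D}) \to \overline{\mathcal D} \xrightarrow{I} {\mathcal D}$ is the fully faithful inclusion ${\rm Pinj}({\mathcal D}) \hookrightarrow {\mathcal D}$, from which faithfulness of our embedding follows.

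For fullness, a natural transformation $\tau : {\rm ev}_N \to {\rm ev}_{N'}$ in $\overline{\mathcal D}$ restricts along ${\mathcal A} \hookrightarrow {\mathcal B}$ to a morphism $f : N \to N'$ in ${\rm Pinj}({\mathcal D})$, and I need to show that $\tau = {\rm ev}(f)$. Both natural transformations agree on the full subcategory ${\mathcal A}$ by construction; the remaining step is to propagate the agreement to every $F \in {\mathcal B}$. The plan is to use the finite-presentation statement \ref{commprodfp} to present $F$ as a cokernel of a morphism between representable functors on ${\rm Pinj}({\mathcal D})$, and then to track the data of $\tau$ and ${\rm ev}(f)$ through this presentation using naturality with respect to ${\mathcal B}$-morphisms.

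For pure-injectivity of ${\rm ev}_N$, the key observation is that products and the appropriate directed colimits in $\overline{\mathcal D}$ of evaluations remain evaluations: $({\rm ev}_N)^I = {\rm ev}_{N^I}$ because every $F \in {\mathcal B}$ preserves products, and likewise $({\rm ev}_N)^I/{\mathcal F} = {\rm ev}_{N^I/{\mathcal F}}$ whenever $N^I/{\mathcal F}$ is pure-injective, which can be arranged as in the proof of \ref{extnattran}. A splitting of the diagonal $N \to N^I/{\mathcal F}$, which exists because $N$ is pure-injective in ${\mathcal D}$, then lifts via the fullness just established to a splitting of ${\rm ev}_N \to ({\rm ev}_N)^I/{\mathcal F}$ in $\overline{\mathcal D}$, giving the required pure-injectivity. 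The principal obstacle I foresee is the fullness step: propagating naturality from ${\mathcal A}$ to ${\mathcal B}$ is delicate because not every representable functor on ${\rm Pinj}({\mathcal D})$ lies in ${\mathcal B}$, so the finite presentations used must be handled inside the ambient category $({\rm Pinj}({\mathcal D}), {\bf Ab})$ rather than in ${\mathcal B}$ itself.
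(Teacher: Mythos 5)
Your proposal follows the paper's proof essentially step for step: the embedding is $N \mapsto {\rm ev}_N$, fullness is obtained by restricting natural transformations along the embedding ${\mathcal A} \hookrightarrow {\mathcal B}$ of \ref{emblang} (which identifies the restriction with a morphism $N\rightarrow N'$ in ${\mathcal D}$), and pure-injectivity is obtained by splitting the diagonal into a suitably chosen ultrapower, using that the objects of ${\mathcal B}$ commute with ultraproducts of pure-injectives so that $({\rm ev}_N)^I/{\mathcal F} = {\rm ev}_{N^I/{\mathcal F}}$. Your preliminary check that ${\rm ev}_N$ is exact (via the pointwise computation of kernels and cokernels in ${\mathcal B}$ from \ref{funpitoCab}) is a step the paper omits but which is needed and correct.

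Two remarks on the points where you diverge. First, on fullness: you correctly isolate the step the paper leaves implicit, namely that one must show $\tau = {\rm ev}(f)$ on all of ${\mathcal B}$, not merely that $\tau$ restricts to a morphism $f:N\rightarrow N'$; the paper's proof simply identifies $\tau$ with its restriction and says no more. Your proposed repair via \ref{commprodfp} does, as you yourself anticipate, hit the obstacle that the representables $(N_0,-)$ occurring in a finite presentation of $F$ need not lie in ${\mathcal B}$, so the naturality of $\tau$ — which is available only against morphisms of ${\mathcal B}$ — cannot be played off against the presentation. So this part of your argument remains a plan rather than a proof; to close it one needs some version of the fact that the objects of ${\mathcal B}$ are ``${\mathcal A}$-definable'' (which is essentially \ref{detbypinj}, proved afterwards via the quoted result of Krause), and the paper does not supply a self-contained argument here either. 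Second, for the pure-injectivity you do not actually need fullness: the retraction $N^I/{\mathcal F} \rightarrow N$ splitting the diagonal in ${\mathcal D}$ is a morphism between pure-injectives, hence lies in the domain of the functor ${\rm ev}$, and applying ${\rm ev}$ to it already yields the splitting of ${\rm ev}_N \rightarrow ({\rm ev}_N)^I/{\mathcal F}$; this is how the paper transfers the splitting, and it makes that part of the argument independent of the delicate fullness step. One should also take care to choose $I$ and ${\mathcal F}$ so that the ultrapower is pure-injective \emph{as a ${\mathcal B}$-structure} (as the paper does), since that is what the split-diagonal criterion for pure-injectivity of ${\rm ev}_N$ requires.
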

\begin{proof} The embedding is that which takes $N\in {\rm Pinj}({\mathcal D})$ to the functor, ${\rm ev}_N$, evaluation-at-$N$.  That it is full can be argued as follows.  As above, we suppose that ${\mathcal D}={\rm Ex}({\mathcal A},{\bf Ab})$ for some small abelian category ${\mathcal A}$ and we have the restriction of actions of objects of ${\mathcal A}$ from ${\mathcal D}$ to ${\rm Pinj}({\mathcal A})$, giving the embedding from \ref{emblang} of ${\mathcal A}$ into ${\mathcal B}$.  Then any natural transformation, $\tau: {\rm ev}_N \rightarrow {\rm ev}_{N'}$, between evaluation-on-${\mathcal B}$ functors (i.e.~objects of ${\rm Pinj}({\mathcal D})$ regarded as ${\mathcal B}$-structures) restricts to one between evaluation-on-${\mathcal A}$ functors (i.e.~${\mathcal A}$-structures) and that is just a morphism from $N$ to $N'$ as objects of ${\mathcal D}$.

Now we show pure-injectivity as ${\mathcal B}$-structures.  Let us write $N_{\mathcal B}$ instead of ${\rm ev}_N$ to emphasise the view of these as structures.  Then, given $N\in {\rm Pinj}({\mathcal D})$, there is some index set $I$ and ultrafilter ${\mathcal F}$ on $I$ such that the ultrapower $N_{\mathcal B}^I/{\mathcal F}$ is pure-injective as a ${\mathcal B}$-structure.   It is, in particular, pure-injective as an ${\mathcal A}$-structure.  Consider the diagonal map but regarded as a (pure) embedding of ${\mathcal A}$-structures: $(N_{\mathcal B})_{\mathcal A} \rightarrow (N_{\mathcal B}^I/{\mathcal F})_{\mathcal A}$.  The first object can be identified with $N$ and hence this map is split.  Therefore the image of this map regarded in the category of ${\mathcal B}$-structures is split.  But, since the action of the objects of ${\mathcal B}$ commute with ultraproducts of objects of ${\rm Pinj}({\mathcal D})$ (since ultraproducts are certain direct limits of products), we have $N_{\mathcal B}^I/{\mathcal F} =( N^I/{\mathcal F})_{\mathcal B}$ and so the diagonal embedding of $N_{\mathcal B}$ into $N_{\mathcal B}^I/{\mathcal F}$ is split.  Thus $N_{\mathcal B}$ is pure-injective.
\end{proof}

Note that $\overline{\mathcal D}$ is naturally a subcategory of ${\mathcal D}$ {\it via} ${\mathcal A} \rightarrow {\mathcal B}$.

\begin{cor}\label{Dbar}\marginpar{Dbar} With notation as above, ${\rm Pinj}(\overline{\mathcal D}) ={\rm Pinj}({\mathcal D})$ and so $\overline{\mathcal D}$ is the definable subcategory of ${\rm Mod}\mbox{-}{\mathcal B}$ generated by ${\rm Pinj}({\mathcal D})$.
\end{cor}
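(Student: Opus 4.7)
The corollary asserts (i) ${\rm Pinj}(\overline{\mathcal D}) = {\rm Pinj}({\mathcal D})$ (under the embedding of \ref{embpinj}) and (ii) $\overline{\mathcal D}$ is the definable subcategory of ${\rm Mod}\mbox{-}{\mathcal B}$ generated by ${\rm Pinj}({\mathcal D})$. One containment in (i), ${\rm Pinj}({\mathcal D}) \subseteq {\rm Pinj}(\overline{\mathcal D})$, is immediate from \ref{embpinj}. My approach is to base both (ii) and the reverse inclusion in (i) on the auxiliary fact that the definable subcategory $\langle {\rm Pinj}({\mathcal D}) \rangle$ of $\overline{\mathcal D}$ equals all of $\overline{\mathcal D}$.

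For the auxiliary fact, apply \ref{ppsortquot} to $\overline{\mathcal D}$: since ${\rm fun}(\overline{\mathcal D}) = {\mathcal B}$ by \ref{defisexfun}, definable subcategories of $\overline{\mathcal D}$ correspond to Serre subcategories of ${\mathcal B}$, and $\langle {\rm Pinj}({\mathcal D}) \rangle$ corresponds to ${\mathcal S} = \{ F \in {\mathcal B} : F \cdot {\rm ev}_N = 0 \text{ for all } N \in {\rm Pinj}({\mathcal D})\}$. Since the action of $F \in {\rm fun}(\overline{\mathcal D})$ on $\overline{M} \in \overline{\mathcal D}$ is by evaluation, $F \cdot {\rm ev}_N = {\rm ev}_N(F) = F(N)$ (viewing $F$ under its original avatar as a functor on ${\rm Pinj}({\mathcal D})$), so ${\mathcal S}$ consists of those $F \in {\mathcal B}$ vanishing on every object of ${\rm Pinj}({\mathcal D})$---which, since ${\mathcal B}$ is by construction a category of functors on ${\rm Pinj}({\mathcal D})$, forces $F = 0$. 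Hence ${\mathcal S} = 0$ and $\langle {\rm Pinj}({\mathcal D}) \rangle = \overline{\mathcal D}$, giving (ii) at once.

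For the reverse inclusion in (i), let $\overline{N} \in {\rm pinj}(\overline{\mathcal D})$ be indecomposable and fix an elementary cogenerator $N_0 \in {\rm Pinj}({\mathcal D})$ of ${\mathcal D}$. Since every $N \in {\rm Pinj}({\mathcal D})$ is a direct summand of some $N_0^I$ in ${\mathcal D}$, additivity of the functor ${\rm ev}$ makes ${\rm ev}_N$ a summand of ${\rm ev}_{N_0}^I = {\rm ev}_{N_0^I}$ in $\overline{\mathcal D}$, so the auxiliary fact refines to $\langle {\rm ev}_{N_0} \rangle = \overline{\mathcal D}$. By the standard description of the Ziegler spectrum of a definable subcategory generated by a single pure-injective (see \cite[5.1.4]{PreNBK} applied in $\overline{\mathcal D}$), $\overline{N}$ is a direct summand of the pure-injective hull of some ultrapower ${\rm ev}_{N_0}^I/{\mathcal F}$. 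Choosing $I$ and ${\mathcal F}$ so that $N_0^I/{\mathcal F}$ is pure-injective in ${\mathcal D}$ (possible by \cite[4.2.19]{PreNBK}), the ultrapower becomes a directed colimit of products taking place entirely inside ${\rm Pinj}({\mathcal D})$, so each $F \in {\mathcal B}$ commutes with it and one obtains ${\rm ev}_{N_0}^I/{\mathcal F} = {\rm ev}_{N_0^I/{\mathcal F}}$ in $\overline{\mathcal D}$. By \ref{embpinj} the right-hand side is already pure-injective and therefore its own pure-injective hull; so $\overline{N}$ is a direct summand of ${\rm ev}_{N_0^I/{\mathcal F}}$, and fullness of the embedding $N \mapsto {\rm ev}_N$ transports the splitting idempotent back to an idempotent on $N_0^I/{\mathcal F}$ in ${\mathcal D}$, which splits to yield $\overline{N} \cong {\rm ev}_{N'}$ for a direct summand $N' \in {\rm Pinj}({\mathcal D})$ of $N_0^I/{\mathcal F}$, as required.

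The main obstacle is the commutation identity ${\rm ev}_{N_0}^I/{\mathcal F} = {\rm ev}_{N_0^I/{\mathcal F}}$: each $F \in {\mathcal B}$ is required only to commute with those directed colimits taking place inside ${\rm Pinj}({\mathcal D})$, so one must verify that the pure-injective ultrapower $N_0^I/{\mathcal F}$ is indeed expressible as such a directed colimit of products of $N_0$, with both the transition arrows and the colimiting object remaining in ${\rm Pinj}({\mathcal D})$. Once this is in hand the defining property of ${\mathcal B}$ supplies the commutation, and the remaining steps---additivity of ${\rm ev}$, fullness transporting idempotents, and the Ziegler description of support---are routine.
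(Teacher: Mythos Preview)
Your route differs substantially from the paper's. The paper's proof is one sentence: any object of $\overline{\mathcal D}$ that is pure-injective as a ${\mathcal B}$-structure is certainly pure-injective as an ${\mathcal A}$-structure (restriction along the full exact embedding ${\mathcal A}\hookrightarrow{\mathcal B}$ of \ref{emblang} can only make the pure-injectivity condition easier), while the converse is exactly \ref{embpinj}; the second clause then follows from \ref{defsubcatzg}. You instead establish the second clause \emph{first}, directly from the Serre-subcategory correspondence (a clean argument the paper does not give), and then deduce the reverse inclusion in (i) by an explicit summand-of-ultrapower calculation. Your approach is more constructive and makes the identification of objects explicit, whereas the paper's argument is quicker but leans on the identification of $\overline{\mathcal D}$ as a subcategory of ${\mathcal D}$ via restriction.

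There is, however, a genuine gap in your reverse-inclusion step. The Ziegler description hands you \emph{some} pair $(I,{\mathcal F})$ with $\overline{N}$ a summand of $H({\rm ev}_{N_0}^I/{\mathcal F})$; you then write ``Choosing $I$ and ${\mathcal F}$ so that $N_0^I/{\mathcal F}$ is pure-injective in ${\mathcal D}$''---but that pair is not yours to choose, and nothing guarantees the two requirements coincide. The fix is the one you gesture at in your final paragraph: pass to a further ultrapower. Pick, by \cite[4.2.19]{PreNBK}, a single $(I',{\mathcal F}')$ for which every $(I',{\mathcal F}')$-ultrapower is pure-injective in both ${\mathcal D}$ and $\overline{\mathcal D}$. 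Then $({\rm ev}_{N_0}^I/{\mathcal F})^{I'}/{\mathcal F}'$ is pure-injective in $\overline{\mathcal D}$, so contains $H({\rm ev}_{N_0}^I/{\mathcal F})$ and hence $\overline{N}$ as a summand; and since an iterated ultrapower is a single ultrapower for the tensor-product ultrafilter ${\mathcal F}\otimes{\mathcal F}'$ on $I\times I'$, with $N_0^{I\times I'}/({\mathcal F}\otimes{\mathcal F}')\cong (N_0^I/{\mathcal F})^{I'}/{\mathcal F}'$ pure-injective in ${\mathcal D}$, your commutation identity now applies to this single ultrapower and the remainder of your argument goes through.
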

\begin{proof} If  $N_{\mathcal B}$ is pure-injective as a ${\mathcal B}$-structure then certainly it is pure-injective as an ${\mathcal A}$-structure, and we have just shown the converse.
\end{proof}

\begin{cor}\label{complissub} For any definable category we have ${\mathcal D} = \overline{\mathcal D}$.
\end{cor}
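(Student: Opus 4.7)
The containment $\overline{\mathcal{D}}\subseteq \mathcal{D}$ was noted just before the statement, arising from restriction along the embedding $\iota:\mathcal{A}\hookrightarrow \mathcal{B}$ of Lemma~\ref{emblang}. My plan is to show this inclusion is an equivalence by invoking the anti-equivalence $\mathbb{ABEX}\simeq \mathbb{DEF}^{\mathrm{op}}$ from Theorem~\ref{3cats} (see also \ref{defisexfun}). Under this anti-equivalence, the restriction morphism $\overline{\mathcal{D}}\to \mathcal{D}$ in $\mathbb{DEF}$ corresponds on the $\mathbb{ABEX}$ side to $\iota:\mathcal{A}\to \mathcal{B}$, so $\overline{\mathcal{D}}\to \mathcal{D}$ is an equivalence iff $\iota$ is. Since Lemma~\ref{emblang} already delivers that $\iota$ is full, faithful and exact, the task reduces to establishing essential surjectivity of $\iota$.

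For essential surjectivity, I would take an arbitrary $F\in \mathcal{B}=(\mathrm{Pinj}(\mathcal{D}),\mathbf{Ab})^{\rightarrow\prod}$ and produce $F'\in \mathcal{A}$ with $\iota F'\simeq F$. Since $F$ in particular commutes with products, Corollary~\ref{commprodfp} shows that $F$ is finitely presented as a functor $\mathrm{Pinj}(\mathcal{D})\to \mathbf{Ab}$. Combining this with the identification $(\mathrm{Pinj}(\mathcal{D}),\mathbf{Ab})^{\mathrm{fp}}\simeq (\mathrm{Fun}^{\mathrm{d}}(\mathcal{D}))^{\mathrm{op}}$ from \cite{KraEx}, recalled in the preamble to this subsection, assigns to $F$ a corresponding object on the dual side. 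The extra hypothesis that $F$ commute with those direct limits in $\mathrm{Pinj}(\mathcal{D})$ landing in $\mathrm{Pinj}(\mathcal{D})$ then restricts this correspondence to the finitely presented part, which is precisely $(\mathrm{fun}^{\mathrm{d}}(\mathcal{D}))^{\mathrm{op}}=\mathcal{A}$. This yields $F'\in\mathcal{A}$ whose restriction to $\mathrm{Pinj}(\mathcal{D})$ is $F$, so $\iota F'\simeq F$.

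The main obstacle is making rigorous the claim that the ``commutes with internal direct limits'' clause in the definition of $\mathcal{B}$ exactly cuts out $\iota(\mathcal{A})$ inside $(\mathrm{Pinj}(\mathcal{D}),\mathbf{Ab})^{\prod}\simeq (\mathrm{Fun}^{\mathrm{d}}(\mathcal{D}))^{\mathrm{op}}$ under the Krause identification. This is effectively the technical heart of the proof of \cite[12.10]{PreDefAddCat} sketched in the section preamble, whose argument (matching, on $\mathrm{Pinj}(\mathcal{D})$, a functor in $(\mathcal{D},\mathbf{Ab})^{\rightarrow\prod}$ with some $F_{\phi/\psi}$) can be reused in the present setting. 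An alternative, more ``geometric'' route would be to combine Corollary~\ref{Dbar} ($\mathrm{Pinj}(\overline{\mathcal{D}})=\mathrm{Pinj}(\mathcal{D})$) with Theorem~\ref{defsubcatzg} (a definable category is generated as such by its pure-injectives), but this would require first verifying that $\overline{\mathcal{D}}$ is a definable subcategory of $\mathcal{D}$, in particular closed under pure submodules inside $\mathrm{Mod}\mbox{-}\mathcal{A}$, and that closure is not at all obvious.
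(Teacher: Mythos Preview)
Your reduction to showing that $\iota:\mathcal{A}\to\mathcal{B}$ is an equivalence is exactly what the paper does, and your identification of essential surjectivity as the only remaining point is correct. But the paper dispatches this in one line where you propose a substantial detour.

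The paper's argument is this: by Krause's result \cite[p.~462]{KraEx} (recalled as \cite[12.2]{PreDefAddCat}), for \emph{any} definable category $\mathcal{E}$ one has
\[
{\rm fun}(\mathcal{E})\;=\;\Big(\big((({\rm Pinj}(\mathcal{E}),\mathbf{Ab})^{\rm fp})^{\rm op}\big)^{\rm fp}\Big)^{\rm op}.
\]
Apply this formula once to $\mathcal{D}$ and once to $\overline{\mathcal{D}}$. Since ${\rm Pinj}(\overline{\mathcal{D}})={\rm Pinj}(\mathcal{D})$ by Corollary~\ref{Dbar}, the two right-hand sides coincide, so ${\rm fun}(\mathcal{D})={\rm fun}(\overline{\mathcal{D}})$; that is, $\mathcal{A}=\mathcal{B}$ (the latter equality because $\overline{\mathcal{D}}={\rm Ex}(\mathcal{B},\mathbf{Ab})$ and \ref{defisexfun}).

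Your first route---using \ref{commprodfp} and then arguing that the direct-limit clause cuts $({\rm Fun}^{\rm d}(\mathcal{D}))^{\rm op}$ down to $({\rm fun}^{\rm d}(\mathcal{D}))^{\rm op}$---would, as you yourself observe, amount to reproving the technical core of \cite[12.10]{PreDefAddCat}; it can be made to work but is much heavier than necessary. Your second, ``geometric'' route is actually very close to the paper's, but you abandon it for the wrong reason: you do not need $\overline{\mathcal{D}}$ to be a definable \emph{subcategory} of $\mathcal{D}$ (closed under pure subobjects, etc.). You only need that $\overline{\mathcal{D}}$ is itself a definable category---which it is, being ${\rm Ex}(\mathcal{B},\mathbf{Ab})$---with the same category of pure-injectives as $\mathcal{D}$. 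Krause's formula then applies to each of $\mathcal{D}$ and $\overline{\mathcal{D}}$ separately, and comparing the two instances gives $\mathcal{A}=\mathcal{B}$ directly.
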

\begin{proof} By \cite[p.~462]{KraEx} (see \cite[12.2]{PreDefAddCat}), the category ${\mathcal A}= {\rm fun}({\mathcal D})$ is determined by ${\rm Pinj}({\mathcal D})$ (specifically ${\rm fun}({\mathcal D}) = (((({\rm Pinj}({\mathcal D}), {\bf Ab})^{\rm fp})^{\rm op})^{\rm fp})^{\rm op}$) and so, retaining the notation from above, ${\mathcal A}={\mathcal B}$ and hence $\overline{\mathcal D}={\mathcal D}$.
\end{proof}

\begin{theorem}\label{detbypinj} Let ${\mathcal A}$ be a skeletally small abelian category and let ${\mathcal D}={\rm Ex}({\mathcal A},{\bf Ab})$ be the corresponding definable category.  Then ${\mathcal A} = ({\rm Pinj}({\mathcal D}),{\bf Ab})^{\rightarrow \prod}$.  In particular every functor on ${\rm Pinj}({\mathcal D})$ which commutes with direct products and those direct limits which are in ${\rm Pinj}({\mathcal D})$ extends to a unique functor in ${\mathbb D}{\mathbb E}{\mathbb F}$ from ${\mathcal D}$ to ${\bf Ab}$.
\end{theorem}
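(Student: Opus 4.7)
The strategy is to combine \ref{emblang}, \ref{complissub}, and the 2-antiequivalence of \ref{3cats}; the theorem is then essentially a corollary. Write ${\mathcal B}=({\rm Pinj}({\mathcal D}),{\bf Ab})^{\rightarrow\prod}$ and let $j:{\mathcal A}\to{\mathcal B}$ be the canonical functor sending $A\in{\mathcal A}$ to its evaluation action on ${\rm Pinj}({\mathcal D})\subseteq{\mathcal D}$ (this action lies in ${\mathcal B}$ because $A$, viewed as an object of ${\mathcal D}\mbox{-}{\rm Mod}$, commutes with all products and direct limits in ${\mathcal D}$, and so in particular with those that exist inside ${\rm Pinj}({\mathcal D})$). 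By \ref{funpitoCab} and \ref{emblang}, $j$ is a fully faithful exact embedding of skeletally small abelian categories, so the first (and main) assertion of the theorem reduces to showing that $j$ is essentially surjective.

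For this I would lift to ${\mathbb D}{\mathbb E}{\mathbb F}$. Precomposition with $j$ gives an interpretation functor $j^{*}:\overline{\mathcal D}={\rm Ex}({\mathcal B},{\bf Ab})\to{\mathcal D}={\rm Ex}({\mathcal A},{\bf Ab})$, namely the inclusion $\overline{\mathcal D}\hookrightarrow{\mathcal D}$ noted just after \ref{embpinj}. By \ref{complissub}, $j^{*}$ is an equivalence in ${\mathbb D}{\mathbb E}{\mathbb F}$. The 2-antiequivalence ${\mathbb A}{\mathbb B}{\mathbb E}{\mathbb X}\simeq^{\rm op}{\mathbb D}{\mathbb E}{\mathbb F}$ of \ref{3cats} sends $j$ to $j^{*}$ (its action on 1-morphisms is precomposition), and any 2-(anti)equivalence reflects equivalences, so $j$ itself is an equivalence ${\mathcal A}\simeq{\mathcal B}$, which gives the first assertion.

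The ``in particular'' clause is then immediate. Given $F\in{\mathcal B}$, essential surjectivity of $j$ produces $A\in{\mathcal A}$ with $jA\cong F$; then ${\rm ev}_A\in{\rm fun}({\mathcal D})={\mathbb D}{\mathbb E}{\mathbb F}({\mathcal D},{\bf Ab})$ (using \ref{defisexfun}) is a functor in ${\mathbb D}{\mathbb E}{\mathbb F}$ whose restriction to ${\rm Pinj}({\mathcal D})$ is naturally isomorphic to $F$, and uniqueness of this extension follows from \ref{extnattran} applied to any two such extensions. The main technical point I foresee is a bookkeeping one: checking that the antiequivalence really sends $j$ to $j^{*}$ (and not merely to some naturally equivalent functor), which requires unwinding the definition of the antiequivalence on morphisms in parallel with the explicit ``restriction of evaluation'' description of $j$. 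Both are essentially tautological from the setup, but writing them out cleanly is what makes the chain of equivalences rigorous.
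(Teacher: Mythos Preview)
Your proposal is correct and follows the same overall strategy as the paper, relying on the chain \ref{emblang}--\ref{complissub}. The only difference is a matter of logical direction: in the paper, the identification ${\mathcal A}={\mathcal B}$ is actually established \emph{inside} the proof of \ref{complissub} (via Krause's formula ${\rm fun}({\mathcal D}) = \big(\big((({\rm Pinj}({\mathcal D}), {\bf Ab})^{\rm fp})^{\rm op}\big)^{\rm fp}\big)^{\rm op}$ together with ${\rm Pinj}(\overline{\mathcal D})={\rm Pinj}({\mathcal D})$ from \ref{Dbar}), and $\overline{\mathcal D}={\mathcal D}$ is then deduced from ${\mathcal A}={\mathcal B}$; so \ref{detbypinj} is stated without further proof. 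You instead take the \emph{statement} $\overline{\mathcal D}={\mathcal D}$ of \ref{complissub} and pull it back through the anti-equivalence of \ref{3cats} to recover ${\mathcal A}\simeq{\mathcal B}$ --- a legitimate route, just slightly more circuitous.

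One small point on the ``in particular'' clause: your argument gives an extension whose restriction is naturally isomorphic to $F$ (since you only have $jA\cong F$), whereas the paper means literal uniqueness of the extension --- as remarked just after the theorem, this holds because every object of ${\mathcal D}$ is a pure subobject of a pure-injective, so the value of any extension at $M\in{\mathcal D}$ is determined inside $F(N)$ for any pure embedding $M\hookrightarrow N\in{\rm Pinj}({\mathcal D})$. Your use of \ref{extnattran} gives uniqueness up to a canonical isomorphism which is the identity on ${\rm Pinj}({\mathcal D})$; one then observes this forces equality.
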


We remark that we do mean ``unique" (as opposed to unique to natural equivalence), the point being that every object of ${\mathcal D}$ is a (pure) subobject of a pure-injective object.

In order to return to the general case, we set ${\mathcal A}'=({\mathcal D}, {\mathcal C})^{\rightarrow \prod}$ and ${\mathcal B}'= ({\rm Pinj}({\mathcal D}), {\mathcal C})^{\rightarrow \prod}$.  As remarked in the proof of \ref{emblang}, we have a full, faithful and exact embedding ${\mathcal A}'\rightarrow {\mathcal B}'$.  If this were not an equivalence then the corresponding definable categories would not be equivalent - that is, there would be an exact functor $D'$ from ${\mathcal A}'$ to ${\bf Ab}$ which does not extend to an exact functor from ${\mathcal B}'$ to ${\bf Ab}$.  That would be a ${\mathcal C}$-valued model which distinguishes between ${\mathcal A}'$ and ${\mathcal B}'$.  But, by the analogue, \ref{deligne} below, of Deligne's theorem, there would then be an ${\bf Ab}$-valued model which distinguishes ${\mathcal A}$ and ${\mathcal B}$ (in the above notation) - contradicting \ref{detbypinj}.

\begin{example}\label{closepinj}  This example compares two very closely related definable categories from the point of view of this section.  Take ${\mathcal D}$ to be the category of modules over the localisation ${\mathbb Z}_{(p)}$ of ${\mathbb Z}$.  Then every pure-injective $\overline{{\mathbb Z}_{(p)}}$-module is also a pure-injective ${\mathbb Z}_{(p)}$-module and the converse is almost true, the difference being that although ${\mathbb Q}$ is a pure-injective ${\mathbb Z}_{(p)}$-module the corresponding (indecomposable, torsionfree injective) $\overline{{\mathbb Z}_{(p)}}$-module is the direct sum of continuum many copies of ${\mathbb Q}$.  We have the inclusion of ${\rm Mod}\mbox{-}\overline{{\mathbb Z}_{(p)}}$ into ${\rm Mod}\mbox{-}{\mathbb Z}_{(p)}$ but not as a definable subcategory since the former is not closed under pure submodules.  If $r\in \overline{{\mathbb Z}_{(p)}} \setminus {\mathbb Z}_{(p)}$ and we let $G$ be the functor from ${\rm Mod}\mbox{-}\overline{{\mathbb Z}_{(p)}}$ to ${\bf Ab}$, and hence from ${\rm Pinj}({\rm Mod}\mbox{-}{\mathbb Z}_{(p)}$ to ${\bf Ab}$,  which is multiplication by $r$ then this does not extend to a functor on ${\rm Mod}\mbox{-}{\mathbb Z}_{(p)}$.
\end{example}

\subsection{${\rm Ex}({\mathcal A},{\mathcal D})$ is definable when ${\mathcal D}$ is Grothendieck}\label{secexactmorsdef}

We know that if $ {\mathcal A}\in {\mathbb A}{\mathbb B}{\mathbb E}{\mathbb X} $ then $ {\rm Ex}({\mathcal A},{\bf Ab}) $ is a definable category. We show that, more generally, if $ {\mathcal D} $ is a definable category and is abelian, hence Grothendieck, hence locally finitely presented, then $ {\rm Ex}({\mathcal A},{\mathcal D}) $ is again definable.

First we consider the case that $ {\mathcal A} $ is $ {\rm Ab}({\mathcal R}) $ and that $ {\mathcal D} $ is $ {\rm Mod}\mbox{-}{\mathcal S} $ for some skeletally small preadditive categories $ {\mathcal R} $ and $ {\mathcal S}$.

\begin{lemma}\label{exbimod} If $ {\mathcal R}, {\mathcal S} \in {\mathbb P}{\mathbb R}{\mathbb E}{\mathbb A}{\mathbb D}{\mathbb D} $ then $ ({\mathcal R},{\rm Mod}\mbox{-}{\mathcal S})\simeq {\mathcal R}\mbox{-}{\rm Mod}\mbox{-}{\mathcal S} $ the category of $ ({\mathcal R},{\mathcal S})$-bimodules.
\end{lemma}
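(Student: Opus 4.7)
The plan is to unwind both sides of the asserted equivalence and exhibit it as an instance of the standard ``currying'' isomorphism in $\mathbb{P}\mathbb{R}\mathbb{E}\mathbb{A}\mathbb{D}\mathbb{D}$. By the conventions of the paper, $\mathrm{Mod}\mbox{-}\mathcal{S} = (\mathcal{S}^{\mathrm{op}}, \mathbf{Ab})$, and an $(\mathcal{R},\mathcal{S})$-bimodule is, by definition, an additive functor $\mathcal{R} \otimes \mathcal{S}^{\mathrm{op}} \to \mathbf{Ab}$ (equivalently a biadditive functor $\mathcal{R} \times \mathcal{S}^{\mathrm{op}} \to \mathbf{Ab}$), so that $\mathcal{R}\mbox{-}\mathrm{Mod}\mbox{-}\mathcal{S} = (\mathcal{R}\otimes \mathcal{S}^{\mathrm{op}}, \mathbf{Ab})$. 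Thus what must be produced is a natural equivalence $(\mathcal{R}, (\mathcal{S}^{\mathrm{op}}, \mathbf{Ab})) \simeq (\mathcal{R} \otimes \mathcal{S}^{\mathrm{op}}, \mathbf{Ab})$.

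First I would define functors in both directions on objects. Given $F \in (\mathcal{R}, \mathrm{Mod}\mbox{-}\mathcal{S})$, set $\Phi(F)(P,Q) = F(P)(Q)$ on objects, with action on a morphism $(f,g):(P,Q) \to (P',Q')$ (where $g:Q'\to Q$ in $\mathcal{S}$) given by $F(P')(g) \circ F(f)_{Q}$, which equals $F(f)_{Q'} \circ F(P)(g)$ by naturality of $F(f)$. Biadditivity is immediate from the additivity of $F$ and of each $F(P)$. Conversely, given a bimodule $M:\mathcal{R}\otimes\mathcal{S}^{\mathrm{op}} \to \mathbf{Ab}$, set $\Psi(M)(P) = M(P,-) \in \mathrm{Mod}\mbox{-}\mathcal{S}$, with action on $f:P\to P'$ given by the natural transformation $M(f,-) : M(P,-) \Rightarrow M(P',-)$. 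One checks routinely that $\Phi\Psi$ and $\Psi\Phi$ are the identity (in fact, this gives an isomorphism of categories, not merely an equivalence).

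Next I would extend the correspondence to morphisms. A natural transformation $\tau:F\to G$ in $(\mathcal{R}, \mathrm{Mod}\mbox{-}\mathcal{S})$ consists of morphisms $\tau_P:F(P) \to G(P)$ of right $\mathcal{S}$-modules, natural in $P$; each $\tau_P$ itself has components $(\tau_P)_Q : F(P)(Q) \to G(P)(Q)$ natural in $Q$. The two naturality conditions together say precisely that the family $((\tau_P)_Q)_{(P,Q)}$ is natural in the pair $(P,Q) \in \mathcal{R}\otimes \mathcal{S}^{\mathrm{op}}$, i.e.~is a morphism of bimodules $\Phi(F)\to \Phi(G)$. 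The inverse assignment is obtained by the same unpacking in reverse.

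There is no real obstacle here; the whole argument is the observation that a functor valued in a functor category is the same data as a functor on the product (more precisely, on the tensor product of preadditive categories), together with the matching of the two notions of naturality. Matching conventions (variance of $\mathcal{S}$, left/right actions) is the only thing one needs to be careful about.
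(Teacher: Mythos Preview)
Your proposal is correct and takes essentially the same approach as the paper: both identify an additive functor $\mathcal{R}\to{\rm Mod}\mbox{-}\mathcal{S}$ with an $(\mathcal{R},\mathcal{S})$-bimodule by unwinding the data, and match natural transformations with bimodule homomorphisms. The paper's proof is a brief informal observation (phrased as if $\mathcal{R}$ were a ring), whereas you spell out the currying isomorphism $(\mathcal{R},(\mathcal{S}^{\rm op},{\bf Ab}))\simeq(\mathcal{R}\otimes\mathcal{S}^{\rm op},{\bf Ab})$ in full detail.
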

\begin{proof} An additive functor $ {\mathcal R}\rightarrow {\rm Mod}\mbox{-}{\mathcal S} $ is an $ {\mathcal S}$-module $ M_{{\mathcal S}} $ together with a left $ {\mathcal R}$-module action in which each multiplication by $ r\in {\mathcal R} $ (meaning $r$ is a morphism of ${\mathcal R}$) is a homomorphism in $ {\rm Mod}\mbox{-}{\mathcal S}$, that is, such that the $ {\mathcal R}$- and $ {\mathcal S}$-actions commute.
A natural transformation from one such to the other is given by an $ {\mathcal S}$-module homomorphism $ M_{{\mathcal S}}\rightarrow N_{{\mathcal S}} $ which commutes with each $ r$-action, that is, a homomorphism of bimodules.
\end{proof}

\begin{example}\label{exchains}\marginpar{exchains} Take ${\mathcal S}$ to be the preadditive $S$-category freely generated by the quiver $ \dots \rightarrow \bullet_{n-1} \rightarrow \bullet_n \rightarrow \bullet_{n+1} \rightarrow \dots$ where $S$ is a ring, so ${\rm Mod}\mbox{-}{\mathcal S}$ is the category of chains of $S$-modules (with the arrows going right to left) (of which, note both the category of chain complexes and the category of exact chain complexes are definable subcategories).  Let ${\mathcal R}$ be the preadditive $S$-category generated by $\ast \rightarrow \ast'$.  Then an $({\mathcal R}, {\mathcal S})$-bimodule, a functor from ${\mathcal R}$ to ${\rm Mod}\mbox{-}{\mathcal S}$, is given by two chains of $S$-modules and a morphism between them (meaning that all the squares commute).  So ${\mathcal R}\mbox{-}{\rm Mod}\mbox{-}{\mathcal S}$ is the category of morphisms between chains of $S$-modules, alternatively of representations of the quiver $A_2 = \bullet \rightarrow \bullet$ in the category of complexes of $S$-modules.
\end{example}

Since an additive functor $ {\mathcal R}\rightarrow {\rm Mod}\mbox{-}{\mathcal S} $ is equivalent to an exact functor $ {\rm Ab}({\mathcal R})\rightarrow {\rm Mod}\mbox{-}{\mathcal S} $ (\ref{freeabthm}) we obtain the next corollary.

\begin{cor}\label{exactdef1}  If $ {\mathcal R},{\mathcal S}\in {\mathbb P}{\mathbb R}{\mathbb E}{\mathbb A}{\mathbb D}{\mathbb D} $ then $ {\rm Ex}({\rm Ab}({\mathcal R}),{\rm Mod}\mbox{-}{\mathcal S})\simeq {\mathcal R}\mbox{-}{\rm Mod}\mbox{-}{\mathcal S} = ({\mathcal R}\otimes {\mathcal S}^{\rm op})\mbox{-}{\rm Mod}\simeq {\rm Ex}({\rm Ab}({\mathcal R}\otimes {\mathcal S}^{\rm op}),{\bf Ab})$.
\end{cor}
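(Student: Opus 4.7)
The plan is to chain together three equivalences, each of which follows formally from a result already in hand; the corollary is then assembled by juxtaposition.

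For the first equivalence $ {\rm Ex}({\rm Ab}({\mathcal R}),{\rm Mod}\mbox{-}{\mathcal S}) \simeq {\mathcal R}\mbox{-}{\rm Mod}\mbox{-}{\mathcal S} $, I would combine \ref{exbimod} with the universal property \ref{freeabthm}. Lemma~\ref{exbimod} identifies the additive functor category $({\mathcal R}, {\rm Mod}\mbox{-}{\mathcal S})$ with the bimodule category $ {\mathcal R}\mbox{-}{\rm Mod}\mbox{-}{\mathcal S} $. Since $ {\rm Mod}\mbox{-}{\mathcal S} $ is abelian, \ref{freeabthm} then provides, for each additive $\alpha:{\mathcal R} \to {\rm Mod}\mbox{-}{\mathcal S}$, an essentially unique exact extension $\alpha':{\rm Ab}({\mathcal R}) \to {\rm Mod}\mbox{-}{\mathcal S}$; conversely, restriction along $i:{\mathcal R} \rightarrow {\rm Ab}({\mathcal R})$ recovers $\alpha$ from $\alpha'$. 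This object-level bijection extends to an equivalence of categories once one verifies that natural transformations correspond (see below).

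The middle identification $ {\mathcal R}\mbox{-}{\rm Mod}\mbox{-}{\mathcal S} = ({\mathcal R}\otimes {\mathcal S}^{\rm op})\mbox{-}{\rm Mod} $ is the universal property of the tensor product of skeletally small preadditive categories: an additive functor from $ {\mathcal R}\otimes {\mathcal S}^{\rm op} $ to $ {\bf Ab} $ is exactly a pair of commuting additive actions of $ {\mathcal R} $ and $ {\mathcal S}^{\rm op} $ on a common abelian group, that is, the data of an $({\mathcal R},{\mathcal S})$-bimodule; with the paper's conventions this is an equality of categories rather than a mere equivalence. For the last equivalence I would simply invoke the identity $ {\mathcal T}\mbox{-}{\rm Mod} \simeq {\rm Ex}({\rm Ab}({\mathcal T}), {\bf Ab}) $ already recorded in the paragraph immediately following \ref{freeabthm}, specialised to $ {\mathcal T} = {\mathcal R}\otimes {\mathcal S}^{\rm op} $. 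Concatenating the three steps yields the entire displayed chain.

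The one place where care is genuinely needed is promoting the universal property \ref{freeabthm} (stated at the level of a single functor) to a full equivalence of categories: one must check that natural transformations $\alpha' \Rightarrow \beta'$ between two exact extensions correspond, via restriction along $i$, to natural transformations $\alpha \Rightarrow \beta$ of the underlying additive functors on ${\mathcal R}$. This is routine, since an exact functor on ${\rm Ab}({\mathcal R})$ is determined by its values on $i{\mathcal R}$ (whose closure under finite limits and colimits is all of ${\rm Ab}({\mathcal R})$), and a natural transformation between two such is determined by its components on $i{\mathcal R}$; this is the only ``real work'' in an otherwise essentially symbolic argument.
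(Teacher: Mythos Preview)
Your proposal is correct and follows essentially the same route as the paper: the paper simply notes that an additive functor $\mathcal{R}\to{\rm Mod}\mbox{-}\mathcal{S}$ is equivalent to an exact functor ${\rm Ab}(\mathcal{R})\to{\rm Mod}\mbox{-}\mathcal{S}$ by \ref{freeabthm}, then invokes \ref{exbimod} and the standard bimodule/tensor identification, with the final equivalence being the instance of ${\mathcal T}\mbox{-}{\rm Mod}\simeq{\rm Ex}({\rm Ab}({\mathcal T}),{\bf Ab})$ for ${\mathcal T}={\mathcal R}\otimes{\mathcal S}^{\rm op}$. Your additional remark on upgrading the object-level bijection to an equivalence by tracking natural transformations is a harmless elaboration beyond what the paper bothers to say.
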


From this special case we will extend to the general one. We will take the view of $ ({\mathcal R},{\mathcal S})$-bimodules from the proof of the lemma above:  that they are $ {\rm Obj}({\mathcal R})$-indexed collections of objects of $ {\rm Mod}\mbox{-}{\mathcal S} $ linked by the arrows of $ {\mathcal R}$. So, in general, an $ ({\mathcal R},{\mathcal S})$-bimodule is, if we forget the $ {\mathcal R}$-module structure, not an $ {\mathcal S}$-module but a collection of $ {\mathcal S}$-modules. We will refer to these as the {\bf component} $ {\mathcal S}$-modules of the bimodule and, of course, we may make restrictions on these and hence restrictions on the bimodule. In particular, if $ {\mathcal D} $ is a definable subcategory of $ {\rm Mod}\mbox{-}{\mathcal S}$, defined by certain axioms (which specify that certain pp-pairs should be closed) then it makes sense to require that each component of an $ ({\mathcal R},{\mathcal S})$-bimodule lie in $ {\mathcal D}$. Clearly this is a set of conditions in the language of $ ({\mathcal R},{\mathcal S})$-bimodules specifying closure of certain pp-pairs and hence gives a definable subcategory of $ ({\mathcal R},{\mathcal S})\mbox{-}{\rm Mod}$.  We present an example before stating this formally.

\begin{example} In the example above, the category of chain complexes of $S$-modules is definable; let us be more explicit about this.  The language for ${\mathcal S}$-modules has a sort, $s_n$ say, for each integer $n$ and, for each $n$, a constant symbol $0_n$ for the $0$ in sort $n$, a 2-ary function symbol $+_n$ for addition in that sort and, for each $s\in S$ a 1-ary function symbol, let us write it as $f_{s,n}$ for multiplication by $s$ in that sort (though, in practice we would just write it as ``$s$" in formulas).  We also have in the language a function symbol, $f_{d_n}$ say, for the given arrow, $d_n$ say, from $\bullet_n$ to $\bullet_{n+1}$; because we chose to look at right ${\mathcal S}$-modules, that is ${\mathcal S}^{\rm op}$-modules, the symbol $d_n$ will be for a function going from sort $s_{n+1}$ to sort $s_n$.  The condition on a right ${\mathcal S}$-module that the composition $d^2$ be zero is equivalent to closure of the pp-pair $(\exists y_{n+1} (x_{n-1} = f_{d_n}f_{d_{n-1}}y_{n+1})) \, / \, (x_{n-1}=0)$ or, writing it as we would write formulas for modules,  $(\exists y_{n+1} (x_n = y_{n+1}d_nd_{n-1})) \, /\, (x_{n-1} =0)$ (the subscripts to the variables indicate their sort, though in these formulas they are redundant since they are determined by the sorting of the function symbols).  Similarly, exactness of chain complexes can be axiomatised by adding to these pairs the collection of pp-pairs of the form $(x_nd_{n-1}=0)\,/\, (\exists x_{n+1} ( x_n=x_{n+1}d_n))$.

The language for $({\mathcal R}, {\mathcal S})$-bimodules will have its sorts indexed by ${\rm Obj}({\mathcal R}) \times {\rm Obj}({\mathcal S})$; denote these as $s_{\ast n}$, $s_{\ast' n}$ ($n\in {\mathbb Z}$).  So if we take our set of pp-pairs to have a ``$\ast$-copy" and a ``$\ast '$-copy" of each of the above pp-pairs then these will cut out the subcategories of $A_2$-representations in chain complexes (respectively, in exact complexes) of $S$-modules.

(Of course one need not use the same set of ${\mathcal S}$-module axioms on each component ${\mathcal S}$-module so one can see further ways of specifying definable subcategories of $({\mathcal R}, {\mathcal S})\mbox{-}{\rm Mod}$.)
\end{example}

\begin{prop}\label{exactdef2} Suppose $ {\mathcal R}\in {\mathbb P}{\mathbb R}{\mathbb E}{\mathbb A}{\mathbb D}{\mathbb D} $ and $ {\mathcal D}$ is an abelian definable category. Then $ {\rm Ex}({\rm Ab}({\mathcal R}),{\mathcal D})\simeq {\rm Ex}({\mathcal B},{\bf Ab}) $ for some $ {\mathcal B}\in {\mathbb A}{\mathbb B}{\mathbb E}{\mathbb X}$.
\end{prop}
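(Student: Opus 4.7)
The plan is to bootstrap from the special case already handled in \ref{exactdef1}.  Since ${\mathcal D}$ is definable (and in fact Grothendieck, by \ref{defgroth}) we may realise it as a definable subcategory of ${\rm Mod}\mbox{-}{\mathcal S}$ for some skeletally small preadditive ${\mathcal S}$; for instance, take ${\mathcal S}={\rm fun}^{\rm d}({\mathcal D})$ or any other ${\mathcal S}$ with ${\mathcal D}\hookrightarrow {\rm Mod}\mbox{-}{\mathcal S}$ a definable inclusion.  In particular this inclusion is full.  By \ref{freeabthm}, \ref{exbimod} and \ref{exactdef1},
$${\rm Ex}({\rm Ab}({\mathcal R}),{\rm Mod}\mbox{-}{\mathcal S})\simeq ({\mathcal R},{\rm Mod}\mbox{-}{\mathcal S})\simeq {\mathcal R}\mbox{-}{\rm Mod}\mbox{-}{\mathcal S}\simeq {\rm Ex}({\rm Ab}({\mathcal R}\otimes {\mathcal S}^{\rm op}),{\bf Ab}),$$
and, tracing through this chain, an exact functor ${\rm Ab}({\mathcal R})\rightarrow {\rm Mod}\mbox{-}{\mathcal S}$ lands in ${\mathcal D}$ iff the corresponding $({\mathcal R},{\mathcal S})$-bimodule has each of its component ${\mathcal S}$-modules (one at each object of ${\mathcal R}$) in ${\mathcal D}$.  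Because ${\mathcal D}\subseteq {\rm Mod}\mbox{-}{\mathcal S}$ is full, on morphisms too the equivalence identifies ${\rm Ex}({\rm Ab}({\mathcal R}),{\mathcal D})$ with the full subcategory ${\mathcal E}$ of ${\mathcal R}\mbox{-}{\rm Mod}\mbox{-}{\mathcal S}$ of bimodules whose every component lies in ${\mathcal D}$.

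Next I would show that ${\mathcal E}$ is a definable subcategory of ${\mathcal R}\mbox{-}{\rm Mod}\mbox{-}{\mathcal S}$.  Here I prefer the direct closure-condition argument over the pp-pair bookkeeping spelt out in the example preceding the statement: direct products, direct limits and pure subobjects in $({\mathcal R},{\mathcal S})\mbox{-}{\rm Mod}$ are computed componentwise in the ${\mathcal S}$-module parts (for pure subobjects this uses that $(-,P)\otimes_{\mathcal R}-$ picks out the $P$-component and so preserves purity); since ${\mathcal D}$ is closed under these three operations in ${\rm Mod}\mbox{-}{\mathcal S}$, so is ${\mathcal E}$ in ${\mathcal R}\mbox{-}{\rm Mod}\mbox{-}{\mathcal S}$.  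Hence ${\mathcal E}$ is definable.

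Finally, having shown ${\rm Ex}({\rm Ab}({\mathcal R}),{\mathcal D})\simeq {\mathcal E}$ and that ${\mathcal E}$ is a definable category, \ref{3cats} (or directly \ref{defisexfun}) gives ${\mathcal E}\simeq {\rm Ex}({\mathcal B},{\bf Ab})$ with ${\mathcal B}={\rm fun}({\mathcal E})\in {\mathbb A}{\mathbb B}{\mathbb E}{\mathbb X}$, proving the proposition.

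The step I expect to need most care is verifying that the equivalence ${\rm Ex}({\rm Ab}({\mathcal R}),{\rm Mod}\mbox{-}{\mathcal S})\simeq {\mathcal R}\mbox{-}{\rm Mod}\mbox{-}{\mathcal S}$ really restricts cleanly to an equivalence on the appropriate full subcategories (object-wise this is clear from the universal property of ${\rm Ab}({\mathcal R})$; for morphisms one uses fullness of ${\mathcal D}\subseteq {\rm Mod}\mbox{-}{\mathcal S}$ plus the fact that a natural transformation between exact functors out of ${\rm Ab}({\mathcal R})$ is determined by, and extends from, its restriction to the image of ${\mathcal R}$), together with the componentwise-purity claim used to prove that ${\mathcal E}$ is pure-subobject closed in ${\mathcal R}\mbox{-}{\rm Mod}\mbox{-}{\mathcal S}$.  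Once those two bookkeeping points are in place, the rest is a direct application of the equivalences of \ref{3cats}.
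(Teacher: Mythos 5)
Your proposal is correct and follows essentially the same route as the paper: realise ${\mathcal D}$ as a definable subcategory of ${\rm Mod}\mbox{-}{\mathcal S}$, identify ${\rm Ex}({\rm Ab}({\mathcal R}),{\mathcal D})$ via \ref{freeabthm} and \ref{exactdef1} with the full subcategory of $({\mathcal R},{\mathcal S})$-bimodules whose components lie in ${\mathcal D}$, show that subcategory is definable, and conclude by \ref{3cats}. The only (cosmetic) difference is that the paper establishes definability of that bimodule subcategory by lifting the pp-pair axioms of ${\mathcal D}$ to each component sort, whereas you verify the closure conditions (products, direct limits, pure subobjects) componentwise; both come down to the same observation that these operations are computed sort-by-sort.
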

\begin{proof} Suppose that $ {\mathcal D} $ is a definable subcategory of $ {\rm Mod}\mbox{-}{\mathcal S}$. Then, as argued above, an additive functor from $ {\mathcal R} $ to $ {\mathcal D} $ is just an $ ({\mathcal R},{\mathcal S})$-bimodule which belongs to a certain definable subcategory of the functor category $ ({\mathcal R}\otimes {\mathcal S}^{\rm op})\mbox{-}{\rm Mod} $. That is, $ ({\mathcal R},{\mathcal D}) $ is a definable subcategory of $ ({\mathcal R},{\mathcal S})\mbox{-}{\rm Mod}$, hence is a definable category and so $ {\rm Ex}({\rm Ab}({\mathcal R}),{\mathcal D})\simeq ({\mathcal R},{\mathcal D})\simeq {\rm Ex}({\mathcal B},{\bf Ab}) $ for some $ {\mathcal B}\in {\mathbb A}{\mathbb B}{\mathbb E}{\mathbb X}$.
\end{proof}

The final step is to replace $ {\rm Ab}({\mathcal R}) $ by a general $ {\mathcal A}\in {\mathbb A}{\mathbb B}{\mathbb E}{\mathbb X} $ but that, as noted in \ref{abisquot}, has the form $ {\rm Ab}({\mathcal R})/{\mathcal T} $ for some $ {\mathcal R}\in {\mathbb P}{\mathbb R}{\mathbb E}{\mathbb A}{\mathbb D}{\mathbb D} $ and ${\mathcal T}\in {\rm Ser}({\rm Ab}({\mathcal R}))$. Then the quotient functor $ {\rm Ab}({\mathcal R})\rightarrow {\rm Ab}({\mathcal R})/{\mathcal T}={\mathcal A} $ induces an embedding of $ {\rm Ex}({\mathcal A},{\mathcal D}) $ as a subcategory of $ {\rm Ex}({\rm Ab}({\mathcal R}),{\mathcal D})$, namely as the full subcategory on those functors which annihilate the Serre subcategory ${\mathcal T}$. We have just seen than $ {\rm Ex}({\rm Ab}({\mathcal R}),{\mathcal D}) $ is definable and it is easy to see that the condition of annihilating $ {\mathcal T} $ is a definable one:  for each object of (a generating set of) ${\mathcal T}$ choose a pp-pair which defines it (we are using, as we did near the beginning of Section \ref{secabexfinacc}, the description of these exact functors as pp-pairs). Thus we have an embedding of $  {\rm Ex}({\mathcal A},{\mathcal D}) $ as a definable subcategory of $ {\rm Ex}({\rm Ab}({\mathcal R}),{\mathcal D}) $ and so $ {\rm Ex}({\mathcal A},{\mathcal D}) $ is indeed definable.

\begin{theorem}\label{exactdef} Suppose that $ {\mathcal A} $ is a skeletally small abelian category  and that $ {\mathcal D} $ is a definable category which is abelian. Then $ {\rm Ex}({\mathcal A},{\mathcal D}) $ is a definable category, in particular is equivalent to $ {\rm Ex}({\mathcal B},{\bf Ab}) $ for some small abelian category $ {\mathcal B}$.
\end{theorem}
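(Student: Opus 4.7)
The plan is to build up to the general case in two stages: first for $ {\mathcal A} = {\rm Ab}({\mathcal R}) $ a free abelian category, and then bootstrap to arbitrary $ {\mathcal A} $ using the Serre-quotient presentation of \ref{abisquot}. Throughout I would represent $ {\mathcal D} $ as a definable subcategory of some module category $ {\rm Mod}\mbox{-}{\mathcal S} $, which is possible by the very definition of definable category (for instance one may take $ {\mathcal S} = {\rm fun}({\mathcal D}) $).

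For the free case, \ref{freeabthm} identifies $ {\rm Ex}({\rm Ab}({\mathcal R}),{\mathcal D}) $ with the additive-functor category $ ({\mathcal R}, {\mathcal D}) $. By \ref{exbimod}, this sits inside the bimodule category $ {\mathcal R}\mbox{-}{\rm Mod}\mbox{-}{\mathcal S} \simeq ({\mathcal R} \otimes {\mathcal S}^{\rm op})\mbox{-}{\rm Mod} $ as the full subcategory on those bimodules whose every component (the $ {\mathcal S} $-module obtained by evaluating at each object of $ {\mathcal R} $) belongs to $ {\mathcal D} $. Since $ {\mathcal D} $ is cut out of $ {\rm Mod}\mbox{-}{\mathcal S} $ by a set of pp-pair closure axioms, imposing the same axioms sort-by-sort in the bimodule language (one copy of each per object of $ {\mathcal R} $) cuts out $ ({\mathcal R}, {\mathcal D}) $ as a definable subcategory of $ ({\mathcal R} \otimes {\mathcal S}^{\rm op})\mbox{-}{\rm Mod} $, so $ {\rm Ex}({\rm Ab}({\mathcal R}),{\mathcal D}) $ is definable.

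For the general case, \ref{abisquot} lets me write $ {\mathcal A} \simeq {\rm Ab}({\mathcal R})/{\mathcal T} $ for some Serre subcategory $ {\mathcal T} $. Precomposition with the canonical localisation embeds $ {\rm Ex}({\mathcal A}, {\mathcal D}) $ into $ {\rm Ex}({\rm Ab}({\mathcal R}), {\mathcal D}) $ as the full subcategory on those exact functors that annihilate every object of $ {\mathcal T} $. Each generator of $ {\mathcal T} $ is, via Burke's pp-pair description of $ {\rm Ab}({\mathcal R}) $ recalled just before \ref{limofabs}, presented by a pp-pair, so the annihilation condition becomes a further pp-pair closure condition in the bimodule language. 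Thus $ {\rm Ex}({\mathcal A}, {\mathcal D}) $ is a definable subcategory of the already-definable $ {\rm Ex}({\rm Ab}({\mathcal R}), {\mathcal D}) $, hence definable. The final clause is then immediate: by \ref{3cats}, setting $ {\mathcal B} = {\rm fun}({\rm Ex}({\mathcal A}, {\mathcal D})) $ gives $ {\rm Ex}({\mathcal A}, {\mathcal D}) \simeq {\rm Ex}({\mathcal B}, {\bf Ab}) $ with $ {\mathcal B} \in {\mathbb A}{\mathbb B}{\mathbb E}{\mathbb X} $.

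The main obstacle I expect is the explicit bookkeeping in these pp-translations: one must verify that a pp-pair in the language of $ {\mathcal S} $-modules lifts faithfully to a sort-indexed family of pp-pairs in the language of $ ({\mathcal R}, {\mathcal S}) $-bimodules under the identification $ ({\mathcal R}, {\rm Mod}\mbox{-}{\mathcal S}) \simeq ({\mathcal R} \otimes {\mathcal S}^{\rm op})\mbox{-}{\rm Mod} $, and that Burke's pp-representation of an object of $ {\mathcal T} \subseteq {\rm Ab}({\mathcal R}) $ similarly produces a bimodule-language annihilation condition with the intended meaning. Conceptually each step is routine, but the definability claim is only as solid as this translation is made explicit; writing out a concrete example (as the author does for chain complexes in \ref{exchains}) would be a helpful sanity check before formalising.
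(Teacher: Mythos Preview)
Your proposal is correct and follows essentially the same two-stage route as the paper: first reduce ${\rm Ex}({\rm Ab}({\mathcal R}),{\mathcal D})$ to a definable subcategory of $({\mathcal R}\otimes{\mathcal S}^{\rm op})\mbox{-}{\rm Mod}$ via \ref{exbimod} and the componentwise pp-axiomatisation of ${\mathcal D}$ (this is the paper's \ref{exactdef2}), then use \ref{abisquot} to write ${\mathcal A}\simeq{\rm Ab}({\mathcal R})/{\mathcal T}$ and observe that the annihilation-of-${\mathcal T}$ condition is itself a pp-pair closure condition. The paper handles the bookkeeping you flag exactly as you suggest, by working through a concrete example (chain complexes, \ref{exchains}) before stating the general claim.
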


In the topos-analogy view of the next section, one would say that if ${\mathcal E} = {\rm Ex}({\mathcal A}, {\bf Ab})$ is a definable category and ${\mathcal D}$ is an abelian = locally finitely presented Grothendieck definable category then ${\rm Ex}({\mathcal A}, {\mathcal D})$ is the category of ${\mathcal D}$-models of the regular theory with (the additive version of) classifying pretopos ${\mathcal A}$, as opposed to the category ${\mathcal E}$ of ${\bf Ab}$-models.

\section{Locally coherent additive categories}\label{seccoh}

Recall that an object $A$ of an additive category ${\mathcal A}$ is {\bf finitely presented} if the representable functor $(A,-):{\mathcal A} \rightarrow {\bf Ab}$ commutes with direct limits.  Set ${\mathcal A}^{\rm fp}$ to be the full subcategory of finitely presented objects of ${\mathcal A}$.  We say that $A$ is {\bf coherent} if it is finitely presented and each of its finitely generated subobjects is finitely presented.  An abelian category ${\mathcal G}$ with direct limits is {\bf locally coherent} iff it has a generating set consisting of coherent objects.  Such a category is, in particular, finitely accessible, hence (\cite[2.4]{CBlfp}) Grothendieck.  It is easy to check that in such a category finitely presented = coherent.  The category ${\rm Mod}\mbox{-}{\mathcal R}$ is locally coherent iff ${\mathcal R}$ is {\bf right coherent}.

The connection between ${\mathbb A}{\mathbb B}{\mathbb E}{\mathbb X}$ and ${\mathbb C}{\mathbb O}{\mathbb H}$ goes back to Gabriel \cite[11.4 Thm.~1]{Gab} (for locally noetherian categories) and Roos \cite[2.2]{Roo} (in general), also see \cite{ObRoh}.

\begin{theorem}\label{abtofromloccoh} If ${\mathcal A}$ is a skeletally small abelian category then ${\rm Lex}({\mathcal A}^{\rm op}, {\bf Ab}) \simeq {\rm Flat}\mbox{-}{\mathcal A} \simeq {\rm Ind}({\mathcal A})$ is a locally coherent Grothendieck category ${\mathcal G}$ with ${\mathcal G}^{\rm fp} \simeq {\mathcal A}$.

If ${\mathcal G}$ is a locally coherent Grothendieck category then ${\mathcal G}^{\rm fp}$ is abelian and ${\mathcal G} \simeq {\rm Lex}(({\mathcal G}^{\rm fp})^{\rm op}, {\bf Ab})$.
\end{theorem}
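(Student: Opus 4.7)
The plan is to prove the two directions separately, in each case exploiting the Yoneda embedding and the interaction between left exactness and filtered colimits.

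For the first direction, let ${\mathcal A}$ be skeletally small abelian and put ${\mathcal G} = {\rm Lex}({\mathcal A}^{\rm op}, {\bf Ab})$. First I would observe that, since ${\mathcal A}$ is abelian, the restricted Yoneda $y: {\mathcal A} \to ({\mathcal A}^{\rm op}, {\bf Ab})$, $A \mapsto (-,A)$, takes values in ${\mathcal G}$ and is fully faithful and exact. Next I would show ${\mathcal G}$ is a Grothendieck abelian category by exhibiting it as a reflective subcategory of $({\mathcal A}^{\rm op}, {\bf Ab})$ closed under filtered colimits and products; the left adjoint (``left-exact reflection'') exists by the adjoint functor theorem, and the representables $y(A)$ form a generating set. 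The key calculation is that every $F \in {\mathcal G}$ is a canonical filtered colimit of representables: the category of elements $\int F$ is filtered precisely because $F$ is left exact and ${\mathcal A}$ has finite limits, and then $F \simeq \varinjlim_{(A,x) \in \int F} y(A)$.

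From this I would deduce the identifications on fp objects. Each $y(A)$ is finitely presented in ${\mathcal G}$ since ${\rm Hom}_{\mathcal G}(y(A),-)$ is evaluation at $A$, computed pointwise in ${\bf Ab}$, which commutes with filtered colimits. Conversely any $F \in {\mathcal G}^{\rm fp}$ factors through some stage of its canonical presentation as a filtered colimit of representables, hence is a retract of some $y(A)$; since ${\mathcal A}$ is abelian it is Karoubi complete, so $F \simeq y(A')$ for some $A' \in {\mathcal A}$. Thus ${\mathcal G}^{\rm fp} \simeq {\mathcal A}$, and local coherence is immediate because ${\mathcal G}^{\rm fp}$ is abelian, so finitely presented subobjects of finitely presented objects are finitely presented. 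The identifications ${\rm Lex}({\mathcal A}^{\rm op},{\bf Ab}) \simeq {\rm Ind}({\mathcal A})$ and $\simeq {\rm Flat}\mbox{-}{\mathcal A}$ then follow: ${\rm Ind}({\mathcal A})$ is by definition the closure under filtered colimits of ${\mathcal A}$, matching the canonical presentation above; and for ${\mathcal A}$ abelian, a right ${\mathcal A}$-module is flat iff it preserves finite limits iff it is left exact, giving ${\rm Flat}\mbox{-}{\mathcal A} = {\mathcal G}$.

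For the second direction, let ${\mathcal G}$ be locally coherent Grothendieck. I would first show ${\mathcal G}^{\rm fp}$ is abelian. Closure under finite colimits (in particular cokernels) is standard for any finitely accessible category. The nontrivial step is closure under kernels: given $f: A \to B$ between fp objects, its kernel $K$ is a subobject of $A$, and coherence of $A$ (each finitely generated subobject is fp) combined with the fact that $K$ is finitely generated (since $A$ is fp and $B$ is fp, so $K = \ker f$ is the equaliser of two maps out of an fp object into an fp object, which one checks is fg) gives $K \in {\mathcal G}^{\rm fp}$. To produce the equivalence ${\mathcal G} \simeq {\rm Lex}(({\mathcal G}^{\rm fp})^{\rm op}, {\bf Ab})$, I would send $G \in {\mathcal G}$ to ${\rm Hom}_{\mathcal G}(-, G)\!\upharpoonright\!{\mathcal G}^{\rm fp}$, which is left exact since the inclusion ${\mathcal G}^{\rm fp}\hookrightarrow {\mathcal G}$ preserves the kernels computed in ${\mathcal G}^{\rm fp}$. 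The quasi-inverse sends a left exact functor to its left Kan extension along $y^{\rm op}$, equivalently the canonical filtered colimit of representables from the first half of the proof; that this lands in ${\mathcal G}$ (not merely its ind-completion) uses that ${\mathcal G}$ is itself locally finitely presented. The two round trips are identities: one direction is Yoneda, the other uses that each $G \in {\mathcal G}$ is the filtered colimit of its fp subobjects.

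The main obstacle is the step that $K = \ker f$ is finitely generated (and then, by coherence, finitely presented), which is where the hypothesis of local coherence, as opposed to merely local finite presentability, is essential; indeed a Grothendieck category with fp generators need not have ${\mathcal G}^{\rm fp}$ abelian without this hypothesis, and the whole second half collapses without it. A second subtle point, in the first direction, is verifying that the filtered colimit presentation of a left exact $F$ actually reconstructs $F$ (not just its Yoneda image), which uses left exactness in an essential way to ensure that $\int F$ is filtered.
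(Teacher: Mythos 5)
The paper does not actually prove this theorem: it is quoted from Gabriel, Roos and Oberst--R\"{o}hrl, with only the identification ${\rm Flat}\mbox{-}{\mathcal A}\simeq {\rm Ind}({\mathcal A})$ sketched (via Lazard's theorem: flat $=$ direct limit of representables). Your overall architecture --- Yoneda into ${\rm Lex}$, filteredness of the category of elements of a left exact functor, fp objects as retracts of representables plus idempotent completeness, and the restricted-hom/Kan-extension adjunction for the converse --- is the standard route and is essentially sound. Two small slips in the first half: exactness of $A\mapsto (-,A)$ into ${\rm Lex}({\mathcal A}^{\rm op},{\bf Ab})$ is not free (Yoneda into the full presheaf category is only left exact; right exactness uses that cokernels in ${\rm Lex}$ are computed by reflecting the presheaf cokernel, so it should come after you establish the reflective structure), and ``finitely presented subobjects'' should read ``finitely generated subobjects'' in the local-coherence step.

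The genuine gap is in the second half, at exactly the point you flag as the crux. You justify that $K=\ker(f)$ is finitely generated by saying it is ``the equaliser of two maps out of an fp object into an fp object, which one checks is fg.'' That check fails: in a general locally finitely presented Grothendieck category the kernel of a map between finitely presented objects need not be finitely generated (take ${\rm Mod}\mbox{-}R$ with $R$ not coherent and $R\xrightarrow{\,r\,}R$ where ${\rm ann}(r)$ is not finitely generated). So fp-ness of $A$ and $B$ alone proves nothing here, and coherence must enter \emph{before} the fg claim, not only in the upgrade from fg to fp. The correct argument factors $f$ through its image: ${\rm im}(f)$ is a finitely generated subobject of the coherent object $B$, hence finitely presented; then in the exact sequence $0\rightarrow K\rightarrow A\rightarrow {\rm im}(f)\rightarrow 0$, with $A$ finitely generated and ${\rm im}(f)$ finitely presented, the standard lemma (write $K$ as the directed union of its finitely generated subobjects $K_i$ and use that ${\rm im}(f)=\varinjlim A/K_i$ with ${\rm im}(f)$ fp) gives $K$ finitely generated; finally coherence of $A$ upgrades $K$ to finitely presented. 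With that repair the second half, and hence the whole proposal, goes through.
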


The Ind-completion, ${\rm Ind}({\mathcal A})$, of ${\mathcal A}$ is the free extension of ${\mathcal A}$ to a category with direct limits (the objects can be defined to be equivalence classes of directed diagrams in ${\mathcal A}$, see, e.g., \cite[Exp.~1, \S 8]{SGA4}, \cite[\S VI.1]{Joh}, \cite[Chpt.~6]{KashSchap}).  To see why ${\rm Flat}\mbox{-}{\mathcal A} \simeq {\rm Ind}({\mathcal A})$ recall that the flat right ${\mathcal A}$-modules are the direct limits of (finitely presented) projective ${\mathcal A}$-modules and those are the images of objects of ${\mathcal A}$ under the Yoneda embedding.

The morphisms of $ {\mathbb C}{\mathbb O}{\mathbb H}$ are adjoint pairs of functors where the left adjoint is exact and preserves coherence of objects.  Precisely, a morphism $f:{\mathcal G} \rightarrow {\mathcal H}$ is given by a morphism $f^\ast:{\mathcal H} \rightarrow {\mathcal G}$ and a morphism $f_\ast:{\mathcal G} \rightarrow {\mathcal H}$ such that $(f^\ast,f_\ast)$ is an adjoint pair with the left adjoint $f^\ast$ being (left) exact and with $f^\ast {\mathcal H}^{\rm fp} \subseteq {\mathcal G}^{\rm fp}$.  We refer to these as {\bf coherent morphisms} and a typical notation is $f:{\mathcal G} \rightarrow {\mathcal H}$, which expands to $\xymatrix{{\mathcal G} \ar@/_/[r]_{f_\ast}  & {\mathcal H} \ar@/_/[l]_{f^\ast}}$.  This is an additive analogue of the definition of a geometric morphism, e.g.~\cite[A4.1.1]{Joh1}, with the added finiteness condition that $f^\ast$ preserve coherent objects.  Such pairs of functors have already been considered by Krause, see \cite[\SS 9-11]{KraFun}; in particular, \cite[6.7]{KraFun}, the functor $f_\ast$ also preserves direct limits (as well as absolutely pure and injective objects - that is, the corresponding definable categories and their pure-injective objects, the absolutely pure objects being the exact functors, see, e.g., \cite[\S 11]{PreDefAddCat}).

The 2-arrows of $ {\mathbb C}{\mathbb O}{\mathbb H}$ are the natural transformations.  More precisely, if $f,g:{\mathcal G}\rightarrow {\mathcal H}$ are morphisms, then a 2-arrow from $f=(f^\ast, f_\ast)$ to $g=(g^\ast, g_\ast)$ is a natural transformation $\tau^\ast:f^\ast \rightarrow g^\ast$ equivalently, see below, a natural transformation $\tau_\ast:g_\ast \rightarrow f_\ast$.

\begin{lemma}\label{adjnattrans}\marginpar{adjnattrans} Suppose that $f=(f^\ast,f_\ast)$ and $g=(g^\ast,g_\ast)$ are two adjoint pairs of functors from ${\mathcal G}$ to ${\mathcal H}$.  Then there is a natural bijection between natural transformations $\tau^\ast: f^\ast \rightarrow g^\ast$ and natural transformations $\tau_\ast:g_\ast \rightarrow f_\ast$.  This bijection commutes with composition of natural transformations as well as with horizontal and vertical composition of natural transformation with functors.
\end{lemma}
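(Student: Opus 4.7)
The plan is to invoke (or rather, to construct explicitly) the classical mate correspondence from 2-category theory. Write $\eta^f:\mathrm{id}_{\mathcal H}\Rightarrow f_\ast f^\ast$ and $\epsilon^f:f^\ast f_\ast\Rightarrow \mathrm{id}_{\mathcal G}$ for the unit and counit of the adjunction $f^\ast\dashv f_\ast$, and similarly $\eta^g,\epsilon^g$ for $g$. Given $\tau^\ast:f^\ast\Rightarrow g^\ast$, define its mate $\tau_\ast:g_\ast\Rightarrow f_\ast$ as the horizontal/vertical paste
\[
g_\ast \xrightarrow{\,\eta^f g_\ast\,} f_\ast f^\ast g_\ast \xrightarrow{\,f_\ast\tau^\ast g_\ast\,} f_\ast g^\ast g_\ast \xrightarrow{\,f_\ast\epsilon^g\,} f_\ast,
\]
and symmetrically, given $\tau_\ast$, define $\tau^\ast$ as
\[
f^\ast \xrightarrow{\,f^\ast\eta^g\,} f^\ast g_\ast g^\ast \xrightarrow{\,f^\ast\tau_\ast g^\ast\,} f^\ast f_\ast g^\ast \xrightarrow{\,\epsilon^f g^\ast\,} g^\ast.
\]

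The first step is to verify that these two assignments are mutually inverse. Starting from $\tau^\ast$, forming $\tau_\ast$, then re-forming a natural transformation of left adjoints from $\tau_\ast$, one obtains a pasting diagram in which two of the four corners are acted on by a unit followed by a counit (or vice-versa); by the triangle identities $f_\ast\epsilon^f\cdot\eta^f f_\ast = 1_{f_\ast}$ and $\epsilon^f f^\ast\cdot f^\ast\eta^f = 1_{f^\ast}$ (and similarly for $g$), the composite collapses to $\tau^\ast$. The reverse round-trip is identical in form with the roles of $f$ and $g$ swapped.

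Next I would verify compatibility with vertical composition: if $\sigma^\ast,\tau^\ast$ are composable natural transformations between left adjoints, then the mate of $\tau^\ast\cdot\sigma^\ast$ equals $\sigma_\ast\cdot\tau_\ast$. This is a standard pasting manipulation: between the middle copy of $f^\ast\cdot g_\ast$ in the composite one inserts $\eta$ then $\epsilon$ for the intermediate adjunction, which cancel by another triangle identity. Compatibility with whiskering by a functor on either side (horizontal composition with an identity 2-cell on a functor) is likewise a direct pasting identity: the units and counits defining the mate construction are whiskered along, and functoriality of horizontal composition does the rest. Compatibility with full horizontal composition of natural transformations between composable adjoint pairs then follows from the vertical-plus-whiskering decomposition of horizontal composition.

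The main (minor) obstacle is purely bookkeeping: keeping track of the directions, since the left adjoints go $\mathcal H\to\mathcal G$ while the arrow $f:\mathcal G\to\mathcal H$ in $\mathbb{COH}$ is named for its right adjoint's direction, so one must be careful to note that mates of transformations between left adjoints reverse direction (which is why the lemma's statement pairs $f^\ast\Rightarrow g^\ast$ with $g_\ast\Rightarrow f_\ast$). Once the conventions are fixed, every verification reduces to a diagram chase using only the triangle identities and naturality, and no special property of $\mathcal G,\mathcal H$ (such as being locally coherent, or the exactness/coherence conditions on $f^\ast,g^\ast$) is used.
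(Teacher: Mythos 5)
Your construction of $\tau_\ast$ from $\tau^\ast$ (unit, whiskered $\tau^\ast$, counit) is exactly the composite the paper writes down, and the paper likewise defers the remaining verifications to the standard mate/conjugate machinery (citing Mitchell); your proposal just spells out the inverse direction and the triangle-identity checks more explicitly. Correct, and essentially the same approach as the paper.
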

\begin{proof} Here is how to get from $\tau^\ast$ to its correspondent $\tau_\ast$.  More details are at \cite[V.2.1]{MitBk}.

We have, for $H\in {\mathcal H}$, the component $f^\ast H \xrightarrow{\tau^\ast_H} g^\ast H$.  We apply this with $H=g_\ast G$ and proceed as follows, where $\epsilon_g$ denotes the counit of $g$ and $\eta_f$ denotes the unit of $f$:

\noindent $f^\ast g_\ast G \xrightarrow{\tau^\ast_{g_\ast G}} g^\ast g_\ast G$ and $g^\ast g_\ast G \xrightarrow{(\epsilon_g)_G} G$

\noindent hence

\noindent $f^\ast g_\ast G \xrightarrow{(\epsilon_g)_G \tau^\ast_{g_\ast G}} G$

\noindent hence

$f_\ast f^\ast g_\ast G \xrightarrow{f_\ast (\epsilon_g)_G f_\ast \tau^\ast_{g_\ast G}} f_\ast G$ together with $g_\ast G \xrightarrow{(\eta_f)_{g_\ast G}} f_\ast f^\ast g_\ast G$

\noindent giving

\noindent $g_\ast G \xrightarrow{f_\ast (\epsilon_g)_G f_\ast \tau^\ast_{g_\ast G} (\eta_f)_{g_\ast G}} f_\ast G$

\noindent We set $\tau_\ast =f_\ast (\epsilon_g)_G .f_\ast \tau^\ast_{g_\ast G} .(\eta_f)_{g_\ast G}$.
\end{proof}

The correspondence between ${\mathbb A}{\mathbb B}{\mathbb E}{\mathbb X}$ and ${\mathbb C}{\mathbb O}{\mathbb H}$ appears in \cite{PreRajShv} but without details (there the focus is on the anti-equivalence between ${\mathbb A}{\mathbb B}{\mathbb E}{\mathbb X}$ and ${\mathbb D}{\mathbb E}{\mathbb F}$; the suggestion of considering the associated locally coherent categories came from the referee of that paper).  Here we give the some details (since I am not aware of any reference for them).

\begin{theorem}\label{abexcohequiv} There is a natural anti-equivalence of 2-categories between ${\mathbb C}{\mathbb O}{\mathbb H}$ and ${\mathbb A}{\mathbb B}{\mathbb E}{\mathbb X}$, as described above.
\end{theorem}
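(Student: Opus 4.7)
The object-level half of the claimed (anti-)equivalence is already supplied by \ref{abtofromloccoh}: send ${\mathcal A}$ to ${\rm Ind}({\mathcal A}) = {\rm Lex}({\mathcal A}^{\rm op},{\bf Ab})$ and send ${\mathcal G}$ to ${\mathcal G}^{\rm fp}$. What remains is (a) to extend these to 1-cells and 2-cells, (b) to check the extensions are mutually inverse up to natural isomorphism, and (c) to verify that composition of 1-cells is reversed while composition of 2-cells is preserved. Write $\Phi: {\mathbb A}{\mathbb B}{\mathbb E}{\mathbb X} \to {\mathbb C}{\mathbb O}{\mathbb H}^{\rm op}$ for the forward assignment, sending an exact $f \in {\rm Ex}({\mathcal A},{\mathcal B})$ to the pair $({\rm Ind}(f),\, -\circ f^{\rm op})$ as in the statement preceding the theorem, and $\Psi$ for the reverse assignment, sending a coherent $(f^\ast,f_\ast)$ to $f^\ast \upharpoonright {\mathcal G}^{\rm fp}$.

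\noindent\textbf{Well-definedness.} For $\Phi$, I would first show that ${\rm Ind}(f)$ is left adjoint to $-\circ f^{\rm op}$ by a direct Yoneda calculation on representables, namely $\big((-,fA),H\big) \cong H(fA) \cong \big((-,A),\, H\circ f^{\rm op}\big)$, and then extend by colimit-preservation of ${\rm Ind}(f)$ in the first variable. Exactness of ${\rm Ind}(f)$ follows from exactness of $f$ together with exactness of direct limits in the locally coherent target; and the condition ${\rm Ind}(f)({\mathcal G}^{\rm fp}) \subseteq {\mathcal H}^{\rm fp}$ is immediate, since ${\rm Ind}(f) \upharpoonright {\mathcal A} = f$ by construction. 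For $\Psi$, the functor $f^\ast \upharpoonright {\mathcal G}^{\rm fp}$ is exact because $f^\ast$ is, and lands in ${\mathcal H}^{\rm fp}$ by definition of coherent morphism, so it is a 1-cell of ${\mathbb A}{\mathbb B}{\mathbb E}{\mathbb X}$.

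\noindent\textbf{Mutual inverseness, 2-cells, composition.} The equality $\Psi \Phi(f) = f$ holds on the nose. For $\Phi\Psi$, given a coherent $(f^\ast,f_\ast)$, the ${\rm Ind}$-extension of $f^\ast \upharpoonright {\mathcal G}^{\rm fp}$ is naturally isomorphic to $f^\ast$: both commute with direct limits (the former by construction, the latter because it is a left adjoint) and they agree on ${\mathcal G}^{\rm fp}$; since ${\mathcal G}$ is locally finitely presented, a direct-limit-preserving functor out of ${\mathcal G}$ is determined up to canonical natural isomorphism by its restriction to ${\mathcal G}^{\rm fp}$. Uniqueness of right adjoints then transports this isomorphism to the direct-image side, yielding $f_\ast \cong -\circ (f^\ast \upharpoonright {\mathcal G}^{\rm fp})^{\rm op}$. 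On 2-cells, a natural transformation $\tau: f \Rightarrow g$ in ${\rm Ex}({\mathcal A},{\mathcal B})$ induces ${\rm Ind}(\tau): {\rm Ind}(f) \Rightarrow {\rm Ind}(g)$, which is the inverse-image form of a 2-cell in ${\mathbb C}{\mathbb O}{\mathbb H}$; its direct-image form is then produced by \ref{adjnattrans}. Conversely, a natural transformation between inverse-image parts of coherent morphisms restricts to ${\mathcal G}^{\rm fp}$, and, as both inverse images commute with direct limits, the transformation is determined by that restriction, giving a bijection on 2-cells which is clearly compatible with vertical and horizontal composition. Finally, 1-cell composition is reversed because ${\rm Ind}(gf) \cong {\rm Ind}(g)\, {\rm Ind}(f)$ canonically (both left Kan-extend $y \circ g \circ f$ along the Yoneda embedding), so $\Phi(gf) = \Phi(f)\, \Phi(g)$ in ${\mathbb C}{\mathbb O}{\mathbb H}$, which is composition in ${\mathbb C}{\mathbb O}{\mathbb H}^{\rm op}$.

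\noindent\textbf{Main obstacle.} The substantive step is the reconstruction just described: showing that the whole adjoint pair $(f^\ast,f_\ast)$ of a coherent morphism is, up to 2-isomorphism, completely captured by the exact functor $f^\ast \upharpoonright {\mathcal G}^{\rm fp}$, including that the original right adjoint is naturally isomorphic to precomposition with its opposite. Once this density-style argument is in place (using that ${\mathcal G}$ is locally finitely presented and that $f^\ast$, being a left adjoint, preserves all colimits), the remaining checks reduce to routine bookkeeping with the universal property of ${\rm Ind}$ and the already-established pseudofunctoriality of the $\mathrm{Ind}$-construction.
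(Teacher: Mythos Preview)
Your proposal is correct and follows essentially the same route as the paper: the object correspondence is \ref{abtofromloccoh}, the morphism correspondence is established by the Yoneda/colimit calculation showing ${\rm Ind}(f)$ is left adjoint to precomposition with $f^{\rm op}$ and is exact (this is the content of \ref{mors}, which the paper proves in detail), and the 2-cell correspondence is handled via \ref{adjnattrans}. If anything you are more explicit than the paper about the mutual-inverseness step and the 2-categorical bookkeeping, which the paper leaves to the reader with the remark ``One may also check, for \ref{abexcohequiv}, the correspondence between 2-arrows.''
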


We have already seen, in \ref{abtofromloccoh}, the correspondence on objects.
On morphisms, the anti-equivalence between ${\mathbb A}{\mathbb B}{\mathbb E}{\mathbb X}$ and ${\mathbb C}{\mathbb O}{\mathbb H}$ is as follows.

\begin{prop}\label{mors} (\cite[10.1]{KraFun}) To each arrow $f_0\in {\rm Ex}({\mathcal A}, {\mathcal B})$ there corresponds the coherent morphism $f=(f^\ast,f_\ast):{\mathcal H}={\rm Ind}({\mathcal B}) \rightarrow {\rm Ind}({\mathcal A})={\mathcal G}$ which has $f_\ast:{\mathcal H}={\rm Lex}({\mathcal B}^{\rm op}, {\bf Ab}) \rightarrow {\rm Lex}({\mathcal A}^{\rm op}, {\bf Ab})={\mathcal G}$ just precomposition with $f_0^{\rm op}$ and has $f^\ast ={\rm Ind}(f_0)$ (the Ind-construction is defined in an obvious way on functors).

In the other direction we take an arrow $(f^\ast,f_\ast):{\mathcal H} \rightarrow {\mathcal G}$ to $f^\ast\upharpoonright {\mathcal G}^{\rm fp}$.
\end{prop}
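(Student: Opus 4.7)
The plan is to verify that the two assignments are well-defined, mutually inverse (up to natural equivalence), and compatible with 2-cells, thus establishing the anti-equivalence of Theorem~\ref{abexcohequiv} on morphisms.

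First I would check that the forward recipe produces a coherent morphism. For $f_\ast = (-)\circ f_0^{\mathrm{op}}$, precomposition with an exact functor preserves left exactness, so $f_\ast$ genuinely lands in $\mathcal{G}=\mathrm{Lex}(\mathcal{A}^{\mathrm{op}},\mathbf{Ab})$. For $f^\ast$, I would invoke the universal property of the Ind-completion: the composite $\mathcal{A}\xrightarrow{f_0}\mathcal{B}\hookrightarrow\mathcal{H}$ extends uniquely (up to natural isomorphism), through a functor that preserves filtered colimits, to $f^\ast:\mathcal{G}=\mathrm{Ind}(\mathcal{A})\to\mathcal{H}$. On representables this gives $f^\ast(-,A)=(-,f_0A)$, so $f^\ast$ sends finitely presented objects to finitely presented objects. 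As a filtered-colimit-preserving extension of the exact $f_0$, the functor $f^\ast$ is exact: right-exactness is automatic once we show it is a left adjoint, and left-exactness combines the left-exactness of $f_0$ on $\mathcal{A}$ with the fact that finite limits commute with filtered colimits in the locally finitely presented Grothendieck category $\mathcal{H}$.

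Next I would verify the adjunction. On representables in $\mathcal{G}$ we have by Yoneda
\[
\mathrm{Hom}_{\mathcal{H}}(f^\ast(-,A),H)=\mathrm{Hom}_{\mathcal{H}}((-,f_0A),H)=H(f_0A)=(f_\ast H)(A)=\mathrm{Hom}_{\mathcal{G}}((-,A),f_\ast H),
\]
naturally in $A$ and $H$. Since every object of $\mathcal{G}$ is a filtered colimit of representables and $f^\ast$ preserves such colimits (while $\mathrm{Hom}(-,H)$ turns them into limits), the isomorphism extends to all of $\mathcal{G}$, giving $f^\ast\dashv f_\ast$. Combined with the preceding paragraph, $(f^\ast,f_\ast)$ is a coherent morphism $\mathcal{H}\to\mathcal{G}$.

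For the inverse direction, a coherent morphism $(f^\ast,f_\ast):\mathcal{H}\to\mathcal{G}$ restricts by definition to $f^\ast\upharpoonright\mathcal{G}^{\mathrm{fp}}:\mathcal{G}^{\mathrm{fp}}\to\mathcal{H}^{\mathrm{fp}}$, which under the identifications of \ref{abtofromloccoh} is a functor $\mathcal{A}\to\mathcal{B}$; it is exact because $f^\ast$ is. The two round-trips are mutual inverses: starting from $f_0$, the construction manifestly satisfies $f^\ast\upharpoonright\mathcal{G}^{\mathrm{fp}}=f_0$; and starting from $(f^\ast,f_\ast)$, the functor $\mathrm{Ind}(f^\ast\upharpoonright\mathcal{G}^{\mathrm{fp}})$ agrees with $f^\ast$ on $\mathcal{G}^{\mathrm{fp}}$ and both preserve filtered colimits, hence are naturally isomorphic; the right adjoint is then determined up to unique isomorphism by $f^\ast$. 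For 2-cells, a natural transformation $\tau:f_0\Rightarrow g_0$ extends {\it via} $\mathrm{Ind}$ to $f^\ast\Rightarrow g^\ast$ and conversely restricts to $\mathcal{G}^{\mathrm{fp}}$; the equivalent data $g_\ast\Rightarrow f_\ast$ is provided by Lemma~\ref{adjnattrans}. Note that the correspondence reverses the direction of 1-cells but preserves the direction of 2-cells (with respect to the $f^\ast$-component), so it is an anti-equivalence of 2-categories.

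The main obstacle is verifying the exactness (not just right exactness) of $f^\ast$ and then the adjunction simultaneously, since each needs to be checked globally on $\mathcal{G}$ rather than only on representables, and the checks rely on $f^\ast$ preserving filtered colimits by its very construction as $\mathrm{Ind}(f_0)$. A subsidiary but routine check is that the assignment respects composition of morphisms: this amounts to the fact that $\mathrm{Ind}(g_0\circ f_0)=\mathrm{Ind}(g_0)\circ\mathrm{Ind}(f_0)$, with the corresponding composition of right adjoints following automatically from uniqueness of adjoints.
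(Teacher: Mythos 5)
Your proposal is correct and follows essentially the same route as the paper: the heart of both is the verification of the adjunction by Yoneda on representables $(-,A)$ and extension along filtered colimits, together with exactness of ${\rm Ind}(f_0)$ obtained by reducing to finitely presented objects where $f_0$ itself is exact (the paper does this by exhibiting a given short exact sequence in ${\mathcal G}$ as a directed colimit of short exact sequences in ${\mathcal G}^{\rm fp}$, whereas you package the same reduction as ``left adjoint, hence right exact'' plus ``finite limits commute with filtered colimits''). Your additional checks that the two assignments are mutually inverse and compatible with 2-cells belong, in the paper's organisation, to the surrounding Theorem~\ref{abexcohequiv} rather than to Proposition~\ref{mors} itself, but they are consistent with what is asserted there.
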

\begin{proof}  The second statement is immediate since if $f^\ast$ is exact then so is its restriction.  For the first statement, the only part that takes some checking is that $f$ is a coherent morphism.

First, we show that $ {\rm Ind}(f_0) $ is exact. Given an exact sequence $ 0\rightarrow G'\rightarrow G\rightarrow G''\rightarrow 0 $ in $ {\mathcal G} $ write $ G $ as a direct limit of finitely presented objects, $ G=\varinjlim G_\lambda $, let $ G'_\lambda  $ be the pullback of $ G_\lambda \rightarrow G $ and $ G'\rightarrow G$, write $ G'_\lambda  $ as a direct limit of finitely generated (hence finitely presented) subobjects $ G'_{\lambda \mu } $ and set $ G_{\lambda \mu }''={\rm coker}(G_{\lambda ,\mu }'\rightarrow G_\lambda ) $ and note the induced canonical map $ G_{\lambda \mu }''\rightarrow G''_\lambda $. All these exact sequences fit together into a directed system in $ {\mathcal G}^{\rm fp}$, the colimit of which is the original exact sequence. Since $ f_0 $ is exact, the component exact sequences $ 0\rightarrow G'_{\lambda \mu }\rightarrow G_\lambda \rightarrow G''_{\lambda \mu }\rightarrow 0 $ are taken by it to exact sequences in $ {\mathcal H}$. These exact sequences form a directed system in $ {\mathcal H} $ and so the direct limit is exact and, by definition of $ f^\ast  $ as $ {\rm Ind}(f_0)$, that direct limit is $  0\rightarrow f^\ast  G'\rightarrow f^\ast  G\rightarrow f^\ast  G''\rightarrow 0$, so $ f^\ast  $ is exact. Of course, by definition $ f^\ast  $ takes $ {\mathcal G}^{\rm fp} $ to $ {\mathcal G}^{\rm fp}$.

We must show that $ {\rm Ind}(f_0) $ is left adjoint to $ f_\ast  $, so take $ H\in {\mathcal H} $ and $ G\in {\mathcal G}$. Write $ G $ as a direct limit, $ G=\varinjlim _\lambda G_\lambda $, of finitely presented objects in $ {\mathcal G} $ where we have $ G_\lambda =(-,A_\lambda ) $ for some directed system $ \{ A_\lambda \}_\lambda  $ of objects of $ {\mathcal A}$.  Then:

\noindent $\big({\rm Ind}(f_0)G,H\big) = \big({\rm Ind}(f_0)(\varinjlim _\lambda G_\lambda ),H\big) = \big({\rm Ind}(f_0)(\varinjlim _\lambda (-,A_\lambda )),H\big)$

\noindent $ = \big(\varinjlim _\lambda {\rm Ind}(f_0)(-,A_\lambda ),H\big)$ by definition of $ {\rm Ind}(f_0)$

\noindent $ = \big(\varinjlim _\lambda f_0(-,A_\lambda ), H\big) = \big(\varinjlim _\lambda (-,f_0A_\lambda ), H\big)$

\noindent $ = \varprojlim_\lambda \big((-,f_0A_\lambda ),H\big) = \varinjlim _\lambda H(f_0A_\lambda ) $ (recall that $ H $ is a functor on $ {\mathcal B}^{\rm op}$)

\noindent $ =\varprojlim_\lambda f_\ast HA_\lambda  $ ($f_\ast H $ is a functor on $  {\mathcal A}^{\rm op} $ but now we convert the action {\it via} Yoneda to an action on $ {\mathcal A}$, expressed within $ {\mathcal G}$)

\noindent $ = \varprojlim_\lambda \big((-,A_\lambda ),f_\ast H\big) $ (by Yoneda)

\noindent $ = \big(\varinjlim _\lambda (-,A_\lambda ), f_\ast H\big) = (G,f_\ast H)$.
\end{proof}

One may also check, for \ref{abexcohequiv}, the correspondence between 2-arrows.

\begin{example} Let $i_0$ be the inclusion of the category, ${\rm reg}\mbox{-}k\widetilde{A_1}$, of finite-dimensional regular modules over the Kronecker algebra $k\widetilde{A_1}$ (or over any tame hereditary finite-dimensional algebra) into the category ${\rm mod}\mbox{-}k\widetilde{A_1}$ of finite-dimensional modules.  This is exact, so induces a coherent morphism as above, with the left adjoint being the inclusion of all regular modules (i.e.~direct limits of finite-dimensional regular modules) and the right adjoint taking an arbitrary $k\widetilde{A_1}$-module $M$ to the module which, in the notation and terminology of \cite[p.~326]{RinInf} (also see \cite{ReiRin}) is ${\mathcal T}(M)/{\mathcal I}(M)$ where ${\mathcal T}(M)$, the torsion submodule of $M$, is the sum of all regular and preinjective submodules of $M$, modulo the sum, ${\mathcal I}(M)$ of all preinjective submodules of $M$ which, since the ring is tame hereditary, will be a direct summand (\cite[3.7]{RinInf}).

We explore this example a bit further.  The category ${\rm reg}\mbox{-}k\widetilde{A_1}$ is a direct sum of uniserial abelian categories indexed by ${\mathbb P}^1(k)$, that indexed by an irreducible monic polynomial $f\in k[T]$ being equivalent to the category of finitely generated torsion modules over $k[T]_{(f)}$.  The ${\rm Ind}$-completion of this category is, by \cite[2.1(iii)]{LenzTrans}, the full subcategory, ${\rm Reg}\mbox{-}k\widetilde{A_1}$, on those $k\widetilde{A_1}$-modules every finite-dimensional submodule of which is regular, and is equivalent to the direct product of the categories of torsion modules over the various localisations of $k[T]$.

The functor $(i_0)_\ast:M\mapsto {\mathcal T}(M)/{\mathcal I}(M)$ takes the generic module $G$, since it is a direct limit of preprojective modules and since, by \cite[6.7]{KraFun}, $(i_0)_\ast$ commutes with direct limits, to $0$.  Also, given any preinjective module $I$ there is, by \cite[\S\S 6,7]{ReiRin}, an exact sequence $0\rightarrow V'\rightarrow V \rightarrow I \rightarrow 0$ with $V'$ a direct sum of copies of $G$ and $V$ a direct sum of copies of $G$ and Pr\"{u}fer modules.  The image under $(i_0)_\ast$ of this exact sequence has, therefore, just the one possibly non-zero term $(i_0)_\ast V$; but this is $0$ only if the sequence is split - which need not be the case - and so $(i_0)_\ast$ is not exact.

The definable category corresponding to $k\widetilde{A_1}\mbox{-}{\rm mod}$ is, by \ref{cohexgen}, ${\rm Proj}\mbox{-}k\widetilde{A_1}$.  That corresponding to ${\rm reg}\mbox{-}k\widetilde{A_1}$ is ${\rm Ex}({\rm reg}\mbox{-}k\widetilde{A_1}, {\rm Ab})$ which, by the decomposition of ${\rm reg}\mbox{-}k\widetilde{A_1}$, is equivalent to the ${\mathbb P}^1(k)$-indexed product of categories of the form ${\rm Ex}({\rm tors}\mbox{-}k[T]_{(f)}, {\bf Ab})$ which (cf.~a similar computation in \cite{PreAxtFlat}) may be computed to be equivalent to the category of reduced flat $k[T]_{(f)}$-modules.  Putting these together, we deduce that ${\rm Ex}({\rm reg}\mbox{-}k\widetilde{A_1}, {\rm Ab})$ may be taken to be the category of ``reduced torsionfree'' $k\widetilde{A_1}$-modules.  The elementary dual of this category consists of the direct sums of Pr\"{u}fer (``torsion modulo divisible", see \cite[p.~326]{RinInf}) $k\widetilde{A_1}$-modules.
\end{example}

Here is the action of duality on $ {\mathbb C}{\mathbb O}{\mathbb H}$: if ${\mathcal G} = {\rm Lex}(({\mathcal G}^{\rm fp})^{\rm op}, {\bf Ab})$ is a locally coherent Grothendieck category then the duality induced on $ {\mathbb C}{\mathbb O}{\mathbb H}$ will take it to ${\rm Lex}({\mathcal G}^{\rm fp}, {\bf Ab})$ which is the {\bf conjugate} category in the sense of Roos (\cite[p.~204]{Roo}) and which is denoted $\widetilde{\mathcal G}$.  The action on morphisms is as follows:  if $(f^\ast,f_\ast):{\mathcal H} \rightarrow {\mathcal G}$ is the coherent morphism induced by $f_0\in {\rm Ex}({\mathcal A}, {\mathcal B})$ then its ``conjugate'' is the coherent morphism induced by $f_0^{\rm op}\in {\rm Ex}({\mathcal A}^{\rm op}, {\mathcal B}^{\rm op})$, namely $(\widetilde{f}^\ast,\widetilde{f}_\ast):\widetilde{\mathcal H} \rightarrow \widetilde{\mathcal G}$ with $\widetilde{f}_\ast$ being precomposition with $f_0$ and $\widetilde{f}^\ast = {\rm Ind}(f_0^{\rm op})$.  This follows directly from the definition of the involution on ${\mathbb A}{\mathbb B}{\mathbb E}{\mathbb X}$ and the description of the equivalence in \ref{abexcohequiv}.

One might note any locally coherent Grothendieck category ${\mathcal G}$ also is an object of $ {\mathbb D}{\mathbb E}{\mathbb F}$, however, its elementary dual ${\mathcal G}^{\rm d}$ (Section \ref{secdefcats}) need not (though in some cases will) coincide with its conjugate:  for example, if $R$ is right coherent but not left coherent then the conjugate category of ${\rm Mod}\mbox{-}R$ (which is described in \cite{Roo}) cannot be its elementary dual $R\mbox{-}{\rm Mod}$ since the latter is not locally coherent.

\vspace{4pt}

If $ {\mathcal G} $ is a locally coherent Grothendieck category then a full subcategory $ {\mathcal G}' $ is a {\bf Giraud subcategory} if it is {\bf reflective}, that is, the inclusion $ i:{\mathcal G}'\rightarrow {\mathcal G} $ has a left adjoint and that left adjoint is (left) exact. These are exactly the subcategories of $ {\mathcal G} $ of the form $ {\mathcal G}_\tau  $, obtained by localising $ {\mathcal G} $ at a hereditary torsion theory $ \tau  $ and then composing the localisation functor $ Q_\tau :{\mathcal G} \rightarrow {\mathcal G}_\tau $  with its right adjoint (which embeds the localised category back into $ {\mathcal G}$). This is the additive analogue of sheafification in the topos/categories of sheaves context.

\begin{prop}\label{giraud} Let $ \tau  $ be a hereditary torsion theory on the locally coherent Grothendieck category $ {\mathcal G}$. Then the following conditions are equivalent.

\noindent (i) $ \xymatrix{{\mathcal G}_\tau  \ar@/_/[r]_{i} & {\mathcal G} \ar@/_/[l]_{Q_\tau }}$ is a coherent morphism from $ {\mathcal G}_\tau  $ to $ {\mathcal G}$;

\noindent (ii) the localisation functor $ Q_\tau  $ takes finitely presented objects of $ {\mathcal G} $ to finitely presented objects of $ {\mathcal G}_\tau $;

\noindent (iii) $  \tau  $ is of finite type (equivalently, since $ {\mathcal G} $ is locally coherent, an elementary torsion theory in the sense of \cite{Prelfp}, see \cite[\S11.1.3]{PreNBK})
\end{prop}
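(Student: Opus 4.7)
The plan is to prove (i) $\Leftrightarrow$ (ii) directly from the definitions and then derive (ii) $\Leftrightarrow$ (iii) through the intermediate condition that the inclusion $i:{\mathcal G}_\tau\to {\mathcal G}$ preserves directed colimits.

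For (i) $\Leftrightarrow$ (ii): since $\tau$ is a hereditary torsion theory on the Grothendieck category ${\mathcal G}$, the localisation functor $Q_\tau$ is automatically exact and $i$ is its fully faithful right adjoint. So the existence of the adjunction and the (left) exactness of the left adjoint are already part of the data of a torsion-theoretic localisation, and the only piece of the definition of a coherent morphism that is not automatic for the pair $(Q_\tau, i)$ is the requirement $Q_\tau {\mathcal G}^{\rm fp}\subseteq {\mathcal G}_\tau^{\rm fp}$, which is exactly condition (ii).

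For (ii) $\Leftrightarrow$ (iii): I would first invoke the fact (see \cite[11.1.23]{PreNBK}, already used in the proof of \ref{defgroth} above) that $\tau$ is of finite type iff the section $i$ commutes with directed colimits, so it suffices to prove that (ii) holds iff $i$ preserves directed colimits. This is a routine adjunction game. For $A\in {\mathcal G}^{\rm fp}$ and a directed system $(X_\lambda)$ in ${\mathcal G}_\tau$, the adjunction yields $\mathrm{Hom}_{{\mathcal G}_\tau}(Q_\tau A, \varinjlim X_\lambda) \simeq \mathrm{Hom}_{\mathcal G}(A, i\varinjlim X_\lambda)$, while finite presentability of $A$ in ${\mathcal G}$ together with the adjunction gives $\varinjlim \mathrm{Hom}_{{\mathcal G}_\tau}(Q_\tau A, X_\lambda) \simeq \varinjlim \mathrm{Hom}_{\mathcal G}(A, i X_\lambda) \simeq \mathrm{Hom}_{\mathcal G}(A, \varinjlim iX_\lambda)$. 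If $i$ preserves directed colimits the two right-hand sides coincide for all such data, so $Q_\tau A$ is finitely presented in ${\mathcal G}_\tau$, giving (ii). Conversely, if (ii) holds the left-hand sides agree, and so the canonical comparison map $\varinjlim iX_\lambda \to i\varinjlim X_\lambda$ induces an isomorphism on $\mathrm{Hom}_{\mathcal G}(A,-)$ for every $A\in {\mathcal G}^{\rm fp}$, hence is itself an isomorphism.

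I do not anticipate any serious obstacle; the argument is essentially bookkeeping around standard properties of hereditary torsion-theoretic localisations on Grothendieck categories. The one step requiring a moment's care is this final one, where local coherence of ${\mathcal G}$ (via the generating set ${\mathcal G}^{\rm fp}$ of finitely presented objects) is used to upgrade a natural Hom-isomorphism to an actual isomorphism in ${\mathcal G}$.
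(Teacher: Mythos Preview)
Your proof is correct. For (i) $\Leftrightarrow$ (ii) you argue exactly as the paper does. For (ii) $\Leftrightarrow$ (iii), however, you take a different route from the paper's main argument: you pass through the intermediate condition ``$i$ commutes with directed colimits'' (using \cite[11.1.23]{PreNBK}) and establish its equivalence with (ii) by a clean adjunction-and-generators argument. The paper explicitly mentions this style of approach as a possibility (``We could finish by appealing to \cite[6.7]{KraFun} and, e.g., \cite[11.1.12]{PreNBK}'') but then opts instead for a direct contrapositive: assuming $\tau$ is not of finite type, it exhibits a finitely presented $G$ with a $\tau$-dense subobject $G'$ having no finitely generated $\tau$-dense subobject, writes $Q_\tau G$ as a directed colimit of the $Q_\tau G_\lambda$ (using local coherence so that the $G_\lambda$ are finitely presented), and derives a contradiction from $Q_\tau G$ being finitely presented. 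Your argument is more categorical and arguably more transparent about where local coherence enters (only at the very end, to pass from a Hom-isomorphism on ${\mathcal G}^{\rm fp}$ to an isomorphism in ${\mathcal G}$); the paper's argument is more hands-on with the torsion-theoretic machinery and avoids the black-box citation of the finite-type characterisation.
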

\begin{proof} The equivalence of (i) and (ii) is just the definitions. That (iii) implies these is part of \cite[11.1.26]{PreNBK}.  We could finish by appealing to \cite[6.7]{KraFun} and, e.g., \cite[11.1.12]{PreNBK} but a direct proof is straightforward, as follows.  Suppose that $ \tau  $ is not of finite type. Then there is $ G\in {\mathcal G}^{\rm fp} $ and $ G'\leq G $ which is $ \tau $-dense (i.e.~$G/G'$ is torsion) but which has no finitely generated $ \tau $-dense subobject (we do assume some acquaintance with torsion theory here - see \cite{Ste}, though what is in \cite[Chpt.~11]{PreNBK} should suffice). So $ G'=\sum_\lambda {\rm cl}^{G'}_\tau G_\lambda  $ - the directed sum of the $ \tau $-closures in $ G' $ of its finitely generated subobjects - and for each $ \lambda  $ we have $ {\rm cl}^{G'}_\tau G_\lambda <G'$.

Since $ {\mathcal G} $ is locally coherent each $ G_\lambda  $ is finitely presented so, by hypothesis, $ Q_\tau G_\lambda  $ is a finitely presented object of $ {\mathcal G}_\tau $. Since $ Q_\tau  $ is a left adjoint it commutes with directed colimits so we have $ Q_\tau G =Q_\tau G' =Q_\tau \varinjlim _\lambda {\rm cl}^{G'}_\tau (G_\lambda ) = \varinjlim _\lambda Q_\tau {\rm cl}^{G'}_\tau (G_\lambda ) = \varinjlim _\lambda Q_\tau {\rm cl}^G_\tau (G_\lambda ) $ (since $ G' $ is $ \tau $-dense in $ G$) $ = \varinjlim _\lambda Q_\tau G_\lambda $. If $ Q_\tau G $ were finitely presented then there would be $ \lambda  $ such that $ Q_\tau G_\lambda =Q_\tau G $ and hence such that $G_\lambda$ is $\tau$-dense in $G$ so, in particular, $ G_\lambda $ would be $ \tau $-dense in $ G' $ - a contradiction.
\end{proof}

\begin{example}\label{finiteab} Take ${\mathcal A}$ to be the category of finite abelian groups and  ${\mathcal B}$  to be the category of finitely generated abelian groups, so $ {\rm Ind}(i)$, where $i$ is the inclusion of  ${\mathcal A}$ in  ${\mathcal B}$, is the inclusion of the category,  ${\rm Tors}\mbox{-}{\mathbb Z}$, of torsion abelian groups in  ${\rm Mod}\mbox{-}{\mathbb Z}$, with right adjoint the torsion functor  $\tau$.  This give a coherent morphism from  ${\rm Mod}\mbox{-}{\mathbb Z}$ to  ${\rm Tors}\mbox{-}{\mathbb Z}$:  $\xymatrix{{\rm Mod}\mbox{-}{\mathbb Z} \ar@/_/[r]_{\tau} & {\rm Tors}\mbox{-}{\mathbb Z} \ar@/_/[l]_{{\rm Ind}(i)}}$. Note that in this example it is the case that  $\tau$  preserves finitely presented objects; this is far from being the case for a general coherent morphism (consider, for instance the example $\xymatrix{{\rm Mod}\mbox{-}{\mathbb Q} \ar@/_/[r]_i & {\rm Mod}\mbox{-}{\mathbb Z} \ar@/_/[l]_{Q_\tau = -\otimes {\mathbb Q}}}$ of \ref{giraud}).
\end{example}

\subsection{The additive version of the classifying topos}\label{sectopos}

In this section we draw out explicitly the analogies between what we have here and the situation seen in topos theory.  Recall (\cite{SGA4}, \cite{Joh1}, \cite{MacMoe}) that if ${\mathcal C}$ is a (skeletally) small category then a contravariant functor from ${\mathcal C}$ to the category, ${\bf Set}$, of sets is referred to as a {\bf presheaf} on ${\mathcal C}$.  In the classical case - that of a presheaf on a topological space - the category ${\mathcal C}$ is the collection of open subsets of the space, ordered by inclusion and thereby conceived of as a poset-type category (indeed a locale).  The category, $({\mathcal C}^{\rm op}, {\bf Set})$, of such functors is the non-additive correspondent to the category, ${\rm Mod}\mbox{-}{\mathcal R} =({\mathcal R}^{\rm op}, {\bf Ab})$, of right modules over a (skeletally) small preadditive category ${\mathcal R}$.

A Grothendieck topology on ${\mathcal C}$ is a notion of ``covering" (generalising the notion of open cover in topology);  ${\mathcal C}$ together with a Grothendieck topology is termed a site.  The additive analogue is a hereditary torsion theory on ${\rm Mod}\mbox{-}{\mathcal R}$ (presented as a system of dense subobjects of the generating projectives $(-,P)$ for $P\in {\mathcal R}$).  A sheaf for a site is a presheaf which satisfies the sheaf conditions (existence and uniqueness of patchings) for that site.  There is a sheafification functor on the category of presheaves and the resulting category of sheaves has an embedding, right adjoint to sheafification, as a full subcategory of the category of presheaves.  The additive analogue of a sheaf is a module which is torsionfree and divisible with respect to the torsion theory.  There is a localisation (=sheafification) functor on the category of modules and the resulting quotient category has an embedding, right adjoint to localisation, as a full subcategory of the category of modules.

The analogy is wide, extends right down to technical details, and has been known for a long time.  An algebraist need only skim through the relevant sections of \cite{Bor3} and a category-theorist through \cite{Ste} to see the parallels.  This paper, motivated by the model theory of additive structures, is, in part, drawing out the analogy further and I hope that it is a useful thing to do and that it will give further insight and results.  So far as I am aware there is no way of directly deducing the additive version of results from the Set-version.  Of course, one can say that the former is an enriched version of the latter but it is not clear to me that allows one to avoid having to write down the details to obtain the additive version.

A Grothendieck topos is the category of sheaves on a site; the additive analogue is the notion of a Grothendieck abelian category.  There is a notion of coherent topos - this can be defined as a topos which can be obtained from a presheaf category by sheafification at a finite-type site (see \cite[\S IX.11]{MacMoe}).  A finite-type site is one where the underlying category ${\mathcal C}$ has finite limits and the topology has a basis of finite covering families.  Analogously a locally coherent Grothendieck category is one which can be obtained by a finite-type localisation of a locally coherent module category, and a module category is locally coherent iff it is (Morita) equivalent to one of the form ${{\rm Mod}\mbox{-}\mathcal R}$ where ${\mathcal R}$ has finite weak limits.  The last statement is almost in \cite{CBlfp} and the former statement almost in \cite[\S\S 11.1.3,11.1.4]{PreNBK}.  Thus ${\mathbb C}{\mathbb O}{\mathbb H}$ is the additive analogue of the 2-category of coherent toposes.

As for ${\mathbb A}{\mathbb B}{\mathbb E}{\mathbb X}$, there is a general notion of {\bf coherent} object: it is an object $C$ such that whenever one has a pullback

$\xymatrix{P \ar@{.>}[r] \ar@{.>}[d] & B \ar[d] \\ B' \ar[r] & C}$

\noindent where $B, B'$ are finitely generated ($B$ is {\bf finitely generated} if the representable functor $(B,-)$ commutes with directed colimits of monomorphisms) then $P$ also is finitely generated.  If ${\mathcal E}$ is a locally coherent topos then the full subcategory of coherent objects forms what is termed a pretopos \cite[\S A1.4]{Joh1}; such a skeletally small, generating, subcategory is the analogue of ${\mathcal G}^{\rm fp}$ when ${\mathcal G} \in {\mathbb C}{\mathbb O}{\mathbb H}$.  Thus the pretoposes which so arise are the analogues of skeletally small abelian categories.

The analogue of ${\mathbb D}{\mathbb E}{\mathbb F}$ arises through the link between toposes and mathematical logic.  There is a very general notion of ``geometric theory" (see \cite{Joh1}, \cite{MacMoe}); if one stays with finitary logic (as is usual in model theory) then one is dealing with ``coherent" theories.  In fact, the strict analogue of the model theory of the additive situation, at least if we want our categories of models to have finite products, is regular logic in the sense of  \cite[\S D1.1]{Joh1}, see \cite[\S 2.5]{Bridge}.  If $T$ is a geometric theory then there is a corresponding ``classifying topos", ${\mathcal B}[T]$ which can be defined by the universal property that if ${\mathcal E}$ is any Grothendieck topos then ${\rm Geom}({\mathcal E}, {\mathcal B}[T]) \simeq {\rm Mod}_{\mathcal E}T$, where ${\rm Geom}({\mathcal E}, {\mathcal F})$ denotes the set, indeed category, of geometric morphisms from ${\mathcal E}$ to ${\mathcal F}$ (these are the analogue of the morphisms in ${\mathbb C}{\mathbb O}{\mathbb H}$) and ${\rm Mod}_{\mathcal E}(T)$ is the category of models of the theory of $T$ in ${\mathcal E}$ (the classical notion of a ${\bf Set}$-model of a theory can be extended for geometric theories to that of a model in any Grothendieck topos).  The category, ${\rm Mod}_{\bf Set}(T)$, of ${\bf Set}$-models parallels the category ${\mathbb D}{\mathbb E}{\mathbb F}$ of definable categories - which are, indeed, categories of ${\bf Ab}$-models of (regular) additive theories.  The analogous equivalence in our context would read ${\mathbb C}{\mathbb O}{\mathbb H}({\mathcal G}, {\rm Fun}({\mathcal D}))\simeq {\rm Mod}_{\mathcal G}({\rm Th}({\mathcal D}))$ where ${\mathcal D}$ is any definable category, ${\rm Th}({\mathcal D})$ is its theory, and where the right-hand side can be defined through this equivalence or, more naturally in view of \ref{defisexfun}, by defining ${\rm Mod}_{\mathcal G}({\rm Th}({\mathcal D}))$ to be ${\rm Ex}({\rm fun}({\mathcal D}), {\mathcal G})$ (see Section \ref{secexactmorsdef}) - the category of ${\mathcal G}$-models of the theory of ${\mathcal D}$.  Thus ${\rm Fun}({\mathcal D})$ plays the role of the classifying topos for (the theory of) ${\mathcal D}$.  We prefer to say things in terms of definable categories ${\mathcal D}$ rather than theories especially since the former are (``Morita"-)invariant objects whereas there are many literally different, though equivalent, theories with ${\mathcal D}$ as the category of models.  It should be noted, though, that the category ${\rm Fun}({\mathcal D})$ can be constructed from any theory $T$ whose category of models is ${\mathcal D}$, essentially in the same way that the classifying topos is constructed syntactically (see, e.g.~\cite[\S 10.5]{MacMoe}).

For more on the parallels see the references already mentioned and also the work (e.g.~\cite{CarLat, CarSites, CarUnif}) of Caramello which heavily exploits the topos analogue of Morita equivalence and the usefulness of a topos having more than one presentation as a category of sheaves on a site.

\subsection{Points of locally coherent categories}

By a point of a topos $ {\mathcal E} $ is meant a geometric morphism $ f:{\bf Set}\rightarrow {\mathcal E}$, that is, an adjoint pair of functors  $\xymatrix{{\bf Set} \ar@/_/[r]_{f_\ast} & {\mathcal E} \ar@/_/[l]_{f^\ast}}$ with the left adjoint $ f^\ast $ left exact, meaning that it preserves finite limits. In our situation $ {\bf Ab} $ replaces $ {\bf Set} $ and we imposed the additional finiteness requirement that $ f^\ast $ should preserve coherent objects, though we relax that here so as to make a better comparison, and will refer to {\bf coherent points} (those that preserve coherent objects) and ({\bf geometric}) {\bf points} (without that condition) $\xymatrix{{\bf Ab} \ar@/_/[r]_{f_\ast} & {\mathcal G} \ar@/_/[l]_{f^\ast}}$.  By \ref{abexcohequiv}/\ref{mors}, to give a coherent point of a locally coherent Grothendieck category $ {\mathcal G} $ it is equivalent to give an exact functor $ {\mathcal G}^{\rm fp}\rightarrow {\rm mod}\mbox{-}{\mathbb Z} $.

If we set ${\mathcal D} ={\rm Ex}({\mathcal G}^{\rm fp}, {\bf Ab})$ (so ${\mathcal G}={\rm Fun}({\mathcal D})$) then each object $D\in {\mathcal D}$ gives the exact functor ${\rm ev}_D : {\mathcal G}^{\rm fp}\rightarrow {\rm Mod}\mbox{-}{\mathbb Z}$ and the requirement that the image of this functor (playing the role of ``$f_0$" in \ref{mors}) be contained in ${\rm mod}\mbox{-}{\mathbb Z}$ is the requirement that each pp-sort $(\phi/\psi)(D)$ be a finitely generated abelian group.  Of course, there might be few or even no such objects; a simple example is given by taking ${\mathcal D}$ (or ${\mathcal G}$) to be ${\rm Mod}\mbox{-}{\mathbb Q}$.

If, in order to obtain more points, we remove the finiteness condition; then we have an analogue of Deligne's Theorem (see \cite[\S IX.11]{MacMoe}) which says that a coherent topos has enough points.

\begin{prop}\label{deligne} Every locally coherent Grothendieck category ${\mathcal G}$ has enough points in the sense that if $\alpha, \beta:G\rightrightarrows H$, $\alpha \neq \beta$, are distinct parallel morphisms of ${\mathcal G}$ then there is a geometric morphism $f:{\bf Ab} \rightarrow {\mathcal G}$ which distinguishes them in the sense that $f^\ast \alpha \neq f^\ast \beta$.
\end{prop}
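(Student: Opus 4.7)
The plan is to identify geometric morphisms $f:{\bf Ab}\to {\mathcal G}$ with objects of the definable category ${\mathcal D}={\rm Ex}({\mathcal G}^{\rm fp},{\bf Ab})$ and then to use the representation ${\mathcal G}\simeq ({\mathcal D},{\bf Ab})^{\rightarrow}$ of \ref{Fundirect} to detect nonzero morphisms of ${\mathcal G}$ by evaluation at some $D\in {\mathcal D}$.

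First I would establish the correspondence between geometric points of ${\mathcal G}$ and objects of ${\mathcal D}$. Given $(f^\ast,f_\ast)$, the restriction $D=f^\ast\!\upharpoonright\!{\mathcal G}^{\rm fp}$ is an exact functor into ${\bf Ab}$, so it defines an object of ${\mathcal D}$. Conversely, given $D\in {\mathcal D}$, define $\bar D:{\mathcal G}\to {\bf Ab}$ as the left Kan extension along the inclusion ${\mathcal G}^{\rm fp}\hookrightarrow {\mathcal G}={\rm Ind}({\mathcal G}^{\rm fp})$ of \ref{abtofromloccoh}; then $\bar D$ is cocontinuous by construction and exact since finite limits commute with filtered colimits in ${\bf Ab}$ and $D$ is already exact on ${\mathcal G}^{\rm fp}$.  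A right adjoint $f_\ast$ then exists by the special adjoint functor theorem (both ${\mathcal G}$ and ${\bf Ab}$ are locally finitely presentable) or directly by the part of \cite[6.7]{KraFun} already invoked in the excerpt.

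Next, given distinct parallel morphisms $\alpha,\beta:G\rightrightarrows H$, set $\gamma=\alpha-\beta$ and $I={\rm im}(\gamma)$, a nonzero subobject of $H$ in ${\mathcal G}$. Exactness of $\bar D=f^\ast$ means it preserves images, so $\bar D(\gamma)$ has image $\bar D(I)$; consequently $f^\ast \alpha\neq f^\ast \beta$ is equivalent to the existence of $D\in {\mathcal D}$ with $\bar D(I)\neq 0$. Under the equivalence ${\mathcal G}\simeq ({\mathcal D},{\bf Ab})^{\rightarrow}$ of \ref{Fundirect}, the object $I\in {\mathcal G}$ corresponds to the direct-limit-preserving functor $D\mapsto \bar D(I)$ on ${\mathcal D}$; since $I\neq 0$ in ${\mathcal G}$, this functor is nonzero, so some $D\in {\mathcal D}$ satisfies $\bar D(I)\neq 0$, yielding the geometric point that separates $\alpha$ from $\beta$.

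The main obstacle is step one: the passage between a geometric morphism defined on all of ${\mathcal G}$ and a mere exact functor on ${\mathcal G}^{\rm fp}$.  The forward direction is immediate, but the reverse (Ind-extension together with existence and properties of the right adjoint) relies on locally-presentable machinery that must be invoked carefully; once it is in place, the rest is essentially bookkeeping using \ref{abtofromloccoh}, \ref{Fundirect}, and exactness. It is worth noting that, by \ref{defsubcatzg}, one may even take $D$ to be an indecomposable pure-injective object of ${\mathcal D}$, so ${\mathcal G}$ has enough ``indecomposable" geometric points.
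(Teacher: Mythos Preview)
Your argument is correct and close in spirit to the paper's, but there are two genuine differences worth noting.

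First, the paper begins by reducing to the case $\alpha,\beta\in{\mathcal G}^{\rm fp}$ (using that every morphism in ${\mathcal G}$ is a directed colimit of morphisms between finitely presented objects), and then invokes the equivalence ${\mathcal G}^{\rm fp}\simeq{\rm fun}({\mathcal D})$ to find a separating $D$.  You bypass this reduction entirely by passing to $I={\rm im}(\alpha-\beta)$ and using the larger equivalence ${\mathcal G}\simeq({\mathcal D},{\bf Ab})^{\rightarrow}$ of \ref{Fundirect}; this is arguably cleaner, since the paper's reduction step, while standard, is stated without justification.  Second, for the existence of the right adjoint the paper appeals to El Bashir's theorem \cite[3.2]{ElBash} to obtain the solution-set condition, whereas you use the special adjoint functor theorem (legitimate here since ${\mathcal G}$ is Grothendieck, hence has a generator and is well-powered, and $\bar D$ is cocontinuous).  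Both routes are sound; yours leans more on general categorical machinery, the paper's more on the specific model-theoretic/representation-theoretic results already assembled.

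Your closing remark that one may take $D$ indecomposable pure-injective is a nice sharpening not present in the paper's proof.
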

\begin{proof} Since each of $\alpha, \beta$ is a direct limit of morphisms in ${\mathcal G}^{\rm fp}$ it is sufficient to assume that $\alpha, \beta$ are morphisms in ${\mathcal G}^{\rm fp}$ and hence can be regarded as pp-definable maps in ${\rm fun}({\mathcal D})$ where ${\mathcal D}= {\rm Ex}({\mathcal G}, {\bf Ab})$.  Since they are different, there is a model $D\in {\mathcal D}$ on which they are different, so ${\rm ev}_D:{\mathcal G}^{\rm fp} \rightarrow {\bf Ab}$ is such that ${\rm ev}_D(\alpha )\neq {\rm ev}_D(\beta)$.  Since ${\mathcal G}$ is the ${\rm Ind}$-completion of ${\mathcal G}^{\rm fp}$, this induces a $\varinjlim$-preserving functor $f^\ast:{\mathcal G} \rightarrow {\bf Ab}$ which, note, is also exact (every exact sequence in ${\mathcal G}$ is a directed colimit of exact sequences in ${\mathcal G}^{\rm fp}$).

We have to show that $f^\ast$ has a right adjoint and we have just noted that it preserves colimits so it remains to check the solution set condition of the adjoint functor theorem.  But that follows by El Bashir's theorem \cite[3.2]{ElBash}.
\end{proof}

Along the same lines, we can weaken the finiteness condition on $f^\ast$ by defining an $ \alpha $-coherent morphism $ f:{\mathcal G}\rightarrow {\mathcal H} $ between Grothendieck categories to be an adjoint pair  $\xymatrix{{\mathcal G} \ar@/_/[r]_{f_\ast}  & {\mathcal H} \ar@/_/[l]_{f^\ast}}$ with $ f^\ast $ exact such that $ f^\ast:{\mathcal H}^{<\alpha }\rightarrow {\mathcal G}^{<\alpha }$, where $ {\mathcal G}^{<\alpha } $ denotes the full subcategory of $ <\alpha$-presented objects of $ {\mathcal G}$.

\begin{lemma}\label{grothalpha}\marginpar{grothalpha} If  $\xymatrix{{\mathcal G} \ar@/_/[r]_{f_\ast}  & {\mathcal H} \ar@/_/[l]_{f^\ast}}$ is an adjoint pair of functors between Grothendieck categories with $ f^\ast $ left exact then there is $ \alpha  $ such that $ f^\ast {\mathcal H}^{<\alpha }\subseteq {\mathcal G}^{<\alpha } $ and so such that this morphism is $ \alpha $-coherent.
\end{lemma}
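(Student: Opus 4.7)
The plan is to use that ${\mathcal G}$ and ${\mathcal H}$, being Grothendieck, are locally presentable, and to invoke the standard fact that right adjoints between locally presentable categories are accessible. First I would fix a regular cardinal $\lambda$ for which both categories are locally $\lambda$-presentable. Then, since $f_\ast$ is a right adjoint between locally presentable categories, by a general result (Adamek--Rosicky, \emph{Locally Presentable and Accessible Categories}, 2.23; equivalently, the accessible adjoint functor theorem), there exists a regular cardinal $\kappa$ such that $f_\ast$ preserves $\kappa$-filtered colimits.

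Choose any regular $\alpha \geq \max(\lambda,\kappa)$. For $H \in {\mathcal H}^{<\alpha}$ and any $\alpha$-filtered diagram $(G_i)_i$ in ${\mathcal G}$, the adjunction supplies the chain
\[ \mathrm{Hom}_{\mathcal G}(f^\ast H, \varinjlim_i G_i) \simeq \mathrm{Hom}_{\mathcal H}(H, f_\ast \varinjlim_i G_i) \simeq \mathrm{Hom}_{\mathcal H}(H, \varinjlim_i f_\ast G_i) \simeq \varinjlim_i \mathrm{Hom}_{\mathcal H}(H, f_\ast G_i) \simeq \varinjlim_i \mathrm{Hom}_{\mathcal G}(f^\ast H, G_i), \]
where the middle isomorphism uses that $f_\ast$ preserves $\alpha$-filtered colimits (since $\alpha \geq \kappa$) and the next uses that $H$ is $<\alpha$-presented. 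Thus $\mathrm{Hom}_{\mathcal G}(f^\ast H, -)$ preserves $\alpha$-filtered colimits, i.e.~$f^\ast H \in {\mathcal G}^{<\alpha}$, and $f$ is $\alpha$-coherent.

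The only substantial ingredient is the accessibility of $f_\ast$. The hard part is therefore not the adjunction calculation, which is mechanical, but the invocation (or in-house proof) of that accessibility result. A self-contained replacement would go: take a set $\{H_j\}$ of $\lambda$-presentable generators of ${\mathcal H}$, observe that each $f^\ast H_j$ is $<\mu_j$-presentable in ${\mathcal G}$ for some $\mu_j$, and choose $\alpha$ regular with $\alpha \geq \lambda$ and $\alpha > \mu_j$ for all $j$; since every $<\alpha$-presented object of ${\mathcal H}$ is (up to retract) a colimit of an $<\alpha$-small diagram in $\{H_j\}$, and $f^\ast$ preserves all colimits and retracts, the same bound transfers. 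In either route the essential content is the same abstract bounding argument.
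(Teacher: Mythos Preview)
Your proof is correct. Your primary route---passing through the accessibility of the right adjoint $f_\ast$ and then running the adjunction calculation---is not what the paper does. The paper's (sketched) argument is essentially your ``self-contained replacement'': take a skeletally small generating subcategory ${\mathcal H}^{<\beta}$, observe that its image under $f^\ast$ lands in some ${\mathcal G}^{<\alpha'}$, and then choose a regular $\alpha$ above both $\beta$ and $\alpha'$. Your main approach has the virtue of making the role of the adjunction transparent and of offloading the real work to a citable result (Ad\'amek--Rosick\'y), at the cost of a black box; the paper's approach, which you also outline, is more hands-on and exposes the actual mechanism, though the paper leaves the final step (why the bound on generators propagates to all of ${\mathcal H}^{<\alpha}$) to the reader, whereas you spell it out.
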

\begin{proof} (sketch)  There is $ \beta  $ such that $ {\mathcal H}^{<\beta } $ is generating and this category is skeletally small, so its image under $ f^\ast $ is contained in $ {\mathcal G}^{<\alpha '} $ for some $ \alpha '$. Then if $ \alpha  $ is the least regular cardinal greater than both $ \beta  $ and $ \alpha ' $ it will be the case that $ f^\ast{\mathcal H}^{<\alpha }\subseteq {\mathcal G}^{<\alpha }$.
\end{proof}

In the example mentioned earlier there were no non-zero coherent points and a perhaps more natural remedy is to note that there are enough ${\mathbb Q}$-points, meaning coherent morphisms from ${\rm Mod}\mbox{-}{\mathbb Q}$.
This fits with the (additive) ``functor-of-points" view of Section \ref{secfunpt}, in the terminology of which a geometric point of ${\mathcal G}$ is an ${\bf Ab}$-point of ${\mathcal G}^{\rm fp}$.  If it is a coherent point then one may see that it will induce a morphism of ``abelian spaces" $ {\mathcal O}_{\mathbb Z}\rightarrow {\mathcal O}_{\mathcal D} $ where ${\mathcal O}_{\mathbb Z}$ is $ {\rm Spec}({\mathbb Z})$ equipped with its usual structure sheaf and where ${\mathcal O}_{\mathcal D}$ is the natural presheaf of localisations of ${\rm fun}({\mathcal D})$ over ${\rm Zar}({\mathcal D})$ (as in \cite[\S 3]{PreRajShv}, but not sheafified).  Thus we come to the view that a coherent (${\bf Ab}$-)point of a locally coherent category $ {\mathcal G} $ is a morphism of ringed spaces from $ {\mathcal O}_{\mathbb Z} $ to a space which has the indecomposable injectives of ${\mathcal G}$ as points and which carries the Zariski topology (as defined by Gabriel \cite[\S V.4, Chpt.~VI]{Gab}, that is, in terms of injective modules rather than prime ideals).  As already remarked, if ${\mathcal G}$ is a $k$-category, where $k$ is a field, or just any commutative ring (for instance) then it would be more appropriate to use $ k$-points of $ {\mathcal G} $ and so to associate morphisms of ringed spaces from $ {\mathcal O}_k $ to the abelian space, ${\mathcal O}_{\mathcal G}$, associated to $ {\mathcal G}$.

\end{document}